\newcommand{\swcref}[2]{#1}
\newcommand{\mbf}{\mathbf}
\DeclareMathOperator{\Fix}{Fix}
\definecolor{coa}{HTML}{77aadd}
\definecolor{cob}{HTML}{99DDFF}
\definecolor{coc}{HTML}{ee8866}
\definecolor{cod}{HTML}{FFAABB}
\definecolor{coe}{HTML}{bbcc33}
\definecolor{cof}{HTML}{44bb99}
\definecolor{cog}{HTML}{eedd88}
\definecolor{coh}{HTML}{DDDDDD}
\subjclass[2010]{55N15,18D05,55P42}
\begin{document}

\title{$K$-theory of endomorphisms, the $\mathit{TR}$-trace, and zeta functions}

\author[J. A. Campbell]{Jonathan A. Campbell}
\email{jonalfcam@gmail.com }
\author[J. A. Lind]{John A. Lind}
\email{jlind@csuchico.edu}
\address{Department of Mathematics and Statistics, California State University, Chico, CA USA}
\author[C. Malkiewich]{Cary Malkiewich}
\email{malkiewich@math.binghamton.edu}
\address{Department of Mathematical Sciences, Binghamton University, PO Box 6000, Binghamton, NY 13902}
\author[K. Ponto]{Kate Ponto}
\email{kate.ponto@uky.edu}
\address{Department of Mathematics, University of Kentucky, 719 Patterson Office Tower, Lexington, KY USA}
\author[I. Zakharevich]{Inna Zakharevich}
\email{zakh@math.cornell.edu}
\address{587 Malott, Ithaca, NY 14853}
\maketitle

\begin{abstract}
We show that the characteristic polynomial and the Lefschetz zeta function are manifestations of the trace map from the $K$-theory of endomorphisms to topological restriction homology (TR).   Along the way
we generalize Lindenstrauss and McCarthy's map from $K$-theory of endomorphisms to topological restriction homology, defining it for any Waldhausen category with a compatible enrichment in orthogonal spectra. In particular, this extends their construction from rings to ring spectra.
 We also give a revisionist treatment of the original Dennis trace map from $K$-theory to topological Hochschild homology ($\THH$) and explain its connection to traces in bicategories with shadow (also known as trace theories).
\end{abstract}

\setcounter{tocdepth}{1}
\tableofcontents

\section{Introduction}

The trace of a matrix is one of the most fundamental invariants in mathematics. It is concrete, computable,  easy to define, and ubiquitous. It generalizes to traces of operators, traces of endomorphisms of projective modules, traces in symmetric monoidal categories \cite{dp}, and traces in bicategories with shadow \cite{ponto_thesis,ps:bicat,kaledin_traces}.
The trace is computable because it is additive: given two endomorphisms of $k$-vector spaces $f\colon V \to V$ and $g\colon W \to W$, the trace satisfies
\[\tr(f \oplus g) = \tr(f) + \tr(g).\]
A similar additivity statement holds for exact sequences of $R$-modules, in symmetric monoidal categories \cite{may_additivity}, and in bicategories \cite{ps:linearity}.

Therefore the trace, considered as a function from the set of matrices to the ground ring, can be encoded using a universal additive invariant. The Hattori--Stallings trace
\[
K_0(A) \arr \HH_0(A) \cong A/[A,A],
\]
and its generalization the Dennis trace
$K(A) \to \HH(A),$
make this idea precise.
Here $K(A)$ is the algebraic $K$-theory of a
ring $A$ \cite{quillen, 1126} and $\HH$ is the Hochschild homology.
Following the outline of Goodwillie \cite{goodwillie}, the Dennis trace was further generalized to a map to topological Hochschild homology $\THH(A)$, then to topological restriction homology $\TR(A)$ and topological cyclic homology $\TC(A)$ in the celebrated work of B\" okstedt, Hsiang, and Madsen \cite{bokstedt_thh,bokstedt_hsiang_madsen}. The invariants $\THH, \TR$ and $\TC$ are the source of much of our computational knowledge of algebraic $K$-theory.

The Hattori-Stallings trace is constructed in a concrete way from the ordinary trace of endomorphisms of modules. In this paper we show that the same is true of the Dennis trace and its refinements to $\THH$ and $\TR$: they also encode concrete and computable trace invariants. This is a shift in perspective, because typically $\THH$, $\TR$, and $\TC$ are viewed as tools for computing the whole of $K$-theory, rather than a sequence of natural receptacles for trace maps.
Our goals are two-fold:
\begin{itemize}
	\item To explain why the invariants comprising the Dennis trace $K(A) \rto \THH(A)$ and the $\TR$ trace $K(A) \rto \TR(A)$ are generalized traces arising in the bicategorical duality theory of Ponto and Ponto-Shulman \cite{ponto_thesis,ps:bicat}. These invariants, which include the trace of a matrix, the characteristic polynomial, and the Lefschetz zeta function, are easy to define, frequently computable, and have excellent formal properties.
	\item To carefully explicate the construction of the Dennis trace map and its generalizations. We follow previous accounts of the Dennis trace \cite{dundas_mccarthy,blumberg_mandell_published,dundas_goodwillie_mccarthy}, using shadows in bicategories to simplify and conceptualize the definition.
\end{itemize}

As a result of the first goal, we also show that fixed-point and periodic-point invariants of ``Reidemeister type'' lift along the Dennis trace, as in \cite{iwashita,gn_k_theory}.

In summary, we view $\THH$ not as a stepping stone to $K$-theory computations, but as an important receptacle for invariants in its own right. This shift in perspective is accompanied by a shift in emphasis in the definition of the Dennis trace.  Cyclic invariance has been central to the construction of the Dennis trace since its invention by Dennis \cite[p.36]{waldhausen_2}. In that guise, cyclicity is more commonly called the Dennis--Waldhausen--Morita argument \cite{blumberg_mandell_unpublished}. We expand this idea, putting it in the context of bicategorical traces.

\subsection{Statement of results: Invariants}
In order to relate the Dennis trace to bicategorical traces, we consider a generalization of the Dennis trace of the form
\[ \widetilde K(A;M) \arr \TR(A;M) \arr \THH(A;M) \]
which was studied by Lindenstrauss and McCarthy \cite{LM12} in the case of discrete rings and bimodules. Here $\THH(A;M)$ denotes topological Hochschild homology with coefficients in an $(A,A)$-bimodule $M$, $K(A;M)$ is the $K$-theory of perfect $A$-modules $P$ and twisted endomorphisms
\begin{equation}\label{intro_twisted_endomorphism}
	f\colon P \to M \otimes_A P,
\end{equation}
and $\widetilde K(A;M)$ is the cofiber of the map $K(A) \to K(A;M)$ that sends each perfect $A$-module to its zero endomorphism.

We will recall in \S\ref{sec:bicat_trace} that a twisted endomorphism $f\colon P \arr M \otimes_A P$, with $P$ a dualizable $A$-module, has an associated bicategorical trace (\cref{defn:euler_characteristic})
\[
\tr(f) \colon \bbS \arr \THH(A; M)
\]
Our first result, an elaboration of \cite[7.11]{cp}, says that the Dennis trace encodes the bicategorical trace.

\begin{thm} [\cref{ex:twisted_trace_is_twisted_trace}]\label{intro_dennis_trace_w_coeffs}
	For any ring or ring spectrum $A$ and a $(A,A)$-bimodule $M$, there is a generalized Dennis trace map (\cref{twisted_dennis_trace})
	\[ \xymatrix{ \widetilde K(A;M) \ar[r]^-\trc & \THH(A;M) } \]
	that on $\pi_0$ takes the class of an endomorphism $f\colon P \to M \otimes_A P$ to its bicategorical trace $\tr(f) \colon \bbS \to \THH(A; M)$.
\end{thm}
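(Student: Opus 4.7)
The plan is to build the trace as a map of spectra by extending the pointwise bicategorical trace to all of Waldhausen's $S_\bullet$-construction, and then to read off the $\pi_0$ statement by unwinding the definition. Since $\THH(A;M)$ is by construction the shadow of $M$ in the bicategory of bimodules, the bicategorical trace $\tr(f)\colon \bbS \to \THH(A;M)$ of a twisted endomorphism $f\colon P \to M\otimes_A P$ with $P$ dualizable is already a $0$-simplex of the target; the content is to promote this pointwise assignment coherently to a map of spectra.

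First I would fix the Waldhausen category $\mathcal{E}(A;M)$ of pairs $(P,f)$ with $P$ a perfect $A$-module and $f$ a twisted endomorphism, with compatible cofibrations and weak equivalences; then $K(A;M)=K(\mathcal{E}(A;M))$ and $\widetilde K(A;M)$ is the cofiber of the zero-endomorphism inclusion $K(A)\to K(A;M)$. I would equip $\mathcal{E}(A;M)$ with the compatible enrichment in orthogonal spectra promised by the abstract, so that the bicategorical constructions below produce maps of spectra rather than maps of sets.

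The map is then defined on $S_q \mathcal{E}(A;M)$, a filtration $P_0 \hookrightarrow \cdots \hookrightarrow P_q$ with compatible twists, by taking the bicategorical trace of the filtered system into the matching simplicial level of a model for $\THH(A;M)$. Two bicategorical ingredients do the heavy lifting: additivity of traces along cofiber sequences \cite{ps:linearity}, which handles the Waldhausen cofibration structure and forces zero endomorphisms to have vanishing trace (yielding the desired factorization through $\widetilde K(A;M)$); and the cyclic invariance encoded in the shadow, which replaces the classical Dennis--Waldhausen--Morita cyclicity argument and ensures compatibility with the simplicial (in fact cyclic) structure on $\THH(A;M)$.

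Given the construction, the $\pi_0$ statement is essentially tautological: a class $[P,f]\in\pi_0\widetilde K(A;M)$ is represented by a $0$-simplex $(P,f)$ of $S_\bullet \mathcal{E}(A;M)$, and the trace map sends it by design to the $0$-simplex $\tr(f)\colon\bbS\to\THH(A;M)$. The hard part is in the third paragraph: assembling the pointwise bicategorical trace into a coherent map of spectra requires organizing cyclicity, additivity, functoriality, and naturality under cofibrations simultaneously along the $S_\bullet$-construction. It is precisely here that the reformulation of the Dennis--Waldhausen--Morita argument via bicategorical shadows, emphasized in the introduction, is indispensable and carries the bulk of the technical work.
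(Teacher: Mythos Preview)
Your proposal diverges from the paper's construction in a way that leaves a real gap. You propose to define the trace on $S_q\mathcal{E}(A;M)$ by ``taking the bicategorical trace of the filtered system into the matching simplicial level of a model for $\THH(A;M)$.'' But the bicategorical trace of a twisted endomorphism is only a map $\bbS\to\THH(A;M)$ defined up to homotopy; there is no evident way to land in a nontrivial simplicial level, and the additivity of traces from \cite{ps:linearity} is a $\pi_0$ statement, not a coherent identification that would produce a map of simplicial spectra. The coherence problem you identify in your last paragraph is genuine, and your sketch does not solve it.

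The paper sidesteps this entirely. The forward map in the construction is \emph{not} built from bicategorical traces: it is the elementary inclusion of the $0$-skeleton
\[
\Sigma^\infty \ob \End(w_\bullet S^{(n)}_\bullet\uncat\cC) \arr \THH(w_\bullet S^{(n)}_\bullet\spcat\cC;\tensor[_L]{(w_\bullet S^{(n)}_\bullet\spcat\cD)}{_R}),
\]
which is a strict map of multisimplicial spectra for formal reasons. The target here is $\THH$ of the $S_\bullet$-category, not $\THH(A;M)$; the passage to $\THH(A;M)$ comes from an additivity theorem for $\THH$ (\cref{thm:THH_additivity}) which produces a backwards equivalence $\THH(S^{(n)}_\bullet\cC;\ldots)\simeq\Sigma^n\THH(\cC;\ldots)$. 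The Dennis trace is then the resulting zig-zag. Crucially, the shadow formalism is used to \emph{prove} this additivity theorem and the Morita invariance needed to reduce from $\Perf$ to $A$, not to define the map levelwise.

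Consequently, the $\pi_0$ identification is not tautological in the paper's approach either: by \cref{lem:pi_0_dennis_trace} the image of $[f]$ is the inclusion of $f$ as a $0$-simplex in the cyclic bar construction for $\THH(\Perf;\Mod_M)$, and one then invokes \cite[7.4,\,7.11]{cp} to identify this $0$-simplex, after the Morita equivalence $\THH(\Perf;\Mod_M)\simeq\THH(A;M)$, with the bicategorical trace $\tr(f)$. That last step is where the bicategorical trace actually enters.
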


More generally, topological restriction homology encodes the traces of the iterates of an endomorphism.

\begin{thm} [\cref{thm:main_identify_pi_0_TR}]\label{intro_tr_trace_w_coeffs}
	There is a lift of the Dennis trace to topological restriction homology (\cref{tr_trace})
	\[ \xymatrix{ \widetilde K(A;M) \ar[r]^-\trc & \TR(A;M) } \]
	that on $\pi_0$ takes the class of an endomorphism $f\colon P \to M \otimes_A P$ to the trace of its $n$-fold iterate
	\[ f^{\circ n}\colon P \to M \otimes_A  \dotsm \otimes_A M \otimes_A P \]
	for every $n \geq 1$.
\end{thm}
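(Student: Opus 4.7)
My plan is to construct the $\TR$-trace level by level and then reduce the $\pi_0$ calculation to the $\THH$ case established in \cref{intro_dennis_trace_w_coeffs}. First, I would recall the structure of $\TR(A;M)$: it is built from pieces $\TR^n(A;M)$ involving $\THH$ with coefficients in the $n$-fold tensor product $M^{\otimes_A n}$ together with a cyclic $\Z/n$-action, and these pieces are connected by restriction and Frobenius maps. The central idea is that a twisted endomorphism $f\colon P \to M \otimes_A P$ of a dualizable $P$ has an $n$-fold iterate $f^{\circ n}\colon P \to M^{\otimes_A n} \otimes_A P$, obtained by applying $f$ successively and extending along the tensor product. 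Applying the bicategorical trace to each such iterate yields a class $\tr(f^{\circ n}) \in \pi_0 \THH(A; M^{\otimes_A n})$, which is exactly what \cref{intro_dennis_trace_w_coeffs} delivers on the $\THH$ side.

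To promote these classes to $\TR^n(A;M)$, I would verify $\Z/n$-equivariance: cyclic permutation of the $n$ copies of $M$ corresponds on the endomorphism side to the identity $f^{\circ k} \circ f^{\circ(n-k)} = f^{\circ(n-k)} \circ f^{\circ k}$, so the trace is invariant by the same cyclicity (``Dennis--Waldhausen--Morita'') argument that underpins the $\THH$-trace. The main obstacle is then compatibility with the restriction and Frobenius structure maps of $\TR$, which should link $\TR^n$ to $\TR^m$ for $m \mid n$ in a way that matches the relationship between the $n$-fold and $m$-fold iterates of $f$. I expect this to reduce to naturality of the bicategorical trace under appropriate morphisms of dualizable $(A,A)$-bimodules, but the bookkeeping — phrasing restriction and Frobenius so that the bicategorical formalism handles them natively, presumably via a careful comparison with the geometric fixed points of the cyclic bar construction — is the technical heart of the proof.

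Once the map $\widetilde K(A;M) \to \TR(A;M)$ of \cref{tr_trace} is in place, the $\pi_0$ identification in \cref{thm:main_identify_pi_0_TR} follows almost tautologically: the $n$-th component of $\tr(f)$ in $\pi_0 \TR(A;M)$ is by construction the bicategorical trace of $f^{\circ n}$, which \cref{intro_dennis_trace_w_coeffs} identifies as the trace of the $n$-fold iterate. That the map factors through $\widetilde K(A;M)$ is automatic, since the trace of a zero endomorphism vanishes at every level $n$.
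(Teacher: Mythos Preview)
Your proposal has a genuine gap at the step where you ``promote these classes to $\TR^n(A;M)$.'' An element of $\pi_0\TR(A;M)$ is an element of the limit of the \emph{genuine} $C_n$-fixed points $\pi_0\THH^{(n)}(A;M)^{C_n}$, and producing such an element is strictly more than producing a $C_n$-invariant class in $\pi_0\THH(A;M^{\otimes_A n})$. Your argument that cyclic permutation corresponds to $f^{\circ k}\circ f^{\circ(n-k)}=f^{\circ(n-k)}\circ f^{\circ k}$ only witnesses invariance under the \emph{naive} $C_n$-action; it does not lift $\tr(f^{\circ n})$ to the genuine fixed-point spectrum, and consequently there is no way to make sense of ``compatibility with restriction and Frobenius'' at the level you describe. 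The restriction maps in $\TR$ pass through the map $\kappa$ from categorical to geometric fixed points, and this is invisible if all you have are non-equivariant classes at each level.

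The paper's construction is organized quite differently. Rather than assembling the traces $\tr(f^{\circ n})$ after the fact, it builds for each $r$ a genuinely $C_r$-equivariant Dennis trace $\trc^{(r)}\colon K(\End^{(r)}) \to \THH^{(r)}$, where $\End^{(r)}$ is a category of length-$r$ cycles of maps carrying a $C_r$-action by rotation. The crucial point is that on the $K$-theory side the categorical $C_r$-fixed points of $\End^{(r)}$ recover $\End^{(1)}$, while on the $\THH$ side the geometric fixed points of $\THH^{(r)}$ recover $\THH^{(1)}$ via the Hill--Hopkins--Ravenel norm diagonal; these two facts are what make the traces for varying $r$ into a map of restriction systems and hence a map to $\TR$. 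The $\pi_0$ identification then comes out by composing with the ghost map $g_n$, not by reading off an ``$n$-th component'' directly. Your outline does not touch the equivariant input (the norm diagonal, the $r$-fold endomorphism categories) that makes the lift exist.
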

We call the map in \cref{intro_tr_trace_w_coeffs} the {\bf $\TR$-trace}.
The characteristic polynomial of a matrix is a refinement of the trace, and is encoded by the $\TR$-trace.

\begin{thm} [\cref{tr=char}]\label{intro:tr=char}
	If $A$ is a discrete commutative ring, then the composite
	\[
	\xymatrix{&\widetilde K_0(A;A)\ar[r]^-\trc & \pi_0\TR(A) \ar[r]^-\cong & (1+tA[[t]])^\times }
	\]
	takes the class $[f \colon P \to P]$ of an endomorphism to its characteristic polynomial $\det(1 - tf)$.
\end{thm}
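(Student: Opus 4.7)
The plan is to verify the equality in $(1+tA[[t]])^\times$ by comparing ghost coordinates, reducing the claim to a classical logarithmic-derivative identity between the characteristic polynomial and iterated traces.

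First, recall the identification of $\pi_0 \TR(A)$ with the big Witt ring $W(A) \cong (1+tA[[t]])^\times$ for commutative $A$ (Hesselholt--Madsen, following Bloch and Almkvist). Under this isomorphism, the $n$-th ghost component $w_n \colon W(A) \to A$ corresponds, on the $\TR$ side, to the composite with the projection to $\pi_0 \THH(A) \cong A$ at level $n$ of the fixed-point tower. Ghost components of a power series $P(t) \in 1+tA[[t]]$ are characterized by the logarithmic-derivative formula
\[
-t\,\frac{d}{dt}\log P(t) \;=\; \sum_{n \geq 1} w_n(P)\,t^n.
\]

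By \cref{intro_tr_trace_w_coeffs}, the element $\trc([f]) \in \pi_0 \TR(A)$ has level-$n$ projection equal to the bicategorical trace of the iterate $f^{\circ n}\colon P \to P$; since $A$ is commutative and $M = A$, this reduces to the ordinary trace $\tr(f^{\circ n}) \in A$ by \cref{ex:twisted_trace_is_twisted_trace}. Hence $w_n(\trc([f])) = \tr(f^{\circ n})$. On the other hand, Newton's identity yields
\[
-t\,\frac{d}{dt}\log \det(1 - tf) \;=\; \sum_{n \geq 1} \tr(f^{\circ n})\,t^n,
\]
so $w_n(\det(1-tf)) = \tr(f^{\circ n})$ as well. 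One proves this identity first over $\mathbb{Q}$-algebras by diagonalizing $f$ and computing $\prod_i (1-\lambda_i t)$ termwise, then transfers it to an arbitrary commutative ring via the universal matrix over $\Z[x_{ij}] \hookrightarrow \mathbb{Q}[x_{ij}]$ and naturality.

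To conclude, invoke injectivity of the ghost map $W(A) \to \prod_n A$ on $\Z$-torsion-free $A$: the two elements $\trc([f])$ and $\det(1-tf)$ agree in all ghost coordinates, hence agree over $\Z[x_{ij}]$; naturality of both constructions in $A$ then extends the equality to an arbitrary commutative ring by pullback along the classifying map for $(P,f)$. The main obstacle is the bookkeeping needed to match conventions: one must confirm that the level-$n$ projection on $\TR$ corresponds exactly to the $n$-th ghost component of $W(A)$ under the Hesselholt--Madsen isomorphism, with the sign that produces $\det(1-tf)$ rather than its inverse or $\det(1+tf)$. In effect, this amounts to reconciling the model of $\TR$ underlying \cref{intro_tr_trace_w_coeffs} with the classical computation of $\pi_0 \TR$ for commutative rings.
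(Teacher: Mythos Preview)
Your proposal is correct and follows essentially the same strategy as the paper: compare ghost coordinates, use \cref{thm:main_identify_pi_0_TR} to identify the $n$th ghost coordinate of $\trc([f])$ with $\tr(f^{\circ n})$, invoke the logarithmic-derivative identity for $\det(1-tf)$, and then reduce to the torsion-free case where the ghost map is injective. The only minor difference is in that final reduction: you pull back along a classifying map $\bbZ[x_{ij}] \to A$ for the matrix of $f$ (after stabilizing $P$ to a free module), whereas the paper chooses a surjection $A' \twoheadrightarrow A$ from a torsion-free ring and appeals to Almkvist's theorem (\cref{char_is_inj}) to get surjectivity of $\widetilde K_0(\End(A')) \to \widetilde K_0(\End(A))$; your route is slightly more elementary since it avoids invoking Almkvist.
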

We emphasize that \cref{intro:tr=char} states that the $\TR$-trace is \textit{exactly} the homotopical analogue of the characteristic polynomial. Since zeta functions are built out of characteristic polynomials, we summarize with the slogan:
\begin{quote}
\emph{$K$-theory is the natural home for additive invariants, $\THH$ is the natural home for traces, and $\TR$ is the natural home for zeta functions.}
\end{quote}
A related slogan occurs in topological fixed-point theory:
\begin{quote}
\emph{$\THH$ is the natural home for fixed-point invariants and $\TR$ is the natural home for periodic-point invariants.}
\end{quote}

The following result captures this idea, and is the topological analogue of the algebraic slogan.

\begin{thm}[\cref{fuller_underneath_k_theory,lefschetz_zeta_function}]\label{intro_fuller}
	Every self-map $f\colon X \to X$ of a connected finite complex defines a canonical class in endomorphism $K$-theory
\[ [f] \in K_0(\bbS[\Omega X]; \bbS[\Omega^f X]).\]
The image of this class under the $\TR$-trace coincides with the periodic-point invariant $R(\Psi^{\infty}(f))$ studied in \cite{mp1}.

Composing with the map on $\TR$ induced by the ring map
\[
\bbS[\Omega X] \xarr{\text{collapse}} \bbS \xarr{\text{unit}} H\bbZ,
\]
the image in $\pi_0\TR(\bbZ) \cong (1+t\bbZ[[t]])^\times$ is the Lefschetz zeta function of $f$:
  \[
  \zeta(t) = \exp \left(\sum^\infty_{n=1} L(f^{\circ n}) \frac{t^n}{n} \right).
  \]
\end{thm}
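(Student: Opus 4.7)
The plan is to prove the theorem in two stages: first construct the class $[f]$ and identify its $\TR$-trace with $R(\Psi^\infty(f))$, and then specialize to $\bbZ$-coefficients to recover the zeta function.

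First I would construct $[f]$ as a concrete twisted endomorphism. The based connected finite complex $X$ gives a perfect $\bbS[\Omega X]$-module $P_X$, essentially the suspension spectrum of the universal cover, which is dualizable because $X$ is finite. The twisted loop space $\Omega^f X$ is the homotopy pullback of $(\mathrm{id}, f)\colon X \to X \times X$ along the diagonal, and its unbased suspension spectrum inherits an $(\bbS[\Omega X], \bbS[\Omega X])$-bimodule structure from loop concatenation on either side. The self-map $f$ then determines a canonical twisted endomorphism
\[
f_\ast \colon P_X \longrightarrow \bbS[\Omega^f X] \otimes_{\bbS[\Omega X]} P_X,
\]
which records, for each point $x \in X$, the pair $(x, f(x))$ with its witnessing path. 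This defines $[f] \in K_0(\bbS[\Omega X]; \bbS[\Omega^f X])$, and it lifts to $\widetilde K_0$ because forgetting the endomorphism structure returns the $K$-theory class of $P_X$, which has a natural nullhomotopy on either side of the quotient defining $\widetilde K$.

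Next I would identify the image under the $\TR$-trace. By \cref{intro_tr_trace_w_coeffs} this image is determined, via the pro-system structure of $\pi_0 \TR$, by the bicategorical traces of the iterates
\[
f_\ast^{\circ n} \colon P_X \longrightarrow \bbS[\Omega^f X]^{\otimes n} \otimes_{\bbS[\Omega X]} P_X.
\]
Under the Morita-type equivalence between $\bbS[\Omega X]$-bimodules and parametrized spectra over $X \times X$, each $f_\ast^{\circ n}$ corresponds to the parametrized endomorphism of $X$ defined by $f^{\circ n}$, and its bicategorical trace is precisely the Reidemeister trace of $f^{\circ n}$ in the Ponto--Shulman bicategory of parametrized spectra. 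The compatibility of these invariants across varying $n$, together with the cyclotomic Frobenius structure on $\TR$, packages them into the class $R(\Psi^\infty(f))$ from \cite{mp1} by construction, which completes the first half.

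For the second half I would invoke naturality of the $\TR$-trace with respect to the ring maps $\bbS[\Omega X] \to \bbS \to H\bbZ$, the first being collapse of loops and the second the Hurewicz map. Under this composite, the Reidemeister trace of $f^{\circ n}$ maps to its classical Lefschetz number $L(f^{\circ n}) \in \bbZ$, since the fundamental-group decoration is forgotten and what remains is the alternating sum of traces on singular homology. Reading off coefficients through the isomorphism $\pi_0 \TR(\bbZ) \cong (1 + t\bbZ[[t]])^\times$ of \cref{intro:tr=char}, the sequence $(L(f^{\circ n}))_{n \geq 1}$ is identified with
\[
\exp\left(\sum_{n=1}^\infty L(f^{\circ n})\, \frac{t^n}{n}\right),
\]
which is the defining formula for $\zeta(t)$.

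The main obstacle will be the comparison between the bicategorical trace of $f_\ast^{\circ n}$ in the bicategory of bimodule spectra and the Reidemeister trace of $f^{\circ n}$ in the Ponto--Shulman bicategory of parametrized spectra, together with the verification that the assembly of these traces across $n$ matches the definition of $R(\Psi^\infty(f))$ in \cite{mp1} at the level of the cyclotomic structure of $\TR$. Once that matching is in hand, the Lefschetz zeta function conclusion is a formal consequence of naturality of the $\TR$-trace and the characteristic-polynomial identification already established in \cref{intro:tr=char}.
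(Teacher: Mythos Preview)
Your high-level strategy matches the paper's, but there is a genuine gap in the first half. You write that by \cref{intro_tr_trace_w_coeffs} the image in $\pi_0\TR$ ``is determined, via the pro-system structure,'' by the traces of the iterates $f_\ast^{\circ n}$, and that these therefore ``package into'' $R(\Psi^\infty(f))$. But \cref{intro_tr_trace_w_coeffs} only tells you the \emph{ghost coordinates} of the $\TR$-trace; it does not say the ghost map is injective, and in general it is not. The paper closes this gap by observing that for spherical group rings the restriction system $\THH^{(\bullet)}(\bbS[\Omega X];\bbS[\Omega^f X])$ is a suspension-spectrum system, so the tom Dieck splitting applies and the ghost map on $\pi_0$ is injective (\cref{ghost_injectivity}). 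Only then can one conclude from the equality of ghost coordinates (the $R(f^{\circ n})$, matched to $R(\Psi^n f)$ via the unwinding theorem of \cite{mp1}) that the two classes in $\pi_0\TR$ agree. You have correctly flagged the bicategory comparison as an obstacle, but the missing tom Dieck splitting is the more serious one.

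For the second half your route through \cref{intro:tr=char} is not quite what the paper does, and as stated it is incomplete. To invoke \cref{intro:tr=char} you would need to identify the push-forward of $[f]$ along $\bbS[\Omega X]\to H\bbZ$ as an explicit endomorphism of a perfect $\bbZ$-complex whose characteristic polynomial is the zeta function; you do not do this. The paper instead argues directly at the level of ghost coordinates: naturality of the ghost map sends the Reidemeister series to the Lefschetz series $(L(f^{\circ n}))_n$, the ghost map on $W(\bbZ)\cong(1+t\bbZ[[t]])^\times$ is the negative logarithmic derivative, and since $\bbZ$ is torsion-free this map is injective, forcing the image to be $\exp\bigl(\sum_n L(f^{\circ n})t^n/n\bigr)$. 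Your ``reading off coefficients'' is implicitly the same computation, but you should make the injectivity of the ghost map over $\bbZ$ explicit rather than appealing to \cref{intro:tr=char}.

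A minor point: the paper's perfect $\bbS[\Omega X]$-module is $\bbS$ itself (with the twisted endomorphism \eqref{twisted_module_morphism}), not a model for the universal cover.
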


In more detail, the image of $[f]$ in $\pi_0 \TR(\bbS[\Omega X]; \bbS[\Omega^f X])$ is given by the Fuller traces $R(\Psi^n f)^{C_n}$ for all $n \geq 1$. These are the strongest invariants that detect the $n$-periodic points of $f$ up to homotopy, and our work here extends \cite{mp1} by lifting them to the $K$-theory of spherical group rings. This realizes a vision of Klein, McCarthy, Williams and others about the fundamental nature of these periodic-point invariants.

The theorem also suggests that the higher homotopy groups of $K$-theory with coefficients capture parameterized versions of the Lefschetz zeta function, just as $K$-theory without coefficients captures parametrized Euler characteristics \cite{dwyer_weiss_williams}. We intend to return to this idea in future work.

\subsection{Statement of Results: The Dennis Trace}

In order to prove that the trace maps out of $K$-theory encode bicategorical traces, as described in the theorems above, we integrate the perspective of shadows into the construction of the Dennis trace. This has the unexpected benefit of simplifying many aspects of its construction. We emphasize that our definition is similar to and very much motivated by the work in   \cite{dundas_mccarthy,blumberg_mandell_published,dundas_goodwillie_mccarthy}, but the focus on shadows is conceptually clarifying.

To make sense of both the algebraic K-theory of a category and its topological Hochschild homology we need the category to be a spectral category and have a compatible Waldhausen structure.  Applying the building blocks of algebraic $K$-theory (i.e. applying $w_\bullet$ and $S_\bullet$) to a Waldhausen category goes back to Waldhausen's original work, but applying these to a spectral category is the most technically demanding portion of the paper.  For the introduction we will treat this step as a black box.

Given a spectral category $\cC$ and a Waldhausen category $\cC_0$ with appropriate compatibility (\cref{def:spectrally_enriched_waldhausen_category}),
the foundation of the Dennis trace is the inclusion of the zero skeleton in $\THH$:
\[
\bigvee_{f \in \operatorname{End}(\uncat{\cC})} \mathbb{S} \to \THH(\mathcal{C}).
\]
Note that the object on the left depends only on the base category $\cC_0$, which we assumed to be Waldhausen. Since $w_\bullet$ and $S_\bullet$ can be applied to both $\cC_0$ and $\cC$, the inclusion of the zero skeleton gives a map of bisimplicial spectra
\[
\Sigma^\infty \operatorname{ob} w_\bullet S_\bullet \End(\uncat{\cC}) \to \THH(w_\bullet S_\bullet \mathcal{C})
\]
and more generally for each $n \geq 0$ a map of $(n+1)$-fold multisimplicial spectra
\[
\Sigma^\infty \operatorname{ob} w_\bullet S^{(n)}_{\bullet, \dotsc, \bullet} \End(\mathcal{C}_0) \to \THH(w_\bullet S^{(n)}_{\bullet, \dotsc, \bullet} \mathcal{C}).
\]
The Dennis trace is then defined to be a map in the homotopy category
\begin{equation}\label{eq:intro_dennis_trace} \trc \colon K(\End(\uncat\cC)) \arr \THH(\cC) \end{equation}
obtained from a zig-zag of the form
\[
\Sigma^\infty \operatorname{ob} w_\bullet S^*_{\bullet, \dotsc, \bullet} \End(\mathcal{C}_0) \to \THH(w_\bullet S^*_{\bullet, \dotsc, \bullet} \mathcal{C}) \xleftarrow{\simeq} \THH(S^*_{\bullet, \dotsc, \bullet}) \xleftarrow{\simeq} \Sigma^\infty \THH(\cC).
\]
The backwards maps of the zig-zag are provided by the following two theorems.
\begin{thm}[\cref{w_bullet_invariance}] If $\cC$ is a spectral category and $w_k\cC$ is the associated category of flags of weak equivalences in $\cC$, then there is a natural equivalence
	\[
	\THH(w_k \mathcal{C}) \xleftarrow{\simeq} \THH(\mathcal{C}).
	\]
\end{thm}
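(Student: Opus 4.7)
The map in the statement is induced by the inclusion of constant flags
\[\iota\colon\cC\to w_k\cC,\qquad c\mapsto (c=c=\cdots=c),\]
which is a spectral functor that is the identity on mapping spectra. To show that $\THH(\iota)$ is an equivalence, the plan is to exhibit a homotopy inverse built from the projection $p\colon w_k\cC\to\cC$ at the zeroth vertex of a flag.

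The composite $p\circ\iota$ is strictly the identity on $\cC$, so $\THH(p)\circ\THH(\iota)=\mathrm{id}$ on the nose. Going the other way, the plan is to construct a spectral natural transformation $\alpha\colon\iota\circ p\Rightarrow\mathrm{id}_{w_k\cC}$ whose component at a flag $c_\bullet=(c_0\xrightarrow{\sim}c_1\xrightarrow{\sim}\cdots\xrightarrow{\sim}c_k)$ is the morphism of flags from the constant flag $(c_0=\cdots=c_0)$ to $c_\bullet$ whose $i$th coordinate is the composite $c_0\xrightarrow{\sim}\cdots\xrightarrow{\sim}c_i$. Naturality on underlying objects is forced by the commuting squares defining morphisms in $w_k\cC$; and upgrading to spectral naturality is straightforward once the mapping spectrum of $w_k\cC$ is presented as the equalizer of the two obvious maps between products of mapping spectra of $\cC$, since in that presentation $\alpha$ acts coordinatewise by composition with morphisms of $\cC$ which are themselves spectrally natural.

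Granting this transformation, one concludes that $\THH(\iota\circ p)\simeq\THH(\mathrm{id}_{w_k\cC})=\mathrm{id}$ by invoking the general principle that $\THH$ sends natural transformations of spectral functors to simplicial homotopies of the induced self-maps of the cyclic bar construction (used in the form of \cite{dundas_mccarthy,blumberg_mandell_published}). Combined with the strict identity $\THH(p)\circ\THH(\iota)=\mathrm{id}$, this shows that $\THH(\iota)$ is a two-sided homotopy equivalence with inverse $\THH(p)$. Naturality of the equivalence in $\cC$ is immediate from the functoriality of $\iota$ and $p$ in $\cC$.

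The main obstacle is not conceptual but one of setup: one must pin down a spectral enrichment on $w_k\cC$ so that $\iota$, $p$, and $\alpha$ are genuinely spectral, and in the orthogonal-spectrum context this requires care with the equalizer description of the mapping spectra and the resulting composition law. Once that bookkeeping is carried out, the argument reduces to the homotopy-invariance principle for $\THH$ of spectral categories, which is by now standard.
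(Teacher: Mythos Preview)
Your approach is genuinely different from the paper's. The paper does not construct a projection $p$ or invoke the ``natural transformations give simplicial homotopies on $\THH$'' principle. Instead, it proceeds in two steps: first, the companion paper shows directly that the iterated degeneracy $\cC\cong w_0\cC\to w_k\cC$ is a Dwyer--Kan equivalence of spectral categories (the Moore-end mapping spectrum between constant flags is equivalent to the original mapping spectrum, and weak equivalences become $\pi_0$-isomorphisms by the spectral Waldhausen axioms, giving essential surjectivity); second, the paper invokes its Morita invariance theorem, proved via the shadow/bicategorical-trace machinery, to conclude that any Dwyer--Kan equivalence induces an equivalence on $\THH$. No explicit homotopy inverse is ever built.

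Your route is the classical one and is sound in outline, but the step you flag as ``bookkeeping'' is exactly where the paper's setup resists a direct attack. The paper notes explicitly (in the discussion preceding \cref{prop:spectAdj}) that the Moore end is \emph{not} $2$-functorial: natural transformations on base categories do not automatically lift to spectrally enriched natural transformations in this model. Your $\alpha$ lives in the base category $w_k\uncat\cC$, and verifying that the required naturality squares commute \emph{strictly} at the level of Moore-end mapping spectra---so that the simplicial-homotopy argument goes through on the nose---is precisely the kind of coherence the Moore end fails to provide in general. One can likely repair this (e.g.\ by passing to a different model, or by working with homotopy-coherent natural transformations), but it is more than bookkeeping; it is the reason the paper routes around the issue entirely by comparing mapping spectra rather than building a strict spectral retraction.

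What each approach buys: the paper's argument slots into the shadow framework it is advertising and immediately generalizes to the twisted-coefficients setting with no extra work; your approach, if completed, would be self-contained and would not require the bicategorical Morita machinery, at the cost of model-specific verifications that the paper's chosen enrichment makes awkward.
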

\begin{thm}[Additivity of $\THH$, \cref{cor:small_add}]\label{intro_thh_additivity}
	Let $\cC$ be a spectral category and let $S_2\cC$ be the associated spectral category of cofiber sequences in $\cC$. Then there is a natural equivalence
	\[
	\THH(S_2 \mathcal{C}) \xleftarrow{\simeq} \THH(\mathcal{C}) \vee \THH(\mathcal{C}).
	\]
	These equivalences inductively define an equivalence
	\[
	\THH(S_{\bullet} \cC) \xleftarrow{\simeq} \Sigma \THH(\cC),
	\]
	and thus an equivalence to the iterated $S_{\bullet}$-construction
	\[
\THH(S^{(n)}_{\bullet, \dotsc, \bullet}\cC) \xleftarrow{\simeq} \Sigma^{n} \THH(\cC).
	\]
\end{thm}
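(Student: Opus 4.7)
The plan is to prove the first equivalence directly and then bootstrap by induction to the $S_\bullet$ and iterated $S^{(n)}_{\bullet,\dotsc,\bullet}$ statements. The comparison map I would build uses the two ``endpoint'' spectrally enriched exact functors $s, q \colon S_2 \cC \to \cC$ sending a cofiber sequence $A \rightarrowtail B \twoheadrightarrow C$ to its sub- and quotient objects $A$ and $C$ respectively. Applying $\THH$, and using that $\THH$ carries a finite product of spectral categories to a wedge, these assemble into
\[
(s_*, q_*) \colon \THH(S_2\cC) \arr \THH(\cC) \vee \THH(\cC).
\]
A section is provided by the ``split cofiber'' functor $\iota \colon \cC \times \cC \to S_2 \cC$ given by $\iota(A, C) = (A \rightarrowtail A \vee C \twoheadrightarrow C)$, for which $(s_*, q_*) \circ \iota_*$ is literally the identity.

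The hard part will be showing that the other composite $\iota_* \circ (s_*, q_*)$ is homotopic to the identity on $\THH(S_2 \cC)$. I would follow the Dennis--Waldhausen--Morita strategy highlighted in the introduction: realize $\THH(S_2\cC)$ as the cyclic bar construction on the mapping spectra of $S_2\cC$, and exhibit a simplicial homotopy of cyclic objects between $\mathrm{id}_{S_2\cC}$ and $\iota \circ (s, q)$ coming from the natural comparison $A \vee C \to B$ induced by the cofibration $A \rightarrowtail B$ together with a section-up-to-cofibration of $B \twoheadrightarrow C$. The standard ``prism'' decomposition of the cyclic nerve then turns the natural transformation into the required homotopy, because naturally related spectrally enriched exact functors induce homotopic maps on $\THH$. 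The real technical content is verifying compatibility of this argument with the spectral enrichment and with the $w_\bullet$, $S_\bullet$ apparatus for spectral Waldhausen categories developed earlier in the paper.

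Granting the $S_2$ equivalence, I would prove the second statement by analyzing the simplicial spectrum $[k] \mapsto \THH(S_k \cC)$. The Segal-type face maps identify $S_k \cC$ with an iterated homotopy fiber product of copies of $S_2 \cC$ over $\cC$, so iterating the $S_2$ equivalence yields $\THH(S_k \cC) \simeq \THH(\cC)^{\vee k}$ compatibly with the simplicial structure. The resulting simplicial spectrum is then the standard model for $S^1 \wedge \THH(\cC)$, whose realization is $\Sigma \THH(\cC)$. For the $n$-fold iterated version, I would argue by induction on $n$, applying the previous step one simplicial direction at a time and using that $\THH$ commutes with geometric realization of simplicial spectral categories so that each application contributes an extra suspension, giving $\Sigma^n \THH(\cC)$.
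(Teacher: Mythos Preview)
Your proposal has a genuine gap at the crucial step: the natural transformation you want between $\iota \circ (s,q)$ and $\id_{S_2\cC}$ does not exist. For a general cofiber sequence $A \rightarrowtail B \twoheadrightarrow C$ there is no natural section of $B \twoheadrightarrow C$, and the phrase ``section-up-to-cofibration'' does not name any actual structure in a Waldhausen category. Without such a section there is no natural map $A \vee C \to B$ compatible with the projections, hence no natural transformation to feed into a simplicial homotopy on the cyclic nerve. This is precisely the obstruction that makes additivity of $\THH$ a nontrivial theorem rather than a formal consequence of naturality.

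The paper's proof circumvents this by working with bimodules rather than functors. The key technical input (\cref{sdot_adjunctions}) is a homotopy cofiber sequence of $(S_k\cC, S_k\cC)$-bimodules
\[
(S_k\cC)_{s_{k-1}} \odot {}_{s_{k-1}}(S_k\cC) \xrightarrow{\ \epsilon\ } S_k\cC \xrightarrow{\ \eta\ } {}_{\pi_k}\cC \odot \cC_{\pi_k}
\]
built from the evaluation and coevaluation maps of the dual pairs associated to the last degeneracy $s_{k-1}$ and the last-quotient functor $\pi_k$. Because $\THH$ is a shadow and preserves cofiber sequences in the bimodule slot, applying $\THH(S_k\cC; -)$ produces a cofiber sequence of spectra, and \cref{lem:base_change_euler} identifies the two outer maps with $\THH(s_{k-1})$ and $\THH(\pi_k)$. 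The resulting sequence $\THH(S_{k-1}\cC) \to \THH(S_k\cC) \to \THH(\cC)$ splits because $\pi_k$ has the section $\iota_k$, giving the inductive step directly---without ever needing to split the individual cofiber sequences in $\cC$. Your outline for passing from the levelwise equivalences to the simplicial and iterated statements is essentially the paper's as well, once this inductive step is in hand.
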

Note that, as a result, the zig-zag defining \eqref{eq:intro_dennis_trace} has \textit{two} spectral directions. One spectral direction comes from the enrichment of $\cC$. The other spectral direction comes from the iterated $S_\bullet$-construction and additivity.

The above two theorems are essential components in the construction of the Dennis trace. They are well known in many different contexts \cite{dundas_mccarthy, blumberg_mandell_unpublished,blumberg_mandell_published,dundas_goodwillie_mccarthy,oberwolfach_report}.  We provide new proofs in the context of spectral Waldhausen categories that highlight how these theorems are \textit{completely formal} consequences of the fact that
\begin{itemize}
	\item $\THH$ is a shadow on the bicategory of spectral categories and spectral bimodules, and that
	\item $\THH$ preserves cofiber sequences in the bimodule slot.
\end{itemize}

We define the Dennis trace for any ring or ring spectrum $A$ by applying the above to the spectral Waldhausen category $\tensor[^A]{\Perf}{}$ of perfect $A$-module spectra:
\begin{equation}\label{eq:intro_dennis_trace_2}  K(\End(A)) \coloneqq K(\End(\tensor[^A]{\Perf}{_0})) \arr \THH(\tensor[^A]{\Perf}{}). \end{equation}
To make this land in $\THH(A)$ we use one final core result, which is also a formal consequence of the shadow property.
\begin{thm}[Morita invariance of $\THH$, \cref{ex:classic_morita_equiv}]\label{intro_thh_morita}
	There is a natural equivalence
	\[
	\THH(\tensor[^A]{\Perf}{}) \xrightarrow{\simeq} \THH(A)
	\]
	defined by a bicategorical trace.
\end{thm}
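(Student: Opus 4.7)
The plan is to realize the equivalence as a bicategorical trace in the bicategory $\mathcal{B}$ of spectral categories and spectral bimodules, in which $\THH$ is the shadow. Writing $\cC = \tensor[^A]{\Perf}{}$ and viewing $A$ as a one-object spectral category, the inclusion $i\colon A \to \cC$ of the generator (sending the unique object to $A$) determines two representable bimodules: the $(A, \cC)$-bimodule $P$ with $P(c) = \cC(A, c)$, and the $(\cC, A)$-bimodule $Q$ with $Q(c) = \cC(c, A)$. I would define the asserted equivalence to be the bicategorical trace of the identity 2-cell on $P$, using the dual pair $(P, Q)$.

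The main structural task is to show that $(P, Q)$ is a Morita equivalence between $A$ and $\cC$ in $\mathcal{B}$, in the homotopical sense that both composites are equivalent to the identity bimodule. Composition yields the counit
\[
\varepsilon \colon Q \otimes_A P \longrightarrow 1_\cC, \qquad \varepsilon_{c,c'} \colon \cC(c, A) \otimes_A \cC(A, c') \to \cC(c, c'),
\]
which is a pointwise equivalence precisely because every object of $\cC$ is perfect, hence $A$-dualizable. In the opposite direction, a co-Yoneda / bar-construction argument identifies the coend $P \otimes_\cC Q \simeq \int^{c \in \cC} \cC(A, c) \otimes \cC(c, A)$ with $\cC(A, A) = A$, providing the unit $\eta \colon P \otimes_\cC Q \xrightarrow{\simeq} 1_A$.

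Granted these two equivalences, the cyclicity axiom for the shadow $\THH$ produces the natural chain
\[
\THH(\cC) \;\xleftarrow[\simeq]{\varepsilon_*}\; \THH(\cC; Q \otimes_A P) \;\xrightarrow[\simeq]{\mathrm{cyc}}\; \THH(A; P \otimes_\cC Q) \;\xrightarrow[\simeq]{\eta_*}\; \THH(A),
\]
which is by construction the bicategorical trace associated with $(P, Q)$. The main obstacle is the point-set construction of the bimodule tensor product $Q \otimes_A P$ and the verification that $\varepsilon$ is a pointwise equivalence: while the underlying algebraic fact (perfect equals dualizable) is standard, realizing it in the spectrally enriched setting requires a cofibrant model or an explicit bar-construction replacement so that the derived bimodule tensor product carries the correct homotopy type. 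This foundational work is supplied by the spectral-category machinery developed earlier in the paper; once it is available, Morita invariance of $\THH$ follows as a purely formal consequence of the shadow structure, exactly parallel to the proofs of $w_\bullet$-invariance and additivity already highlighted in the introduction.
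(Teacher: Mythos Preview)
Your approach is essentially the same as the paper's. Both construct the base-change bimodules $P=\tensor[_i]{\cC}{}$ and $Q=\tensor{\cC}{_i}$ from the inclusion $i\colon A\to\cC=\tensor[^A]{\Perf}{}$, show that $(P,Q)$ is a Morita equivalence, and then read off the equivalence on $\THH$ as the Euler characteristic of this dual pair (Proposition~\ref{lem:base_change_euler}). The only real difference is packaging: the paper appeals to the general criterion ``Dwyer--Kan embedding $+$ surjective up to thick closure $\Rightarrow$ Morita equivalence'' (cited from Blumberg--Mandell), whereas you verify the two composites directly---your co-Yoneda argument for $P\odot_{\cC}Q\simeq A$ is exactly the Dwyer--Kan embedding condition, and your dualizability argument for $Q\odot_A P\to 1_{\cC}$ is exactly ``surjective up to thick closure''. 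One minor slip: your coevaluation $\eta$ should point $1_A\to P\odot_{\cC}Q$ rather than the reverse, though since you claim it is an equivalence this is harmless.
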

Again, this is well known, but recognizing that the map underlying the equivalence is itself a bicategorical trace is clarifying and simplifies the proof.

The trace to $\THH(A;M)$ for an $(A, A)$-bimodule $M$ proceeds in the same way, using variants of the above theorems with coefficients. To define the lift to $\TR$ as in \cite{LM12} we perform the same manipulations but replace the endomorphisms $c_0 \to c_0$ in $\uncat\cC$ by length $r$ cycles of maps
\begin{equation}
	a_1 \xarr{f_1} a_2 \xarr{f_2} a_3 \xarr{f_3} \cdots \xarr{f_{r-1}} a_r \xarr{f_f} a_1
\end{equation}
for each $r \geq 1$, and include these into the zero skeleton of $\THH^{(r)}(\spcat\cC)$, a certain $r$-fold subdivision of $\THH$. The resulting traces agree by taking fixed points along the action of a cyclic group that rotates the endomorphisms, and therefore they assemble together into a map to $\TR$.

\subsection{Connection to the literature}
In the case of discrete or simplicial rings $A$, the trace of \cref{intro_dennis_trace_w_coeffs} is not new. The algebraic $K$-theory of parametrized endomorphisms $K(A;M)$ and its trace to $\THH(A;M)$ were first defined in \cite{dundas_mccarthy_stable} for exact categories, see also \cite{iwashita,dundas_goodwillie_mccarthy}. The lift to $\TR(A;M)$ was constructed for discrete rings (or exact categories) by Lindenstrauss and McCarthy in \cite{LM12}. Our contribution is mainly to re-tool the construction so that it works for any ring spectrum, or more generally any spectrally enriched Waldhausen category.

Our reworking uses the Hill-Hopkins-Ravenel equivariant norm of \cite{hhr} and the associated cyclotomic structure on $\THH$ from \cite{ABGHLM_TC,malkiewich_thh_dx,dmpsw}. Many of our arguments are adaptations and conceptualizations of work of Blumberg and Mandell \cite{blumberg_mandell_unpublished,blumberg_mandell_published,blumberg_mandell_cyclotomic}. To identify the image on $\pi_0$ we make heavy use of the main result of \cite{cp}.

In the setting of stable $\infty$-categories, the Dennis trace has a universal characterization \cite{bgt,bgt_endo}.
The point-set model of the Dennis trace for spectrally enriched Waldhausen categories serves as a concrete description of the trace for stable $\infty$-categories. (Note from \cite{bgt} that the two settings are essentially equivalent.) We expect that the generalized Dennis trace constructed here will similarly underlie the $\infty$-categorical Dennis trace with coefficients \cite{bgt_endo,oberwolfach_report}, and the universal characterization of the $\TR$-trace described in forthcoming work of Nikolaus \cite{nikolaus_future}.

Finally, on the subject of fixed-point theory, we note that \cref{intro_fuller} is closely related to the main result of \cite{iwashita}, which lifts the Reidemeister traces of the iterates $R(f^{\circ n})$ to $K_0(\bbZ[\pi_1 X]; \bbZ[\pi_1 X^f])$. They are related because on $\pi_0$, the Fuller trace $R(\Psi^n f)^{C_n}$ is equivalent to the Reidemeister traces $R(f^{\circ k})$ for all $k | n$, by \cite{mp1}. We anticipate that the formulation in \cref{intro_fuller} will be needed for future generalizations to families of endomorphisms, where the Fuller trace becomes a strictly stronger invariant than $R(f^{\circ n})$, and approaches that use discrete rings tend to break down.

\subsection{Organization}
We recall preliminaries on duality and traces in symmetric monoidal categories and bicategories, as well as on equivariant spectra, in \S\ref{sec:bicat_trace}.  \S\ref{sec:spec_wald_cat}--\ref{sec:spectral_bimodules} recall and extend necessary foundations to apply the trace in categories that are compatibly spectrally enriched and have a Waldhausen structure.  In \S\ref{sec:add} we revisit the additivity of $\THH$ using shadows in preparation for the definition of the Dennis trace in \S\ref{sec:dennis_trace}.  We extend this definition to an equivariant trace in \S\ref{sec:equiv_dennis_trace} and use it to define the $\TR$ trace in \S\ref{sec:trace_to_TR}.  Finally in
\S\ref{sec:char_poly_zeta} we describe applications to homotopical characteristic polynomials and periodic point invariants.

\subsection{Acknowledgments}

JC would like to thank Andrew Blumberg, Mike Mandell, and Randy McCarthy for helpful conversations about this paper, and for general wisdom about trace methods.
CM would like to thank Randy McCarthy for persistently telling him about the TR trace for years -- it's beginning to sink in a little.
KP was partially supported by NSF grant DMS-1810779 and the University of Kentucky Royster Research Professorship.
The authors thank Cornell University for hosting the initial meeting which led to this work.

\section{Preliminaries: duality, bicategories, and spectra}\label{sec:bicat_trace}
We begin with a slogan:
\begin{quotation}
  \emph{Every endomorphism of a finite mathematical object defines a class in
  $K$-theory, and the Dennis trace takes its trace.}
\end{quotation}
In this section, we recall many of the fundamental definitions in this slogan.   We define the trace of an endomorphism in a symmetric monoidal category, and then extend the formalism to the noncommutative setting of bicategories.
Ideas suggesting this approach can be found in \cite{nicas_traces}, but the
first successful formalization was the notion of ``shadowed bicategory'' in the
thesis of the fourth author \cite{ponto_thesis, ps:bicat}, later re-discovered
by Kaledin under the name ``trace theory'' \cite{kaledin_traces,kaledin_trace_theories_periodicity}.  In this
section we give a brief introduction to these ideas.  The reader is encouraged
to consult \cite{ponto_thesis, ps:bicat, ps:smc,
  dp}
for more details.

\subsection{Duality and trace in symmetric monoidal categories}
An object $X$ of a symmetric monoidal category $(\cC, \otimes, I)$ is {\bf dualizable} if there exists an object $X^*$, together with an evaluation map $\epsilon \colon X \otimes X^\ast \to I$ and a coevaluation map
$\eta \colon I \to X^\ast \otimes X$, such that both composites
\[
\begin{tikzcd}[row sep=tiny]
  X \ar{r}{\cong} & X \otimes I \ar{r}{\id \otimes \eta} & X \otimes X^\ast \otimes X \ar{r}{\epsilon \otimes \id} & I \otimes X \ar{r}{\cong} & X \\
  X^\ast \ar{r}{\cong} & I \otimes X^\ast \ar{r}{\eta \otimes \id} & X^\ast \otimes X \otimes X^\ast \ar{r}{\id \otimes \epsilon} & X^\ast \otimes I \ar{r}{\cong} & X^\ast
\end{tikzcd}
\]
are identity maps. The dual object $X^*$ is unique up to canonical isomorphism.

Given a dualizable object $X$,  the {\bf trace} of a map $f \colon X \to X$ is the composite
\begin{equation}\label{commutative_trace}
\begin{tikzcd}
\tr(f) \colon I \ar{r}{\eta} & X^\ast \otimes X \ar{r}{\id \otimes f} &  X^\ast \otimes X \ar{r}{\cong} & X \otimes X^\ast \ar{r}{\epsilon} & I.
\end{tikzcd}
\end{equation}
When $f$ is the identity morphism, we call $\tr(\id_{X})$ the {\bf Euler characteristic} of the object $X$ \cite{dp,lms,ps:smc}.

\begin{example} In classical contexts, the above definition becomes familiar.
  \begin{enumerate}
  \item In the category of vector spaces over a field $k$, the trace of an
    endomorphism $f \colon V \arr V$ of a finite dimensional vector space is the $k$-linear map $\tr(f) \colon k \arr k$ given by multiplication by the trace of a matrix representing $f$.
  \item In the stable homotopy category of spectra, the trace of the identity map on the suspension spectrum $\Sigma^{\infty}_{+} X$ of a finite CW complex $X$ is a map $\tr(\id_{\Sigma^{\infty}_{+} X}) \colon \bbS \rto \bbS$ whose degree is the Euler characteristic of $X$.
  \item More generally, if $f \colon X \arr X$ is a self-map of a finite CW complex, then the trace of the stable map $\Sigma^{\infty}_{+} f \colon \Sigma^{\infty}_{+} X \arr \Sigma^{\infty}_{+} X$ is
    the Lefschetz number $L(f)$ \cite{dp,dold:enr}.
  \end{enumerate}
\end{example}

\subsection{Bicategories and shadows} \label{sec:bicats}
If $A$ is a non-commutative ring then $A$-modules do not form a symmetric
monoidal category. Hence the trace as defined in \eqref{commutative_trace}
does not make sense.
To take the trace of an endomorphism $f\colon M \to M$ in this setting, one must circumvent the problem that
\[M \otimes N \ncong N \otimes M\] --- they are not even objects of the same
type. The Hattori--Stallings trace solves this issue in an ad-hoc way, by
modding out by a commutator ideal. The general solution to this issue first
appeared in \cite{ponto_thesis} (and was independently developed in work of
Kaledin on cyclic $K$-theory \cite{kaledin_traces}). The idea is to use
\emph{bicategories} to encode noncommutativity, and create a type
of wrapper $\sh{-}$, called a ``shadow,'' which removes just enough information
to give us commutativity when we need it.

\begin{defn}
A {\bf bicategory} $\mc{B}$ consists of objects, $A, B, \dotsc$, called $0$-cells, and categories $\mc{B}(A, B)$ for each pair of objects $A, B$. Objects in the category $\mc{B}(A, B)$ are called 1-cells and morphisms are called 2-cells. The bicategory is further equipped with horizontal composition functors
\begin{equation}\label{eq:odot_convention}
\odot \colon  \mc{B}(A, B) \times \mc{B}(B, C) \to \mc{B}(A, C),
\end{equation}
that are associative and have units $U_A \in \mc{B}(A, A)$, up to coherent isomorphism.
\end{defn}

In our context, the horizontal composition will substitute for the tensor
product; the following family of examples is used throughout this section as
motivation.

\begin{example} \label{ex:bicat_rings}
There is a bicategory with one 0-cell for each ring $A$. For each pair of rings $A$ and $B$, the category $\mc{B}(A,B)$ is the category of $(A,B)$-bimodules. The horizontal composition is the tensor product $\otimes_B$.
\end{example}

This bicategory serves as motivation for
 the bicategory of spectral categories,
bimodules, and homotopy classes of maps of bimodules, which we describe in
\S\ref{sec:spectral_bimodules}.  The true work of the paper requires the bicategory in \S\ref{sec:spectral_bimodules}.

In order to define the trace, extra structure is
required.

\begin{defn}[\cite{ponto_thesis}]\label{def:shadow}
  Let $\mc{B}$ be a bicategory.  A \textbf{shadow functor} for $\mc{B}$ consists
  of the following data:
  \begin{description}
  \item[a target category] $\mbf{T}$,
  \item[functors]   \[
      \sh{-}\colon  \mc{B}(C,C) \to \mbf{T}
    \]
    for each object $C$ of $\mc{B}$,
  \item[a natural isomorphism] \begin{equation}\label{eq:sh_iso} \theta\colon
      \sh{M\odot N}\xto{\cong} \sh{N\odot M}
    \end{equation}
    for $M\in \mc{B}(C, D)$ and $N\in \mc{B}(D, C).$
  \end{description}
  These must satisfy the condition that the following
  diagrams commute whenever they make sense:
  \begin{description}
  \item[cyclic associativity]
      \[\xymatrix{
          \sh{(M\odot N)\odot P} \ar[r]^\theta \ar[d]_{\sh{a}} &
          \sh{P \odot (M\odot N)} \ar[r]^{\sh{a}} &
          \sh{(P\odot M) \odot N}
          \\
          \sh{M\odot (N\odot P)} \ar[r]^\theta
          & \sh{(N\odot P)\odot M} \ar[r]^{\sh{a}}
          & \sh{N\odot (P\odot M)}\ar[u]_\theta
        }. \]
    \item[unitality]
      \[\xymatrix{
          \sh{M\odot U_C} \ar[r]^\theta \ar[dr]_{\sh{r}} &
          \sh{U_C\odot M} \ar[d]^{\sh{l}} \ar[r]^\theta &
          \sh{M\odot U_C} \ar[dl]^{\sh{r}}
          \\
          &\sh{M}}.\]

    \end{description}
  \end{defn}
  If $\sh{-}$ is a shadow functor on $\mc{B}$, then the composite
  \[\xymatrix{
      \sh{M\odot N} \ar[r]^\theta &
    \sh{N\odot M} \ar[r]^\theta &
    \sh{M\odot N}
   }\]
is the identity \cite[Prop.~4.3]{ps:bicat}. More generally, the circular product $\sh{M_1 \odot \dotsm \odot M_n}$ of any composable list of 1-cells $M_1, \dotsc, M_n$ is well-defined up to canonical isomorphism \cite[1.6]{mp2}.

\begin{example}
The 0th Hochschild homology group $\sh{M} = HH_0(A;M) \coloneqq M/(am - ma)$ defines a shadow on the bicategory of rings and bimodules. The isomorphism
  \[
  \theta\colon  \HH_0 (A, M \otimes_B N) \to \HH_0 (B, N \otimes_A M)
  \]
  is given by observing that both sides are the same quotient of $M \otimes N$.
\end{example}
If we modify the bicategory of \cref{ex:bicat_rings} by taking derived tensor products $\otimes^{\bbL}$ instead of ordinary ones, then the higher Hochschild homology $HH_*(A;M)$ is also a shadow. See \cref{main_homotopy_bicategory} for an analog of this using topological Hochschild homology.

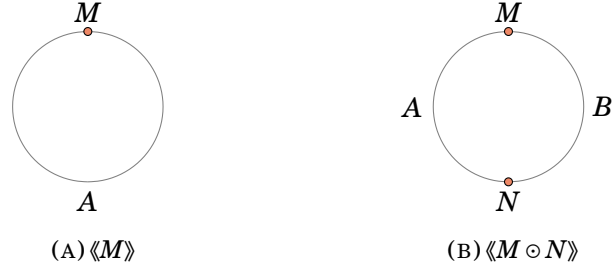
\begin{figure}[h]
     \centering
	\hspace{1in}
     \begin{subfigure}[b]{0.25\textwidth}
         \centering
         \begin{tikzpicture}
%circle
\draw [ domain=90:270 ,smooth,variable =\x, gray] plot ({cos \x},{sin \x}); 
\draw [ domain=270:450 ,smooth,variable =\x, gray] plot ({cos \x},{sin \x}); 

\node[below] at ({cos 270},{sin 270}) {$A$};

%node 
\node[circle,draw=black, fill=coc, inner sep=0pt,minimum size=3pt] at ({cos 90},{sin 90}) {};
\node[above] at ({cos 90},{sin 90}) {$M$};
\end{tikzpicture}
         \caption{$\sh{M}$}
         \label{fig:shadow_single}
     \end{subfigure}
     \hfill
     \begin{subfigure}[b]{0.25\textwidth}
         \centering
         \begin{tikzpicture}
%circle
\draw [ domain=90:270 ,smooth,variable =\x, gray] plot ({cos \x},{sin \x}); 
\draw [ domain=270:450 ,smooth,variable =\x, gray] plot ({cos \x},{sin \x}); 

\node[left] at ({cos 180},{sin 180}) {$A$};
\node[right] at ({cos 0},{sin 0}) {$B$};

%node 
\node[circle,draw=black, fill=coc, inner sep=0pt,minimum size=3pt] at ({cos 90},{sin 90}) {};
\node[above] at ({cos 90},{sin 90}) {$M$};
\node[circle,draw=black, fill=coc, inner sep=0pt,minimum size=3pt] at ({cos 270},{sin 270}) {};
\node[below] at ({cos 270},{sin 270}) {$N$};
\end{tikzpicture}
         \caption{$\sh{M\odot N}$}
         \label{fig:shadow_double}
     \end{subfigure}
	\hspace{1in}
        \caption{Graphical representations of shadows}
        \label{fig:pic_shadows}
\end{figure}

While we won't make any formal use of graphical reasoning or string diagram calculi, ``cartoon'' images of the shadow and later generalizations can be useful.
\cref{fig:pic_shadows} contains two examples of this.  We think of a 1-cell $M$ as a vertex with two edges labeled by the 0-cells which are the source and target of $M$.  Then the shadow of $M$ glues the free ends of these edges to each other, as in \cref{fig:shadow_single}.  The shadow of the horizontal composite of compatible 1-cells is displayed in \cref{fig:shadow_double}.

\subsection{Duality and trace}

With a shadow we can now define traces in bicategories.  We start by recalling the generalization of dualizability to bicategories.

\begin{defn}We say that a 1-cell $P\in \mc{B}(C,D)$ in a bicategory is \textbf{left dualizable} if there is a 1-cell $P^\ast\in \mc{B}(D,C)$, called its \textbf{left dual}, and coevaluation and evaluation 2-cells $\eta\colon U_D \to P^\ast \odot P$ and $\epsilon \colon P \odot P^\ast \to U_C$ satisfying the triangle identities. We say that $(P^\ast,P)$ is a \textbf{dual pair}, that $P^\ast$ is \textbf{right dualizable}, and that $P$ is its \textbf{right dual}.
\end{defn}

\begin{example}\hfill
\begin{enumerate}
\item
For rings $C$ and $D$, a $(C,D)$-bimodule $P$ is left dualizable if and only if it is finitely generated and projective as a left $C$-module.
\item The $2$-category of small categories, functors, and natural transformations
  is a bicategory. The functors and their compositions may either be written from right to left (function convention), or from left to right (bimodule convention, \eqref{eq:odot_convention}).

  Under the function convention, a functor $G\colon \mc{C} \to \mc{D}$ is left dualizable if and only if it is a left adjoint. Under the bimodule convention, $G$ is left dualizable if and only if it is a right adjoint.
\end{enumerate}
\end{example}

\begin{defn}[\cite{ponto_thesis}]\label{defn:trace}
  Let $\mc{B}$ be a bicategory with a shadow functor and let $(P^\ast,P)$ be a
  dual pair of 1-cells.  Let $M\in \mc{B}(C,C)$ and $N\in \mc{B}(D,D)$ be 1-cells.
The \textbf{trace} of a 2-cell $f\colon P\odot N\to M\odot P$ is the composite
\[
\resizebox{\textwidth}{!}{$
   \sh{N}\cong \sh{U_D \odot N}\xto{\sh{\eta\odot\id_N}}
  \sh{P^\ast \odot P \odot N} \xto{\sh{\id_{P^\ast}\odot f}}
  \sh{P^\ast \odot M \odot P} \xto{\theta}
  \sh{M \odot P \odot P^\ast} \xto{\sh{\id_M \odot \epsilon}}
    \sh{M \odot U_C} \cong \sh{M}.
$}
\]
The trace for a 2-cell $g\colon N\odot P^\ast \rightarrow P^\ast \odot M$ is defined similarly.
\end{defn}
See \cref{fig:picture_trace}.
\begin{figure}
\resizebox{\textwidth}{!}{\begin{tikzpicture}
\pgfmathsetmacro{\shift}{4}

%circle
\draw [ domain=90:270 ,smooth,variable =\x, gray] plot ({-\shift+ cos \x},{sin \x}); 
\draw [ domain=270:450 ,smooth,variable =\x, gray] plot ({-\shift+ cos \x},{sin \x}); 

\node[left] at ({-\shift+ cos 150},{sin 150}) {$A$};
\node[below] at ({-\shift+ cos 270},{sin 270}) {$A$};

%node 
\node[circle,draw=black, fill=cob, inner sep=0pt,minimum size=3pt] at ({-\shift+ cos 210},{sin 210}) {};
\node[below left] at ({-\shift+ cos 210},{sin 210}) {$Q$};

%circle
\draw [ domain=90:270 ,smooth,variable =\x, gray] plot ({cos \x},{sin \x}); 
\draw [ domain=270:450 ,smooth,variable =\x, gray] plot ({cos \x},{sin \x}); 

\node[left] at ({cos 150},{sin 150}) {$A$};
\node[right] at ({cos 30},{sin 30}) {$B$};

\node[below] at ({cos 270},{sin 270}) {$A$};

%node 
\node[circle,draw=black, fill=coc, inner sep=0pt,minimum size=3pt] at ({cos 90},{sin 90}) {};
\node[above] at ({cos 90},{sin 90}) {$M$};
\node[circle,draw=black, fill=coa, inner sep=0pt,minimum size=3pt] at ({cos -30},{sin -30}) {};
\node[below right] at ({cos -30},{sin -30}) {$N$};
\node[circle,draw=black, fill=cob, inner sep=0pt,minimum size=3pt] at ({cos 210},{sin 210}) {};
\node[below left] at ({cos 210},{sin 210}) {$Q$};

%circle
\draw [ domain=90:270 ,smooth,variable =\x, gray] plot ({\shift+cos \x},{sin \x}); 
\draw [ domain=270:450 ,smooth,variable =\x, gray] plot ({\shift+cos \x},{sin \x}); 

\node[left] at ({\shift+cos 150},{sin 150}) {$A$};
\node[right] at ({\shift+cos 30},{sin 30}) {$B$};

\node[below] at ({\shift+cos 270},{sin 270}) {$B$};

%node 
\node[circle,draw=black, fill=coc, inner sep=0pt,minimum size=3pt] at ({\shift+cos 90},{sin 90}) {};
\node[above] at ({\shift+cos 90},{sin 90}) {$M$};
\node[circle,draw=black, fill=cod, inner sep=0pt,minimum size=3pt] at ({\shift+cos -30},{sin -30}) {};
\node[below right] at ({\shift+cos -30},{sin -30}) {$P$};
\node[circle,draw=black, fill=coa, inner sep=0pt,minimum size=3pt] at ({\shift+cos 210},{sin 210}) {};
\node[below left] at ({\shift+cos 210},{sin 210}) {$N$};

%circle
\draw [ domain=90:270 ,smooth,variable =\x, gray] plot ({2*\shift+cos \x},{sin \x}); 
\draw [ domain=270:450 ,smooth,variable =\x, gray] plot ({2*\shift+cos \x},{sin \x}); 

\node[right] at ({2*\shift+cos 30},{sin 30}) {$B$};

\node[below] at ({2*\shift+cos 270},{sin 270}) {$B$};

%node 
\node[circle,draw=black, fill=cod, inner sep=0pt,minimum size=3pt] at ({2*\shift+cos -30},{sin -30}) {};
\node[below right] at ({2*\shift+cos -30},{sin -30}) {$P$};

\draw[->, thick] (-2.5,0)-- (-1.5,0) node[midway, above] {$\eta\odot \id$};
\draw[->, thick] (1.5,0)-- (2.5,0) node[midway, above] {$\id\odot f$};

\draw[->, thick] (5.5,0)-- (6.5,0) node[midway, above] {$\id \odot \epsilon$};
\end{tikzpicture}}
\caption{The trace}\label{fig:picture_trace}
\end{figure}

As explained in \cite{ponto_thesis}, there is a conceptual re-interpretation of the Hattori--Stallings trace of an $A$-module endomorphism $f\colon P \to P$ as a bicategorical trace
\[
\begin{tikzcd}[row sep=small]
\bbZ \cong \HH_0(\bbZ) \ar{r}{\eta} & \HH_0(\bbZ;P^\ast \otimes_A P) \ar{r}{f} & \HH_0(\bbZ;P^\ast \otimes_A P) \ar{d}{\cong}&A/[A,A].
\\
&& \HH_0(A;P \otimes_\bbZ P^\ast) \ar{r}{\epsilon} & \HH_0(A) \ar{u}{\cong}
\end{tikzcd}
\]

This formalism is precisely what we need to generalize the classical link between the Dennis trace and the Hattori--Stallings trace so that it also applies to ring spectra.

\begin{defn}\label{defn:euler_characteristic}
  If $P\in \mc{B}(C,D)$ and $P$ is left dualizable with left dual $P^\ast$, the \textbf{Euler characteristic} of $P$ is the trace of its identity 2-cell,
  \[\chi(P)\colon \sh{U_D}\to \sh{U_C} \qquad \chi(P) \coloneqq \tr(\id_P).\]
  (Here, $M = U_C$ and $N = U_D$.)  Similarly, the Euler characteristic
  $\chi(P^\ast)$ is the trace of the identity 2-cell of $P^\ast$. A check of the
  definitions shows that $\chi(P)=\chi(P^\ast)$.
\end{defn}

As for symmetric monoidal categories, there is also a notion of invertible 1-cell that is stronger than being dualizable. It gives us a natural notion of equivalence between the 0-cells.

\begin{defn}[{\cite{benabou}\cite[Def. 4.1]{cp}}]\label{morita_equivalence}
A pair of 1-cells $P\in \mc{B}(C,D)$ and $P^\ast\in \mc{B}(D,C)$ forms a {\bf Morita equivalence} between $C$ and $D$ if $(P^\ast,P)$ is a dual pair whose coevaluation and evaluation maps are isomorphisms.
\end{defn}

\begin{example}\label{ex:morita_rings}
Morita equivalence in the bicategory of rings and bimodules is the usual notion of Morita equivalence between rings.
\end{example}

When $(P^\ast,P)$ is a Morita equivalence, the Euler characteristic $\chi(P)$ is
an isomorphism since it is a composite of isomorphisms. We will make significant use of this observation---it is an essential part of our approach to  \cref{intro_thh_morita}.

We finish this section with a definition which will be used often in this paper:
\begin{defn} \label{def:twisting} Let $\cC$ be a category, such as the category of rings and ring homomorphisms.  A {\bf pre-twisting}
  of an object $C\in \cC$ is a pair of morphisms $F\colon A \to C$ and $G\colon B \to C$;
  this is denoted $\lMr{F}{A\slash C\reflectbox{\slash}B}{G}$. When clear from context we
  often omit $A$ and $B$ from the notation.  A {\bf morphism of pre-twistings}
  $(H,I,J)\colon\lMr{F}{A\slash C\reflectbox{\slash} B}{G} \to \lMr{F'}{A'\slash C'\reflectbox{\slash}
    B}{G'}$ is a commutative diagram
  \[\xymatrix{A  \ar[r]^{F} \ar[d]_{H} & C \ar[d]^J & B \ar[l]_{G} \ar[d]^I \\
      A' \ar[r]^{F'} & C' & B' \ar[l]_{G'}
    }
  \]
  A pre-twisting is a {\bf twisting} if $A = B$; we denote a twisting by
  $\lMr{F}{A/C}{G}$, and often omit $A$ from the notation.  If
  $\lMr{F}{A/C}{G} \to \lMr{F'}{A'/C'}{G'}$ is a morphism of pre-twistings
  between twistings then it is a {\bf morphism of twistings} if $H = I$; this is
  denoted $(I,J)$.
\end{defn}

Many bimodules of interest arise from twistings in the following way.

\begin{example}
  Let $R$ be a ring, and let $f\colon S \to R$ and $g\colon T \to R$ be a pre-twisting of
  $R$.  Then $\lMr{f}{S\slash R\reflectbox{\slash} T}{g}$ gives $R$ the structure of an
  $(S,T)$-bimodule, with $S$ acting on the left through $f$ and $T$ acting on
  the right through $g$.  Morphisms of twistings produce morphisms of bimodules.
  In a similar manner, a twisting $\lMr{f}{S/R}{g}$ gives $R$ an $S$-bimodule
  structure.
\end{example}

We use this perspective in future sections (e.g. \cref{def:base_change}) to produce examples of bimodules
over spectral categories.

\subsection{Review of orthogonal $G$-spectra}\label{sec:equivariant_review}

We will also recall a bit of the theory of equivariant spectra; more details can be found in \cite{mandell_may,hhr,clmpz-specWaldCat}.

For simplicity, let $G$ be a finite abelian group, such as $C_r = \bbZ/r\bbZ$. An {\bf orthogonal $G$-spectrum} is an orthogonal spectrum with an action of $G$. By the point-set change of universe functor, this is the same thing as an orthogonal spectrum indexed on the finite-dimensional representations of $G$ \cite[V.1.5]{mandell_may}. An {\bf equivalence of $G$-spectra} is a map that induces an isomorphism on the homotopy groups that are defined using all of the $G$-representations; these are the weak equivalences in a model structure on orthogonal $G$-spectra from \cite[III.4.2]{mandell_may}, and when we say ``cofibrant'' or ``fibrant'' we are always referring to the notions coming from this model structure.

There is a categorical fixed-points functor $(-)^H$ from orthogonal $G$-spectra to orthogonal $G/H$-spectra. It is right Quillen and its right-derived functor is the \textbf{genuine fixed points} functor. There is also a point-set geometric fixed points functor $\Phi^H$ from orthogonal $G$-spectra to orthogonal $G/H$-spectra \cite[V.4.1]{mandell_may}. It is not a left adjoint, but it is left-deformable and we refer to its left-derived functor $\bbL\Phi^{H}$ as the \textbf{(left-derived) geometric fixed points} functor. On suspension spectra we have canonical isomorphisms $\Phi^H \Sigma^\infty X \cong \Sigma^\infty X^H$. There is a natural transformation $\kappa\colon X^H \to \Phi^H X$ for $G$-spectra $X$ called the \textbf{restriction map}. Making $X$ cofibrant and fibrant gives a corresponding restriction map from the genuine fixed points to the geometric fixed points.

We recall a common tool for checking that a map of $G$-spectra is an equivalence.

\begin{prop}\cite[XVI.6.4]{alaska}\label{equivs_measured_on_geometric_fp}
	A map of $G$-spectra $X \to Y$ is an equivalence if and only if for every $H \leq G$ the induced map on derived geometric fixed points $\bbL\Phi^{H}X \to \bbL\Phi^{H}Y$ is an equivalence of spectra.
\end{prop}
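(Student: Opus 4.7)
The proof plan is to separate the two implications. The forward direction is immediate: the derived geometric fixed points functor $\bbL\Phi^H$, being a left-derived functor, preserves weak equivalences of orthogonal $G$-spectra by construction, so if $X \to Y$ is a $G$-equivalence then $\bbL\Phi^H X \to \bbL\Phi^H Y$ is an equivalence of spectra for every $H \leq G$.

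For the backward direction I would argue by induction on $|G|$. The base case $G = 1$ is trivial since $\bbL\Phi^{e}$ is (naturally equivalent to) the identity functor. For the inductive step, assume the statement for all proper subgroups. Given $f\colon X \to Y$ such that $\bbL\Phi^H f$ is an equivalence for every $H \leq G$, I would first restrict to any proper subgroup $K < G$. Geometric fixed points commute with restriction along subgroup inclusions, so the hypothesis passes to $\mathrm{res}^G_K f$, and by induction $\mathrm{res}^G_K f$ is a $K$-equivalence. In particular, $\pi_*^H(f)$ is an isomorphism for every proper $H < G$. It remains to handle $H = G$.

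For this final case I would invoke the isotropy separation cofiber sequence
\[
E\mathcal{P}_+ \wedge X \arr X \arr \widetilde{E\mathcal{P}} \wedge X,
\]
where $\mathcal{P}$ is the family of proper subgroups of $G$. The key two inputs are: (i) $E\mathcal{P}_+$ is built cellularly from orbits $G/H_+$ with $H \in \mathcal{P}$, so $\pi^G_*(E\mathcal{P}_+ \wedge -)$ is computable from the restrictions to proper subgroups, and therefore $E\mathcal{P}_+ \wedge f$ is a $G$-equivalence by the previous paragraph; and (ii) for cofibrant $X$, the canonical restriction map induces a natural equivalence $(\widetilde{E\mathcal{P}} \wedge X)^G \simeq \bbL\Phi^G X$ (see \cite[V.4.17]{mandell_may}), so the hypothesis for $H = G$ forces $(\widetilde{E\mathcal{P}} \wedge f)^G$ to be an equivalence. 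Applying $(-)^G$ to the cofiber sequence and chasing the long exact sequence in homotopy then yields that $\pi^G_*(f)$ is an isomorphism, completing the induction.

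The main obstacle is the step (ii) above: rigorously identifying the $G$-fixed points of $\widetilde{E\mathcal{P}} \wedge X$ with $\bbL\Phi^G X$, which requires that $X$ be cofibrant (so that smashing with $\widetilde{E\mathcal{P}}$ is homotopically meaningful) and, if one works with the point-set $\Phi^G$, an argument that $\widetilde{E\mathcal{P}} \wedge X$ is in the subcategory where the restriction $\kappa\colon (-)^G \to \Phi^G$ is an equivalence. In practice this is handled by choosing a cofibrant-fibrant replacement of $X$ at the outset, which is harmless since both sides of the asserted equivalence only see the derived functors.
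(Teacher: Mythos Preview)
The paper does not supply its own proof of this proposition; it is stated as a preliminary fact with a citation to \cite[XVI.6.4]{alaska}. Your argument via isotropy separation is precisely the standard proof found in that reference and is correct. One small clarification: in the final step you could avoid mentioning $(-)^G$ and simply apply $\pi_*^G$ directly to the cofiber sequence of $G$-spectra, since you have already shown $E\mathcal{P}_+ \wedge f$ is a full $G$-equivalence (not merely an equivalence on $G$-fixed points) and the identification $\pi_*^G(\widetilde{E\mathcal{P}} \wedge X) \cong \pi_*(\bbL\Phi^G X)$ is exactly what you need.
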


As discussed in \cite[4.1]{dmpsw}, the geometric fixed point functor $\Phi^H$ and its left-derived functor $\bbL\Phi^{H}$ also commute with the forgetful functor from $G$-spectra to $H$-spectra. As a result, to determine whether a map of $G$-spectra is an equivalence, it suffices to forget down to the $H$-action and measure its geometric $H$-fixed points, for each $H \leq G$.

If $X$ is an orthogonal spectrum, then the $r$-fold smash product $X^{\sma r}$ admits a canonical $C_r$-action by rotating the factors.  By the above discussion, we may then consider $X^{\sma r}$ to be an orthogonal $C_r$-spectrum. This is the {\bf Hill--Hopkins--Ravenel norm} of $X$. The following fundamental property of the norm gives us control over its equivariant homotopy type.

\begin{prop}\cite{hhr}\label{norm_diagonal}
	There is a natural diagonal map of orthogonal spectra
	\[ D_r\colon X \arr \Phi^{C_r} X^{\sma r}. \]
	If $X$ is cofibrant, then $X^{\sma r}$ is cofibrant and $D_r$ is an isomorphism on the point-set level. We therefore get a natural equivalence for cofibrant $X$,
	\[ X \simeq \bbL \Phi^{C_r} X^{\sma r}. \]
\end{prop}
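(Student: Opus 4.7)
The plan is to construct $D_r$ explicitly, verify it is a point-set isomorphism on cofibrant spectra, and then deduce the derived equivalence. For the construction, recall that for any based space $A$ the set-theoretic diagonal $\Delta\colon A \to A^{\wedge r}$ factors through the $C_r$-fixed subspace of $A^{\wedge r}$, where $C_r$ acts by cyclic permutation. I would globalize this to orthogonal spectra by working in a point-set model of $\Phi^{C_r}$ along the lines of \cite{mandell_may,hhr}. Concretely, for each inner product space $U$, the spectrum $X^{\wedge r}$ evaluated at the regular representation tensored with $U$, namely $(X^{\wedge r})(\rho_r \otimes U)$, is naturally isomorphic to $X(U)^{\wedge r}$ with its $C_r$-permutation action. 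Its $C_r$-fixed subspace receives the iterated diagonal image of $X(U)$, and these maps assemble, through the colimits over representations not containing a trivial summand that define $\Phi^{C_r}$, into a natural transformation $D_r\colon X \to \Phi^{C_r}(X^{\wedge r})$.

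To see that $X^{\wedge r}$ is cofibrant when $X$ is cofibrant, I would iterate the pushout--product axiom $r-1$ times; this is standard in any monoidal model category in which smash is a left Quillen bifunctor, and is developed for orthogonal $G$-spectra in \cite{hhr}. For the point-set isomorphism claim, I would proceed by cell induction. It suffices to check the claim on the generating cofibrations, which are built from free spectra $F_V S^n$, and then propagate via retracts, coproducts, pushouts along cofibrations, and transfinite composition. The base case reduces to a computation on free spectra: $(F_V S^n)^{\wedge r} \cong F_{\rho_r \otimes V}(S^{nr})$, equipped with its cyclic $C_r$-action, and the explicit formula for geometric fixed points on free orthogonal $C_r$-spectra identifies $\Phi^{C_r}$ of this spectrum with $F_V S^n$ itself, in a way that matches the effect of $D_r$.

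For the derived equivalence, since $\Phi^{C_r}$ is left-deformable (it preserves weak equivalences between cofibrant orthogonal $C_r$-spectra), its left-derived functor $\bbL\Phi^{C_r}$ is computed by applying the point-set functor $\Phi^{C_r}$ to any cofibrant replacement. For cofibrant $X$, the previous step shows that $X^{\wedge r}$ is itself cofibrant, so $\bbL\Phi^{C_r}(X^{\wedge r}) \simeq \Phi^{C_r}(X^{\wedge r})$, and the point-set isomorphism $D_r$ produces the stated equivalence $X \simeq \bbL\Phi^{C_r}(X^{\wedge r})$.

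The main obstacle is the cell-induction verification in the second step: one must identify $\Phi^{C_r}$ applied to the $r$-fold smash power of a free orthogonal spectrum with a free spectrum, in such a way that $D_r$ reduces to the obvious identity on generators. This demands a careful choice of point-set model for the geometric fixed points in which the formulas interact cleanly with the regular representation, with the free-spectrum construction, and with restriction along subgroups, all handled via the indexed-smash-product book-keeping developed in \cite{hhr}.
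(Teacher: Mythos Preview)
The paper does not supply its own proof of this proposition; it is stated with the citation \cite{hhr} and used as a black box. Your sketch is essentially the argument of Hill--Hopkins--Ravenel: construct $D_r$ from the space-level diagonal into the fixed points of the permutation action, verify the isomorphism on generating free spectra, and propagate by cell induction.

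One point deserves more care than you indicate. You describe the main obstacle as the computation on free spectra, but the genuinely delicate part of the cell induction is the inductive step, not the base case. The functor $(-)^{\sma r}$ does not preserve pushouts: if $Y$ is obtained from $X$ by attaching a cell along a cofibration $A \to B$, then $Y^{\sma r}$ is obtained from $X^{\sma r}$ by a sequence of pushouts coming from the equivariant filtration of the $r$-fold pushout--product. The associated graded of this filtration consists of pieces induced up from proper subgroups of $C_r$ (which are annihilated by $\Phi^{C_r}$) together with one top piece that is the $r$-fold smash power of the cofiber. This is what makes the induction go through, and it is the substance of the argument in \cite{hhr}; your sketch implicitly assumes this step is routine.
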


In \cite{ABGHLM_TC,malkiewich_thh_dx} this result is used to build a cyclotomic structure on the topological Hochschild homology of an orthogonal ring spectrum, that by \cite{dmpsw} is equivalent to the cyclotomic structure of B\" okstedt \cite{bokstedt_thh}. In this paper we use \cref{norm_diagonal} in much the same way to control the equivariant homotopy type of the $r$-fold topological Hochschild homology spectrum $\THH^{(r)}$ (see \cref{ex:thh_n}).

\begin{rmk}\label{norm_on_suspension_spectra}
	It is especially important for us to note that on suspension spectra, the HHR norm agrees with the more obvious map
	\[ \Sigma^\infty X \cong \Sigma^\infty (X^{\sma r})^{C_r} \cong \Phi^{C_r} \Sigma^\infty X^{\sma r} \cong \Phi^{C_r} (\Sigma^\infty X)^{\sma r}. \]
	This can be checked by tracing through the definitions, but it is much easier to conclude it formally by noting that any point-set automorphism of the functor $\Sigma^\infty X$ has to be the identity when $X = S^0$ and therefore has to be the identity for all $X$. The rigidity theorem for geometric fixed points from \cite[1.2]{malkiewich_thh_dx} is a generalization of this observation.
\end{rmk}

\section{Spectral categories and spectral Waldhausen categories}\label{sec:spec_wald_cat}

In this section we establish our conventions on spectral categories, define the notion of a spectral Waldhausen category, and set up notation.  In later sections, spectral
categories will play the role that rings played in the primary example of
\S\ref{sec:bicat_trace}.

\subsection{Spectral categories}

\begin{defn}
  A {\bf spectral category} $\mc C$ is a category enriched in orthogonal
  spectra.  In other words, for every ordered pair of objects $(a, b)$ there is
  a mapping spectrum $\mc C(a,b)$, a unit map $\bbS \arr \mc C(a,a)$ from the
  sphere spectrum for every object $a$, and multiplication maps
  \[ \mc{C}(a,b) \sma \mc{C}(b,c) \arr \mc{C}(a,c) \] that are strictly
  associative and unital.  A spectral category is {\bf pointwise cofibrant} if
  all mapping spectra are cofibrant in the stable model structure on orthogonal spectra \cite[\S 9]{mandell_may_shipley_schwede}.

  A {\bf functor of spectral categories} $F\colon \mc C \arr \mc D$ consists of
  a map on the object sets and maps of spectra
  $F\colon \mc C(a,b) \arr \mc D(Fa,Fb)$ that agree with the multiplications and
  units.  Such a functor is called a {\bf Dwyer--Kan embedding} if each of these
  maps is an equivalence \cite[5.1]{blumberg_mandell_published}.

  Throughout, we assume that spectral categories are small, meaning that they
  have a set of objects.
\end{defn}

\begin{rmk}
  Our convention that $\mc C(a,b)$ is an orthogonal spectrum imposes no essential restriction. Any category enriched in symmetric or EKMM spectra can be turned into an orthogonal spectral category using the symmetric monoidal Quillen equivalences $(\bbP,\bbU)$ and $(\bbN,\bbN^\#)$ from \cite{mandell_may_shipley_schwede} and \cite{mandell_may}, respectively.
\end{rmk}

\begin{example}\nopagebreak\hfill
  \begin{enumerate}
  \item Every (orthogonal) ring spectrum $A$ is a spectral category with one object.

  \item If $\uncat\cC$ is a pointed category, then there is a spectral category $\Sigma^\infty \uncat\cC$ with the same objects as $\uncat\cC$, mapping spectra given by the suspension spectra $\Sigma^\infty \uncat\cC(a,b)$, and composition arising from $\uncat\cC$.
  \end{enumerate}
\end{example}

\begin{defn}
  A \textbf{base category} of a spectral category $\cC$ is a pair
  $(\uncat \cC, F \colon \Sigma^\infty \uncat\cC \to \cC)$ where $\uncat\cC$ is an
  pointed category and $F$ is a spectral functor that is the identity on object sets.  When the functor is clear from context we omit it
  from the notation.
\end{defn}

We can form such a base category $\uncat\cC$ by restricting each mapping
spectrum to level zero and forgetting the topology.  However,  there are also examples, such as \cref{ex:fun_cat},
which do not arise in this way.

\begin{defn}\label{modules_convention}
  Let $A$ be an orthogonal ring spectrum. The {\bf category of $A$-modules}
  $\tensor[^A]{\Mod}{}$ is a spectral category whose objects are the cofibrant
  module spectra over the ring spectrum $A$. The mapping spectra are the
  right-derived mapping spectra.  When $A$ is clear from context we omit it from
  the notation.
\end{defn}

Perfect modules also form a spectral category.
\begin{example}\label{ex:perfect_modules}

  For a ring spectrum $A$, the category of {\bf perfect $A$-modules}
  $\tensor[^A]{\Perf}{}$ is the full subcategory of $\tensor[^A]{\Mod}{}$
  spanned by the modules that are retracts in the homotopy category of finite
  cell $A$-modules. When $A$ is understood, we call this $\Perf$. There is a functor of spectral categories
  $A \to \tensor[^A]{\Perf}{}$ taking $A$ to $A$ equipped with the
  left-multiplication action.
\end{example}

In both of these cases, we make the mapping spectra right-derived by passing through the category of EKMM spectra, using the symmetric monoidal Quillen adjunction $(\bbN, \bbN^\#)$ from \cite[I.1.1]{mandell_may}. A more explicit treatment appears in \cite[\swcref{Section 3.2}{\cref{swc-sec:spec_mod}}]{clmpz-specWaldCat}.

Another common example of a spectral category is a functor category.

\begin{example} \label{ex:fun_cat}\label{thm:moore_end}
  Let $I$ be a small category, and let $\cC$ be a spectral category with a
  base category $\uncat\cC$. Write $\Fun(I,\uncat\cC)$ for the {\bf category of
  functors} (and natural transformations) $I \to \uncat\cC$. The category
  $\Fun(I, \uncat \cC)$ is a base category of a spectral
  category $\Fun(I, \spcat\cC)$, whose mapping spectra are right-derived from the equalizer
  \[
  \operatorname{eq}\Big( \prod_{i_0 \in \ob I} \spcat{\cC}(\phi(i_0),\gamma(i_0))
    \rightrightarrows \prod_{i_0 \arr i_1}
    \spcat{\cC}(\phi(i_0),\gamma(i_1))\Big).
  \]
  To be more precise, the spectral category $\spcat{\Fun(I,\cC)}$ is defined using the Moore end construction of \cite[2.4]{mcclure_smith} and \cite[2.3]{blumberg_mandell_unpublished}. An explicit and detailed treatment of this construction for spectra appears in \cite[\swcref{Section 4}{\cref{swc-ex:fun_cat}}]{clmpz-specWaldCat}.
\end{example}

Many of our techniques will require the use of
pointwise cofibrant spectral categories. Spectral categories can
always be replaced with equivalent pointwise cofibrant spectral categories using the model structure from \cite[6.1,~6.3]{schwede_shipley}.

\begin{thm} \label{thm:cofib_repl_spcat}
	There is a pointwise cofibrant replacement functor $Q$ and a pointwise fibrant replacement functor $R$ on spectral categories. In particular,
	\[Q\colon \mathbf{SpCat} \to \mathbf{SpCat}\]
	is a functor equipped with a natural transformation $q\colon Q \Rightarrow \id_{\mathbf{SpCat}}$ such that $q_{\cC}$ is a pointwise equivalence for every spectral category $\cC$.
\end{thm}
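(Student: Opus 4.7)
The plan is to deduce the existence of $Q$ and $R$ by invoking the model structure on $\mathbf{SpCat}$ due to Schwede and Shipley \cite[6.1, 6.3]{schwede_shipley} and applying Quillen's small object argument. First I would recall the relevant features of their model structure: $\mathbf{SpCat}$ admits a cofibrantly generated model structure whose weak equivalences are the Dwyer--Kan equivalences and whose fibrations are those spectral functors $F\colon \cC \to \cD$ that are levelwise fibrations on each mapping spectrum together with an isofibration condition on homotopy categories. The generating (acyclic) cofibrations are built from the generating (acyclic) cofibrations of the stable model structure on orthogonal spectra by freely adjoining morphisms between pairs of named objects.

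Next I would extract the two key consequences of the explicit form of the generating cofibrations that make the argument go through: every cofibrant spectral category is pointwise cofibrant (each mapping spectrum is cofibrant in orthogonal spectra), and every trivial fibration is, in particular, a pointwise trivial fibration of mapping spectra, hence a pointwise weak equivalence. Both statements follow by induction over the transfinite cell attachments: attaching a cell along one of the generating cofibrations produces a pointwise cofibration of mapping spectra, and the lifting-property characterization of trivial fibrations then forces pointwise triviality.

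With these preparations in hand, I would apply the small object argument to the generating cofibrations to obtain, functorially in $\cC$, a factorization of $\varnothing \to \cC$ into a cofibration $\varnothing \to Q\cC$ followed by a trivial fibration $q_\cC\colon Q\cC \to \cC$. Functoriality of the small object argument delivers the functor $Q\colon \mathbf{SpCat} \to \mathbf{SpCat}$ and the natural transformation $q\colon Q \Rightarrow \id_{\mathbf{SpCat}}$, and the paragraph above guarantees that $Q\cC$ is pointwise cofibrant and $q_\cC$ is a pointwise equivalence. The functorial pointwise fibrant replacement $R$ is produced analogously by running the small object argument on the set of generating acyclic cofibrations and using that every acyclic cofibration in this model structure is, by inspection of the generators, a pointwise acyclic cofibration.

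The main obstacle is the verification that cofibrancy and trivial fibrancy in the Schwede--Shipley model structure really do imply their pointwise analogues, since the model-categorical definitions involve additional conditions on the level of homotopy categories. This is a careful but routine cellular induction over the generating cofibrations, combined with the observation that attaching morphisms freely between specified objects only modifies the relevant mapping spectrum by a pushout along a cofibration of orthogonal spectra. Everything else is formal consequence of the small object argument.
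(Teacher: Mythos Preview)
Your proposal is correct and matches the paper's approach: the paper gives no proof beyond the preceding sentence citing the Schwede--Shipley model structure \cite[6.1,~6.3]{schwede_shipley}, and your expansion via the functorial factorizations supplied by the small object argument is precisely how one substantiates that citation. One minor imprecision worth flagging: freely adjoining a morphism between two specified objects does \emph{not} modify only the single mapping spectrum between them---composition propagates the new cell throughout the category---so the cellular induction showing that cofibrant spectral categories are pointwise cofibrant is more delicate than your sketch suggests; the result is nonetheless true and is established in the cited reference.
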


\subsection{Spectral Waldhausen categories}\label{ssec:sp_wald}

We now extend the definition of Waldhausen categories to spectral categories. Recall that a Waldhausen category $\uncat\cC$ is a category with cofibrations and weak equivalences satisfying the axioms in \cite[\S 1.2]{1126}.

\begin{defn}\label{def:spectrally_enriched_waldhausen_category}
  A \textbf{spectral Waldhausen category} is a spectral category $\cC$
  together with a base category $\uncat\cC$ which is equipped with a Waldhausen category structure. This data is subject to the following three conditions:
\begin{enumerate}
	\item\label{def:spectrally_enriched_waldhausen_category_zero} The zero object of $\uncat{\cC}$ is also a zero object for $\spcat{\cC}$.
	\item\label{def:spectrally_enriched_waldhausen_category_we} Every weak equivalence $c \arr c'$ in $\uncat{\cC}$ induces stable equivalences
	\[ \spcat{\cC}(c',d) \overset\sim\arr \spcat{\cC}(c,d), \qquad \spcat{\cC}(d,c) \overset\sim\arr \spcat{\cC}(d,c'). \]
	\item\label{def:spectrally_enriched_waldhausen_category_pushout} For every pushout square in $\uncat{\cC}$ along a cofibration
	\[ \xymatrix @R=1.5em{
		a \, \ar[d] \ar@{^(->}[r] & b \ar[d] \\
		c \,\ar@{^(->}[r] & d
	} \]
	and object $e$, the resulting two squares of spectra
	\[ \xymatrix @R=1.5em{
		\spcat{\cC}(a,e) \ar@{<-}[d] \ar@{<-}[r] & \spcat{\cC}(b,e) \ar@{<-}[d] \\
		\spcat{\cC}(c,e) \ar@{<-}[r] & \spcat{\cC}(d,e)
	}
	\qquad
	\xymatrix @R=1.5em{
		\spcat{\cC}(e,a) \ar[d] \ar[r] & \spcat{\cC}(e,b) \ar[d] \\
		\spcat{\cC}(e,c) \ar[r] & \spcat{\cC}(e,d)
	} \]
	are homotopy pushout squares.
      \end{enumerate}

      A \textbf{functor of spectral Waldhausen categories}
      $F\colon(\spcat{\cC},\uncat{\cC}) \arr (\spcat{\cD},\uncat{\cD})$ is an exact functor
      $\uncat{F}\colon \uncat{\cC} \arr \uncat{\cD}$ and a spectral functor
      $\spcat{F}\colon \spcat{\cC} \arr \spcat{\cD}$ such that the diagram
      \[\xymatrix @R=1.5em{
          \Sigma^\infty \uncat{\cC} \ar[r]^{\Sigma^\infty \uncat{F}} \ar[d] & \Sigma^\infty \uncat{\cD}
          \ar[d] \\
          \spcat{\cC} \ar[r]^{\spcat{F}}& \spcat{\cD}
      }\]
      commutes.
      When it is clear from context, we omit $\uncat\cC$ from the notation and refer simply to the spectral Waldhausen category $\cC$.
    \end{defn}

\begin{example}
	\label{ex:Perf_as_spectral_wald_cat}
	The categories $\tensor[^A]{\Perf}{}$ of perfect $A$-modules and
        $\tensor[^A]{\Mod}{}$ of all $A$-modules are both spectral Waldhausen
        categories.
\end{example}

\begin{example} \label{ex:BM} If $\uncat{\cC}$ is a simplicially enriched
	Waldhausen category in the sense of \cite{blumberg_mandell_unpublished} then
	the spectral enrichment $\cC^\Gamma$ from
	\cite[2.2.1]{blumberg_mandell_unpublished} is compatible with the Waldhausen
	structure in our sense. The same is true for the non-connective enrichment
	$\cC^{\mc S}$ from \cite[2.2.5]{blumberg_mandell_unpublished} if $\uncat{\cC}$
	is enhanced simplicially enriched.
\end{example}

\begin{prop}[{\cite[\swcref{Theorem 4.1}{\cref{swc-thm:moore_end}}]{clmpz-specWaldCat}}]
	\label{prop:fun_cat_wald}
	The category of functors construction from \cref{thm:moore_end} respects Waldhausen structures and  defines a functor
	\[\Fun\colon \mathbf{Cat}^\op \times \mathbf{SpWaldCat} \to \mathbf{SpWaldCat},\]
	by giving $\Fun(I, \uncat\cC)$ the levelwise Waldhausen structure.
\end{prop}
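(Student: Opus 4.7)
The plan is to equip $\Fun(I,\uncat\cC)$ with the levelwise Waldhausen structure: a natural transformation is declared to be a cofibration (respectively weak equivalence) iff its value at each $i \in \ob I$ is one in $\uncat\cC$. First I would verify that this is a Waldhausen category in the sense of \cite{1126}. The constant zero functor is a zero object; pushouts along cofibrations exist and are computed objectwise; the gluing lemma, the $2$-out-of-$3$ axiom, and the remaining Waldhausen axioms all follow from the corresponding properties of $\uncat\cC$ applied pointwise in $I$. This step is essentially bookkeeping.

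Next I would verify the three compatibility conditions of \cref{def:spectrally_enriched_waldhausen_category} for the pair $(\Fun(I,\spcat\cC),\Fun(I,\uncat\cC))$. For condition \eqref{def:spectrally_enriched_waldhausen_category_zero}, the Moore end formula for the mapping spectrum out of (or into) the zero functor reduces, at each $i_0$, to a mapping spectrum involving the zero object of $\cC$, and the end of a diagram of trivial spectra is trivial. For condition \eqref{def:spectrally_enriched_waldhausen_category_we}, a pointwise weak equivalence $\phi \to \phi'$ induces pointwise stable equivalences on the diagrams being equalized by condition \eqref{def:spectrally_enriched_waldhausen_category_we} applied to $\cC$, and the right-derived Moore end preserves pointwise equivalences. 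For condition \eqref{def:spectrally_enriched_waldhausen_category_pushout}, a pointwise pushout square along a cofibration in $\uncat\cC$ produces, for each $i_0$, a homotopy pushout (or pullback) square of mapping spectra by condition \eqref{def:spectrally_enriched_waldhausen_category_pushout} for $\cC$; one then needs the derived Moore end to preserve this levelwise homotopy pushout square of diagrams of spectra.

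The main obstacle is the homotopical analysis of the Moore end in conditions \eqref{def:spectrally_enriched_waldhausen_category_we} and \eqref{def:spectrally_enriched_waldhausen_category_pushout}: one must show that the derived Moore end preserves pointwise equivalences and pointwise homotopy pushouts of $I$-shaped diagrams of orthogonal spectra. This is where the explicit construction of \cite[\swcref{Section 4}{\cref{swc-thm:moore_end}}]{clmpz-specWaldCat} does the work, since the Moore end is set up to be homotopically well-behaved and to commute up to equivalence with pointwise homotopy colimits in each variable. The formal invocation is a standard spectral-sequence or skeletal-filtration argument, filtering by the nerve of the indexing category and reducing to finite products of mapping spectra, where conditions \eqref{def:spectrally_enriched_waldhausen_category_we} and \eqref{def:spectrally_enriched_waldhausen_category_pushout} for $\cC$ apply directly.

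Finally I would check functoriality. A functor $I \to J$ of small categories induces, by precomposition, an exact functor $\Fun(J,\uncat\cC) \to \Fun(I,\uncat\cC)$ and a compatible spectral functor $\Fun(J,\spcat\cC) \to \Fun(I,\spcat\cC)$, with the compatibility square of \cref{def:spectrally_enriched_waldhausen_category} commuting tautologically. Likewise, an exact spectrally enriched functor $\cC \to \cD$ induces, by postcomposition and the functoriality of the Moore end, a map of spectral Waldhausen categories $\Fun(I,\cC) \to \Fun(I,\cD)$, preserving levelwise cofibrations and weak equivalences. The two functorialities commute strictly, yielding the bifunctor $\Fun\colon \mathbf{Cat}^\op \times \mathbf{SpWaldCat} \to \mathbf{SpWaldCat}$.
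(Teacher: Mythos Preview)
The paper does not give its own proof of this proposition; it is stated as a citation of \cite[\swcref{Theorem 4.1}{\cref{swc-thm:moore_end}}]{clmpz-specWaldCat}, and the actual argument lives entirely in that companion paper. So there is nothing here to compare your proof against at the level of details.

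That said, your outline matches the shape one would expect and correctly isolates the only nontrivial step: the levelwise Waldhausen axioms and the functoriality are indeed bookkeeping, and the real content is that the (right-derived) Moore end has enough homotopical control to push conditions \eqref{def:spectrally_enriched_waldhausen_category_we} and \eqref{def:spectrally_enriched_waldhausen_category_pushout} from $\cC$ to $\Fun(I,\cC)$. Your reference to a filtration/zig-zag description of the Moore end is exactly what the companion paper supplies (cf.\ the later use in \cref{prop:spectAdj}, which appeals to the same zig-zag homotopy-limit description from \cite[\swcref{Theorem 4.1{\it ii}}{\cref{swc-thm:moore_end}\ref{swc-thm:moore_end_zz}}]{clmpz-specWaldCat}). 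So your identification of the obstacle and the tool needed to overcome it is accurate, but the sketch remains a sketch: the actual verification that the Moore end preserves levelwise homotopy pushouts is not something you can wave through, and is precisely the content being imported from the cited source.
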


By levelwise Waldhausen structure, we mean that a map of diagrams $\phi \to \gamma$ is a cofibration (resp. weak equivalence) if $\phi(i) \to \gamma(i)$ is a cofibration (resp. weak equivalence) for every object $i$ of the indexing category $I$. In practice, this is usually more cofibrations than we need, but we can always restrict the class of cofibrations:
\begin{lem} \label{lem:restrict_cofib}
	If $(\spcat\cC,\uncat\cC)$ is a spectral Waldhausen category, and $\uncat\cC'$ is a different Waldhausen structure on $\uncat\cC$ with the same weak equivalences and fewer cofibrations, then $(\spcat\cC, \uncat\cC')$ is also a
	spectral Waldhausen category.
\end{lem}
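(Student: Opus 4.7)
The plan is to verify each of the three conditions of \cref{def:spectrally_enriched_waldhausen_category} for the pair $(\spcat\cC, \uncat\cC')$, using that each condition is monotone in the class of cofibrations.

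First, I would dispatch conditions \eqref{def:spectrally_enriched_waldhausen_category_zero} and \eqref{def:spectrally_enriched_waldhausen_category_we} as immediate consequences of the hypothesis. Condition \eqref{def:spectrally_enriched_waldhausen_category_zero} involves only the zero object; since $\uncat\cC'$ and $\uncat\cC$ have the same underlying category, the zero object of $\uncat\cC$ is also the zero object of $\uncat\cC'$, and remains a zero object for $\spcat\cC$. Condition \eqref{def:spectrally_enriched_waldhausen_category_we} depends only on the class of weak equivalences, which by hypothesis is identical in $\uncat\cC'$ and $\uncat\cC$, so the required stable equivalences of mapping spectra are the same ones already known to hold.

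The only step requiring any observation is condition \eqref{def:spectrally_enriched_waldhausen_category_pushout}. Here the key point is that the class of cofibrations in $\uncat\cC'$ is contained in the class of cofibrations in $\uncat\cC$. Given a pushout square in $\uncat\cC'$ along a $\uncat\cC'$-cofibration, I would note that the same diagram is a pushout in $\uncat\cC$ (colimits are computed in the common underlying category) and that its horizontal map is also an $\uncat\cC$-cofibration. The spectral pushout condition for $(\spcat\cC, \uncat\cC)$ then provides the required homotopy pushout squares of mapping spectra.

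I do not anticipate a main obstacle: the lemma is essentially the remark that shrinking the class of cofibrations can only make condition \eqref{def:spectrally_enriched_waldhausen_category_pushout} easier to verify, while leaving conditions \eqref{def:spectrally_enriched_waldhausen_category_zero} and \eqref{def:spectrally_enriched_waldhausen_category_we} untouched. The hypothesis that $\uncat\cC'$ is itself a Waldhausen category already guarantees that the relevant pushouts exist in $\uncat\cC'$, so there is nothing further to check.
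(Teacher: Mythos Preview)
Your proposal is correct and is exactly the intended verification. The paper states this lemma without proof, treating it as immediate; your argument spells out precisely why, by observing that conditions \eqref{def:spectrally_enriched_waldhausen_category_zero} and \eqref{def:spectrally_enriched_waldhausen_category_we} do not reference cofibrations at all, while condition \eqref{def:spectrally_enriched_waldhausen_category_pushout} is a universally quantified statement over pushouts along cofibrations and hence only becomes weaker when the class of cofibrations shrinks.
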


\subsection{The $S_\bullet$ construction and the $K$-theory of spectral Waldhausen categories}\label{subsec:Sdot}

Let $[k] = \{ 0 < 1 < \cdots < k \}$ denote the totally ordered set on $k+1$ elements. Recall that for a Waldhausen category $\uncat\cC$, the $S_\bullet$ construction produces a simplicial category whose $k$th level is the full subcategory
\[ S_k \uncat\cC \subseteq \Fun([k] \times [k],\uncat\cC) \]
consisting of functors that vanish on all pairs $(i,j)$ with $i > j$, and on the remaining pairs (in other words the category of arrows $\Arr[k]$) form a sequence of cofibrations and their quotients \cite[\S 1.3]{1126}.

We extend this definition to spectral Waldhausen categories $\spcat\cC$ by defining $S_k \spcat\cC$ to be the full subcategory of $\spcat{\Fun([k] \times [k],\cC)}$ on the objects that define $S_k \uncat\cC$. We define the $n$-fold $S_\bullet$ construction and the simplicial category $w_\bullet$ of composable sequences of weak equivalences for spectral Waldhausen categories in a similar way; see
\cite[\swcref{Definitions 5.6 and 5.10}{\cref{swc-ex:spec_iterated_s_dot,swc-ex:spec_w_bullet}}]
{clmpz-specWaldCat}
for more details.

\begin{lem}[{\cite[\swcref{Definition 5.10}{\cref{swc-ex:spec_iterated_s_dot}}]{clmpz-specWaldCat}}]
	\label{s_dot_multisimplicial}
  For every $n \geq 0$, there is a functor from spectral Waldhausen categories to multisimplicial Waldhausen categories
  \[w_\bullet S^{(n)}_\bullet\colon \mathbf{SpWaldCat} \arr \Fun(\Delta^{n+1}, \mathbf{SpWaldCat})\]
  which on base categories takes $\uncat{\cC}$ to $w_\bullet S^{(n)}_\bullet \uncat{\cC}$, as defined by Waldhausen \cite{1126}.
\end{lem}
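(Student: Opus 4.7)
The plan is to construct $S_k \spcat{\cC}$ as the full subcategory of $\spcat{\Fun([k]\times[k], \cC)}$ on the objects of $S_k \uncat{\cC}$, endow it with the cofibrations from Waldhausen's classical $S_k \uncat{\cC}$, and then verify the three axioms of \cref{def:spectrally_enriched_waldhausen_category}. Proposition \ref{prop:fun_cat_wald} already provides $\spcat{\Fun([k]\times[k], \cC)}$ as a spectral Waldhausen category with the levelwise cofibrations, so passing to a full subcategory preserves the spectral enrichment, and the cofibration class of $S_k \uncat{\cC}$ has the same weak equivalences but strictly fewer cofibrations than the levelwise ones, so \cref{lem:restrict_cofib} applies and gives a well-defined spectral Waldhausen structure once the axioms are verified.

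For axiom \eqref{def:spectrally_enriched_waldhausen_category_zero}, the constant zero diagram is a zero object of both $\uncat{\cC}$ and $\spcat{\cC}$ in each coordinate, and the Moore end defining the mapping spectra in $\spcat{\Fun([k]\times[k], \cC)}$ preserves this. Axiom \eqref{def:spectrally_enriched_waldhausen_category_we} is inherited verbatim, since the mapping spectra are unchanged and a levelwise weak equivalence of $[k]\times[k]$-diagrams that are in $S_k \uncat{\cC}$ induces levelwise stable equivalences on the factors of the equalizer. Functoriality in the simplicial variable is automatic: any order-preserving map $[j] \to [k]$ pulls back diagrams, and the induced map is exact on base categories by the classical Waldhausen argument; spectrally, precomposition with a functor of indexing categories is a functor of spectral categories from \cref{thm:moore_end}. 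Functoriality in $\cC$ is the same observation applied to a functor of spectral Waldhausen categories $\cC \to \cD$.

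The main obstacle is axiom \eqref{def:spectrally_enriched_waldhausen_category_pushout}. Pushouts in $S_k \uncat{\cC}$ along the restricted cofibrations exist and are computed as levelwise pushouts along cofibrations in $\uncat{\cC}$, so, fixing a target $e$, the mapping spectrum $\spcat{\Fun([k]\times[k], \cC)}(-, e)$ applied to such a pushout square is the Moore end over $[k]\times[k]$ of the corresponding diagrams of mapping spectra in $\cC$. Axiom \eqref{def:spectrally_enriched_waldhausen_category_pushout} for $\cC$ makes each such levelwise square a homotopy pushout of spectra, and the Moore end, being a finite homotopy limit over the finite category $[k]\times[k]$, commutes with the (finite) homotopy pushout; hence the result is a homotopy pushout. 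The same argument handles the variance $\spcat{\Fun([k]\times[k], \cC)}(e,-)$.

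Finally, the $n$-fold construction is obtained by iterating: having shown $S_\bullet\colon \mathbf{SpWaldCat} \to \Fun(\Delta, \mathbf{SpWaldCat})$, we may apply it $n$ times to obtain $S^{(n)}_{\bullet, \dotsc, \bullet}$ valued in $\Fun(\Delta^n, \mathbf{SpWaldCat})$. The construction $w_\bullet$ is the analogous full subcategory of $\spcat{\Fun([k], \cC)}$ on the composable sequences of weak equivalences; since the cofibrations on $w_k$ are again levelwise and the weak equivalences are unchanged, the verification of the three axioms is the same as for $S_\bullet$. Composing these yields the desired functor $w_\bullet S^{(n)}_\bullet$ to multisimplicial spectral Waldhausen categories, whose restriction to base categories is manifestly Waldhausen's $w_\bullet S^{(n)}_\bullet \uncat{\cC}$.
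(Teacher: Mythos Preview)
Your argument is correct and follows precisely the route the paper sketches before stating the lemma: $S_k\spcat{\cC}$ is the full spectral subcategory of $\spcat{\Fun([k]\times[k],\cC)}$ on the objects of $S_k\uncat{\cC}$, the levelwise Waldhausen structure from \cref{prop:fun_cat_wald} restricts to this full subcategory, and then \cref{lem:restrict_cofib} lets you pass to Waldhausen's smaller class of cofibrations. The paper itself does not supply a proof here---it cites the companion paper---so what you have written is essentially a fleshed-out version of the deferred argument.

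One small organizational point: once you know that the full subcategory (with levelwise cofibrations) inherits the spectral Waldhausen structure from $\spcat{\Fun([k]\times[k],\cC)}$, invoking \cref{lem:restrict_cofib} finishes the job; you do not then need a separate verification of axiom \eqref{def:spectrally_enriched_waldhausen_category_pushout} for the restricted cofibrations. Your paragraph on the Moore end commuting with homotopy pushouts is really the content of why the full subcategory satisfies the axioms in the first place (i.e., why \cref{prop:fun_cat_wald} holds), and is already packaged into that proposition. The only thing you should make explicit, which you use implicitly, is that $S_k\uncat{\cC}$ is closed in $\Fun([k]\times[k],\uncat{\cC})$ under levelwise pushouts along levelwise cofibrations and contains the zero diagram, so that the full subcategory is itself a Waldhausen category before you restrict cofibrations.
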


For the next lemma, recall that a {\bf Dwyer--Kan equivalence} is a Dwyer--Kan embedding of spectral categories that induces an equivalence of ordinary categories after applying $\pi_0$ to the mapping spectra.

\begin{lem}[{\cite
	[\swcref{Lemma 5.7}{\cref{swc-w_bullet_invariance}}]
	{clmpz-specWaldCat}}]\label{w_bullet_invariance}
	The iterated degeneracy map
  \[w_0S_{k_1,\ldots,k_n}\spcat{\cC} \arr w_kS_{k_1,\ldots,k_n}\spcat{\cC}\]
  is a Dwyer--Kan equivalence of spectral categories.  In particular,
	the spectral categories $w_k\spcat{\cC}$ are all canonically Dwyer--Kan
	equivalent to $\spcat{\cC}$.
\end{lem}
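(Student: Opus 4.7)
The plan is to unpack the definition of Dwyer--Kan equivalence and verify the two required conditions in turn: that the functor induces a stable equivalence on each mapping spectrum, and that it is essentially surjective on homotopy categories. It suffices to treat the case $n=0$, since the general assertion then follows by applying the same argument to the spectral Waldhausen category $S_{k_1,\ldots,k_n}\spcat{\cC}$ produced by \cref{s_dot_multisimplicial}. So I focus on the degeneracy $s\colon \spcat{\cC} = w_0\spcat{\cC} \to w_k\spcat{\cC}$, which sends an object $c$ to the constant diagram $c = c = \cdots = c$ on $[k]$.

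For the Dwyer--Kan embedding property, observe that the mapping spectrum $w_k\spcat{\cC}(sc, sd)$ is defined via the Moore end construction of \cref{thm:moore_end}. For the two constant diagrams $sc$ and $sd$, every structure map in the diagram
\[
\prod_{i \in [k]} \spcat{\cC}(c,d) \rightrightarrows \prod_{i \to j} \spcat{\cC}(c,d)
\]
is the identity on $\spcat{\cC}(c,d)$. Since $[k]$ is connected, the strict equalizer equals $\spcat{\cC}(c,d)$, and the Moore end is naturally stably equivalent to the same spectrum; the degeneracy $s$ realizes precisely this identification, so the map on mapping spectra is a stable equivalence.

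For essential surjectivity, given $\phi \in w_k\spcat{\cC}$ (a sequence of weak equivalences in $\uncat{\cC}$), set $c = \phi(0)$ and construct a morphism $f \colon sc \to \phi$ in $w_k\uncat{\cC}$ whose component at $i$ is the composite weak equivalence $\phi(0)\xrightarrow{\sim}\phi(1)\xrightarrow{\sim}\cdots\xrightarrow{\sim}\phi(i)$. Naturality of $f$ follows from associativity of composition. By the Yoneda lemma in the homotopy category, to show that $[f]$ is an isomorphism in $\pi_0 w_k\spcat{\cC}(sc,\phi)$ it is enough to show that, for every $\psi \in w_k\spcat{\cC}$, post-composition with $f$ induces a stable equivalence
\[
f_* \colon w_k\spcat{\cC}(\psi, sc) \to w_k\spcat{\cC}(\psi, \phi),
\]
and similarly for pre-composition. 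On the Moore end, $f_*$ is induced termwise by the post-composition maps $\spcat{\cC}(\psi(i), c) \to \spcat{\cC}(\psi(i), \phi(i))$, each a stable equivalence by condition \eqref{def:spectrally_enriched_waldhausen_category_we} of \cref{def:spectrally_enriched_waldhausen_category} applied to the weak equivalence $f_i \colon c \xrightarrow{\sim} \phi(i)$.

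The hard part is the technical input behind both steps: homotopy invariance of the Moore end with respect to levelwise stable equivalences of spectral-valued diagrams on $[k]$. This is precisely the reason the Moore end rather than the strict end appears in \cref{thm:moore_end}, and it is the sole nontrivial ingredient that I would need to cite from the foundational material of \cite{clmpz-specWaldCat}. Once that is in hand, the rest of the proof is a direct unraveling of the Waldhausen axioms and the Yoneda lemma in the spectrally enriched homotopy category.
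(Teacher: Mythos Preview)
The paper does not prove this lemma in the body; it is stated with a citation to the companion paper \cite{clmpz-specWaldCat}, so there is no in-text argument to compare against. Your outline is the natural proof and is essentially correct: reduce to $n=0$, show the degeneracy is a Dwyer--Kan embedding because the Moore end of the constant zig-zag on $\spcat{\cC}(c,d)$ is equivalent to $\spcat{\cC}(c,d)$, and then obtain essential surjectivity by exhibiting, for each $\phi \in w_k\uncat{\cC}$, the levelwise weak equivalence $s\phi(0)\to\phi$ and invoking axiom~\eqref{def:spectrally_enriched_waldhausen_category_we} together with homotopy invariance of the Moore end to see it represents an isomorphism in $\pi_0 w_k\spcat{\cC}$.

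Two small remarks. First, for the Yoneda step you only need one direction (either all $f_*$ or all $f^*$ are bijections on $\pi_0$); showing $f_*$ is a stable equivalence on mapping spectra is more than sufficient. Second, you have correctly isolated the only genuinely nontrivial ingredient: that the Moore end, as constructed in \cref{thm:moore_end}, takes levelwise stable equivalences of zig-zag diagrams to stable equivalences. This is exactly the property recorded in \cite[\swcref{Theorem 4.1}{\cref{swc-thm:moore_end}}]{clmpz-specWaldCat} (the mapping spectrum is equivalent to the homotopy limit of the zig-zag), and once you cite it your argument is complete.
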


\begin{defn}
We define the $K$-theory of a spectral Waldhausen category $\cC$ to be the
$K$-theory of the base category $\uncat\cC$.  In other words, the $n$th level of the $K$-theory spectrum is obtained from the spectral category $w_\bullet S^{(n)}_\bullet\spcat{\cC}$ by restricting to objects and taking the geometric realization:
\[
K(\cC)_{n} = \bigl| \ob w_{\bullet} S^{(n)}_{\bullet} \spcat{\cC} \bigr|.
\]
\end{defn}

Note that, as usual, the $K$-theory spectrum is a symmetric spectrum with the symmetric groups permuting the  $S_{\bullet}$ terms.

\section{Bimodules over spectral categories and their traces}
\label{sec:spectral_bimodules}

In this section, we define the bicategory of spectral categories and bimodules, which is the relevant generalization of the bicategory of rings and bimodules from \S\ref{sec:bicat_trace}.
This bicategory can be equipped with a shadow via $\THH$, and we use the notion of Morita equivalence from \cref{lem:base_change_euler} to
construct examples of equivalences on $\THH$.

\subsection{Spectral bimodules}\label{subsec:spectral_bimodules}
\begin{defn} \label{def:bimodules}
  If $\mc{C}$ and $\mc{D}$ are spectral categories, a
  \textbf{$(\mc{C}, \mc{D})$-bimodule} is a spectral functor
  \[\cX\colon  \mc{C}^{\text{op}} \sma \mc{D} \to \Sp\]
  to the spectral category of orthogonal spectra. More explicitly, a bimodule $\cX$ consists of an orthogonal spectrum $\cX(c,d)$ for every ordered pair $(c,d) \in \ob\mc{C} \times \ob\mc{D}$, along with a {\bf left action} by $\cC$ and a {\bf right action} by $\cD$
  \[ \mc{C}(a,c) \sma \cX(c,d) \arr \cX(a,d), \qquad \cX(c,d) \sma
    \mc{D}(d,e) \arr \cX(c,e) \] satisfying evident unit and associativity
  conditions.  A {\bf morphism} of
  $(\mc{C}, \mc{D})$-bimodules $\cX \to \cY$ is a collection of maps of orthogonal spectra
  $\cX(c, d) \to \cY(c, d)$ commuting with the $\mc{C}$ and $\mc{D}$
  actions.
  A {\bf pointwise equivalence} of bimodules is a morphism which induces weak
  equivalences of spectra $\cX(c,d) \stackrel\sim\arr \cY(c,d)$ for all objects
  $c\in \mc C$ and $d\in\mc D$.
  We denote the category of spectral $(\mc C,\mc D)$-bimodules by $\Mod_{(\mc C,\mc D)}$.
\end{defn}

\begin{example}
If $A$ and $B$ are ring spectra, then a $(B, A)$-bimodule is the same thing as a bimodule over the associated one-object spectral categories.
\end{example}

\begin{example}
  Let $\cC$ be a spectral category.  Then $\cC$ gives rise to a
  $(\cC,\cC)$-bimodule defined by the enrichment functor
  $\Hom\colon \cC^\op \sma \cC \to \Sp$.  By an abuse of notation we denote this
  bimodule by $\cC$.
\end{example}

We can further generalize this example by allowing different sources for the domains
and codomains of the mapping spectra.

\begin{defn} \label{def:base_change} Recall from \cref{def:twisting} that a pre-twisting of spectral categories
  $\lMr{F}{\cA\slash\cC\reflectbox{\slash}\cB}{G}$ is a pair of spectral functors $F\colon \cA \to \cC$ and $G\colon \cB \to \cC$. Given a pre-twisting, we define an $(\mc{A}, \mc{B})$-bimodule
  (which, by an abuse of notation, we denote by $\tensor[_F]{\mc{C}}{_G}$) by
	\[
	\tensor[_F]{\mc{C}}{_G} (a, b) \coloneqq \mc{C}(F(a), G(b)).
	\]
We have the following special cases:
\begin{itemize}
	\item the $(\mc{A},\mc{C})$-bimodule $\tensor[_F]{\mc{C}}{_\id}$,
          abbreviated by $\tensor[_F]{\mc{C}}{}$ or $\tensor[_{\mc A}]{\mc C}{}$
          (when $F$ is clear from context).
	\item the $(\mc{C},\mc{B})$-bimodule $\tensor[_\id]{\mc{C}}{_G}$,
          abbreviated by $\tensor{\mc{C}}{_G}$ or $\tensor[]{\mc C}{_{\mc B}}$ (when $G$
          is clear from context).
	\item the $(\mc{C},\mc{C})$-bimodule $\tensor[_\id]{\mc{C}}{_\id}$,
          abbreviated by $\mc{C}$.  Note that this agrees with the use of
          $\cC$ as a bimodule above.
        \end{itemize}
      \end{defn}

\begin{example}\label{ex:bimodule_smash_functor}
  Let $A$ and $B$ be ring spectra and let $M$ be a cofibrant
  $(B,A)$-bimodule. As discussed in
	\cite[\swcref{Definition 3.8}{\cref{swc-modules_convention}}]
	{clmpz-specWaldCat}, $M$ induces a
  spectral functor on the categories of modules
  ${M \sma_A -}\colon \tensor[^A]{\Mod}{}  \to \tensor[^B]{\Mod}{},$ and thus defines a $(\tensor[^B]{\Mod}{},\tensor[^A]{\Mod}{})$-bimodule
  $(\tensor[^B]{\Mod}{})_{M \sma_A -}$. We will often abbreviate this to
  $\Mod_M$ if the rings are understood. By
	\cite[\swcref{Lemma 3.15}{\cref{swc-commuting_bimodules}}]
	{clmpz-specWaldCat}, there is a natural equivalence of $(B,A)$-bimodules $M \to \Mod_M$ given by the map $M \to \tensor[^B]{\Mod}{}(B,M)$ adjoint to the $B$-action on $M$.  Note that the target is implicitly restricted to a $(B, A)$-bimodule along the canonical
  inclusions of spectral categories $A \to \tensor[^A]{\Mod}{}$ and
  $B \to \tensor[^B]{\Mod}{}$.
\end{example}

The bar construction provides a canonical model for the derived coend of bimodules over spectral categories; this is the appropriate analog in our context of the derived tensor product of bimodules over rings.

\begin{defn}
  Let $\cX$ be a $(\mc C, \mc D)$-bimodule and let $\cY$ be a
  $(\mc D, \mc E)$-bimodule.  Define the {\bf two-sided categorical bar
    construction}
  $B(\cX, \mc D, \cY)$ to be the $(\mc C, \mc E)$-bimodule whose value at
  $(c,e)$ is the geometric realization of the simplicial spectrum
  $B_\bullet (\cX, \mc D, \cY)(c,e)$ given by
  \[ B_n (\cX, \mc{D}, \cY)(c,e) = \bigvee_{d_0, \dots, d_n}
    \cX(c, d_0) \sma \mc{D}(d_0, d_1) \sma \cdots \sma \mc{d}(d_{n-1}, d_n)
    \sma \cY(d_n, e). \]
    As is usual for bar constructions, the iterated $\cD$-action maps define canonical pointwise equivalences
  \begin{equation}\label{eq:units}
    B(\mc D,\mc D,\cY)(d,e) \stackrel\sim\arr {\cY}(d,e) \qquad \text{and} \qquad
  B(\cX,\mc D,\mc D)(c,d) \stackrel\sim\arr {\cX}(c,d).
\end{equation}

  When $\cX$ is a $(\mc C,\mc C)$-bimodule we define the {\bf
    topological Hochschild homology} or {\bf cyclic bar construction} $\THH(\mc C; \cX)$ to be the realization
  of the simplicial spectrum $B^{\mathrm{cy}}_\bullet(\mc C; \cX)$ given by
  \[
    B^{\mathrm{cy}}_n (\mc{C}; \cX) \coloneqq \bigvee_{c_0, \dots, c_n} \mc{C}(c_0, c_1) \sma \mc{C}(c_1, c_2) \sma \cdots \sma \mc{C}(c_{n-1}, c_n) \sma \cX(c_n, c_0).
  \]
  When $\mc C$ is pointwise cofibrant, the definition is equivalent to all
  other definitions of $\THH$ in the literature,
  e.g.
  \cite{bokstedt_thh,blumberg_mandell_unpublished,dundas_goodwillie_mccarthy,nikolaus_scholze}.
\end{defn}

Note that $\THH(\mc C; \cX)$ is functorial in both $\cC$ and $\cX$.  Directly from the definition we get the following
observation:
\begin{lem}
  A morphism of twistings $\lMr{F}{\cA/\cC}{G} \to \lMr{F'}{\cA'/\cC'}{G'}$ induces a
  morphism
  \[\THH(\cA; \lMr{F}{\cC}{G}) \to \THH(\cA'; \lMr{F'}{\cC'}{G'}).\]
\end{lem}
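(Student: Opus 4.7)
The plan is to construct the desired map directly at the level of the cyclic bar construction, then pass to geometric realization. By \cref{def:twisting}, a morphism of twistings $(I,J)\colon \lMr{F}{\cA/\cC}{G} \to \lMr{F'}{\cA'/\cC'}{G'}$ consists of spectral functors $I\colon \cA \to \cA'$ and $J\colon \cC \to \cC'$ satisfying $F' \circ I = J \circ F$ and $G' \circ I = J \circ G$.

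In simplicial degree $n$, I would define a map
\[
B^{\mathrm{cy}}_n(\cA;\lMr{F}{\cC}{G}) \to B^{\mathrm{cy}}_n(\cA';\lMr{F'}{\cC'}{G'})
\]
summand-wise: send the wedge summand indexed by $(a_0,\ldots,a_n) \in \ob\cA^{n+1}$ to the summand indexed by $(I(a_0),\ldots,I(a_n))$ by applying $I$ on each of the factors $\cA(a_{k-1},a_k)$ and $J$ on the bimodule factor $\cC(F(a_n),G(a_0))$. The commutation relations $F'I = JF$ and $G'I = JG$ guarantee that the $J$-image lands in $\cC'(F'I(a_n), G'I(a_0))$, which is precisely the bimodule factor of the target summand. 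I would then verify that these level maps assemble into a map of simplicial spectra: the face maps are built from composition in $\cA$, from the left and right actions of $\cA$ on the bimodule $\cC$ (which are themselves composition in $\cC$ after applying $F$ and $G$), and from composition in $\cC$; all of these are preserved by $I$ and $J$ since they are spectral functors compatible with the twisting data. The degeneracy maps insert unit maps from $\bbS$, which $I$ preserves. Geometric realization then produces the desired map.

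Equivalently, and more conceptually, one may factor the construction through the two obvious types of functoriality of $\THH$. Applying $J$ levelwise and using the commutativity of the twisting square produces a morphism of $(\cA,\cA)$-bimodules $\lMr{F}{\cC}{G} \to I^{\ast}(\lMr{F'}{\cC'}{G'})$, where $I^{\ast}$ denotes restriction along $I$; then functoriality of $\THH$ in the bimodule, followed by functoriality along the spectral functor $I\colon \cA \to \cA'$, yields
\[
\THH(\cA;\lMr{F}{\cC}{G}) \to \THH(\cA;I^{\ast}\lMr{F'}{\cC'}{G'}) \to \THH(\cA';\lMr{F'}{\cC'}{G'}).
\]
There is no essential obstacle here; the argument is a straightforward unpacking of the naturality of the cyclic bar construction in both of its inputs, combined with the compatibility data encoded in a morphism of twistings.
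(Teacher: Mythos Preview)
Your proposal is correct and is exactly the unpacking the paper has in mind: the paper states the lemma as following ``directly from the definition'' of the cyclic bar construction and the functoriality of $\THH$ in both variables, and you have spelled out precisely that argument, including the alternative factorization through bimodule-functoriality followed by functoriality along $I$.
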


We equip the category $\Mod_{(\mc C,\mc D)}$ of $(\mc C,\mc D)$-bimodules with the model structure discussed in {\cite[2.4-2.8]{blumberg_mandell_published}}, in which the weak equivalences are the pointwise equivalences.

Bimodules over spectral categories form a bicategory with shadow, and this structure is the foundation of all our work in this paper.  It echoes the structure of the bicategory of rings and bimodules from
Example~\ref{ex:bicat_rings}. There are several previous constructions in the literature, for instance \cite[22.11]{shulman_hocolim}, \cite[4.13]{lm}, \cite[2.17]{cp},  \cite[7.4.2]{malkiewich_parametrized}.

\begin{defn} \label{main_homotopy_bicategory}
  Let $\mathbf{Bimod(SpCat)}$ be the bicategory with
  \begin{itemize}
  \item[] {\bf 0-cells} the pointwise cofibrant spectral categories $\mc C$,
  \item[] {\bf 1- and 2-cells} the objects and morphisms in the homotopy categories $\operatorname{Ho}\left(\operatorname{Mod}_{(\mc{C}, \mc{D})}\right)$,
  \item[] and {\bf horizontal composition of 1-cells} $\cX$, a
    $(\cC,\cD)$-bimodule, and $\cY$, a $(\cD, \cE)$-bimodule, given by the bar
    construction
    \[ \cX \odot \cY \coloneqq B(\cX, \mc{D}, \cY). \]
  \end{itemize}
  The bimodules $\mc{D} = \tensor[_\id]{\mc{D}}{_\id}$ are the
  units for the horizontal composition, with unit isomorphisms given by \eqref{eq:units}.
  The associativity of the horizontal composition follows from a comparison of
  bisimplicial spectra.
  We equip the bicategory $\mathbf{Bimod(SpCat)}$ with a shadow using topological Hochschild homology:
    \[
      \sh{\cX} \coloneqq  \THH(\mc{C}; \cX).
    \]
\end{defn}

The horizontal composition of 1-cells is compatible with the
bimodule structures given by functors:
\begin{lem}\label{base_change_composition}
  For any twisting $\lMr{F}{\cC}{G}$ there is a canonical isomorphism
  of 1-cells
  \[\lMr{F}{\cC}{G} \simeq \lMr{F}{\cC}{} \odot \lMr{}{\cC}{G}.\]
  For composable spectral functors $\mc{A} \to \mc{B} \to \mc{C}$, there are
  canonical isomorphisms of 1-cells
  \begin{equation*} \tensor[_{\mc A}]{\mc
      B}{} \odot \tensor[_{\mc B}]{\mc C}{} \simeq \tensor[_{\mc A}]{\mc
            C}{} \qquad \tensor{\mc C}{_{\mc B}} \odot \tensor{\mc B}{_{\mc A}} \simeq
          \tensor{\mc C}{_{\mc A}}. \end{equation*}
\end{lem}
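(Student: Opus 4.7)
The plan is to apply the unit equivalences \eqref{eq:units} of the bar construction in both parts of the statement, using the fact that restriction of scalars on the outer arguments of $B(\cX, \cD, \cY)$ merely reindexes the outer terms and leaves the inner bar structure intact.

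For the first isomorphism $\lMr{F}{\cC}{G} \simeq \lMr{F}{\cC}{} \odot \lMr{}{\cC}{G}$, I would unwind the bar construction using \cref{def:base_change}. Its $(a,b)$-component at simplicial level $n$ is
\[
\bigvee_{c_0, \dots, c_n} \cC(F(a), c_0) \sma \cC(c_0, c_1) \sma \cdots \sma \cC(c_{n-1}, c_n) \sma \cC(c_n, G(b)),
\]
which is literally $B_n(\cC, \cC, \cC)(F(a), G(b))$. The unit equivalence $B(\cC, \cC, \cC) \simeq \cC$ supplied by \eqref{eq:units} then gives a natural pointwise equivalence $B(\tensor[_F]{\cC}{}, \cC, \tensor{\cC}{_G})(a,b) \simeq \cC(F(a), G(b)) = \tensor[_F]{\cC}{_G}(a,b)$, visibly compatible with the $(\mc{A}, \mc{A})$-bimodule structure.

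The same principle handles $\tensor[_{\mc A}]{\mc B}{} \odot \tensor[_{\mc B}]{\mc C}{} \simeq \tensor[_{\mc A}]{\mc C}{}$ for composable spectral functors $F\colon \mc{A} \to \mc{B}$ and $G\colon \mc{B} \to \mc{C}$: level $n$ of the relevant bar construction unwinds to
\[
\bigvee_{b_0, \dots, b_n} \mc{B}(F(a), b_0) \sma \mc{B}(b_0, b_1) \sma \cdots \sma \mc{B}(b_{n-1}, b_n) \sma \mc{C}(G(b_n), c),
\]
which is precisely $B_n(\mc{B}, \mc{B}, \tensor[_G]{\mc{C}}{})(F(a), c)$. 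Applying the unit equivalence $B(\mc{B}, \mc{B}, \cY) \simeq \cY$ of \eqref{eq:units} with $\cY = \tensor[_G]{\mc{C}}{}$ and evaluating at $(F(a), c)$ produces an equivalence to $\mc{C}(GF(a), c) = \tensor[_{\mc A}]{\mc C}{}(a, c)$. The mirror isomorphism $\tensor{\mc{C}}{_{\mc B}} \odot \tensor{\mc{B}}{_{\mc A}} \simeq \tensor{\mc{C}}{_{\mc A}}$ follows symmetrically from the other unit equivalence $B(\cX, \mc{B}, \mc{B}) \simeq \cX$.

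The main subtle point will be confirming that these pointwise equivalences represent canonical isomorphisms of 1-cells in $\mathbf{Bimod(SpCat)}$, not merely pointwise equivalences of underlying bimodules. Since the 0-cells are required to be pointwise cofibrant (\cref{main_homotopy_bicategory}), the maps in \eqref{eq:units} are pointwise stable equivalences and hence represent isomorphisms in the homotopy categories of bimodules; naturality with respect to the twisting data then assembles these into the asserted canonical isomorphisms of 1-cells. I expect this bookkeeping to be the most delicate piece, though it is routine and parallels the analogous argument for the bar construction over ring spectra.
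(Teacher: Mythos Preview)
Your proposal is correct and is exactly the argument the paper expects the reader to supply: the lemma is stated without proof in the paper, since it follows immediately from the unit equivalences \eqref{eq:units} after observing that restricting along $F$ or $G$ in the outer slots of the bar construction merely reindexes those slots. Your treatment of the bookkeeping for passing to the homotopy category of bimodules is also appropriate and matches the paper's conventions in \cref{main_homotopy_bicategory}.
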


Our examples of 1-cells also give simple ways to construct dual pairs:
\begin{prop}[{\cite[Appendix]{ponto_thesis}}, {\cite[Lem.~7.6]{ps:indexed}}] \label{prop:dual_pair}
    Let $F\colon \mc A \to \mc C$ be a spectral functor.  Then there is a dual
  pair $(\tensor[_F]{\mc C}{},\tensor{\mc C}{_F})$ whose coevaluation and
  evaluation maps are induced by the composites
  \[\eta \colon \cA(a,b) \xto{\ \ F\ \ } \cC(Fa,Fb) \overset{\sim}{\arr} B(\cC,\cC,\cC)(Fa,Fb) =
    (\lMr{F}{\cC}{} \odot \lMr{}{\cC}{F})(a, b)\]
  and
  \[\epsilon \colon (\lMr{}{\cC}{F} \odot \lMr{F}{\cC}{})(c,d) = B(\lMr{}{\cC}{F},\mc A,\lMr{F}{\cC}{})(c,d) \xto{\ \ F\ \ } B(\cC,\cC,\cC)(c,d) \xto{\ \sim\ } \cC(c,d).\]
\end{prop}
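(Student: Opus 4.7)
The plan is to verify the triangle identities for the proposed pair $(\tensor[_F]{\cC}{}, \tensor{\cC}{_F})$ directly, using the bar-construction model of horizontal composition in $\mathbf{Bimod(SpCat)}$.

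First I check that the maps $\eta$ and $\epsilon$ in the statement are well-defined morphisms in the homotopy bicategory. Unwinding the definition of horizontal composition, the $n$-simplices of $(\tensor[_F]{\cC}{} \odot \tensor{\cC}{_F})(a,b) = B(\tensor[_F]{\cC}{}, \cC, \tensor{\cC}{_F})(a,b)$ agree as a simplicial spectrum with $B_n(\cC,\cC,\cC)(Fa,Fb)$, since the twisting by $F$ only modifies the outer two wedge factors. The iterated composition $B(\cC,\cC,\cC)\xrightarrow{\sim}\cC$ is a pointwise equivalence by the standard extra-degeneracy argument, valid here because $\cC$ is pointwise cofibrant, so that the bar construction computes the derived coend. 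Composing with $F\colon \cA(a,b)\to\cC(Fa,Fb)$, which is $\cA$-bilinear by virtue of $F$ being a spectral functor, gives a well-defined morphism $\eta$ of $\cA$-bimodules in the homotopy category. In the same way, applying $F$ level-by-level to the middle string of $B_\bullet(\tensor{\cC}{_F}, \cA, \tensor[_F]{\cC}{})(c,d)$ produces a map to $B_\bullet(\cC,\cC,\cC)(c,d)$, and composing with the iterated $\cC$-composition produces the morphism $\epsilon$ of $\cC$-bimodules.

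Next I verify the first triangle identity, namely that
\[
\tensor{\cC}{_F} \simeq \tensor{\cC}{_F} \odot U_\cA \xrightarrow{\id \odot \eta} \tensor{\cC}{_F} \odot \tensor[_F]{\cC}{} \odot \tensor{\cC}{_F} \xrightarrow{\epsilon \odot \id} U_\cC \odot \tensor{\cC}{_F} \simeq \tensor{\cC}{_F}
\]
equals the identity of $\tensor{\cC}{_F}$ in the homotopy category of $(\cC,\cA)$-bimodules. Using the Fubini isomorphism for iterated two-sided bar constructions, the middle bimodule becomes a bisimplicial bar construction whose $(p,q)$-level at $(c,a)$ is a wedge over tuples in $\cA$ and $\cC$ of expressions of the form $\cC(c,Fa'_0) \wedge \cA(a'_0,a'_1) \wedge \cdots \wedge \cA(a'_p, b'_0) \wedge \cC(Fb'_0,Fb'_1) \wedge \cdots \wedge \cC(Fb'_q, Fa)$. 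Tracing the maps, $\id \odot \eta$ inserts units into the rightmost block via $F$, while $\epsilon \odot \id$ applies $F$ to every $\cA$-factor and then collapses everything by iterated $\cC$-composition. A standard extra-degeneracy simplicial homotopy then identifies the total composite with the identity on $\tensor{\cC}{_F}$. The second triangle identity is handled by the symmetric computation with the roles of $\tensor[_F]{\cC}{}$ and $\tensor{\cC}{_F}$ swapped.

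The main obstacle is the careful bookkeeping of the associativity isomorphisms and of the simplicial contractions produced by the extra degeneracies coming from $F$ and from $\cC$-multiplication; conceptually, however, this is just the spectral analog of the classical fact that restriction and extension of scalars along a ring homomorphism form an adjoint pair. As a fallback, one may deduce the statement directly from the enriched-bicategorical version established in \cite[Appendix]{ponto_thesis} and \cite[Lem.~7.6]{ps:indexed}.
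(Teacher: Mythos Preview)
The paper does not prove this proposition; it simply cites \cite[Appendix]{ponto_thesis} and \cite[Lem.~7.6]{ps:indexed} and moves on. Your proposal therefore goes further than the paper does, by sketching the direct verification of the triangle identities in the bar-construction model, and your fallback (invoke the cited references) is exactly what the paper itself does.

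Your outline is correct in spirit: the identification $(\tensor[_F]{\cC}{} \odot \tensor{\cC}{_F})(a,b) \cong B(\cC,\cC,\cC)(Fa,Fb)$ is immediate from the definitions, the collapse $B(\cC,\cC,\cC)\xrightarrow{\sim}\cC$ is the standard unit equivalence \eqref{eq:units}, and the triangle identities unwind to compositions of ``apply $F$ to the middle and multiply in $\cC$'' which are indeed homotopic to the identity via extra degeneracies. One small caution: since the bicategory is a \emph{homotopy} bicategory, you only need the triangle composites to be homotopic to the identity (equal as 2-cells in $\operatorname{Ho}(\Mod_{(\cC,\cA)})$), not strictly equal, so the extra-degeneracy contraction is exactly the right tool. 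The bookkeeping you flag (associators, Fubini for bisimplicial bar constructions) is genuine but routine, and is precisely what the cited references carry out in the general bicategorical setting.
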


\subsection{Bicategorical traces and $\THH$}
In this subsection we show how familiar maps on $\THH$ can be described as
traces of endomorphisms of bimodules.   The primary results are well-known, but our explicit use of the shadow structure on $\THH$ simplifies and clarifies previous proofs, e.g. in the work of Blumberg and Mandell  \cite{blumberg_mandell_published}.

 \begin{prop}[{\cite[5.8]{cp}}]\label{lem:base_change_euler}
   Let $F\colon\cA \to \cC$ be a spectral functor.
   \begin{enumerate}
    \item Given a $(\mc C,\mc C)$-bimodule $\cX$, write
        $\tensor[_F]{\cX}{_F} \coloneqq \lMr{F}{\cC}{} \odot \cX \odot
        \lMr{}{\cC}{F}$.  Then the trace of the map
  \[
  \lMr{F}{\cX}{F} \odot \lMr{F}{\cC}{} = \lMr{F}{\cC}{} \odot \cX \odot \lMr{}{\cC}{F} \odot \lMr{F}{\cC}{} \xarr{\id \odot \epsilon} \lMr{F}{\cC}{} \odot \cX,
  \]
taken with respect to the dual pair $(\tensor[_F]{\mc C}{},\tensor{\mc C}{_F})$,
is the map
\[ \THH(F)\colon \THH(\mc A;\tensor[_F]{\cX}{_F})\arr \THH(\mc C;\cX) \]
induced by $F$ on the cyclic bar construction.
\item The Euler
characteristic $\chi(\lMr{}{\cC}{F})$ of the left dualizable 1-cell $\lMr{}{\cC}{F}$ (resp. the right dualizable 1-cell
$\tensor[_F]{\mc C}{}$) is the induced
map
\[ \THH(F) \colon \THH(\mc A) \arr \THH(\mc C). \]
\end{enumerate}
\end{prop}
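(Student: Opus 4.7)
The plan is to unfold the bicategorical trace in Definition~\ref{defn:trace} using the explicit coevaluation $\eta$ and evaluation $\epsilon$ from Proposition~\ref{prop:dual_pair}, and then identify the resulting composite, under the $\THH$-shadow, with the simplicial description of $\THH(F)$.

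For part (1), the trace of $\id \odot \epsilon$ is a composite whose source is $\sh{\lMr{F}{\cX}{F}} = \THH(\cA; \lMr{F}{\cX}{F})$ and whose target is $\sh{\cX} = \THH(\cC; \cX)$. I would unwind it step by step: the coevaluation inserts a copy of $B(\cC, \cC, \cC) \simeq \lMr{F}{\cC}{} \odot \lMr{}{\cC}{F}$ via $F$; the map $\id \odot \epsilon$ acts on the designated factors; the shadow swap $\theta$ cyclically permutes the bar entries; and the evaluation contracts a composite $\lMr{}{\cC}{F} \odot \lMr{F}{\cC}{}$ down to $\cC$ by applying $F$ together with the bar-construction counit~\eqref{eq:units}. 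Working at each simplicial level of the cyclic bar construction $B^{\mathrm{cy}}_\bullet$, the factors freshly introduced by $\eta$ are absorbed by the counit maps, and the net effect on an $n$-simplex of $B^{\mathrm{cy}}_\bullet(\cA; \lMr{F}{\cX}{F})$ is to apply $F$ to each $\cA$-mapping spectrum while leaving the $\cX$-factor untouched. This matches the simplicial formula for $\THH(F)$, so the two maps agree in the homotopy category.

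For part (2), the Euler characteristic $\chi(\lMr{}{\cC}{F}) = \tr(\id_{\lMr{}{\cC}{F}})$ is the special case of (1) with $\cX = \cC$, so that $\lMr{F}{\cX}{F}$ becomes $\lMr{F}{\cC}{F}$. Using Lemma~\ref{base_change_composition} to rewrite $\lMr{F}{\cC}{F} \simeq \lMr{F}{\cC}{} \odot \lMr{}{\cC}{F}$, one factors the Euler characteristic as $\THH(\cA) \to \THH(\cA; \lMr{F}{\cC}{F}) \xarr{\THH(F)} \THH(\cC)$, where the first map is induced by $F: \cA \to \lMr{F}{\cC}{F}$ regarded as a bimodule map. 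The composite then equals $\THH(F): \THH(\cA) \to \THH(\cC)$ by the functoriality of $\THH$ in its ring argument. The case of the right dualizable 1-cell $\tensor[_F]{\cC}{}$ is symmetric.

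The main obstacle will be the careful bookkeeping in the first paragraph: each application of $\eta$, $\theta$, and $\epsilon$ introduces nested bar constructions, and one must track how the shadow swap corresponds to the cyclic permutation in $B^{\mathrm{cy}}_\bullet$ before the unit isomorphisms~\eqref{eq:units} collapse the extra factors. Once this matching is pinned down, the identification with $\THH(F)$ is immediate at each simplicial level and both statements follow.
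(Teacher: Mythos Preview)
The paper does not supply its own proof of this proposition: it is stated with the citation \cite[5.8]{cp} and no argument follows. Your outline---unwinding Definition~\ref{defn:trace} using the explicit unit and counit of Proposition~\ref{prop:dual_pair}, then matching the result level-by-level with the simplicial map $\THH(F)$ on the cyclic bar construction---is the natural approach and is in the spirit of the cited reference.

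One small point on part~(2): it is not literally the special case $\cX=\cC$ of part~(1), since (1) produces a map out of $\sh{\lMr{F}{\cC}{F}}=\THH(\cA;\lMr{F}{\cC}{F})$ whereas $\chi(\lMr{}{\cC}{F})$ is a map out of $\sh{U_\cA}=\THH(\cA)$. Your factorization through $\THH(\cA)\to\THH(\cA;\lMr{F}{\cC}{F})$ is correct, but you should say why that first arrow---which is $\sh{\eta}$ for the coevaluation $\eta\colon\cA\to\lMr{F}{\cC}{}\odot\lMr{}{\cC}{F}$---composed with the trace from (1) recovers the Euler characteristic. This follows by writing out both composites and cancelling an $\eta$/$\epsilon$ pair via a triangle identity, or simply by unfolding $\chi(\lMr{}{\cC}{F})$ directly as you did for (1). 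Either way the bookkeeping is of the same flavor as your first paragraph, so this is a minor addendum rather than a gap.
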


\begin{defn}
	A spectral functor $F\colon \cA \to \cC$ is a \textbf{Morita equivalence} if the dual pair
	$(\lMr{F}{\cC}{}, \tensor{\mc C}{_F})$ is a Morita equivalence, in the sense of \cref{morita_equivalence}.
\end{defn}

\begin{lem}[{\cite[5.12]{blumberg_mandell_published}}] If $F$ is a Dwyer--Kan embedding and surjective up to thick closure, then $F$ is a Morita equivalence.
\end{lem}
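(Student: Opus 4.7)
The plan is to verify that both the coevaluation $\eta$ and evaluation $\epsilon$ from \cref{prop:dual_pair} are pointwise equivalences of bimodules, which upgrades the dual pair $(\lMr{F}{\cC}{}, \lMr{}{\cC}{F})$ to a Morita equivalence in the sense of \cref{morita_equivalence}. The formula in \cref{prop:dual_pair} shows that $\eta_{a,b}$ factors as
\[ \cA(a,b) \xrightarrow{F} \cC(Fa, Fb) \xrightarrow{\sim} B(\cC,\cC,\cC)(Fa,Fb), \]
where the second map is always an equivalence by \eqref{eq:units} and the first is an equivalence precisely because $F$ is a Dwyer--Kan embedding. So $\eta$ is a pointwise equivalence with no further work, and all the content of the lemma lies in $\epsilon$.

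The map $\epsilon_{c,d}$ factors through the always-equivalence $B(\cC,\cC,\cC)(c,d) \xrightarrow{\sim} \cC(c,d)$, so it suffices to show that $B(\id,F,\id)\colon B(\lMr{}{\cC}{F}, \cA, \lMr{F}{\cC}{})(c,d) \to B(\cC, \cC, \cC)(c,d)$ is a pointwise equivalence. I would first fix $d$ and treat the case $c = F(a_0)$ for some $a_0 \in \ob\cA$. The source can be read as a derived coend $\cC(F(a_0), F(-)) \otimes_{\cA}^{\bbL} \cC(F(-), d)$, and the Dwyer--Kan hypothesis identifies the left $\cA$-module $\cC(F(a_0), F(-))$ pointwise with the representable $\cA(a_0, -)$. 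The standard collapse of a bar construction against a representable, via the extra degeneracy induced by $\id_{F(a_0)}$, identifies both source and target with $\cC(F(a_0), d)$ compatibly, giving the desired equivalence in the base case.

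To extend from objects in the image of $F$ to all objects of $\cC$, I would invoke the thick-closure hypothesis. For each fixed $d$, the class of $c \in \ob\cC$ for which $\epsilon_{c,d}$ is a pointwise equivalence is closed under retracts and under cofiber sequences of $\cC$-modules, because both $\cC(-,d)$ and $B(\lMr{}{\cC}{F}, \cA, \lMr{F}{\cC}{})(-,d)$ send cofiber sequences in $\cC$ to homotopy fiber sequences of spectra. Since this class contains $F(\ob\cA)$ and $F$ is surjective up to thick closure, it is all of $\ob\cC$; repeating the argument in the $d$-variable completes the proof. The main technical obstacle is making the cofiber-sequence and representability arguments rigorous at the point-set level: one needs the simplicial bar construction to be ``good'' so that its realization computes the derived coend and is homotopy invariant in each variable. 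This is enabled by the pointwise cofibrancy of $\cA$ and $\cC$ built into the definition of $0$-cells in $\mathbf{Bimod(SpCat)}$, together with the bimodule cofibrancy assumptions set up in \cref{subsec:spectral_bimodules}.
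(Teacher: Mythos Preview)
The paper does not supply a proof of this lemma; it is cited from \cite[5.12]{blumberg_mandell_published}. Your outline is the standard argument and is essentially correct, but two points deserve tightening.

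In the base case for $\epsilon$, the extra degeneracy you invoke does not exist directly on $B(\cC(F(a_0),F(-)),\cA,\lMr{F}{\cC}{})$, because the first factor is not a representable $\cA$-module. The right order is to first use the Dwyer--Kan hypothesis to replace $\cC(F(a_0),F(-))$ by $\cA(a_0,-)$ (a pointwise equivalence of right $\cA$-modules, hence one on bar constructions), and only then collapse $B(\cA(a_0,-),\cA,\lMr{F}{\cC}{})\simeq\cC(F(a_0),d)$ via the extra degeneracy coming from $\id_{a_0}$ rather than $\id_{F(a_0)}$.

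More substantively, the thick-closure step must be run in the category of left $\cC$-modules, not on objects $c\in\cC$: the hypothesis is a statement about modules, and an arbitrary spectral category $\cC$ has no internal notion of cofiber sequence. The clean formulation extends both sides to functors of a left $\cC$-module $N$---the target is $N\mapsto N(d)$ and the source is $N\mapsto B(N\circ F,\cA,\cC(F(-),d))$---and observes that both are exact in $N$ and that $\epsilon$ extends to a natural transformation between them. The class of $N$ for which this transformation is an equivalence is then thick and contains each $\cC(F(a),-)$, hence by hypothesis each $\cC(c,-)$. Note that with this phrasing no separate pass in the $d$-variable is needed; the single thick-closure argument already gives $\epsilon_{c,d}$ an equivalence for all $c$ and all $d$.
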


The condition of being surjective up to thick closure means that the representable functors $\cC(c,-)$ can be obtained from the representable functors $\cC(Fa,-)$ for $a \in \ob\cA$ by cofiber sequences and retracts, and similarly on the other side $\cC(-,c)$.

\cref{lem:base_change_euler} implies that when $F\colon\cA \to \cC$ is a Morita
equivalence, the induced map $\THH(F)$ is an equivalence.
The next result follows immediately:
\begin{thm}[See {\cite[5.9]{cp} and
    \cite[5.12]{blumberg_mandell_published}}] \label{cor:base_change_morita_isos}\label{morita_invariance_thh}
  Let $F\colon \mc A\to \mc C$ be a map of pointwise cofibrant spectral
  categories and $\cX$ be a $(\mc{C},\mc{C})$-bimodule. If $F$ is a
  Dwyer--Kan embedding and surjective up to thick closure, then the induced maps of spectra
  \[ \THH(F) \colon \THH(\mc{A})\arr \THH(\mc{C}), \qquad \THH(F) \colon  \THH(\mc A;\tensor[_F]{\cX}{_F})\arr \THH(\mc C;\cX) \]
  are equivalences.
\end{thm}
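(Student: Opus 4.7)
The plan is to combine the preceding Morita-equivalence criterion of \cite[5.12]{blumberg_mandell_published} with \cref{lem:base_change_euler} to reduce both statements to the purely formal observation that all maps in a bicategorical trace become invertible when the dual pair is a Morita equivalence.

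First I would invoke the cited lemma: since $F$ is a Dwyer--Kan embedding and surjective up to thick closure, it is a Morita equivalence. Unpacking \cref{morita_equivalence}, this means the dual pair $(\tensor[_F]{\cC}{}, \tensor{\cC}{_F})$ of \cref{prop:dual_pair} has coevaluation $\eta$ and evaluation $\epsilon$ that are isomorphisms in the relevant homotopy categories of bimodules.

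For the first map, \cref{lem:base_change_euler}(2) identifies $\THH(F)\colon \THH(\cA) \to \THH(\cC)$ with the Euler characteristic $\chi(\tensor[]{\cC}{_F})$. By \cref{defn:euler_characteristic}, this is the composite of $\sh{\eta}$, $\theta$, and $\sh{\epsilon}$ (with identity $2$-cell inserted). Since $\eta$ and $\epsilon$ are isomorphisms and $\theta$ is always an isomorphism by \cref{def:shadow}, the Euler characteristic is an isomorphism in the homotopy category of bimodules; as weak equivalences in this model structure are pointwise stable equivalences of spectra, $\THH(F)$ is an equivalence. For the second map, \cref{lem:base_change_euler}(1) identifies $\THH(F)\colon \THH(\cA;\tensor[_F]{\cX}{_F}) \to \THH(\cC;\cX)$ with the trace of the $2$-cell $\id \odot \epsilon$. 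Since $\epsilon$ is an isomorphism in the homotopy category, so is $\id \odot \epsilon$, because horizontal composition is already a functor on homotopy categories in $\mathbf{Bimod(SpCat)}$. Reading off \cref{defn:trace}, the trace of an isomorphism with respect to a Morita pair is a composite of four isomorphisms $\sh{\eta \odot \id}$, $\sh{\id \odot (\id \odot \epsilon)}$, $\theta$, $\sh{\id \odot \epsilon}$, and hence is itself an isomorphism, which again translates to an equivalence of spectra.

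The main obstacle is conceptual rather than technical: one must recognize that both instances of $\THH(F)$ are bicategorical traces in the dual pair arising from $F$, and that the hypotheses on $F$ are precisely the conditions that upgrade this dual pair to a Morita equivalence. Once that is in place, the invertibility of $\THH(F)$ is formal from the shadow axioms. In effect the substantive content has already been extracted in \cref{lem:base_change_euler} and the Blumberg--Mandell criterion, and the theorem is then a short corollary of the general fact that trace functors preserve isomorphisms when the dual pair is Morita.
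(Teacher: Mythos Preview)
Your proposal is correct and is essentially the same argument the paper gives: the paper states the Blumberg--Mandell criterion, observes that \cref{lem:base_change_euler} then makes $\THH(F)$ an equivalence whenever $F$ is a Morita equivalence, and records the theorem as an immediate consequence. One small slip of language: the Euler characteristic and trace are morphisms in the shadow's target category $\mathbf{T}$ (here the homotopy category of spectra), not in a homotopy category of bimodules, so the conclusion ``isomorphism in $\mathbf{T}$'' already means ``equivalence of spectra'' without appeal to the model structure on bimodules.
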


We can use the theorem to show that $\THH(A)$ is equivalent to $\THH(\tensor[^A]{\Perf}{})$.

\begin{example}\label{ex:classic_morita_equiv}
  If $A$ is a ring spectrum, then the inclusion of spectral categories
  $A \arr \tensor[^A]{\Perf}{}$ is a Dwyer--Kan embedding, and surjective up to thick closure essentially by the definition of $\tensor[^A]{\Perf}{}$.
  Thus
  \[ \THH(A) \stackrel\sim\arr \THH(\tensor[^A]{\Perf}{}). \]
  Furthermore, if $M$ is an $(A,A)$-bimodule, then along the map $A \to \tensor[^A]{\Perf}{}$ we have an equivalence of bimodules $M \to \tensor[^A]{\Mod}{_M}$ from \cref{ex:bimodule_smash_functor}, and hence an equivalence
  \begin{equation*}
    \THH(A;M) \stackrel\sim\arr \THH(\tensor[^A]{\Perf}{};\tensor[^A]{\Mod}{_M}).
  \end{equation*}
\end{example}

\begin{example}\label{ex:w_bullet_THH}
  In \cref{w_bullet_invariance} we saw that the iterated degeneracy maps
  $w_0\cC \to w_k\cC$ for a spectral Waldhausen
  category $\cC$ are Dwyer--Kan equivalences, and thus induce equivalences
  on $\THH$.
\end{example}

 \begin{example}
   Let $\cC$ and $\cD$ be spectral categories with chosen zero object,
   let $\cC \times \cD$ be their product in spectral categories, and let
   $\cC \vee \cD \subset \cC \times \cD$ be the full subcategory spanned by the
   pairs in which at least one coordinate is the zero object. Then the inclusion
   $\cC \vee \cD \arr \cC \times \cD$ is both a Dwyer--Kan embedding and
   surjective up to thick closure. Therefore we have an equivalence
 	\[
  \THH(\cC) \vee \THH(\cD) \stackrel\cong\arr \THH(\cC \vee \cD) \stackrel\sim\arr \THH(\cC \times \cD).
  \]
 	This gives a short proof of the nontrivial fact that $\THH$ preserves finite products.
 \end{example}

We can further strengthen \cref{lem:base_change_euler} if $\cX$ is of the form $\lMr{F}{\cD}{G}$.
\begin{prop} \label{base_change_beck_chevalley}
  Given a morphism of twistings
  $(I,J)\colon \lMr{F}{\cC/\cD}{G} \to \lMr{F'}{\cC'/\cD'}{G'}$
  there are induced maps\footnote{These are examples of \textbf{Beck--Chevalley maps}.}
  \begin{equation*}
    \lMr{F}{\cD}{G} \odot \lMr{I}{\cC'}{} \to
    \lMr{I}{\cC'}{} \odot \lMr{F'}{\cD'}{G'} \qqand
    \lMr{}{\cC'}{I} \odot \lMr{F}{\cD}{G}   \to
     \lMr{F'}{\cD'}{G'}  \odot\lMr{}{\cC'}{I}
  \end{equation*}
  whose traces are both the induced map
  \begin{equation*} \THH(I;J) \colon
    \THH(\cC;\tensor[_F]{\cD}{_G}) \arr
    \THH(\cC';\tensor[_{F'}]{{\cD'}}{_{G'}}) \end{equation*}
\end{prop}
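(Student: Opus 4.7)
The plan is to construct both Beck--Chevalley maps as simple composites and then identify each trace with $\THH(I;J)$ by combining \cref{lem:base_change_euler}(1) with a general naturality property of the trace. Let $P^* = \lMr{I}{\cC'}{}$ and $P = \lMr{}{\cC'}{I}$, which form a dual pair by \cref{prop:dual_pair} applied to $I\colon \cC\to \cC'$. Write $N = \lMr{F}{\cD}{G}$ and $M = \lMr{F'}{\cD'}{G'}$. The equalities $JF = F'I$ and $JG = G'I$ produce a canonical 2-cell of $(\cC,\cC)$-bimodules $\tilde J \colon N \to \lMr{F'I}{\cD'}{G'I}$, and by \cref{base_change_composition} the target identifies canonically with $P^* \odot M \odot P$. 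The two Beck--Chevalley maps are then the composites
\begin{align*}
    g \colon N \odot P^* &\xrightarrow{\tilde J \odot \id} P^* \odot M \odot P \odot P^* \xrightarrow{\id \odot \epsilon} P^* \odot M, \\
    f \colon P \odot N &\xrightarrow{\id \odot \tilde J} P \odot P^* \odot M \odot P \xrightarrow{\epsilon \odot \id} M \odot P.
\end{align*}

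The identification of $\trc(g)$ with $\THH(I;J)$ uses two inputs. First, \cref{lem:base_change_euler}(1), applied to the functor $I$ and the $(\cC',\cC')$-bimodule $\cX = M$, says that the trace of the evaluation $P^* \odot M \odot P \odot P^* \xrightarrow{\id \odot \epsilon} P^* \odot M$ with respect to $(P^*, P)$ is the induced map $\THH(I)\colon \THH(\cC; \tensor[_I]{M}{_I}) \to \THH(\cC'; M)$. Second, the trace is natural in precomposition in the ``endo'' slot: for any 2-cell $\alpha \colon N_1 \to N_2$ of $(\cC,\cC)$-bimodules and any $h \colon N_2 \odot P^* \to P^* \odot M$, unwinding \cref{defn:trace} and using functoriality of $\sh{-}$ together with naturality of $\eta$ and of horizontal composition gives $\trc(h \circ (\alpha \odot \id)) = \trc(h) \circ \sh{\alpha}$. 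Applied with $\alpha = \tilde J$ and $h$ the evaluation, this yields $\trc(g) = \THH(I) \circ \sh{\tilde J}$. Since $\sh{\tilde J}$ is precisely the map on cyclic bar constructions induced by $J$, i.e.\ $\THH(\id_\cC;J)$ for the morphism of twistings with $I = \id_\cC$, and since $(I,J) = (I,\id_{\cD'}) \circ (\id_\cC, J)$ as morphisms of twistings, the functoriality of $\THH$ gives $\THH(I;J) = \THH(I) \circ \THH(\id_\cC;J)$, which is exactly $\trc(g)$. The computation of $\trc(f)$ proceeds symmetrically using the other form of the trace from \cref{defn:trace}, with the roles of $\eta$ and $\epsilon$ swapped.

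The main obstacle is verifying the naturality property of the trace claimed above, and ensuring that the canonical isomorphism $\lMr{F'I}{\cD'}{G'I} \simeq P^* \odot M \odot P$ from \cref{base_change_composition} intertwines $\tilde J$ with the evaluation $\epsilon$ in the way required to reduce both composites to the setting of \cref{lem:base_change_euler}(1). These are formal but slightly tedious manipulations of the shadow calculus; once carried out, the result follows directly from \cref{lem:base_change_euler}(1) and the functoriality of $\THH$ on morphisms of twistings.
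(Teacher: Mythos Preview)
Your proof is correct and follows essentially the same route as the paper. Both arguments construct the Beck--Chevalley map by passing through the intermediate bimodule $\lMr{\widehat F}{\cD'}{\widehat G}$ with $\widehat F = JF = F'I$ and $\widehat G = JG = G'I$, then applying the evaluation for the dual pair $(\lMr{I}{\cC'}{},\lMr{}{\cC'}{I})$; and both identify the trace as the composite $\THH(I)\circ\THH(\id_\cC;J)$, invoking \cref{lem:base_change_euler}(1) for the $\THH(I)$ piece. The only difference is packaging: the paper encodes the passage $\cD\to\cD'$ via the coevaluation $\eta$ for the dual pair associated to $J$ and then cites \cite[Prop.~7.1]{ps:bicat} to split the trace, whereas you encode it as the single bimodule map $\tilde J$ and use the elementary naturality $\tr(h\circ(\alpha\odot\id))=\tr(h)\circ\sh{\alpha}$. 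Your version is slightly more self-contained, since that naturality is immediate from \cref{defn:trace} and functoriality of $\sh{-}$, while the paper's version outsources the same manipulation to the Ponto--Shulman reference. The ``tedious'' checks you flag (that $\tilde J$ agrees with the paper's $\eta$-insertion after the simplifications of \cref{base_change_composition}, and that $\sh{\tilde J}=\THH(\id_\cC;J)$) are indeed routine.
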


\begin{proof}
  Write $\widehat F \coloneqq JF$ and $\widehat G \coloneqq JG$, and observe that $\widehat F = F'I$ and $\widehat G = G'I$ by the commutativity of the diagram defining a morphism of twistings (\cref{def:twisting}).  We prove the proposition for
  the first map; the second is proved analogously.

  The desired map is the composite
  \begin{align*}
    \lMr{F}{\cD}{} \odot \lMr{}{\cD}{G} \odot \lMr{\cC}{\cC'}{} & \xarr{\id\odot
      \eta \odot \id} \lMr{F}{\cD}{} \odot \lMr{\cD}{\cD'}{} \odot
                                                                  \lMr{}{\cD'}{\cD} \odot \lMr{}{\cD}{G} \odot \lMr{\cC}{\cC'}{} \\
    & \oarr{\sim} \lMr{\widehat F}{\cD'}{\widehat G}
     \oarr{\sim} \lMr{\cC}{\cC'}{} \odot
      \lMr{F'}{\cD'}{} \odot \lMr{}{\cD'}{G'} \odot \lMr{}{\cC'}{\cC} \odot
      \lMr{\cC}{\cC'}{}
     \xarr{\id\odot \epsilon} \lMr{\cC}{\cC'}{} \odot \lMr{F'}{\cD'}{} \odot \lMr{}{\cD'}{G'},
  \end{align*}
  where the two isomorphisms of 1-cells are obtained using \cref{base_change_composition}.
  This map is now in a form where its trace agrees with the right-hand side of the
  equation in \cite[Proposition 7.1]{ps:bicat}, with $M = \lMr{\cC}{\cC'}{}$.
  Applying the proposition and simplifying implies that the
  trace of this map is
\begin{align*}
\sh{\lMr{F}{\cD}{G} }
	&\xarr{\ \sh{\id\odot \eta \odot \id}\ }
\sh{\lMr{\widehat F}{\cD'}{\widehat G}}
\xto{\ \tr(\id \odot \epsilon)\ } \sh{ \lMr{F'}{\cD'}{G'}}.
\end{align*}
Applying the definition of the shadow $\sh{-}$ on the bicategory of spectral categories and using \cref{lem:base_change_euler} with $\cX = \tensor[_{F'}]{{\cD'}}{_{G'}}$, the composite is
 \begin{equation*}
	\THH(\cC;\tensor[_F]{\cD}{_G}) \xarr{\THH(\id_{\cC}; J)}
        \THH(\cC;\tensor[_{\widehat F}]{{\cD'}}{_{\widehat G}}) \xarr{\THH(I)} \THH(\cC';\tensor[_{F'}]{{\cD'}}{_{G'}}),
\end{equation*}
where the first map applies $J$ to mapping spectra in $\cD$ and the second is the map induced by $I$.
\end{proof}

Putting \cref{morita_invariance_thh} and \cref{base_change_beck_chevalley} together gives the
following:

\begin{cor}\label{equivalence_on_twisted_thh}
  Let $(I,J)\colon \lMr{F}{\cC/\cD}{G} \to \lMr{F'}{\cC'/\cD'}{G'}$ be a morphism of
  twistings where $I$ and $J$ are Dwyer--Kan embeddings and $I$ is surjective up to thick closure.  Then the induced map
  \begin{equation*} \THH(I;J) \colon \THH(\cC;\tensor[_F]{\cD}{_G}) \arr
    \THH(\cC';\tensor[_{F'}]{{\cD'}}{_{G'}}) \end{equation*} is an equivalence.
\end{cor}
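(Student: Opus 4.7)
The plan is to factor the map $\THH(I;J)$ as the two-step composite
\[
\THH(\cC;\tensor[_F]{\cD}{_G}) \xarr{\THH(\id_{\cC};J)} \THH(\cC;\tensor[_{\widehat F}]{{\cD'}}{_{\widehat G}}) \xarr{\THH(I)} \THH(\cC';\tensor[_{F'}]{{\cD'}}{_{G'}})
\]
that already appears at the end of the proof of \cref{base_change_beck_chevalley}, where $\widehat F \coloneqq JF = F'I$ and $\widehat G \coloneqq JG = G'I$ by commutativity of the defining square of a morphism of twistings. The goal is then to show each factor is an equivalence of spectra and conclude.

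For the second factor, I would note that it is precisely the base-change map associated to the spectral functor $I\colon \cC \to \cC'$ applied to the coefficient bimodule $\cX = \tensor[_{F'}]{\cD'}{_{G'}}$, under the identification
\[
\tensor[_I]{\bigl(\tensor[_{F'}]{\cD'}{_{G'}}\bigr)}{_I}(c,c') = \cD'(F'Ic, G'Ic') = \tensor[_{\widehat F}]{\cD'}{_{\widehat G}}(c,c').
\]
The hypothesis that $I$ is a Dwyer--Kan embedding surjective up to thick closure then makes this exactly the second equivalence of \cref{morita_invariance_thh}, so this factor is an equivalence.

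For the first factor, I would verify that applying $J$ to mapping spectra of $\cD$ defines a pointwise equivalence of $(\cC,\cC)$-bimodules $\tensor[_F]{\cD}{_G} \to \tensor[_{\widehat F}]{\cD'}{_{\widehat G}}$: its component on $(c,c')$ is $J\colon \cD(Fc,Gc') \to \cD'(JFc,JGc') = \cD'(\widehat F c, \widehat G c')$, which is a stable equivalence of spectra because $J$ is a Dwyer--Kan embedding. The bimodule structure is preserved because $J$ is spectral, and because the $\cC$-actions on the target are taken through $\widehat F = JF$ and $\widehat G = JG$. Since the 0-cell $\cC$ is pointwise cofibrant, the cyclic bar construction carries pointwise equivalences of bimodules to equivalences of spectra on geometric realization, so $\THH(\id_\cC;J)$ is an equivalence.

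The main obstacle here is purely bookkeeping: correctly identifying the three twisted bimodules in the factorization and matching the two factors to the hypotheses already available. There is no genuinely new homotopical content beyond \cref{morita_invariance_thh} (which provides the equivalence for the $I$-factor that is Morita up to thick closure) and the standard homotopy invariance of the cyclic bar construction on pointwise cofibrant spectral categories (which handles the $J$-factor, where $J$ is only required to be a Dwyer--Kan embedding).
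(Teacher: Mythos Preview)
Your proposal is correct and follows exactly the approach the paper intends: the paper's entire proof is the single sentence ``Putting \cref{morita_invariance_thh} and \cref{base_change_beck_chevalley} together gives the following,'' and you have correctly unpacked this into the factorization at the end of the proof of \cref{base_change_beck_chevalley}, handling the $I$-factor via the second equivalence of \cref{morita_invariance_thh} and the $J$-factor via the pointwise-equivalence invariance of the cyclic bar construction.
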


\section{The additivity theorem for $\THH$, revisited}\label{sec:add}

\subsection{Additivity without coefficients}

In this section we prove:
\begin{thm}\label{cor:small_add}
  There is an equivalence of spectra
  \[ (\iota_j)_{j = 1}^{k} \colon \ \bigvee_{j=1}^k \THH(\spcat{\cC}) \stackrel\sim\arr \THH(S_k\spcat{\cC}). \]
\end{thm}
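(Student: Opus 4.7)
The strategy is to realize the assembly map $(\iota_j)_{j=1}^k$ as $\THH$ applied to a Morita equivalence and to invoke \cref{cor:base_change_morita_isos} together with the additivity of $\THH$ over wedges of spectral categories (the example following \cref{ex:w_bullet_THH}, and its evident $k$-fold generalization). In this way the theorem becomes a formal consequence of the two properties of $\THH$ highlighted in the introduction: that it is a shadow on the bicategory of spectral categories, and that Dwyer--Kan embeddings that are surjective up to thick closure induce equivalences on $\THH$.

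First I would define, for each $1 \leq j \leq k$, a spectral functor $\iota_j \colon \cC \to S_k\cC$ sending $c$ to the elementary filtration
\[
0 = A_{0,1} = \cdots = A_{0,j-1} \rightarrowtail A_{0,j} = A_{0,j+1} = \cdots = A_{0,k} = c,
\]
with all other entries $A_{i,j'}$ of the diagram determined by the requirement that they be the subquotients $A_{0,j'}/A_{0,i}$. These assemble into a single spectral functor
\[
\iota \colon \textstyle\bigvee_{j=1}^k \cC \longrightarrow S_k \cC,
\]
where $\bigvee_{j=1}^k \cC \subset \cC^{\times k}$ is the full subcategory of tuples with at most one nonzero entry.

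Next I would verify that $\iota$ is a Dwyer--Kan embedding. Using the Moore end formula of \cref{thm:moore_end}, the mapping spectrum $S_k\cC(\iota_j(c), \iota_{j'}(c'))$ decomposes as an iterated equalizer indexed over $[k]\times[k]$. When $j = j'$, the positions where both $\iota_j(c)$ and $\iota_{j'}(c')$ are nonzero form a rectangle on which every transition map is the identity, so the equalizer collapses to a single copy of $\cC(c,c')$; when $j \neq j'$, at least one of the transition maps factors through the zero object, and by \cref{def:spectrally_enriched_waldhausen_category}\eqref{def:spectrally_enriched_waldhausen_category_zero} the corresponding factor is contractible, forcing the full mapping spectrum to be contractible. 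These exactly match the mapping spectra in $\bigvee_{j=1}^k \cC$.

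I would then verify that $\iota$ is surjective up to thick closure. For any $\sigma = (a_1 \rightarrowtail \cdots \rightarrowtail a_k)$ in $S_k\cC$, the filtration itself produces a cofiber sequence
\[
\iota_1(a_1) \rightarrowtail \sigma \twoheadrightarrow \sigma'
\]
in $S_k\cC$, where $\sigma'$ has $0$ in its first slot and $a_i/a_1$ in the remaining slots. Iterating, $\sigma$ is assembled from the elementary pieces $\iota_j(a_j/a_{j-1})$ via $k-1$ cofiber sequences, which suffices. Applying \cref{cor:base_change_morita_isos} gives an equivalence $\THH(\iota)\colon \THH(\bigvee_{j=1}^k \cC) \xrightarrow{\sim} \THH(S_k\cC)$, and composing with the wedge-preservation equivalence $\bigvee_{j=1}^k \THH(\cC) \xrightarrow{\sim} \THH(\bigvee_{j=1}^k \cC)$ (whose $k$-fold version follows from the same Morita argument used in the two-fold case) yields the theorem.

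The main obstacle will be the Moore end computation for the mapping spectra between the $\iota_j(c)$'s. Showing that the cross-term mapping spectra are genuinely contractible (not merely $\pi_0$-trivial), and that the equalizer cleanly reduces to $\cC(c,c')$ on the diagonal, requires careful bookkeeping with the indexing data and a genuine use of \cref{def:spectrally_enriched_waldhausen_category}\eqref{def:spectrally_enriched_waldhausen_category_zero} together with \cref{def:spectrally_enriched_waldhausen_category}\eqref{def:spectrally_enriched_waldhausen_category_pushout}; everything else in the argument is a formal consequence of the bicategorical framework established in \S\ref{sec:spectral_bimodules}.
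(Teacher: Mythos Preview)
Your approach has a genuine gap: the functor $\iota\colon \bigvee_{j=1}^k \cC \to S_k\cC$ is \emph{not} a Dwyer--Kan embedding. The cross-term mapping spectra are contractible only in one direction. Concretely, take $k=2$: a morphism in $S_2\uncat\cC$ from $\iota_2(c) = (0 \rightarrowtail c)$ to $\iota_1(c') = (c' \xrightarrow{=} c')$ is just an arbitrary map $c \to c'$ (the compatibility squares impose no constraint because one leg of each square factors through $0$). The same Moore-end computation you propose shows that $S_2\cC(\iota_2(c),\iota_1(c')) \simeq \cC(c,c')$, not $\ast$. More generally, for $j>j'$ one finds $S_k\cC(\iota_j(c),\iota_{j'}(c')) \simeq \cC(c,c')$; only the $j<j'$ direction vanishes. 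This is the familiar ``upper-triangular'' phenomenon: the image of $\iota$ sits inside $S_k\cC$ as the diagonal of a triangular category, and the strictly upper-triangular homs are nonzero.

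This is exactly the obstruction that forces the paper's argument to use the second structural property of $\THH$ you mentioned but did not actually invoke: that $\THH$ preserves cofiber sequences in the bimodule variable. The paper proceeds by induction on $k$, producing in \cref{sdot_adjunctions} a cofiber sequence of $(S_k\cC,S_k\cC)$-bimodules
\[
\lMr{}{(S_k\cC)}{s_{k-1}} \odot \lMr{s_{k-1}}{(S_k\cC)}{} \xrightarrow{\epsilon} S_k\cC \xrightarrow{\eta} \lMr{\pi_k}{\cC}{} \odot \lMr{}{\cC}{\pi_k},
\]
whose shadow is the split cofiber sequence $\THH(S_{k-1}\cC) \to \THH(S_k\cC) \to \THH(\cC)$. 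The off-diagonal homs that break your DK-embedding claim are precisely what this cofiber sequence is designed to kill. Your thick-closure argument is correct and is morally present in the paper's setup, but Morita invariance alone cannot carry the proof.
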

This is similar in spirit to existing additivity results, such as \cite[1.6.20]{dundas_mccarthy} and \cite[IV.2.5.8]{dundas_goodwillie_mccarthy} which use a category of upper-triangular matrices,
\cite[Thm.~10.8]{blumberg_mandell_published} which proves a version for DG-categories,
and \cite[3.1.1]{blumberg_mandell_unpublished} which proves additivity for $\textup{WTHH}(\cC) \coloneqq \THH(S_\bullet \cC)$, in other words after one copy of $S_\bullet$ has been applied.

Our approach to \cref{cor:small_add} is fundamentally an adaptation of a technique from \cite[\S 7]{blumberg_mandell_published}, made more conceptual by the machinery of shadows.

\begin{defn}
  Let $\cC$ be a spectral Waldhausen category.  By
	\cite[\swcref{Theorem 4.1}{\cref{swc-thm:moore_end}}]
	{clmpz-specWaldCat}
  there is a canonical equivalence $S_1\cC \simeq \cC$.  Let $s_{k-1}\colon S_{k-1}\cC \to S_k \cC$ and
  $d_k\colon S_k \cC \to S_{k-1}\cC$ be the last degeneracy and face functors,
  respectively. Let $\pi_k\colon S_k \cC \to S_1\cC \stackrel\sim\to \cC$ be the
  induced by $d_0^{k-1}$ and let $\iota_k\colon \cC \stackrel\sim\to S_1 \cC \to S_k\cC$
  be induced by $s_0^{k-1}$.  More generally, for $1 \leq j \leq k$ write $\iota_j\colon \cC \to S_k\cC$
  for the functor induced by $s_0^{j-1}s_1^{k-j}$; these are the functors inducing the equivalence in \cref{cor:small_add}.
\end{defn}

The next proposition is the main ingredient for the proof of the additivity
theorem. As the proof is technical, we postpone it until
\S\ref{sec:important_dual_pair}.

\begin{prop}\label{sdot_adjunctions}
  The coevaluation map of the dual pair
  $(\lMr{s_{k-1}}{(S_k\cC)}{},\bcl{}{s_{k-1}}{(S_k\cC)})$
  and the evaluation map of the dual pair
  $(\lMr{\pi_k}{\cC}{},\bcl{}{\pi_k}{\cC})$ are pointwise
  equivalences of bimodules.  The other evaluation map and coevaluation map
  induce a homotopy cofiber sequence of $(S_k\spcat\cC,S_k\spcat\cC)$-bimodules
  \[ \lMr{}{(S_k\cC)}{s_{k-1}} \odot \lMr{s_{k-1}}{(S_k\cC)}{} \xarr{\ \epsilon \ } S_k\cC \xarr{\ \eta \ }
    \lMr{\pi_k}{\cC}{} \odot \lMr{}{\cC}{\pi_k}.\]
\end{prop}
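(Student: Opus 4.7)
The plan is to exploit two adjunctions between structural functors of the $S_\bullet$-construction: $s_{k-1} \dashv d_k$ and $\pi_k \dashv \iota_k$. The unit of the first is the identity (so $s_{k-1}$ is fully faithful); its counit is the natural cofibration $\varepsilon_{c'}\colon s_{k-1} d_k c' \hookrightarrow c'$ in $S_k\uncat\cC$ which is the identity on entries $0, \dotsc, k-1$ and the cofibration $c'_{k-1} \hookrightarrow c'_k$ in position $k$. The counit of the second is the identity ($\pi_k\iota_k = \id$); its unit is the natural quotient $\eta_{c'}\colon c' \twoheadrightarrow \iota_k \pi_k c'$ which collapses the first $k-1$ entries to $0$ and projects onto $c'_k/c'_{k-1}$ in position $k$. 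The central observation driving the argument is that these two natural transformations fit into a single cofiber sequence $s_{k-1}d_k c' \hookrightarrow c' \twoheadrightarrow \iota_k \pi_k c'$ in $S_k\uncat\cC$.

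Promoting both adjunctions to the spectral enrichment would yield pointwise bimodule isomorphisms $\lMr{s_{k-1}}{(S_k\cC)}{} \simeq \lMr{}{(S_{k-1}\cC)}{d_k}$ and $\lMr{\pi_k}{\cC}{} \simeq \lMr{}{(S_k\cC)}{\iota_k}$, exhibiting the right-hand factor of each bar construction in the two dual pairs as a representable module. The standard contraction $B(X, \cD, \cD(-, y)) \simeq X(y)$ then collapses each of the four bar constructions to an ordinary mapping spectrum. In particular, the coevaluation of the first dual pair reduces to $S_{k-1}\cC(a, a') \to S_k\cC(s_{k-1}a, s_{k-1}a')$, an equivalence by $d_k s_{k-1} = \id$. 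The evaluation of the second dual pair reduces to $B(\lMr{}{\cC}{\pi_k}, S_k\cC, \lMr{}{(S_k\cC)}{\iota_k}) \simeq \lMr{}{\cC}{\pi_k \iota_k} = \cC$, an equivalence by $\pi_k\iota_k = \id$.

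The same reductions identify the remaining evaluation of dual pair~1 with post-composition by the counit $\varepsilon_{c'}$, giving $S_k\cC(c, s_{k-1}d_k c') \to S_k\cC(c, c')$, and the remaining coevaluation of dual pair~2 with post-composition by the unit $\eta_{c'}$, giving $S_k\cC(c, c') \to S_k\cC(c, \iota_k\pi_k c') \simeq \cC(\pi_k c, \pi_k c')$. Applying the pushout axiom of \cref{def:spectrally_enriched_waldhausen_category} to the cofiber sequence $s_{k-1}d_k c' \hookrightarrow c' \twoheadrightarrow \iota_k\pi_k c'$ in the second variable, with $c$ fixed in the first, then yields exactly the cofiber sequence of mapping spectra asserted by the proposition, and hence the cofiber sequence of bimodules.

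The hard part is promoting the elementary hom-set adjunctions $s_{k-1}\dashv d_k$ and $\pi_k\dashv\iota_k$ to genuine spectral adjunctions, that is, upgrading the hom-set bijections to equivalences of mapping spectra in $\spcat{S_k\cC}$. This requires unpacking the Moore end definition of $\spcat{\Fun([k]\times[k], \cC)}$ and verifying that the membership constraints defining $S_k\cC$ as a full spectral subcategory do not obstruct the adjunction isomorphism at the level of spectra. This technical bookkeeping, rather than the structural outline above, is presumably what necessitates postponing the proof to \S\ref{sec:important_dual_pair}.
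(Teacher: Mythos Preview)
Your proposal is correct and follows the same strategy as the paper: exploit the base-category adjunctions $s_{k-1}\dashv d_k$ and $\pi_k\dashv\iota_k$, use the cofiber sequence $s_{k-1}d_k \hookrightarrow \id \twoheadrightarrow \iota_k\pi_k$ of functors, reduce the four bar constructions to mapping spectra, and invoke the pushout axiom of \cref{def:spectrally_enriched_waldhausen_category}. Your last paragraph correctly anticipates that upgrading the hom-set adjunctions to equivalences of mapping spectra (the paper's \cref{prop:spectAdj}) requires unpacking the Moore end.

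There is, however, a second technical hurdle you pass over. Your sentence ``the same reductions identify the remaining evaluation of dual pair~1 with post-composition by the counit $\varepsilon_{c'}$'' is not automatic. Post-composition by a morphism $\varepsilon_{c'}$ of the \emph{base} category $S_k\uncat\cC$ defines a map of spectra $S_k\cC(c,s_{k-1}d_kc')\to S_k\cC(c,c')$ compatible with the left $S_k\cC$-action, but \emph{not} with the right action---naturality of $\varepsilon$ holds in $\uncat\cC$, not in the spectral enrichment. So this is not a map of $(S_k\cC,S_k\cC)$-bimodules, whereas the evaluation $\epsilon$ is. The paper explicitly introduces a notion of ``pointwise map of bimodules'' for exactly this reason, and \cref{lem:annoying_diagrams} is devoted to showing that the bimodule map $\epsilon$ and the pointwise map $\epsilon_0$ agree after the reductions you describe. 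The proof uses an auxiliary spectral category $\widetilde{S_k\cC}\subseteq\spcat{\Fun([1]\times[k]^2,\cC)}$ with restriction functors $r_0,r_1$ to interpolate between the two. This is not a gap in your argument so much as a second piece of technical bookkeeping of the same flavor as the one you flagged.
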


Theorem~\ref{cor:small_add} follows by induction from the following proposition:

\begin{prop}\label{prop:small_add}
The spectral functors $s_{k-1}$ and $\iota_k$ induce an equivalence
\[ \THH(S_{k-1}\spcat{\cC}) \vee \THH(\spcat{\cC}) \stackrel\sim\arr \THH(S_k\spcat{\cC}). \]
\end{prop}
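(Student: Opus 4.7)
The plan is to apply $\THH(S_k\spcat\cC;-)$ to the cofiber sequence of $(S_k\spcat\cC,S_k\spcat\cC)$-bimodules provided by \cref{sdot_adjunctions},
\[ \lMr{}{(S_k\cC)}{s_{k-1}} \odot \lMr{s_{k-1}}{(S_k\cC)}{} \xarr{\epsilon} S_k\spcat\cC \xarr{\eta} \lMr{\pi_k}{\cC}{} \odot \lMr{}{\cC}{\pi_k}. \]
Since $\THH$ preserves cofiber sequences in the bimodule slot, this produces a cofiber sequence of spectra. The goal is then to identify the two outer terms with $\THH(S_{k-1}\spcat\cC)$ and $\THH(\spcat\cC)$ in such a way that the two maps in the cofiber sequence become $\THH(s_{k-1})$ and $\THH(\pi_k)$, after which a simplicial identity forces the sequence to split.

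For the identifications, first invoke cyclicity of the shadow ($\theta\colon \sh{M \odot N} \cong \sh{N \odot M}$) to re-express the outer terms as
\[ \THH(S_{k-1}\spcat\cC;\, \lMr{s_{k-1}}{(S_k\cC)}{} \odot \lMr{}{(S_k\cC)}{s_{k-1}}) \quad\text{and}\quad \THH(\spcat\cC;\, \lMr{}{\cC}{\pi_k} \odot \lMr{\pi_k}{\cC}{}). \]
Now apply the remaining content of \cref{sdot_adjunctions}: the coevaluation of the first dual pair is a pointwise equivalence $S_{k-1}\spcat\cC \xarr{\sim} \lMr{s_{k-1}}{(S_k\cC)}{} \odot \lMr{}{(S_k\cC)}{s_{k-1}}$, and the evaluation of the second is a pointwise equivalence $\lMr{}{\cC}{\pi_k} \odot \lMr{\pi_k}{\cC}{} \xarr{\sim} \spcat\cC$. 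Applying $\THH$ yields the desired equivalences to $\THH(S_{k-1}\spcat\cC)$ and $\THH(\spcat\cC)$.

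Under these identifications the map $\THH(S_k\spcat\cC) \to \THH(\spcat\cC)$ factors as the composite
\[ \sh{U_{S_k\cC}} \xarr{\sh{\eta}} \sh{\lMr{\pi_k}{\cC}{} \odot \lMr{}{\cC}{\pi_k}} \xarr{\theta} \sh{\lMr{}{\cC}{\pi_k} \odot \lMr{\pi_k}{\cC}{}} \xarr{\sh{\epsilon}} \sh{U_\cC}, \]
which is precisely the formula for the Euler characteristic $\chi(\lMr{}{\cC}{\pi_k})$, and this equals $\THH(\pi_k)$ by \cref{lem:base_change_euler}(2). The analogous argument shows the left-hand map is $\THH(s_{k-1})$. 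Finally, the simplicial identity $\pi_k \circ \iota_k = \id_\cC$ gives $\THH(\pi_k) \circ \THH(\iota_k) = \id$, so $\THH(\iota_k)$ is a section of $\THH(\pi_k)$. The cofiber sequence therefore splits in the stable homotopy category to give
\[ \THH(S_{k-1}\spcat\cC) \vee \THH(\spcat\cC) \xarr{(\THH(s_{k-1}),\, \THH(\iota_k))} \THH(S_k\spcat\cC), \]
as claimed. The real content is packaged inside \cref{sdot_adjunctions} (and that is where the technical obstacle lives); everything in this proof is then a formal bicategorical manipulation using only cyclicity and the Euler characteristic formula.
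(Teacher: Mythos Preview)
Your proof is correct and follows essentially the same approach as the paper: apply $\THH(S_k\spcat\cC;-)$ to the cofiber sequence from \cref{sdot_adjunctions}, use cyclicity of the shadow together with the coevaluation/evaluation equivalences there to identify the outer terms, invoke \cref{lem:base_change_euler} to recognize the maps as $\THH(s_{k-1})$ and $\THH(\pi_k)$, and split via $\pi_k\iota_k=\id$. The paper merely organizes the same steps in the reverse order, starting from the Euler-characteristic description of $\THH(s_{k-1})$ and $\THH(\pi_k)$ and then observing that the inner composite is the image of the cofiber sequence under $\THH$.
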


\cref{fig:pic_shadows_add} is the version of \cref{fig:shadow_single} for \cref{cor:small_add}.

\begin{figure}[h]
     \centering
	\hspace{1in}
     \begin{subfigure}[b]{0.25\textwidth}
         \centering
	\tdplotsetmaincoords{90}{70}
         \begin{tikzpicture}[tdplot_main_coords]
%\draw[thick,->] (0,0,0) -- (1,0,0) node[anchor=north east]{$x$};
%\draw[thick,->] (0,0,0) -- (0,1,0) node[anchor=north west]{$y$};
%\draw[thick,->] (0,0,0) -- (0,0,1) node[anchor=south]{$z$};

\def\ra{1}
\def\aa{10}
\def\a1{(\aa+60)}
\def\ab{(\a1+50)}
\def\ac{(\ab+70)}
\def\ad{(\ac+60)}
\def\ae{(\ad+60)}
\def\af{(\ae+60)}
%circle
%\draw [ domain=\aa:\af,smooth,variable =\x, cod, very thick] plot ({\ra*sin \x},0,{\ra* cos \x}); 
%shifted circles 
\draw [ domain=\ac:\af ,smooth,variable =\x, gray] plot ({(\ra+.5)*sin \x},.5,{(\ra+.5)* cos \x}); 
\draw [ domain=\ac:\af,smooth,variable =\x, gray] plot ({(\ra+.75)*sin \x},.25,{(\ra+.75)* cos \x}); 
\draw [ domain=\ac:\af ,smooth,variable =\x, gray] plot ({(\ra+.25)*sin \x},.75,{(\ra+.25)* cos \x}); 
\draw [ domain=\ac:\af,smooth,variable =\x, gray] plot ({(\ra+1)*sin \x},0,{(\ra+1)* cos \x}); 
\draw [ domain=\ac:\af ,smooth,variable =\x, gray] plot ({\ra*sin \x},1,{\ra* cos \x}); 
%flags

\filldraw[draw=coa, very thick,fill=coa, opacity=.9] ({\ra*sin \aa},0,{\ra* cos \aa})--({(\ra+1)*sin \aa},0,{(\ra+1)* cos \aa})--({\ra*sin \aa},1,{\ra* cos \aa})--({\ra*sin \aa},0,{\ra* cos \aa});

%edge nodes
\node[circle,draw=black, fill=coc, inner sep=0pt,minimum size=3pt] at ({(\ra+1)*sin \aa},0,{(\ra+1)* cos \aa}) {};
\node[circle,draw=black, fill=coc, inner sep=0pt,minimum size=3pt] at ({(\ra+.75)*sin \aa},.25,{(\ra+.75)* cos \aa}) {};
\node[circle,draw=black, fill=coc, inner sep=0pt,minimum size=3pt] at ({(\ra+.5)*sin \aa},.5,{(\ra+.5)* cos \aa}) {};
\node[circle,draw=black, fill=coc, inner sep=0pt,minimum size=3pt] at ({(\ra+.25)*sin \aa},.75,{(\ra+.25)* cos \aa}) {};
\node[circle,draw=black, fill=coc, inner sep=0pt,minimum size=3pt] at ({(\ra+0)*sin \aa},1,{(\ra+0)* cos \aa}) {};

%%circle
\draw [ domain=\aa:\af ,smooth,variable =\x, cod, very thick] plot ({\ra*sin \x},0,{\ra* cos \x}); 
%shifted circles 
\draw [ domain=\aa:\ac ,smooth,variable =\x, gray] plot ({(\ra+.5)*sin \x},.5,{(\ra+.5)* cos \x}); 
\draw [ domain=\aa:\ac,smooth,variable =\x, gray] plot ({(\ra+.75)*sin \x},.25,{(\ra+.75)* cos \x}); 
\draw [ domain=\aa:\ac ,smooth,variable =\x, gray] plot ({(\ra+.25)*sin \x},.75,{(\ra+.25)* cos \x}); 
\draw [ domain=\aa:\ac,smooth,variable =\x, gray] plot ({(\ra+1)*sin \x},0,{(\ra+1)* cos \x}); 
\draw [ domain=\aa:\ac ,smooth,variable =\x, gray] plot ({\ra*sin \x},1,{\ra* cos \x});

%base nodes
\node[circle,draw=black, fill=cob, inner sep=0pt,minimum size=3pt] at ({\ra*sin \aa},0,{\ra* cos \aa}) {};
\end{tikzpicture}
         \caption{$\THH(S_k\spcat{\cC})$}
         \label{fig:shadow_flag}
     \end{subfigure}
     \hfill
     \begin{subfigure}[b]{0.25\textwidth}
         \centering
	\tdplotsetmaincoords{90}{70}
         \begin{tikzpicture}[tdplot_main_coords]
%\draw[thick,->] (0,0,0) -- (1,0,0) node[anchor=north east]{$x$};
%\draw[thick,->] (0,0,0) -- (0,1,0) node[anchor=north west]{$y$};
%\draw[thick,->] (0,0,0) -- (0,0,1) node[anchor=south]{$z$};

\def\ra{1}
\def\rb{2}
\def\aa{10}
\def\a1{(\aa+60)}
\def\ab{(\a1+50)}
\def\ac{(\ab+70)}
\def\ad{(\ac+60)}
\def\ae{(\ad+60)}
\def\af{(\ae+60)}
%circle
%\draw [ domain=\aa:\af,smooth,variable =\x, cod, very thick] plot ({\ra*sin \x},0,{\ra* cos \x}); 
%shifted circles 
\draw [ domain=\ac:\af ,smooth,variable =\x, gray] plot ({(\ra+.5)*sin \x},.5*\rb,{(\ra+.5)* cos \x}); 
\draw [ domain=\ac:\af,smooth,variable =\x, gray] plot ({(\ra+.75)*sin \x},.25*\rb,{(\ra+.75)* cos \x}); 
\draw [ domain=\ac:\af ,smooth,variable =\x, gray] plot ({(\ra+.25)*sin \x},.75*\rb,{(\ra+.25)* cos \x}); 
\draw [ domain=\ac:\af,smooth,variable =\x, gray] plot ({(\ra+1)*sin \x},0*\rb,{(\ra+1)* cos \x}); 
\draw [ domain=\ac:\af ,smooth,variable =\x, gray] plot ({\ra*sin \x},1*\rb,{\ra* cos \x}); 
%%flags
%
%\filldraw[draw=coa, very thick,fill=coa, opacity=.9] ({\ra*sin \aa},0,{\ra* cos \aa})--({(\ra+1)*sin \aa},0,{(\ra+1)* cos \aa})--({\ra*sin \aa},1,{\ra* cos \aa})--({\ra*sin \aa},0,{\ra* cos \aa});
%
%\filldraw[draw=coa, very thick,fill=coa, opacity=.9] ({\ra*sin \a1},0,{\ra* cos \a1})--({(\ra+1)*sin \a1},0,{(\ra+1)* cos \a1})--({\ra*sin \a1},1,{\ra* cos \a1})--({\ra*sin \a1},0,{\ra* cos \a1});
%
%\filldraw[draw=coa, very thick,fill=coa, opacity=.9] ({\ra*sin \ab},0,{\ra* cos \ab})--({(\ra+1)*sin \ab},0,{(\ra+1)* cos \ab})--({\ra*sin \ab},1,{\ra* cos \ab})--({\ra*sin \ab},0,{\ra* cos \ab});
%
%\filldraw[draw=coa, very thick,fill=coa, opacity=.9] ({\ra*sin \ac},0,{\ra* cos \ac})--({(\ra+1)*sin \ac},0,{(\ra+1)* cos \ac})--({\ra*sin \ac},1,{\ra* cos \ac})--({\ra*sin \ac},0,{\ra* cos \ac});
%
%\filldraw[draw=coa, very thick,fill=coa, opacity=.9] ({\ra*sin \ad},0,{\ra* cos \ad})--({(\ra+1)*sin \ad},0,{(\ra+1)* cos \ad})--({\ra*sin \ad},1,{\ra* cos \ad})--({\ra*sin \ad},0,{\ra* cos \ad});
%
%\filldraw[draw=coa, very thick,fill=coa, opacity=.9] ({\ra*sin \ae},0,{\ra* cos \ae})--({(\ra+1)*sin \ae},0,{(\ra+1)* cos \ae})--({\ra*sin \ae},1,{\ra* cos \ae})--({\ra*sin \ae},0,{\ra* cos \ae});

%edge nodes
\node[circle,draw=black, fill=coc, inner sep=0pt,minimum size=3pt] at ({(\ra+1)*sin \aa},0*\rb,{(\ra+1)* cos \aa}) {};
\node[circle,draw=black, fill=coc, inner sep=0pt,minimum size=3pt] at ({(\ra+.75)*sin \aa},.25*\rb,{(\ra+.75)* cos \aa}) {};
\node[circle,draw=black, fill=coc, inner sep=0pt,minimum size=3pt] at ({(\ra+.5)*sin \aa},.5*\rb,{(\ra+.5)* cos \aa}) {};
\node[circle,draw=black, fill=coc, inner sep=0pt,minimum size=3pt] at ({(\ra+.25)*sin \aa},.75*\rb,{(\ra+.25)* cos \aa}) {};
\node[circle,draw=black, fill=coc, inner sep=0pt,minimum size=3pt] at ({(\ra+0)*sin \aa},1*\rb,{(\ra+0)* cos \aa}) {};
%%circle
%\draw [ domain=\aa:\af ,smooth,variable =\x, cod, very thick] plot ({\ra*sin \x},0,{\ra* cos \x}); 
%shifted circles 
\draw [ domain=\aa:\ac ,smooth,variable =\x, gray] plot ({(\ra+.5)*sin \x},.5*\rb,{(\ra+.5)* cos \x}); 
\draw [ domain=\aa:\ac,smooth,variable =\x, gray] plot ({(\ra+.75)*sin \x},.25*\rb,{(\ra+.75)* cos \x}); 
\draw [ domain=\aa:\ac ,smooth,variable =\x, gray] plot ({(\ra+.25)*sin \x},.75*\rb,{(\ra+.25)* cos \x}); 
\draw [ domain=\aa:\ac,smooth,variable =\x, gray] plot ({(\ra+1)*sin \x},0*\rb,{(\ra+1)* cos \x}); 
\draw [ domain=\aa:\ac ,smooth,variable =\x, gray] plot ({\ra*sin \x},1*\rb,{\ra* cos \x});

%base nodes
%\node[circle,draw=black, fill=cob, inner sep=0pt,minimum size=3pt] at ({\ra*sin \aa},0,{\ra* cos \aa}) {};
%\node[circle,draw=black, fill=cob, inner sep=0pt,minimum size=3pt] at ({\ra*sin \a1},0,{\ra* cos \a1}) {};
%\node[circle,draw=black, fill=cob, inner sep=0pt,minimum size=3pt] at ({\ra*sin \ab},0,{\ra* cos \ab}) {};
%\node[circle,draw=black, fill=cob, inner sep=0pt,minimum size=3pt] at ({\ra*sin \ac},0,{\ra* cos \ac}) {};
%\node[circle,draw=black, fill=cob, inner sep=0pt,minimum size=3pt] at ({\ra*sin \ad},0,{\ra* cos \ad}) {};
%\node[circle,draw=black, fill=cob, inner sep=0pt,minimum size=3pt] at ({\ra*sin \ae},0,{\ra* cos \ae}) {};
\end{tikzpicture}
         \caption{$ \bigvee_{i=1}^k \THH(\spcat{\cC}) $}
         \label{fig:shadow_wo_flag}
     \end{subfigure}
	\hspace{1in}
        \caption{Graphical representations of \cref{cor:small_add}}
        \label{fig:pic_shadows_add}
      \end{figure}
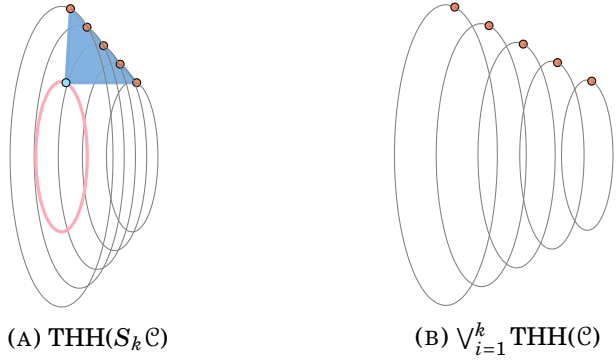

\begin{proof}
  By \cref{lem:base_change_euler}, the induced map $\THH(s_{k-1})$ is
  the Euler characteristic of $\lMr{}{(S_k\cC)}{s_{k-1}}$, computed using the
  dual pair $(\lMr{s_{k-1}}{(S_{k}\cC)}{},\lMr{}{(S_k\cC)}{s_{k-1}})$.  By
  \cref{defn:euler_characteristic}, the Euler characteristic is the following
  composite:
 \begin{equation*}
   \chi(\lMr{}{(S_k\cC)}{s_{k-1}}) \colon \sh{S_{k-1}\cC} \arr \sh{\lMr{s_{k-1}}{(S_{k}\cC)}{} \odot
     \lMr{}{(S_k\cC)}{s_{k-1}}} \simeq \sh{ \lMr{}{(S_k\cC)}{s_{k-1}}\odot
     \lMr{s_{k-1}}{(S_{k}\cC)}{}} \stackrel\epsilon\arr \sh{S_k\cC}.
 \end{equation*}
 The first map is an equivalence by the first statement in \cref{sdot_adjunctions}.
Rewriting in terms of $\THH$, it follows that the induced map $\THH(s_{k-1})$ is the composite
  \begin{equation}  \label{eq:left-half}
    \THH(S_{k-1}\cC) \stackrel{\sim}\arr \THH(S_k\cC; \lMr{}{(S_k\cC)}{s_{k-1}}\odot
      \lMr{s_{k-1}}{(S_{k}\cC)}{})
    \oarr{\ \epsilon \ } \THH(S_k\cC).
  \end{equation}

  Similarly, the induced map $\THH(\pi_k)$ is the composite
  \begin{equation} \label{eq:right-half}
  \chi(\lMr{}{\cC}{\pi_k}) \colon \THH(S_k\cC)  \xarr{\ \eta \ }
    \THH(S_k\cC; \lMr{\pi_k}{\cC}{} \odot \lMr{}{\cC}{\pi_k}) \stackrel{\sim}\arr
    \THH(\cC),
  \end{equation}
  where the equivalence is induced by the evaluation map of the dual pair $(\lMr{\pi_k}{\cC}{},\bcl{}{\pi_k}{\cC})$.

  Composing the two sequences in \eqref{eq:left-half} and \eqref{eq:right-half}, the two middle maps arise by applying $\THH(S_k\cC;-)$ to the
  cofiber sequence from \cref{sdot_adjunctions}.  Since $\THH$ preserves cofiber
  sequences, this produces a cofiber sequence
  \[\THH(S_{k-1}\cC) \stackrel{s_{k-1}}\arr \THH(S_k\cC) \stackrel{\pi_k}\arr
    \THH(\cC).\] The second map has a section, induced by $\iota_k$, so the
  cofiber sequence
  splits.
\end{proof}

\subsection{Additivity with coefficients}\label{sec:thh_add_coeff}

Next we generalize \cref{cor:small_add} to allow for twisted
coefficients.  For ease of
future reference we state the theorem in its multisimplicial form.

\begin{rmk}
	From the properties of the $S_\bullet$-construction, any twisting
	$\lMr{L}{\cC/\cD}{R}$ of spectral Waldhausen categories induces a twisting
	$\lMr{S_\bullet L}{(S_\bullet\cC/S_\bullet\cD)}{S_\bullet R}$.  For ease of
	reading, in such cases we drop the $S_\bullet$-notation from the subscripts
	and simply write $\lMr{L}{(S_\bullet\cC/S_\bullet\cD)}{R}$.
\end{rmk}

\begin{thm}\label{thm:THH_additivity}
  Given a twisting  $\lMr{L}{\cC/\cD}{R}$ of spectral Waldhausen categories
  there is an equivalence of spectra
	\[
	\bigvee_{\substack{1 \leq i_j \leq k_j \\ 1 \leq j \leq n}} \THH(\spcat{\cC};\tensor[_L]{\spcat{\cD}}{_R})
	\stackrel\sim\arr \THH(w_{k_0}S^{(n)}_{k_1, \dotsc, k_n} \spcat{\cC};\tensor[_L]{(w_{k_0}S^{(n)}_{k_1, \dotsc, k_n} \spcat{\cD})}{_R}).
	\]
\end{thm}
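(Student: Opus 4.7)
The plan is to generalize the proof of Proposition prop:small_add to the twisted setting, then iterate over the multisimplicial $S_\bullet$-directions and handle the $w_\bullet$-direction separately via Dwyer--Kan invariance.

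First I would establish the one-variable coefficient analog: for a twisting $\lMr{L}{\cC/\cD}{R}$ of spectral Waldhausen categories, the functors $s_{k-1}$ and $\iota_k$ induce an equivalence
\[
\THH(S_{k-1}\spcat{\cC}; \tensor[_L]{(S_{k-1}\spcat{\cD})}{_R}) \vee \THH(\spcat{\cC};\tensor[_L]{\spcat{\cD}}{_R}) \stackrel\sim\arr \THH(S_k\spcat{\cC};\tensor[_L]{(S_k\spcat{\cD})}{_R}).
\]
The argument parallels the proof of Proposition prop:small_add: applying $S_\bullet$ to $L$ and $R$ yields morphisms of twistings
\[
\lMr{L}{(S_{k-1}\cC/S_{k-1}\cD)}{R} \xarr{s_{k-1}} \lMr{L}{(S_k\cC/S_k\cD)}{R} \xarr{\pi_k} \lMr{L}{(\cC/\cD)}{R},
\]
so by Proposition base_change_beck_chevalley the induced maps on twisted $\THH$ are traces (in the shadow bicategory) of Beck--Chevalley maps built from the dual pairs $(\lMr{s_{k-1}}{(S_k\cC)}{},\lMr{}{(S_k\cC)}{s_{k-1}})$ and $(\lMr{\pi_k}{\cC}{},\lMr{}{\cC}{\pi_k})$ provided by Proposition sdot_adjunctions. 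I would then horizontally compose the cofiber sequence of $(S_k\cC,S_k\cC)$-bimodules from Proposition sdot_adjunctions with the coefficient bimodule $\lMr{L}{(S_k\cD)}{R}$; applying $\THH(S_k\cC;-)$ and using that $\THH$ preserves cofiber sequences in the bimodule slot, together with the pointwise equivalences of bimodules from Proposition sdot_adjunctions, identifies the composite with the sequence
\[
\THH(S_{k-1}\cC;\lMr{L}{(S_{k-1}\cD)}{R}) \xarr{\THH(s_{k-1};L,R)} \THH(S_k\cC;\lMr{L}{(S_k\cD)}{R}) \xarr{\THH(\pi_k;L,R)} \THH(\cC;\lMr{L}{\cD}{R}).
\]
The functor $\iota_k$ splits this cofiber sequence.

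Next I would iterate this one-variable decomposition across all $n$ directions of $S_\bullet^{(n)}$. Because $L$ and $R$ are exact, they commute with $S_\bullet$ in each direction, so at each stage we have a genuine twisting of spectral Waldhausen categories and the one-variable result applies. Iterating produces the wedge indexed by $\prod_{j=1}^n \{1,\dots,k_j\}$. Finally, for the $w_{k_0}$-direction, Lemma w_bullet_invariance says the iterated degeneracy $w_0 S^{(n)}_{k_1,\dots,k_n}\cC \to w_{k_0}S^{(n)}_{k_1,\dots,k_n}\cC$ is a Dwyer--Kan equivalence, and similarly for $\cD$; the pair assembles into a morphism of twistings whose functors are Dwyer--Kan embeddings and (trivially) surjective up to thick closure, so Corollary equivalence_on_twisted_thh gives an equivalence on twisted $\THH$. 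This is why the $w_{k_0}$-index contributes no wedge factors, consistent with the statement.

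The main obstacle is the bookkeeping in the first step: one must check that after tensoring the cofiber sequence of Proposition sdot_adjunctions with $\lMr{L}{(S_k\cD)}{R}$, the resulting Beck--Chevalley maps are exactly those whose traces are $\THH(s_{k-1};L,R)$ and $\THH(\pi_k;L,R)$ as predicted by Proposition base_change_beck_chevalley. This requires comparing two natural constructions of these maps, one via horizontal composition of a cofiber sequence in the bimodule slot and one via the traces of Beck--Chevalley 2-cells; commutativity reduces to manipulations of the unit and counit 2-cells in the shadow bicategory, together with the observation that $\odot$-composition with a pointwise cofibrant bimodule preserves pointwise equivalences so that the equivalences of Proposition sdot_adjunctions survive the tensor with $\lMr{L}{(S_k\cD)}{R}$.
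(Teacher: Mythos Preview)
Your proposal is correct and matches the paper's approach closely: the paper proves the one-variable coefficient version (Proposition~\ref{prop:small_add_2}) exactly as you outline, then appeals to $w_\bullet$-invariance and induction. The ``main obstacle'' you identify is resolved in the paper via Proposition~\ref{prop:spectAdj}, which shows that $d_k$ and $\pi_k$ induce equivalences on the relevant mapping spectra; this is precisely what is needed to check that the Beck--Chevalley maps $f$ and $g$ from Proposition~\ref{base_change_beck_chevalley} are pointwise equivalences of bimodules, so that the trace composites reduce to $\epsilon$ and $\eta$ applied to the coefficient bimodule as in the untwisted case.
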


The theorem follows from the $w_{\bullet}$-invariance of $\THH$ (see \cref{ex:w_bullet_THH}), and an inductive argument based on the following generalization of
\cref{prop:small_add}:

\begin{prop}\label{prop:small_add_2}
  Let $\lMr{L}{\cC/\cD}{R}$ be a twisting of spectral Waldhausen categories.
The spectral functors $s_{k-1}$ and $\iota_k$ induce an equivalence
\[ \THH(S_{k-1}\spcat{\cC};\tensor[_L]{(S_{k-1}\spcat\cD)}{_R}) \vee \THH(\spcat{\cC};\tensor[_L]{\spcat\cD}{_R}) \stackrel\sim\arr \THH(S_k\spcat{\cC};\tensor[_L]{(S_k\spcat\cD)}{_R}). \]
\end{prop}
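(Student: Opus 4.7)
The plan is to adapt the proof of \cref{prop:small_add}, replacing \cref{lem:base_change_euler} with \cref{base_change_beck_chevalley} so that Beck--Chevalley traces play the role previously played by the Euler characteristic. Apply \cref{base_change_beck_chevalley} to the morphisms of twistings $(s_{k-1}, s_{k-1})\colon \lMr{L}{S_{k-1}\cC/S_{k-1}\cD}{R} \to \lMr{L}{S_k\cC/S_k\cD}{R}$ and $(\pi_k, \pi_k)\colon \lMr{L}{S_k\cC/S_k\cD}{R} \to \lMr{L}{\cC/\cD}{R}$: the induced maps $\THH(s_{k-1}; s_{k-1})$ and $\THH(\pi_k; \pi_k)$ are traces of Beck--Chevalley 2-cells in the dual pairs of $s_{k-1}\colon S_{k-1}\cC \to S_k\cC$ and $\pi_k\colon S_k\cC \to \cC$.

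Next, smash the cofiber sequence of $(S_k\cC, S_k\cC)$-bimodules from \cref{sdot_adjunctions} on the left with $\lMr{L}{S_k\cD}{R}$.  Since $\odot$ is computed by the bar construction and therefore preserves cofiber sequences, and since $\THH(S_k\cC; -)$ preserves cofiber sequences in its bimodule argument, applying $\THH(S_k\cC; -)$ yields a cofiber sequence of spectra
\[ \THH(S_k\cC; \lMr{L}{S_k\cD}{R} \odot C_1) \to \THH(S_k\cC; \lMr{L}{S_k\cD}{R}) \to \THH(S_k\cC; \lMr{L}{S_k\cD}{R} \odot C_3), \]
with $C_1 = \lMr{}{S_k\cC}{s_{k-1}} \odot \lMr{s_{k-1}}{S_k\cC}{}$ and $C_3 = \lMr{\pi_k}{\cC}{} \odot \lMr{}{\cC}{\pi_k}$.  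Unwinding \cref{defn:trace} via the cyclic invariance of the shadow shows that $\THH(s_{k-1}; s_{k-1})$ and $\THH(\pi_k; \pi_k)$ factor through the first and third terms, with the middle maps induced by $\id \odot \epsilon$ and $\id \odot \eta$ of the relevant dual pairs.

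The outer terms are identified with the target twisted $\THH$ groups as follows.  For the first, cyclic invariance of the shadow together with the canonical isomorphism $\lMr{s_{k-1}}{S_k\cC}{} \odot \lMr{L}{S_k\cD}{R} \odot \lMr{}{S_k\cC}{s_{k-1}} \cong \lMr{s_{k-1}}{(\lMr{L}{S_k\cD}{R})}{s_{k-1}}$ (via iterated use of \eqref{eq:units}) gives
\[ \THH(S_k\cC; \lMr{L}{S_k\cD}{R} \odot C_1) \cong \THH(S_{k-1}\cC; \lMr{s_{k-1}}{(\lMr{L}{S_k\cD}{R})}{s_{k-1}}), \]
and the bimodule map $\lMr{L}{S_{k-1}\cD}{R} \to \lMr{s_{k-1}}{(\lMr{L}{S_k\cD}{R})}{s_{k-1}}$ induced by $s_{k-1}\colon S_{k-1}\cD \to S_k\cD$ is a pointwise equivalence by \cref{sdot_adjunctions} applied to $\cD$, providing an equivalence with $\THH(S_{k-1}\cC; \lMr{L}{S_{k-1}\cD}{R})$.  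The third outer term is identified with $\THH(\cC; \lMr{L}{\cD}{R})$ by a parallel argument using the Beck--Chevalley 2-cell for $(\pi_k, \pi_k)$ together with the evaluation $\epsilon$ of the dual pair for $\pi_k$, a pointwise equivalence by \cref{sdot_adjunctions}.  The resulting cofiber sequence
\[ \THH(S_{k-1}\cC; \lMr{L}{S_{k-1}\cD}{R}) \to \THH(S_k\cC; \lMr{L}{S_k\cD}{R}) \to \THH(\cC; \lMr{L}{\cD}{R}) \]
splits via $\THH(\iota_k; \iota_k)$ since $\pi_k \circ \iota_k = \id_\cC$, yielding the desired wedge decomposition.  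The main obstacle is the identification of the third outer term, which requires carefully combining the cyclic shadow structure, the Beck--Chevalley 2-cell, and the pointwise equivalences from \cref{sdot_adjunctions} on both the $\cC$- and $\cD$-sides.
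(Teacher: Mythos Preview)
Your overall strategy is correct and matches the paper's: use \cref{base_change_beck_chevalley} to express $\THH(s_{k-1};s_{k-1})$ and $\THH(\pi_k;\pi_k)$ as traces of Beck--Chevalley maps, then link these to the cofiber sequence from \cref{sdot_adjunctions} and split via $\iota_k$. The issue lies in how you identify the outer terms of the cofiber sequence.

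For the first term your argument works: because $s_{k-1}\colon S_{k-1}\cC \to S_k\cC$ points \emph{into} the larger category, the unit isomorphisms \eqref{eq:units} do give
\[
\lMr{s_{k-1}}{S_k\cC}{} \odot \lMr{L}{S_k\cD}{R} \odot \lMr{}{S_k\cC}{s_{k-1}} \;\simeq\; (\lMr{L}{S_k\cD}{R})(s_{k-1}-,\,s_{k-1}-),
\]
and the map from $\lMr{L}{S_{k-1}\cD}{R}$ is then the functor $s_{k-1}$ on $\cD$-mapping spectra, an equivalence by \cref{sdot_adjunctions} applied to $\cD$.

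But the ``parallel argument'' for the third term breaks down. The functor $\pi_k\colon S_k\cC \to \cC$ points \emph{out of} the larger category, so the $(\cC,\cC)$-bimodule
\[
\lMr{}{\cC}{\pi_k} \odot \lMr{L}{S_k\cD}{R} \odot \lMr{\pi_k}{\cC}{}
\]
does \emph{not} simplify via unit isomorphisms to any restriction of $\lMr{L}{S_k\cD}{R}$; it is a genuine bar construction over $S_k\cC$. To identify it with $\lMr{L}{\cD}{R}$ compatibly with the cofiber sequence you must show that the Beck--Chevalley map
\[
g\colon \lMr{L}{(S_k\cD)}{R} \odot \lMr{\pi_k}{\cC}{} \arr \lMr{\pi_k}{\cC}{} \odot \lMr{L}{\cD}{R}
\]
is a pointwise equivalence. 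Neither \cref{sdot_adjunctions} for $\cC$ (which only gives that $\epsilon$ is an equivalence) nor for $\cD$ (which gives an equivalence of $(\cD,\cD)$-bimodules, not $(S_k\cC,\cC)$-bimodules) yields this. The paper handles this by invoking \cref{prop:spectAdj}: the equivalence $S_k\cC(a,\iota_k b)\simeq \cC(\pi_k a,b)$ is precisely what one needs to build a commuting diagram exhibiting $g$ as a pointwise equivalence. The same proposition also handles $f$ uniformly, which is why the paper does not use your \cref{sdot_adjunctions}-for-$\cD$ shortcut even for the first term.
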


The proof of this proposition is largely analogous to the proof of
\cref{prop:small_add}; the main difficulty that the twisting adds is that the
construction of the equivalences in \eqref{eq:left-half} and
\eqref{eq:right-half} requires an extra step.

The functors $s_{k - 1}$, $d_{k}$, $\pi_k$, and $\iota_{j}$ are functors of spectral
Waldhausen categories.  By definition, $d_ks_{k-1} = \id$ and $\pi_k\iota_k = \id$.  These
identities define the unit and counit, respectively, of adjunctions
\[
\xymatrix{
S_{k - 1}\uncat\cC \ar@<.5ex>[r]^-{s_{k - 1}} & S_{k}\uncat \cC \ar@<.5ex>[l]^-{d_{k}}
}
\qquad
\xymatrix{
S_{k}\uncat\cC \ar@<.5ex>[r]^-{\pi_{k}} & \uncat \cC \ar@<.5ex>[l]^-{\iota_{k}}
}
\]
on the associated base categories. These adjunctions do \emph{not} extend to
spectrally enriched adjunctions between our models for the spectral categories
$S_{k}\cC$, because the Moore end is not 2-functorial (\cite[\swcref{Section 4.5}{\cref{swc-2-functoriality}}]{clmpz-specWaldCat}).  They do, however, still satisfy a condition analogous to an
adjunction:

\begin{prop} \label{prop:spectAdj}
  The spectral functors $d_{k}$ and $\pi_{k}$ induce equivalences of spectra
  \begin{align*}
  S_{k}\cC(s_{k - 1}a, b) & \oarr{\sim} S_{k - 1}\cC(d_{k}s_{k - 1}a, d_{k}b) = S_{k - 1}\cC(a, d_{k}b) \\
  S_{k}\cC(a, \iota_{k}b) & \oarr{\sim} \cC(\pi_{k}a, \pi_{k}\iota_{k}b) = \cC(\pi_{k}a, b).
  \end{align*}
\end{prop}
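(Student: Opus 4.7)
My plan is to prove both equivalences by explicit analysis of the Moore end formula
\[
S_k\cC(x,y) \;\cong\; \textstyle\int_{(i,j)\in\Arr[k]} \spcat\cC\bigl(x(i,j),\, y(i,j)\bigr)
\]
which provides a homotopically correct (homotopy equalizer) model for the mapping spectra in the functor category, as set up in \cite[\swcref{Section 4}{\cref{swc-ex:fun_cat}}]{clmpz-specWaldCat}. The maps $d_k$ and $\pi_k$ induce restriction maps on these Moore ends: $d_k$ restricts from $\Arr[k]$ to $\Arr[k-1]$, and $\pi_k$ projects to the factor at the single index $(k-1,k)$ (since $\pi_k x = x(k-1,k)$).

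For the first equivalence, I would note that $s_{k-1}a$ is the trivial extension (left Kan extension) of $a$ along $\Arr[k-1]\hookrightarrow\Arr[k]$: concretely $(s_{k-1}a)(i,k) = a(i,k-1)$ for $i<k$, with structure map from $(i,k-1)$ equal to the identity of $a(i,k-1)$, and $(s_{k-1}a)(k,k) = 0$. The new factors contributed by the Moore end over $\Arr[k]$ then reduce. The factor at $(k,k)$ is $\spcat\cC(0, b(k,k))$, which is contractible by the zero-object axiom \cref{def:spectrally_enriched_waldhausen_category}\eqref{def:spectrally_enriched_waldhausen_category_zero}. The factor at $(i,k)$ for $i<k$ is forced by the equalizer condition at the structure map $(i,k-1)\to(i,k)$: since this map is the identity on $a(i,k-1)$, the condition becomes $\alpha_{i,k} = b(i,k-1\to i,k)\circ \alpha_{i,k-1}$, so $\alpha_{i,k}$ is determined by $\alpha_{i,k-1}$ and contributes no new homotopical data.

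For the second equivalence, the dual observation is that $\iota_k b$ has value $b$ at the indices $(i,k)$ for $0\le i\le k-1$ with identity structure maps between them, and value $0$ at every other index. In the Moore end computing $S_k\cC(a,\iota_k b)$, every factor $\spcat\cC(a(i,j),0)$ is contractible by the zero-object axiom. The remaining factors $\spcat\cC(a(i,k), b)$ for $0\le i\le k-1$ are constrained by the equalizer condition at $(i,k)\to(i',k)$, which (since the $\iota_k b$-side is the identity of $b$) forces $\alpha_{i,k} = \alpha_{i',k}\circ (a(i,k)\to a(i',k))$. Thus all components are determined by the last one, and the projection to the factor at $(k-1,k)$ gives an equivalence onto $\spcat\cC(a(k-1,k), b) = \cC(\pi_k a, b)$; this projection is precisely the map induced by $\pi_k$.

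The main obstacle is verifying that the ``forcing'' and ``contractibility'' reductions work homotopically rather than strictly, which is exactly the purpose of using the Moore end in place of the strict end. Formally, each equivalence is an instance of a homotopical adjunction between restriction and Kan extension along the indexing functors $\Arr[k-1]\hookrightarrow\Arr[k]$ (for $d_k\dashv s_{k-1}$) and $s_0^{k-1}\colon\Arr[k]\to\Arr[1]$ (for $\pi_k\dashv\iota_k$), applied to the Moore end of $\spcat\cC$-valued functors. With the explicit structure of $s_{k-1}a$ and $\iota_k b$ in hand, the remaining verifications are routine calculations with Moore ends.
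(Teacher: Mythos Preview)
Your proposal is correct and takes essentially the same approach as the paper: both arguments identify $S_k\cC(a,b)$ with a homotopy limit (the Moore end, equivalently the zig-zag homotopy limit from \cite[\swcref{Theorem 4.1}{\cref{swc-thm:moore_end}}]{clmpz-specWaldCat}) over the indexing poset, and then observe that when the domain is $s_{k-1}a$ (resp.\ the codomain is $\iota_k b$) the extra factors are either determined by identity structure maps or contractible by the zero-object axiom, so the restriction map is an equivalence. The only cosmetic difference is that the paper indexes over all of $[k]\times[k]$ and phrases the reduction as a map of zig-zag homotopy limits, whereas you index over $\Arr[k]$ and phrase it as an equalizer/forcing argument; these are the same computation.
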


We postpone the proof of the proposition to \S\ref{sec:important_dual_pair}, and now prove \cref{prop:small_add_2}.

\begin{proof}[Proof of \cref{prop:small_add_2}]
  We fill in the details that differ from the proof of \cref{prop:small_add}.
  The spectral functor $s_{k-1}$ defines a morphism of twistings
  \[(s_{k-1},s_{k-1})\colon\lMr{L}{(S_{k-1}\cD)}{R} \arr \lMr{L}{(S_{k}\cD)}{R}.\]
  By \cref{base_change_beck_chevalley} there is an associated map
  \[f\colon \lMr{L}{(S_{k-1}\cD)}{R} \odot \lMr{s_{k-1}}{(S_k\cC)}{} \arr
    \lMr{s_{k-1}}{(S_k\cC)}{} \odot \lMr{L}{(S_k\cD)}{R}\]
  whose trace is the induced map
  \[\THH(s_{k-1})\colon \THH(S_{k-1}\cC;\lMr{L}{(S_{k-1}\cD)}{R}) \arr \THH(S_k\cC;
    \lMr{L}{(S_k\cD)}{R}).\] By definition, the trace of $f$ is the composite
  \begin{align*}\sh{\lMr{L}{(S_{k-1}\cD)}{R}} &\xarr{\ \eta \ }
                                                \sh{\lMr{L}{(S_{k-1}\cD)}{R}
                                                \odot \lMr{s_{k-1}}{(S_k\cC)}{}
                                                \odot \lMr{}{(S_{k}\cC)}{s_{k-1}}} \\
    &\xarr{\ f \ }
    \sh{\lMr{s_{k-1}}{(S_k\cC)}{} \odot \lMr{L}{(S_k\cD)}{R} \odot
                                                \lMr{}{(S_{k}\cC)}{s_{k-1}} } \\
    &\cong     \sh{ \lMr{}{(S_k\cC)}{s_{k-1}}  \odot\lMr{s_{k-1}}{(S_{k}\cC)}{} \odot
      \lMr{L}{(S_k\cD)}{R}} \xarr{\ \epsilon \ }
      \sh{\lMr{L}{(S_k\cD)}{R}}.
  \end{align*}
  As in the proof of
  \cref{prop:small_add}, we will prove that, prior to the application of
  the evaluation map, the composite is an equivalence.  It suffices to
  prove that $f$ is a pointwise equivalence of $(S_{k-1}\cC,S_k\cC)$-bimodules,
  which follows from the commutative diagram
  \[
  \begin{tikzcd}
\lMr{L}{(S_{k - 1}\cD)}{} \odot \lMr{}{(S_{k - 1}\cD)}{R} \odot \lMr{s_{k - 1}}{(S_{k}\cC)}{} \ar[r, "f"] \ar[dd, "\simeq"', "\id \odot \id \odot d_k"] &
\lMr{s_{k - 1}}{(S_{k}\cC)}{} \odot \lMr{L}{(S_{k}\cD)}{} \odot \lMr{}{(S_{k}\cD)}{R} \ar[d, "\simeq"] \\
 & \lMr{L}{(S_{k - 1}\cD)}{} \odot \lMr{s_{k - 1}}{(S_{k}\cD)}{} \odot \lMr{}{(S_{k}\cD)}{R} \ar[d, "\simeq", "\id \odot d_{k} \odot \id"'] \\
\lMr{L}{(S_{k - 1}\cD)}{} \odot \lMr{}{(S_{k - 1}\cD)}{R} \odot \lMr{}{(S_{k - 1}\cC)}{d_k} \ar[r, "\simeq"] &
\lMr{L}{(S_{k - 1}\cD)}{} \odot \lMr{}{(S_{k}\cD)}{d_{k}} \odot \lMr{}{(S_{k}\cD)}{R},
  \end{tikzcd}
  \]
  where the unlabeled equivalences are instances of
  \cref{base_change_composition} and the equivalences induced by $d_{k}$ are
  from \cref{prop:spectAdj}.

  Similarly, the map $\pi_k$ induces a morphism of twistings
  \[(\pi_k,\pi_k)\colon \lMr{L}{(S_k\cD)}{R} \to \lMr{L}{\cD}{R}.\] The same
  logic as above reduces the proof to showing that the map
  \[g\colon \lMr{L}{(S_k\cD)}{R} \odot \lMr{\pi_k}{\cC}{} \arr
    \lMr{\pi_k}{\cC}{} \odot \lMr{L}{\cD}{R} \]
    from \cref{base_change_beck_chevalley} is a pointwise
  equivalence of bimodules, which follows in the same manner from the commutative diagram
  \[
\begin{tikzcd}
\lMr{L}{(S_{k}\cD)}{} \odot \lMr{}{(S_{k}\cD)}{R} \odot \lMr{}{(S_{k}\cC)}{\iota_{k}} \ar[dd, "\simeq"', "\id \odot \id \odot \pi_k"] \ar[r, "\simeq"] &
\lMr{L}{(S_{k}\cD)}{} \odot \lMr{}{(S_{k}\cD)}{\iota_{k}} \odot \lMr{}{\cD}{R} \ar[d, "\id \odot \pi_k \odot \id"', "\simeq"] \\
 & \lMr{L}{(S_{k}\cD)}{} \odot \lMr{\pi_{k}}{\cD}{} \odot \lMr{}{\cD}{R} \ar[d, "\simeq"] \\
\lMr{L}{(S_{k}\cD)}{} \odot \lMr{}{(S_{k}\cD)}{R} \odot \lMr{\pi_{k}}{\cC}{} \ar[r, "g"] &
\lMr{\pi_{k}}{\cC}{} \odot \lMr{L}{\cD}{} \odot \lMr{}{\cD}{R}.
\end{tikzcd}
  \]
    The rest of the proof proceeds as in the untwisted case.
\end{proof}

\begin{rmk}
  These results can be generalized to the case when $\cD$ is a pointed
  spectral category.  In this case, $w_k\cD$ is defined to include all maps and  $w_0\spcat\cD \to w_k\spcat\cD$ is still a Dwyer--Kan
  embedding. The definition of $S_k\spcat\cD$ from \S\ref{subsec:Sdot}
  is modified to be the subcategory of
  $\spcat{\Fun([k] \times [k],\cD)}$ on diagrams sending each $(i \geq j)$ to the
  zero object $* \in \uncat\cD$.

  This doesn't affect any of the proofs because we only ever consider diagrams
  in the image of the functors $L$ and $R$, and so it is enough to control the
  behavior of cofibrations and pushouts in $S_k\spcat\cC$.
\end{rmk}

\subsection{The technical proofs}\label{sec:important_dual_pair}

In this subsection we prove \cref{prop:spectAdj,sdot_adjunctions}.

\begin{proof}[Proof of \cref{prop:spectAdj}]
  The proof requires explicit properties of the construction of the mapping
  spectra in $S_{k}\cC$ from
	\cite[\swcref{\S 4}{\S\ref{swc-ex:fun_cat}}]
	{clmpz-specWaldCat}.
  The key fact is that the mapping spectrum $S_{k}\cC(a, b)$ is equivalent, via
  the canonical restriction maps, to the homotopy limit of the zig-zag diagram
  of spectra built out of the composition maps between $\cC(a(i, j), b(i, j))$
  for $0 \leq i, j \leq k$
	\cite[\swcref{Theorem 4.1{\it ii}}{\cref{swc-thm:moore_end}\ref{swc-thm:moore_end_zz}}]
	{clmpz-specWaldCat}.
  Under these equivalences, the last face functor
  $d_{k} \colon S_{k}\cC \to S_{k - 1}\cC$ agrees with the map to the homotopy
  limit of the subdiagram where $0 \leq i, j \leq k - 1$.  Replacing the domain
  $a$ with a diagram $s_{k - 1}a$ in the image of the last degeneracy functor,
  the canonical map $s_{k - 1}a(k - 1, j) \to s_{k - 1}a(k, j)$ is the identity
  map, and thus the induced maps of spectra
\[
\cC(s_{k - 1}a(k, j), b(k, j)) \arr \cC(s_{k - 1}a(k - 1, j), b(k, j))
\]
are identity maps for any $j$.  Similarly, restricting to the bottom row gives
\[
\cC(s_{k - 1}a(i, k), b(i, k)) = \ast
\]
for any $i$.  It follows that the map of homotopy limits from the diagram with $0 \leq i, j \leq k$ to the diagram with $0 \leq i, j \leq k - 1$ is an equivalence of spectra.  Therefore, the last face functor $d_{k}$ induces an equivalence of mapping spectra
\[
S_{k}\cC(s_{k - 1}a, b) \oarr{\simeq} S_{k - 1}\cC(d_{k}s_{k - 1}a, d_{k}b),
\]
as claimed.  A similar argument shows that the functor $\pi_{k}$ induces an equivalence of mapping spectra
\[
S_{k}\cC(a, \iota_{k}b) \oarr{\sim} \cC(\pi_{k}a, \pi_{k}\iota_{k}b).
\]
\end{proof}

For ease of notation, we require an extra definition.

\begin{defn}
  A pointwise map of $(\cC,\cD)$-bimodules, denoted
  $\xymatrix{\cX \ar@{.>}[r] & \cY}$, is a map of spectra $\cX(c,d) \to \cY(c,d)$ for
  each $c\in \ob \cC$ and $d\in \ob \cD$.  These are not required to satisfy any
  coherence with the $\cC$ and $\cD$ actions.  When a pointwise map of
  $(\cC,\cD)$-bimodules is compatible with the $\cC$-action, we denote it by
  $\xymatrix{\cX \ar@{(.>}[r] & \cY}$.
\end{defn}

We write $\epsilon_0 \colon s_{k - 1}d_{k} \arr
\id$ for the counit of the adjunction $(s_{k-1}\dashv d_k)$ of base categories.  Composing with the morphism $\epsilon_0
\colon s_{k - 1}d_{k}b \arr b$ in $S_{k}\cC_0$ defines a pointwise map of
bimodules
\[\xymatrix{\lMr{}{(S_k\cC)}{s_{k-1}d_k} \ar@{(.>}[r] & S_k\cC},\]
compatible with the left action.  Similarly, composing with the unit $\eta_0 \colon b \arr \iota_{k} \pi_{k}$
for the adjunction $(\pi_k \dashv \iota_k)$ of base categories defines a pointwise map of bimodules
\[\xymatrix{\lMr{}{(S_k\cC)}{} \ar@{(.>}[r] & \lMr{}{(S_k\cC)}{\iota_{k} \pi_{k} } }\]
that is also compatible with the left action.

\begin{lem} \label{lem:annoying_diagrams}
The pointwise equivalences from \cref{prop:spectAdj} fit into commutative
diagrams of pointwise morphisms of bimodules
\[
  \xymatrix{\lMr{}{(S_{k}\cC)}{s_{k - 1}} \odot \lMr{s_{k - 1}}{(S_{k}\cC)}{} \ar[d]^\simeq \ar[r]^-\epsilon & S_{k}\cC \\
\lMr{}{(S_{k}\cC)}{s_{k - 1}} \odot \lMr{}{(S_{k - 1}\cC)}{d_{k}} \ar[r]^-\simeq & \lMr{}{(S_{k}\cC)}{s_{k - 1}d_{k}} \ar@{(.>}[u]_{\epsilon_0}
} \qqand
\xymatrix{S_{k}\cC \ar[r]^\eta \ar@{(.>}[d]^{\eta_0} & \lMr{\pi_{k}}{\cC}{} \odot \lMr{}{\cC}{\pi_{k}} \\
\lMr{}{(S_{k}\cC)}{\iota_{k}\pi_{k}} \ar[r]^\simeq & \lMr{}{(S_{k}\cC)}{\iota_{k}} \odot \lMr{}{\cC}{\pi_{k}} \ar[u]^\simeq
}
\]
relating the evaluation (resp. coevaluation) map for the dual pairs with the
counit (resp. unit) of the adjunction on base categories.\end{lem}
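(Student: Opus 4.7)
My plan is to verify each diagram by unwinding the definitions at the level of the bar construction and reducing commutativity to the triangle identities of the base-category adjunctions $(s_{k-1} \dashv d_k)$ and $(\pi_k \dashv \iota_k)$ (namely $d_k s_{k-1} = \id$, $\pi_k \iota_k = \id$, and the corresponding triangle relations), combined with naturality of the unit $\eta_0$ and counit $\epsilon_0$.

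For the first diagram, I would consider a zero-simplex summand of the upper-left bimodule at $(a, b) \in \ob(S_k\cC) \times \ob(S_k\cC)$, namely $S_k\cC(a, s_{k-1}c) \sma S_k\cC(s_{k-1}c, b)$ indexed by $c \in \ob(S_{k-1}\cC)$. By the definition of the evaluation in \cref{prop:dual_pair}, $\epsilon$ sends a pair $(f,g)$ to the composite $g \circ f \in S_k\cC(a, b)$. The down-right-up path sends $(f,g)$ first via \cref{prop:spectAdj} to $(f, d_k g) \in S_k\cC(a, s_{k-1}c) \sma S_{k-1}\cC(c, d_k b)$, then via the composition isomorphism of \cref{base_change_composition} to $s_{k-1}(d_k g) \circ f \in S_k\cC(a, s_{k-1}d_k b)$, and finally via post-composition with $\epsilon_0$ to $\epsilon_0 \circ s_{k-1}(d_k g) \circ f$. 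Naturality of $\epsilon_0$ applied to $g \colon s_{k-1}c \to b$ rewrites the right two factors as $g \circ \epsilon_0$, where now $\epsilon_0 \colon s_{k-1}d_k s_{k-1}c \to s_{k-1}c$; the triangle identity forces this to be $\id_{s_{k-1}c}$, so the whole composite is $g \circ f$, agreeing with $\epsilon$.

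The argument for the second diagram will be analogous. The coevaluation $\eta$ from \cref{prop:dual_pair} sends $f \colon a \to b$ to (the realization of) the unit inclusion of $\pi_k f$ in the $c = \pi_k b$ summand, which we may represent as $(\pi_k f, \id_{\pi_k b})$. The down-right-up path sends $f$ first to $\eta_0 \circ f \in S_k\cC(a, \iota_k \pi_k b)$, then via the bar-construction section for the composition isomorphism to $(\eta_0 \circ f, \id_{\pi_k b}) \in S_k\cC(a, \iota_k \pi_k b) \sma \cC(\pi_k b, \pi_k b)$ (in the $c = \pi_k b$ summand), and finally applies $\pi_k$ to the first factor to produce $(\pi_k(\eta_0) \circ \pi_k(f), \id)$. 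The triangle identity $\pi_k \eta_0 = \id$ collapses this to $(\pi_k f, \id_{\pi_k b})$, matching $\eta$.

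The passage from commutativity on zero-simplices to the full pointwise morphism of bimodules is then formal: every building block --- applications of $d_k$, $\pi_k$, $\epsilon_0$, and $\eta_0$ to mapping spectra, the composition isomorphism of \cref{base_change_composition}, and the realization map of the bar construction --- is induced by a simplicial map defined the same way at every level, so naturality propagates the zero-simplex check. The hardest part will be the notational bookkeeping: the vertical pointwise maps in both diagrams are only compatible with the left action (not full bimodule morphisms), so one must be careful that they fit coherently with the composition isomorphisms, which are bimodule equivalences realized through the bar construction. I anticipate no essential difficulty beyond this accounting.
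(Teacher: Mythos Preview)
Your argument for the first diagram has a genuine gap. The identity you need at the end, namely
\[
(\epsilon_0^b \circ -)\circ s_{k-1}\circ d_k \;=\; \id
\quad\text{as a map of spectra}\quad
S_k\cC(s_{k-1}c,b)\to S_k\cC(s_{k-1}c,b),
\]
is exactly the triangle identity for a \emph{spectrally enriched} adjunction $(s_{k-1}\dashv d_k)$. But the paper explicitly warns, just before \cref{prop:spectAdj}, that these base-category adjunctions do \emph{not} lift to spectral adjunctions because the Moore end is not 2-functorial. Your ``naturality of $\epsilon_0$'' step is likewise a spectral-naturality statement (that $\epsilon_0$ is an enriched natural transformation $s_{k-1}d_k\Rightarrow\id$), and this is precisely what fails. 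So treating $g$ as a generic ``element'' of the mapping spectrum and invoking base-category identities does not establish the required equality of maps of spectra, strictly or even up to an evident homotopy.

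The paper's proof sidesteps 2-functoriality altogether. It introduces an auxiliary spectral category $\widetilde{S_k\cC}\subset\spcat{\Fun([1]\times[k]^2,\cC)}$ whose extra $[1]$-direction encodes the counit $\epsilon_0\colon s_{k-1}d_k\Rightarrow\id$ as data built into the objects, together with spectral functors $L,R\colon S_k\cC\to\widetilde{S_k\cC}$ and restrictions $r_0,r_1\colon\widetilde{S_k\cC}\to S_k\cC$ arising from poset maps. The desired square is then embedded in a larger point-set diagram of bimodule maps where every region commutes because the underlying maps of twistings (all induced by maps of posets) agree; the appearance of $\epsilon_0$ comes from the zig-zag description of $\Fun([1],\cC)$ rather than from any enriched triangle identity. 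Your approach to the second diagram fares better, since there the key step $\pi_k(\eta_0^b\circ f)=\pi_k(\eta_0^b)\circ\pi_k(f)=\pi_k(f)$ only uses that $\pi_k$ is a spectral functor together with the base-category identity $\pi_k\eta_0=\id$; but the paper treats both diagrams uniformly with the arrow-category trick.
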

\begin{proof}
  We prove the lemma for the first diagram; the second follows analogously.
  Recall that the spectral category $S_k\cC$ is defined as a full subcategory of the functor category $\spcat{\Fun([k]^2,\cC)}$. Define a twisting of spectral categories
  $\lMr{L}{S_k\cC/\widetilde{S_k\cC}}{R}$ where
\[\widetilde{S_k\cC} \subseteq \spcat{\Fun([1] \times [k]^2,\cC)}\] is the full subcategory of diagrams that at each $i \in [1]$ satisfy the conditions for $S_k\cC$. The spectral functor $L\colon S_k\cC \to \widetilde{S_k\cC}$ arises from the collapse $[1] \to {*}$, and the spectral functor $R\colon S_k\cC \to \widetilde{S_k\cC}$ arises from the map of posets $[1] \times [k]^2 \to [k]^2$ that on $1 \in [1]$ is the identity of $[k]^2$ and on $0 \in [1]$ applies $i \mapsto \max(i,k-1)$ to each coordinate of $[k]^2$ (this is the map of totally ordered sets inducing $s_{k-1}d_k$). Let $r_0,r_1\colon \widetilde{S_k\cC} \rightrightarrows S_k\cC$ denote the spectral functors that restrict to $0 \in [1]$ and $1 \in [1]$, respectively. We form the following diagram of $(S_k\cC,S_k\cC)$-bimodules
  \[ \xymatrix{
	  S_k\cC_{s_{k-1}} \odot \lMr{s_{k-1}}{S_k\cC}{} \ar[d]^-{(1,1,d_k)} \ar@/^2em/[rr]^-{=} & \ar[l]_-\sim^-{(r_0,1,1)} \widetilde{S_k\cC}_{s_{k-1}} \odot \lMr{s_{k-1}}{S_k\cC}{} \ar[d]^-{(1,s_{k-1},1)} \ar[r]^-\sim_-{(r_1,1,1)} &
	  S_k\cC_{s_{k-1}} \odot \lMr{s_{k-1}}{S_k\cC}{} \ar[d]^-{(1,s_{k-1},1)} \\
	  S_k\cC_{s_{k-1}} \odot S_{k-1}\cC_{d_k} \ar[d]^-\sim & \ar[l]_-\sim^-{(r_0,d_k,d_k)} \widetilde{S_k\cC} \odot S_k\cC \ar[d]^-\sim \ar[r]^-\sim_-{(r_1,1,1)} &
	  S_k\cC \odot S_k\cC \ar[d]^-\sim \\
	  (S_k\cC)_{s_{k-1}d_k} \ar@/_2em/@{(.>}[rr]_{\epsilon_0} & \ar[l]_-\sim^-{r_0} \widetilde{S_k\cC} \ar[r]^-\sim_-{r_1} &
	  S_k\cC.
  } \]
  The four rectangular regions commute and all the solid arrows are maps of $(S_k\cC,S_k\cC)$-bimodules, as is verified by checking that various maps of twistings, arising from maps of posets, agree with each other. The top region also commutes easily. The region at the very bottom commutes in the category of pointwise maps of bimodules, again using the description of $\Fun([1],\cC)$ as a homotopy limit of a zig-zag -- see
	\cite[\swcref{\S4}{\S\ref{swc-ex:fun_cat}}]
	{clmpz-specWaldCat} for more details. The outside maps are the desired pointwise maps of bimodules, finishing the proof.
\end{proof}

We are now ready to prove \cref{sdot_adjunctions}.

\begin{proof}
The first claim in the proposition follows from the commutativity of the diagrams
\[
\begin{tikzcd}
S_{k - 1}\cC \ar[r, "\eta"] \ar[d, equal] & \lMr{s_{k - 1}}{(S_{k}\cC)}{} \odot \lMr{}{(S_{k}\cC)}{s_{k - 1}} \ar[d, "d_{k} \odot \id", "\simeq"'] \\
\lMr{}{(S_{k - 1})}{d_{k}s_{k - 1}} \ar[r, "\simeq"] & \lMr{}{(S_{k - 1}\cC)}{d_{k}} \odot \lMr{}{(S_{k}\cC)}{s_{k - 1}}
\end{tikzcd}
\quad \text{and} \quad
\begin{tikzcd}
\lMr{}{\cC}{\pi_{k}} \odot \lMr{\pi_{k}}{\cC}{}  \ar[r, "\epsilon"] & \cC \ar[d, equal] \\
\lMr{}{\cC}{\pi_{k}} \odot \lMr{}{(S_{k}\cC)}{\iota_{k}} \ar[u, "\simeq"', "\id \odot \pi_{k}"] \ar[r, "\simeq"] & \lMr{}{\cC}{\pi_{k}\iota_{k}}
\end{tikzcd}
\]
which are formally analogous to those in \cref{lem:annoying_diagrams} (but easier to check because $\eta_0 = \id$ for $(s_{k-1}\dashv d_k)$ and $\epsilon_0 = \id$ for $(\pi_k \dashv \iota_k)$).

  To check the cofiber sequence statement note that the two diagrams in
  \cref{lem:annoying_diagrams} give, for each $x,y\in \ob S_k\cC$, a natural weak
  equivalence betwen the sequence of interest and the sequence
  \[S_k\cC(x,s_{k-1}d_ky) \to S_k\cC(x,y) \to S_k\cC(x,\iota_k\pi_k y).\]
  Thus to show that the given sequence of bimodules is a homotopy cofiber
  sequence it suffices to show that this is a homotopy cofiber sequence of
  spectra for each $x,y$.

  The counit and unit of the adjunctions $(s_{k-1}\dashv d_k)$ and
  $(\pi_k \dashv \iota_k)$ of base categories fit into a pushout square of functors
  $S_k \uncat\cC \to S_k\uncat\cC$
  \begin{equation*} \xymatrix{
      (s_{k-1} d_k) \ar[d] \ar[r]^-{\epsilon_0} & \id \ar[d]^-{\eta_0} \\
      {*} \ar[r] & (\iota_k \pi_k)  } \end{equation*}
  whose horizontal arrows are cofibrations.
  Since $S_k\spcat\cC$ is a spectral Waldhausen category, there is an induced homotopy pushout
  square of spectra
  \begin{equation*}
    \xymatrix{
      S_{k}\cC(x,s_{k-1}d_k y) \ar[d] \ar[r] & S_{k}\cC(x,y) \ar[d] \\
      S_{k}\cC(x,\ast) \ar[r] & S_{k}\cC(x,\iota_k\pi_k y).
    }
  \end{equation*}
  Since the lower-left corner is contractible, the other three terms form a
  homotopy cofiber sequence, as desired.
\end{proof}

\section{The Dennis trace}\label{sec:dennis_trace}

In this section we construct the Dennis trace map
$K(\End(\spcat\cC)) \to \THH(\spcat\cC)$ out of endomorphism $K$-theory for a spectrally enriched Waldhausen category $\spcat\cC$, as well
as a twisted Dennis trace which allows bimodule coefficients.
This material serves as the
scaffolding for the construction of the trace map to $\TR$ in
\S\ref{sec:equiv_dennis_trace}--\ref{sec:trace_to_TR}.
We conclude the section with a concrete description of the effect of the Dennis trace on $\pi_0$ in terms of bicategorical traces (\cref{lem:pi_0_dennis_trace}).

\subsection{Endomorphism categories}

We begin by defining endomorphism categories.

\begin{defn}\label{def:category_of_endomorphisms}
  For any Waldhausen category $\uncat{\cC}$, let $\End(\uncat\cC)$ be the
  Waldhausen category of functors $\Fun(\bbN,\uncat\cC)$, where $\bbN$ is
  considered as a category with one object and morphism set $\bbN$.  More
  concretely, the objects of $\End(\uncat{\cC})$ are endomorphisms
  $f\colon a \arr a$ in $\uncat{\cC}$, and the morphisms are commuting squares of the
  form
  \[ \xymatrix @R=1.5em{
      a \ar[r]^-f \ar[d]_-i & a \ar[d]^-{i} \\
      b \ar[r]_-g & b. } \]
  We define the morphism to be a cofibration or
  weak equivalence if $i$ is a cofibration or weak equivalence, respectively.
  We also define exact functors
 \[
 \begin{tikzcd}
 \uncat{\cC} \arrow[r, bend left=50, "\iota_0"{below}]
 \arrow[r, bend right=50, "\iota_1"]
 &\End(\uncat{\cC}) \arrow[l, ""]
 \end{tikzcd}
\]
where $\End(\uncat{\cC}) \arr \uncat{\cC}$  forgets the endomorphism $f$.  The inclusions $\iota_0,\iota_1\colon \uncat{\cC} \arr \End(\uncat{\cC})$ equip each object $a$ with either the zero endomorphism or the identity endomorphism.
\end{defn}

\begin{example}
  If $A$ is a ring spectrum and $\spcat\cC = \Perf = \tensor[^A]{\Perf}{}$ is the
  spectral Waldhausen category of perfect $A$-modules from \cref{ex:Perf_as_spectral_wald_cat}, then $K(\uncat\cC)$ is the usual algebraic $K$-theory spectrum $K(A)$ of $A$, and the $K$-theory of $\End(\uncat{\cC})$ is the $K$-theory of endomorphisms $K(\End(A))$.
\end{example}

It is also possible to extend the definition of endomorphism $K$-theory to twistings.  First we recall our main example of a twisting.

\begin{example}\label{ex:twisting_mod}
	Let $A$ be a ring spectrum. Recall from \cref{modules_convention,ex:perfect_modules,ex:bimodule_smash_functor} the spectral categories $
\tensor[]{\Perf}{}$ of perfect left $A$-modules and $
\tensor[]{\Mod}{}$ of all left $A$-modules. Let
	\[ L,R\colon \tensor[]{\Perf}{}  \rightrightarrows \tensor[]{\Mod}{} \]
	denote, respectively, the inclusion and the functor $M \sma_A -$ for a
        cofibrant $(A,A)$-bimodule $M$. This defines a twisting that we denote by $\tw{\tensor[]{\Perf}{}}{}{M}{\tensor[]{\Mod}{}}$.
\end{example}

\begin{defn}
  Given a twisting $\tw{\cC}{L}{R}{\cD}$ of spectral Waldhausen categories, the
  \textbf{twisted endomorphism category} $\End\left(\tw{\cC}{L}{R}{\cD}\right)$
  is the category where
  \begin{itemize}
  \item the objects are pairs $(a,f)$ of $a \in \ob \uncat{\cC}$ and a morphism $f\colon L(a) \rto R(a)$ in $\uncat{\cD}$, and
  \item a morphism $(a,f) \rto (b, g)$ is a morphism $i\colon a \rto b$
    in $\uncat{\cC}$ such that the diagram
    \[
      \begin{tikzcd}
        L(a) \ar{r}{f} \ar{d}{L(i)} & R(a) \ar{d}{R(i)}\\
        L(b) \ar{r}{g} & R(b)
      \end{tikzcd}\]
    commutes.
  \end{itemize}
Note that this definition only uses the base categories $\uncat\cC$ and $\uncat\cD$, and not the spectral enrichment.
\end{defn}

\begin{example}\hfill
\begin{enumerate}
\item
  When $\cD = \cC$ and $L = R = \id_\cC$ we get the usual endomorphism category.

\item
  The twisted endomorphism category for $\tw{\tensor[]{\Perf}{}}{}{M}{\tensor[]{\Mod}{}} = \tw{\tensor[^A]{\Perf}{}}{}{M}{\tensor[^A]{\Mod}{}}$  has as objects $A$-module maps $P \to M \sma_A P$ with $P$ a perfect $A$-module. Following \cite{LM12}, the $K$-theory of its base Waldhausen category is denoted by
  \[ K(A; M) \coloneqq K\End(\tw{\tensor[^A]{\Perf}{}}{}{M}{\tensor[^A]{\Mod}{}}). \]
\end{enumerate}
\end{example}

\subsection{Bispectra}

Before defining the Dennis trace, we introduce some formal structure that arises naturally when analyzing $\THH$.

\begin{defn}\label{def:bispectra}
  A \textbf{bispectrum} is a symmetric spectrum object in orthogonal
  spectra \cite{mandell_may_shipley_schwede,hovey:spec_sym_spec,clmpz-specWaldCat}.
\end{defn}

In order to construct bispectra, we need a technical tool which formalizes the way that the iterated $S_{\bullet}$-constructions $|S^{(n)}_{\bullet,\dotsm,\bullet}\uncat\cC|$ assemble into a symmetric spectrum. Let $\mc I$ be the skeleton of the category of finite sets and injections spanned by the objects
\[\underline{n} = \{1,\ldots,n\}\] for $n \geq 0.$
Let $\mathbf\Delta^{\op \times n}$ be the $n$-fold product of the opposite of the category $\mathbf\Delta$ of nonempty totally ordered finite sets
\[[k] = \{0 < \dotsm < k\}.\]
For each morphism $f\colon \underline{m} \arr \underline{n}$ in $\mc I$, there is an induced functor $f_*\colon \mathbf\Delta^{\op \times m} \arr \mathbf\Delta^{\op \times n}$ taking $([k_1],\dotsc,[k_m])$ to the $n$-tuple whose value at $f(i)$ is $[k_i]$ and whose value outside the image of $f$ is always $[1]$.
In particular, when $m = n$ there is an action of the symmetric group $\Sigma_n$ on $\mathbf\Delta^{\op \times n}$. This rule defines a strict diagram of categories indexed by $\mc I$, and we write $\mc I \int \mathbf\Delta^{\op\times -}$ for its Grothendieck construction.  Thus, the objects of the category $\mc I \int \mathbf\Delta^{\op\times -}$ are tuples
\[(\underline{m}; k_1,\dotsc,k_m),\] where $m, k_i \geq 0$, and a morphism
\[(\underline{m}; k_1,\dotsc,k_m) \to (\underline{n}; l_1,\dotsc, l_n)\] consists of an injection $f\colon \underline{m} \arr \underline{n}$ and a morphism $(\phi_i) \colon f_*([k_1],\dotsc,[k_m]) \to ([l_1],\dotsc,[l_n])$ in $\mathbf\Delta^{\op\times n}$.

\begin{defn}\label{sigma-delta-diagram}
  Given a pointed category $\mc M$, a
  \textbf{$\Sigma_{\mathbf\Delta}$-diagram} in $\mc{M}$ is a functor
  \[ X_{(\bullet;\bullet,\dotsc,\bullet)} \colon \mc I \textstyle\int \mathbf\Delta^{\op\times -} \arr \mc{M} \]
  with the following two properties:
  \begin{itemize}
  \item $X_{(\underline{m};k_1,\dotsc,k_m)} \cong \ast$ any time $k_i = 0$ for at least one $i$, and
  \item the morphisms $(\underline{m};k_1,\dotsc,k_m) \arr (\underline{n};f_*(k_1,\dotsc,k_m))$ with every $\phi_i = \id$ induce isomorphisms
    \[ X_{(\underline{m};k_1,\dotsc,k_m)} \cong X_{(\underline{n};f_*(k_1,\dotsc,k_m))}. \]
  \end{itemize}
\end{defn}

The symmetric group action on $\mathbf\Delta^{\op \times n}$ defines an action of $\Sigma_n$ on the geometric realization of the multisimplicial object $\abs{X_{(\underline{n}, \bullet, \dotsc, \bullet)}}$, and this construction extends to a functor from $\Sigma_\Delta$-diagrams in a pointed simplicial model category $\cM$ to symmetric spectrum objects in $\cM$
	\cite[\swcref{Lemma 6.3}{\cref{swc-realization_is_symmetric_spectrum}}]
	{clmpz-specWaldCat}.
For further discussion of $\Sigma_\Delta$-diagrams, see
	\cite[\swcref{\S6}{\S\ref{swc-sec:properness}}]
	{clmpz-specWaldCat}.

\subsection{Definition of the Dennis trace}\label{subsec:dennis_trace}

Let $\spcat\cC$ be a spectral Waldhausen category and let $\cX$ be a $(\cC,\cC)$-bimodule.  The key observation for the construction of the Dennis trace is that the inclusion of $0$-simplices in the cyclic bar construction defines a canonical map
  \begin{equation} \label{eq:k_end_trace_0}
    \bigvee_{c_0\in \ob \uncat{\cC}} \cX(c_0,c_0) \to \THH(\cC;\cX).
  \end{equation}
When $\cX = \cC$, each object $f \colon c_0 \to c_0$ of $\End(\uncat{\cC})$ defines a map of spectra $\bbS \arr \spcat{\cC}(c_0,c_0)$, and so composing with \eqref{eq:k_end_trace_0} gives a map
\begin{equation}\label{eq:k_end_trace_1}
\Sigma^{\infty} \ob \End(\uncat{\cC}) = \bigvee_{\substack{f\colon c_0 \to c_0,\\ c_0 \neq *}} \bbS \arr \bigvee_{c_0 \in \ob \uncat{\cC}} \spcat{\cC}(c_0,c_0) \arr \THH(\spcat{\cC})
\end{equation}
where $f$ runs over the objects of $\End(\uncat{\cC})$.  See
\cref{fig:k_end_trace_1} for a picture of this map.
Applying \eqref{eq:k_end_trace_1} to the spectral Waldhausen category $w_{k_0}S^{(n)}_{k_1,\ldots,k_n} \spcat{\cC}$
for each value of $n$ and $k_0, \dotsc, k_n$ defines a map of orthogonal spectra
\[
\Sigma^{\infty} \ob \End(w_{k_0}S^{(n)}_{k_1,\ldots,k_n} \uncat{\cC}) \arr \THH(w_{k_0}S^{(n)}_{k_1,\ldots,k_n} \spcat{\cC}).
\]
Appending the splitting from \cref{thm:THH_additivity} gives a zig-zag of orthogonal spectra
\begin{equation}\label{eq:levelwise_trace_zigzag}
\Sigma^{\infty} \ob \End(w_{k_0}S^{(n)}_{k_1,\ldots,k_n} \uncat{\cC}) \arr \THH(w_{k_0}S^{(n)}_{k_1, \dotsc, k_n} \spcat{\cC}) \overset{\simeq}{\longleftarrow} \bigvee_{\substack{i_1,\dotsc, i_n \\ 1 \leq i_j \leq k_j}} \THH(\spcat{\cC}).
\end{equation}
The number of summands on the right is the same as the number of nonzero points
in the set $S^1_{k_1} \sma \dotsm \sma S^1_{k_n}$, where $S^1_\bullet$ is the
simplicial circle $\Delta[1]/\partial \Delta[1]$. Therefore these wedge sums
form an $(n + 1)$-fold multisimplicial spectrum that is constant in the $k_0$
direction.

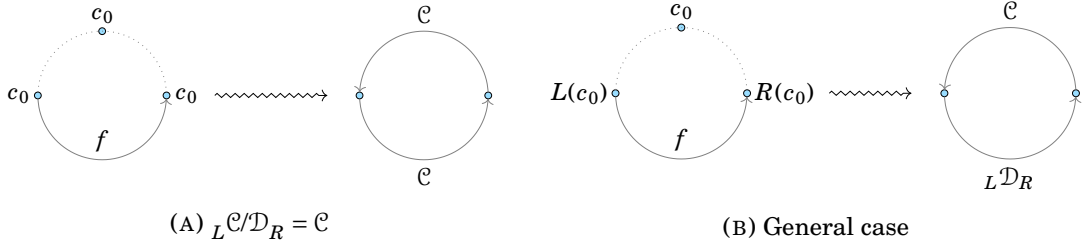
\begin{figure}
\begin{subfigure}{.46\textwidth}
\resizebox{\textwidth}{!}{\begin{tikzpicture}
\def\shrink{3}

%\node[circle,draw=black, fill=coc, inner sep=0pt,minimum size=3pt](source) at ({-5+cos 90},{-sin 90}) {};
%
%\node[circle,draw=black, fill=cob, inner sep=0pt,minimum size=3pt] (target) at ({-4+cos 90},{-sin 90}) {};
%
%\draw [->](source)--node[midway, above]{$f$}(target);

%node 
\node[circle,draw=black, fill=cob, inner sep=0pt,minimum size=3pt] at ({-5+cos 180},{-sin 180}) {};

\node[circle,draw=black, fill=cob, inner sep=0pt,minimum size=3pt] at ({-5+cos 0},{-sin 0}) {};
\node[circle,draw=black, fill=cob, inner sep=0pt,minimum size=3pt] at ({-5+cos 90},{sin 90}) {};
%circle
\draw [dotted, domain=(180+\shrink):(360-\shrink) ,smooth,variable =\x, gray] plot ({-5+cos \x},{-sin \x}); 
\draw [->, domain=(180-\shrink):(0+\shrink) ,smooth,variable =\x, gray] plot ({-5+cos \x},{-sin \x}); 

\node[above] at ({-5+cos 270},{-sin 270}) {$c_0$};
\node[right] at ({-5+cos 0},{sin 0}) {$c_0$};
\node[left] at ({-5+cos 180},{-sin 180}) {$c_0$};

\node[above] at ({-5+cos 270},{sin 270}) {$f$};

%node 
\node[circle,draw=black, fill=cob, inner sep=0pt,minimum size=3pt] at ({cos 180},{-sin 180}) {};

\node[circle,draw=black, fill=cob, inner sep=0pt,minimum size=3pt] at ({cos 0},{-sin 0}) {};
%circle
\draw [ <-,domain=(180+\shrink):(360-\shrink) ,smooth,variable =\x, gray] plot ({cos \x},{-sin \x}); 
\draw [->, domain=(180-\shrink):(0+\shrink) ,smooth,variable =\x, gray] plot ({cos \x},{-sin \x}); 

\node[below] at ({cos 270},{sin 270}) {$\mathcal{C}$};

\node[above] at ({cos 270},{-sin 270}) {$\mathcal{C}$};

%\node[above] at ({cos 90},{sin 90}) {$M$};

\draw[->, decorate, decoration={
    zigzag,
    segment length=4,
    amplitude=.9,post=lineto,
    post length=2pt
}] (-3.25,0)--(-1.5,0);
\end{tikzpicture}}
\caption{
  $\lMr{L}{\cC/\cD}{R} = \cC$}\label{fig:k_end_trace_1}
\end{subfigure}
\hfill
\begin{subfigure}{.5\textwidth}
\resizebox{\textwidth}{!}{\begin{tikzpicture}
\def\shrink{3}

%\node[circle,draw=black, fill=coc, inner sep=0pt,minimum size=3pt](source) at ({-5+cos 90},{-sin 90}) {};
%
%\node[circle,draw=black, fill=cob, inner sep=0pt,minimum size=3pt] (target) at ({-4+cos 90},{-sin 90}) {};
%
%\draw [->](source)--node[midway, above]{$f$}(target);

%node 
\node[circle,draw=black, fill=cob, inner sep=0pt,minimum size=3pt] at ({-5+cos 180},{-sin 180}) {};

\node[circle,draw=black, fill=cob, inner sep=0pt,minimum size=3pt] at ({-5+cos 0},{-sin 0}) {};
\node[circle,draw=black, fill=cob, inner sep=0pt,minimum size=3pt] at ({-5+cos 90},{sin 90}) {};
%circle
\draw [dotted, domain=(180+\shrink):(360-\shrink) ,smooth,variable =\x, gray] plot ({-5+cos \x},{-sin \x}); 
\draw [->, domain=(180-\shrink):(0+\shrink) ,smooth,variable =\x, gray] plot ({-5+cos \x},{-sin \x}); 

\node[above] at ({-5+cos 270},{-sin 270}) {$c_0$};
\node[right] at ({-5+cos 0},{sin 0}) {$R(c_0)$};
\node[left] at ({-5+cos 180},{-sin 180}) {$L(c_0)$};

\node[above] at ({-5+cos 270},{sin 270}) {$f$};

%node 
\node[circle,draw=black, fill=cob, inner sep=0pt,minimum size=3pt] at ({cos 180},{-sin 180}) {};

\node[circle,draw=black, fill=cob, inner sep=0pt,minimum size=3pt] at ({cos 0},{-sin 0}) {};
%circle
\draw [ <-,domain=(180+\shrink):(360-\shrink) ,smooth,variable =\x, gray] plot ({cos \x},{-sin \x}); 
\draw [->, domain=(180-\shrink):(0+\shrink) ,smooth,variable =\x, gray] plot ({cos \x},{-sin \x}); 

\node[below] at ({cos 270},{sin 270}) {$_L\mathcal{D}_R$};

\node[above] at ({cos 270},{-sin 270}) {$\mathcal{C}$};

%\node[above] at ({cos 90},{sin 90}) {$M$};

\draw[->, decorate, decoration={
    zigzag,
    segment length=4,
    amplitude=.9,post=lineto,
    post length=2pt
}] (-2.75,0)--(-1.5,0);
\end{tikzpicture}}
\caption{
General case}\label{fig:k_end_trace_twisted_1}
\end{subfigure}
\caption{Graphical representation of \eqref{eq:k_end_trace_1}. An endomorphism is ``closed up'' via a bar construction.}
\end{figure}

The construction of the zig-zag \eqref{eq:levelwise_trace_zigzag} works identically for a bimodule arising from a twisting
$\lMr{L}{\cC/\cD}{R}$ of spectral Waldhausen categories.
 In this case, the map \eqref{eq:k_end_trace_0} with $\cX = \lMr{L}{(w_\bullet S_\bullet^{(n)} \cD)}{R}$ defines a zig-zag of multisimplicial orthogonal spectra
\begin{equation}\label{eq:levelwise_trace_zigzag_twisted}
\begin{aligned}
\Sigma^{\infty} \ob \End(\lMr{L}{(w_{\bullet}S^{(n)}_{\bullet}
  \spcat{\cC}/w_{\bullet}S^{(n)}_{\bullet}
  \spcat{\cD})}{R}) &\arr \THH(w_{\bullet}S^{(n)}_{\bullet}
\spcat{\cC};\lMr{L}{(w_\bullet S_\bullet^{(n)} \cD)}{R}) \\
&\overset{\simeq}{\longleftarrow} (S^1_\bullet)^{\sma n} \sma \THH(\spcat{\cC};\lMr{L}{\cD}{R}).
\end{aligned}
\end{equation}

The following lemma follows directly from the definitions and \cref{thm:THH_additivity}:

\begin{lem}\label{zigzag_respects_simplicial_str}
The maps in the zig-zag \eqref{eq:levelwise_trace_zigzag_twisted} of multisimplicial orthogonal spectra commute with the $\Sigma_n$-actions and
the identifications that remove a simplicial direction when its index is equal
to 1.  In other words the given maps produce a zig-zag of $\Sigma_\Delta$-diagrams of
simplicial orthogonal spectra.
\end{lem}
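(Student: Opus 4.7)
The plan is to verify, separately for each of the two maps in the zig-zag \eqref{eq:levelwise_trace_zigzag_twisted}, the two structural conditions required for a $\Sigma_\Delta$-diagram: $\Sigma_n$-equivariance on the $S_\bullet^{(n)}$ directions, and compatibility with the identifications arising when a simplicial index equals $1$. Since a $\Sigma_n$-action on $w_{k_0}S^{(n)}_{k_1,\dotsc,k_n}\cC$ and the ``$k_j = 1$'' identifications are both induced by maps of spectral Waldhausen categories, I plan to express each arrow of \eqref{eq:levelwise_trace_zigzag_twisted} as the value, at $w_{k_0}S^{(n)}_{k_1,\dotsc,k_n}\cC$, of a natural transformation of functors $\mathbf{SpWaldCat} \to \Sp$ (resp.\ $\mathbf{SpWaldTw} \to \Sp$ for twistings); functoriality then immediately yields both required compatibilities.

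For the forward map $\Sigma^{\infty} \ob \End(\lMr{L}{(\,\cdot\,)}{R}) \to \THH((\,\cdot\,);\lMr{L}{(\,\cdot\,)}{R})$, this is immediate: the inclusion \eqref{eq:k_end_trace_0} of $0$-simplices in the cyclic bar construction is defined for any spectral Waldhausen category equipped with a twisting, and is natural in morphisms of twistings (in particular under permutation of $S_\bullet$-directions, or under the structure map $S_1 \cD \simeq \cD$). For the backward map (the additivity splitting of \cref{thm:THH_additivity}), I would proceed by first showing $\Sigma_n$-equivariance and then the $k_j=1$ identifications. The splitting is assembled from $n$ iterated applications of \cref{prop:small_add_2}, one per $S_\bullet$-direction, and from the $w_\bullet$-invariance (\cref{ex:w_bullet_THH}) in the $w_{k_0}$-direction. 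Each individual application is the trace, in the sense of \cref{lem:base_change_euler} and \cref{base_change_beck_chevalley}, of a structure map (either $\iota_{i_j}$ or $s_{k_j-1}$) that involves \emph{only} the $j$-th simplicial coordinate. Consequently the bimodules and dual pairs entering distinct applications come from strictly commuting spectral functors, so the order of splitting can be permuted; this exhibits the splitting as an iterated functorial construction that is symmetric in its $n$ arguments, hence $\Sigma_n$-equivariant.

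Finally, when some $k_j = 1$, the single iterate of \cref{prop:small_add_2} in direction $j$ degenerates: $S_1\cC \simeq \cC$, the functor $s_{0}$ becomes the map into the subterminal Waldhausen category $S_0\cC = \{0\}$ whose $\THH$ is trivial, and $\iota_1 \colon \cC \xto{\sim} S_1\cC$ is an equivalence. Thus the wedge \smash{$\bigvee_{1\le i_j \le 1} \THH(\cC;\lMr{L}{\cD}{R})$} contains one summand which the splitting identifies via $\iota_1$ with $\THH(S_1\cC;\lMr{L}{S_1\cD}{R})$. Naturality of both sides of the zig-zag in the isomorphism $S_1\cC \cong \cC$ then gives the required compatibility, which is what pins down a $\Sigma_\Delta$-diagram.

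The main obstacle I expect is the $\Sigma_n$-equivariance of the backward map, since \cref{prop:small_add_2} is formulated one $S_\bullet$-direction at a time. The right way to overcome this is to observe that the composite splitting is the trace of a single map built from the disjoint commuting functors $\{\iota_{i_j}\}_{j=1}^n$ (or, dually, $\{s_{k_j-1}\}$) across all directions; by cyclic associativity of the shadow (\cref{def:shadow}) and \cref{base_change_beck_chevalley}, the resulting trace is manifestly symmetric in the indexing of those functors. The remaining checks are then entirely formal functoriality arguments.
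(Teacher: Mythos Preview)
Your proposal is correct, and in fact the paper does not provide an explicit proof: it simply asserts that the lemma ``follows directly from the definitions and \cref{thm:THH_additivity}.'' The paper does, however, immediately follow the lemma with a remark offering a cleaner viewpoint on the backward map than the one you outline: the right-hand term $(S^1_\bullet)^{\sma n} \sma \THH(\spcat\cC;\lMr{L}{\cD}{R})$ is the \emph{free} $\Sigma_\Delta$-diagram on $\THH(\spcat\cC;\lMr{L}{\cD}{R})$ at level $(0;)$, and the backward map $\bigvee \iota_{i_1,\ldots,i_n}$ is precisely the counit of the free-forgetful adjunction. This makes the $\Sigma_\Delta$-structure of the backward map automatic, without any need to invoke shadows or trace compatibilities.

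Your argument for the backward map is more elaborate than necessary. The $\Sigma_n$-equivariance does not require cyclic associativity of the shadow or any bicategorical trace argument: the spectral functors $\iota_{i_1,\ldots,i_n}\colon \cC \to S^{(n)}_{k_1,\ldots,k_n}\cC$ are themselves strictly intertwined by the $\Sigma_n$-action (permuting the indices $(i_1,\ldots,i_n)$ corresponds to permuting the $S_\bullet$-directions), and $\THH$ is functorial in spectral functors, so equivariance is immediate. Similarly, the $k_j = 1$ identification holds because $\iota_1\colon \cC \to S_1\cC$ is the canonical isomorphism. Your invocation of \cref{base_change_beck_chevalley} and shadow axioms would work, but it obscures the elementary reason.
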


Another way of saying this is that
$(S^1_\bullet)^{\sma n} \sma \THH(\spcat{\cC})$ is the free
$\Sigma_\Delta$-diagram on the spectrum $\THH(\spcat\cC)$ at level $(0;)$, and
$\bigvee \iota_{i_1,\ldots,i_k}$ agrees with the map that arises from the
free-forgetful adjunction.

Taking the geometric realization of these multisimplicial orthogonal spectra
gives a zig-zag of bispectra.  At level $n$ in the symmetric spectrum
direction, the zig-zag of bispectra is
\begin{equation}\label{eq:levelwise_trace_after_realization}
\begin{aligned}
  \abs{\Sigma^{\infty}
    \ob \End(\lMr{L}{(w_{\bullet}S^{(n)}_{\bullet}
  \spcat{\cC}/w_{\bullet}S^{(n)}_{\bullet}
      \spcat{\cD})}{R})} &\arr
  \abs{\THH(w_{\bullet}S^{(n)}_{\bullet} \spcat{\cC};
    \lMr{L}{(w_\bullet S^{(n)}_{\bullet}\cD)}{R})} \\
    &\overset{\simeq}{\longleftarrow} \Sigma^n \THH(\spcat{\cC};\lMr{L}{\cD}{R}).
\end{aligned}
\end{equation}
There is a canonical identification of sets
\[
\ob \End(\lMr{L}{(w_{\bullet}S^{(n)}_{\bullet}
  \spcat{\cC}/w_{\bullet}S^{(n)}_{\bullet}\spcat{\cD})}{R}) = \ob w_{\bullet}S^{(n)}_{\bullet}\End(\lMr{L}{\spcat{\cC}/\spcat{\cD}}{R}).
\]
This identifies the bispectrum
on the left of \eqref{eq:levelwise_trace_after_realization} with the orthogonal
suspension spectrum of the symmetric spectrum $K(\End(\lMr{L}{\spcat{\cC}/\spcat{\cD}}{R}))$.  On the
other hand, the spectrum on the right of
\eqref{eq:levelwise_trace_after_realization} is the symmetric suspension
spectrum of the orthogonal spectrum $\THH(\spcat{\cC};\lMr{L}{\cD}{R})$.

Applying the (left-derived) prolongation functor from
	\cite[\swcref{Proposition A.7}{\cref{swc-quillen-adjoints}}]
	{clmpz-specWaldCat}
 to these bispectra, we get a zig-zag of orthogonal spectra
\begin{equation}\label{eq:trc_1_step_back}
\bbP K(\End(\lMr{L}{\cC/\cD}{R})) \arr \bbP\abs{\THH(w_{\bullet}S^{*}_{\bullet}
\spcat{\cC}; \lMr{L}{(w_{\bullet}S^{*}_{\bullet} \spcat{\cD})}{R})} \overset{\simeq}{\longleftarrow} \THH(\cC;\lMr{L}{\cD}{R}).
\end{equation}
The first term in \eqref{eq:trc_1_step_back} is the prolongation of $K(\End(\lMr{L}{\cC/\cD}{R}))$ from symmetric to orthogonal spectra.

\begin{defn} \label{twisted_dennis_trace} The \textbf{Dennis trace map}
  associated to a twisting $\lMr{L}{\cC/\cD}{R}$ is obtained by choosing an
  inverse to the wrong-way map in \eqref{eq:trc_1_step_back}, defining a map
\begin{equation}\label{eq:trc_ors_k_thh}
	\trc \colon \bbP K(\End(\lMr{L}{\cC/\cD}{R})) \arr \THH(\spcat{\cC};\lMr{L}{\spcat{\cD}}{R})
\end{equation}
in the homotopy category of orthogonal spectra,
or, equivalently, in the homotopy category of symmetric spectra
\begin{equation}\label{eq:trc_ss_k_thh}
  \trc \colon K(\End(\lMr{L}{\cC/\cD}{R})) \arr \bbU\THH(\spcat{\cC};\lMr{L}{\cD}{R})
\end{equation}
to the underlying symmetric spectrum of $\THH$.
\end{defn}

\begin{rmk}
	It is unnecessary to check that the multisimplicial objects above are Reedy cofibrant, because the realization can be automatically left-derived without losing the symmetric spectrum structure (\cite[\swcref{Theorem 6.4}{\cref{swc-properness-thm}}]{clmpz-specWaldCat}). See \cite[\swcref{Remark 7.14}{\cref{swc-rmk:properness_for_zigzag}}]{clmpz-specWaldCat} for additional discussion. The upshot is that this construction of the Dennis trace is insensitive to the choice of model for $\THH$.
      \end{rmk}

\begin{obs}
  Consider the case when $\cC = \cD$ and $L = R = \id$.
The Dennis trace map on $K(\End(\uncat\cC))$ extends the Dennis trace on $K(\uncat{\cC})$, as constructed in e.g. \cite{blumberg_mandell_unpublished,dundas_goodwillie_mccarthy}, in the sense that the following diagram commutes:
\[
\xymatrix{
  K(\uncat{\cC}) \ar[d]_-{\trc} \ar[r]^-{\iota_1} & K(\End(\uncat{\cC})) \ar[dl]^-{\trc} \\
  \THH(\spcat{\cC}).
} \]
\end{obs}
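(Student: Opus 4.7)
The plan is to verify that the zig-zag \eqref{eq:trc_1_step_back} defining $\trc$ on $K(\End(\uncat\cC))$, when precomposed with $\iota_1$ at each multisimplicial level, reduces to the analogous zig-zag used to construct the classical Dennis trace on $K(\uncat{\cC})$ in \cite{blumberg_mandell_unpublished,dundas_goodwillie_mccarthy}.

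The essential step is to inspect the base map \eqref{eq:k_end_trace_1}. The functor $\iota_1\colon \uncat\cC \to \End(\uncat\cC)$ sends each object $c_0$ to the pair $(c_0,\id_{c_0})$. Under the assignment sending an endomorphism $f\colon c_0\to c_0$ to its associated map $\bbS \to \spcat{\cC}(c_0,c_0)$, the identity $\id_{c_0}$ goes to the unit map of the spectral category at $c_0$; this is immediate from the fact that the spectral functor $\Sigma^\infty \uncat{\cC} \to \spcat{\cC}$ packaging the base category data is unital. Composing with the $0$-simplex inclusion into the cyclic bar construction therefore reproduces the map
\[ \Sigma^\infty \ob \uncat{\cC} \to \bigvee_{c_0} \spcat{\cC}(c_0,c_0) \to \THH(\spcat{\cC}) \]
used as the starting vertex of the classical Dennis trace zig-zag.

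Next, I would exploit naturality. The functor $\iota_1$ is exact and commutes with $w_\bullet S^{(n)}_\bullet$, so at each multisimplicial level of \eqref{eq:levelwise_trace_zigzag} we get a commuting square relating the base maps for $\uncat\cC$ and for $\End(\uncat\cC)$. The backward equivalence is produced by the additivity splittings of \cref{cor:small_add} and \cref{thm:THH_additivity}, which depend only on $\spcat\cC$ and are thus identical in the two cases. The two zig-zags therefore assemble into a commuting ladder of $\Sigma_\Delta$-diagrams of orthogonal spectra; after realization and prolongation, choosing a single homotopy inverse to the shared additivity equivalence produces the required commuting triangle in the homotopy category.

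The main subtlety, rather than an obstacle, is the bookkeeping involved in verifying that this single choice of inverse works for both zig-zags simultaneously. This is automatic because the two zig-zags share the same intermediate bispectrum and the same wrong-way equivalence coming from $\THH(\spcat\cC)$; the full argument then reduces to the basic compatibility $\iota_1(\id_{c_0}) = \id_{c_0}$ identified in the second paragraph.
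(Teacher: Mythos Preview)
Your proposal is correct and follows essentially the same approach as the paper: both arguments reduce to the observation that the classical Dennis trace uses the inclusion of units $\bbS \to \spcat{\cC}(c_0,c_0)$ into the $0$-skeleton, that this is precisely what \eqref{eq:k_end_trace_1} produces when restricted along $\iota_1$, and that the remainder of the zig-zag (the $w_\bullet S^{(n)}_\bullet$ machinery and the additivity equivalence) depends only on $\spcat\cC$ and is therefore literally the same in both constructions.
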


To see this, observe that other authors construct the Dennis trace map in the same way that we did for $K(\End(\uncat\cC))$, except using the map \eqref{eq:k_end_trace_0} with $\cX(c_0, c_0) = \bbS$ instead of \eqref{eq:k_end_trace_1}. The two maps agree along the inclusion of the identity morphisms, and tracing through the construction above this becomes the map of $K$-theory spectra $\iota_1 \colon K(\uncat{\cC}) \arr K(\End(\uncat{\cC}))$ induced by the exact functor $\iota_1$ from beneath \cref{def:category_of_endomorphisms}.

\begin{rmk}\label{cyclic_ktheory}
The same argument shows that the diagram
\[
\xymatrix{
	K(\uncat{\cC}) \ar[d]_-{0} \ar[r]^-{\iota_0} & K(\End(\uncat{\cC})) \ar[dl]^-{\trc} \\
	\THH(\spcat{\cC})
} \]
commutes.
As a result, the Dennis trace factors through the mapping cone of $\iota_0$, often called {\bf cyclic $K$-theory} (or the reduced $K$-theory of endomorphisms):
\[ K^\cyc(\uncat\cC) = \widetilde K(\End(\uncat\cC)) = K(\End(\uncat\cC)) \ / \ \iota_0 K(\uncat\cC). \]
\end{rmk}

If we return to the case of general $\cD$, the inclusion of identity endomorphisms $\iota_1$ becomes meaningless, but the inclusion of zero endomorphisms $\iota_0$ is still
 defined, and there is a commutative diagram
	\[
	\xymatrix{
		K(\uncat{\cC}) \ar[d]_-{0} \ar[r]^-{\iota_0} & K\End\left(\tw{\cC}{L}{R}{\cD}\right) \ar[dl]^-{\trc} \\
		\THH\left(\spcat{\cC};\tensor[_L]{\spcat\cD}{_R}\right).
	} \]
Thus the Dennis trace descends to a map out of cyclic $K$-theory,
\[
\widetilde K\End\left(\tw{\cC}{L}{R}{\cD}\right) = K\End\left(\tw{\cC}{L}{R}{\cD}\right) \ / \ \iota_0 K(\uncat\cC) \arr \THH\left(\spcat{\cC};\tensor[_L]{\spcat\cD}{_R}\right).
\]
In particular, for a ring spectrum $A$ and bimodule $M$ the Dennis trace defines a map
\[ K^{\cyc}(A; M) = \widetilde K(A;M) \xarr{\trc} \THH(\tensor[^A]{\Perf}{};\tensor[^A]{\Mod}{_M}) \overset\sim\longleftarrow \THH(A;M). \]

\begin{rmk}\label{comparison_to_LM12}
	We sketch an argument that this agrees with the trace defined in \cite[9.2]{LM12}, see also \cite[V]{dundas_goodwillie_mccarthy}. The idea is to use a version of \cite{dmpsw} with bimodule coefficients to turn our smash products into B\" okstedt smash products. Then the relevant bispectrum is an $\Omega$-spectrum in the $\THH$ direction, hence the prolongation is equivalent to the functor that restricts to the symmetric spectrum direction. After these manipulations, the inclusion of endomorphisms map \eqref{eq:k_end_trace_1} is the same as the one in \cite[9.1]{LM12}.
\end{rmk}

\begin{example}
	Taking $\spcat\cC = \tensor[^A]{\Perf}{}$ for a ring spectrum $A$, we get maps
	\[ \xymatrix @R=1em {
		K(A) \ar@{=}[d] & \widetilde K\End(A) \ar@{=}[d] & \ar[d]^-\sim \THH(A) \\
		K(\tensor[^A]{\Perf}{}) \ar[r]^-{\iota_1} & \widetilde K\End(\tensor[^A]{\Perf}{}) \ar[r]^-{\trc} & \THH(\tensor[^A]{\Perf}{})
	} \]
whose composite agrees with the Dennis trace map $K(A) \to \THH(A)$ studied previously, e.g. \cite{dundas_mccarthy,dundas_goodwillie_mccarthy,madsen_survey}.
\end{example}

\begin{prop}\label{lem:pi_0_dennis_trace}
Let $f \colon L(a) \to R(a)$ be an object of the twisted endomorphism category $\End\left(\tw{\cC}{L}{R}{\cD}\right)$.  The image of $[f] \in K_0\End\left(\tw{\cC}{L}{R}{\cD}\right)$ under the Dennis trace
is the homotopy class of the composite
\[
\bbS \xarr{[f]} \spcat{\cD}(L(a), R(a)) \arr \bigvee_{c_0 \in \cC} \spcat{\cD}(L(c_0), R(c_0)) \xarr{\text{$0$-skeleton}} \THH(\spcat\cC; \tensor[_L]{\spcat\cD}{_R})
\]
which includes $f$ as a $0$-simplex in the cyclic bar construction.

\end{prop}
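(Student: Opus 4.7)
The plan is to unfold the definition of the Dennis trace from \cref{twisted_dennis_trace} on the class $[f]$ and verify that, on $\pi_0$, the resulting map sends $[f]$ to the claimed 0-simplex inclusion. Recall that the trace is the composite
\[\bbP K(\End(\lMr{L}{\uncat\cC/\uncat\cD}{R})) \arr \bbP \bigl|\THH(w_{\bullet}S^{*}_{\bullet} \spcat\cC; \lMr{L}{(w_{\bullet}S^{*}_{\bullet} \spcat\cD)}{R})\bigr| \xleftarrow{\simeq} \THH(\spcat\cC;\lMr{L}{\spcat\cD}{R}),\]
and the strategy is to restrict to the piece of this zigzag at symmetric level $n=0$ and $w_\bullet$-degree $0$, where everything simplifies.

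First I would represent $[f]$ by the $0$-cell $f$ in the set $\ob \End(\lMr{L}{\uncat\cC/\uncat\cD}{R})$, which sits as the $w_0$-degree-$0$ part of the $K$-theory bispectrum at symmetric level $n=0$. Under the forward map in \eqref{eq:levelwise_trace_zigzag_twisted} at this lowest piece, the direct definition \eqref{eq:k_end_trace_0}--\eqref{eq:k_end_trace_1} (applied with $\cX = \lMr{L}{\spcat\cD}{R}$) sends $f$ precisely to the composite
\[\bbS \xarr{f} \spcat\cD(L(a),R(a)) \arr \bigvee_{c_0} \spcat\cD(L(c_0), R(c_0)) \arr \THH(\spcat\cC; \lMr{L}{\spcat\cD}{R})\]
appearing in the statement of the proposition.

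Next I would check that the backward equivalence restricts to the identity on this piece. At $n = 0$ the wedge sum in \eqref{eq:levelwise_trace_zigzag_twisted} has a single summand (empty index set), and the $S^{(0)}_\bullet$ direction is absent, so the equivalence reduces to the $w_\bullet$-invariance map $\THH(\spcat\cC; \lMr{L}{\spcat\cD}{R}) \xarr{\simeq} |\THH(w_\bullet \spcat\cC; \lMr{L}{(w_\bullet \spcat\cD)}{R})|$ from \cref{ex:w_bullet_THH}. On $0$-simplices this is the identity inclusion (since $w_0 \spcat\cC = \spcat\cC$), so inverting it leaves the $0$-simplex inclusion of $f$ unchanged.

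The main technical point is to verify that this computation at the bottom of the bispectrum structure truly computes the induced map on $\pi_0$ of the full orthogonal spectra. This follows because the forward and backward maps of the zigzag are compatible with the $\Sigma_\Delta$-diagram structure (\cref{zigzag_respects_simplicial_str}), the prolongation $\bbP$ preserves $\pi_0$ (being a Quillen equivalence), and the class $[f] \in \pi_0 K \End(\lMr{L}{\uncat\cC/\uncat\cD}{R})$ is represented by $f$ in $\ob \End(\lMr{L}{\uncat\cC/\uncat\cD}{R})$ via the canonical map from this set into the $K$-theory spectrum. This is essentially the same bookkeeping underlying the comparison sketched in \cref{comparison_to_LM12} to the earlier constructions of \cite{LM12,dundas_goodwillie_mccarthy}.
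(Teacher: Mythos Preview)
Your proposal is correct and follows essentially the same approach as the paper: both restrict the defining zigzag of bispectra to symmetric level $n=0$ and $w_\bullet$-degree $0$, where the forward map is literally the inclusion of endomorphisms into the $0$-skeleton and the backward equivalence is the identity, and then observe that every $[f]\in K_0$ is represented at this level. The paper packages this as a single commutative diagram rather than prose, and invokes the standard presentation of $K_0$ of a Waldhausen category for surjectivity of the map from objects, but the content is the same.
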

\begin{proof}
Consider the following commutative diagram, where the top row maps to the bottom
row by mapping symmetric spectrum level 0 and simplicial level 0 into the
zig-zag of bispectra:
\[
	\xymatrix @R=1.5em{
		\Sigma^{\infty} \ob \End(w_0 \lMr{L}{\cC/\cD}{R}) \ar[d] \ar[r] & \THH(w_0\cC; \lMr{L}{w_0\cD}{R}) \ar[d] & \ar[l]_-\cong \Sigma^0 \THH(\cC;\lMr{L}{\spcat{\cD}}{R}) \ar[d]^-\cong \\
		\bbP K(\End(\lMr{L}{\cC/\cD}{R})) \ar[r] & \bbP|\THH(w_\bullet
                S^{\ast}_{\bullet,\ldots,\bullet} \spcat{\cC};\lMr{L}{(w_\bullet
                  S^*_{\bullet,\ldots,\bullet} \cD)}{R})| & \ar[l]_-\cong \THH(\spcat{\cC};\lMr{L}{\cD}{R})
	}
\]
The vertical map on the left is surjective on $\pi_0$ by the standard presentation for $K_0$ of a Waldhausen category. By the construction of the Dennis trace, the first horizontal map in the top row is the inclusion of endomorphisms map, so the conclusion follows.
\end{proof}

\begin{example}\label{ex:trace_is_trace}
	If $A$ is a ring spectrum, each perfect left $A$-module $P$ defines a class $[P] \in K_0(A)$. By \cref{lem:pi_0_dennis_trace}, its image in $\pi_0 \THH(\tensor[^A]{\Perf}{})$ is the inclusion of the 0-simplex corresponding to the identity map of $P$ into the cyclic bar construction. By \cite[\S 7]{cp},
	the image of this class under the Morita equivalence
\[\pi_0 \THH(\tensor[^A]{\Perf}{}) \cong \pi_0\THH(A)\]
 is the Euler characteristic of $P$. More generally, each endomorphism $f\colon P \arr P$ defines a class $[f] \in K_0 \End(A)$ whose image in $\pi_0\THH(A)$ is the trace of $f$, by \cref{lem:pi_0_dennis_trace} and \cite[7.11]{cp}.
\end{example}

\begin{example}\label{ex:twisted_trace_is_twisted_trace}
	Every twisted endomorphism $f\colon P \to M \sma_A P$ of a perfect left $A$-module $P$ defines a class $[f] \in K_0(A;M)$ whose image in $\pi_0\THH(\tensor[^A]{\Perf}{};\tensor[^A]{\Mod}{_M})$ is the inclusion of the 0-simplex corresponding to $f$ in the cyclic bar construction. By \cite[7.4]{cp}, applied as in the proof of \cite[7.11]{cp}, its image under the Morita equivalence
\[\pi_0 \THH(\tensor[^A]{\Perf}{};\tensor[^A]{\Mod}{_M}) \cong \pi_0\THH(A;M)\]
 is the bicategorical trace
	\[
	\tr(f) \colon \bbS = \sh{\bbS} \arr \sh{M} = \THH(A;M).
	\]
\end{example}

\section{The equivariant Dennis trace}\label{sec:equiv_dennis_trace}

In order to build a trace to topological restriction homology ($\TR$), we
construct $C_r$-equivariant refinements of $\THH$ and the $K$-theory of
endomorphisms whose fixed points for varying $r$ are linked together. This
requires some generalizations of the constructions and theorems of the previous
three sections.  For a short review of equivariant spectra and the notation we
are using see \S\ref{sec:equivariant_review}.  We then define the equivariant Dennis trace and give a description of its effect on $\pi_0$ in terms of the bicategorical trace
\[\tr(f_{r} \circ \dotsm \circ f_{1})\] of a composite twisted endomorphism (\cref{prop:main_identify_pi_0_TR}).

\subsection{$r$-fold endomorphisms and $\THH^{(r)}$}

\begin{defn} \label{def:twisted_endo} Given a spectral Waldhausen category
  $\spcat\cC$, let $\uncat\cC^{\times r}$ denote the $r$-fold product of the
  base category, with Waldhausen structure determined coordinate-wise.  Let
  $\rho$ denote the rotation functor
	\[
	\rho \colon (c_1,\ldots,c_r) \longmapsto (c_2,\ldots,c_r,c_1).
	\]
\end{defn}

\noindent
Given a twisting $_L\cC/\cD_R$ of $\spcat{\cC}$, let
$R^{r}_{\rho} = R^r \circ \rho = \rho \circ R^r \colon \uncat\cC^{\times r} \rto \uncat\cD^{\times r}$
denote the exact functor
\[ R^{r}_{\rho}(c_1,\ldots,c_r) = (R(c_2), \ldots,R(c_{r}), R(c_1)). \]

\begin{defn} \label{def:rfoldTwisted}
	Define the \textbf{$r$-fold twisted endomorphism category of $\lMr{L}{\cC/\cD}{R}$} by
	\[\End^{(r)}\left(\tw{\cC}{L}{R}{\cD}\right)
          \coloneqq \End\left(\tensor[_{L^r}]{ \left(\cC^{\times r}/\cD^{\times
                  r}\right)}{_{R^{r}_{\rho}}}\right).\] There is an exact
        functor
        \[\Delta_r\colon\End(\lMr{L}{\cC/\cD}{R}) \to \End^{(r)}(\lMr{L}{\cC/\cD}{R})\]
        taking $(a, f)$ to $((a,\ldots,a), (f,\ldots,f))$; we refer to this as
        the \textbf{duplication functor}.
\end{defn}

An object of this category consists of an $r$-tuple of objects $(a_1, \ldots, a_r)$ in $\uncat\cC$ and an $r$-tuple of morphisms in $\uncat\cD$
	\[ \bigl( f_1 \colon L(a_1) \arr R(a_2), \; \ldots, \; f_{r - 1} \colon L(a_{r-1}) \arr R(a_r), \; f_{r} \colon L(a_r) \arr R(a_1) \bigr). \]
	See \cref{fig:obj_twisted}.
	A morphism $(a_i, f_i) \arr (b_i, g_i)$
	is an $r$-tuple of morphisms $(t_i \colon a_i \arr b_i)$ such that
	\[
	R(t_{i + 1}) \circ f_i = g_i \circ L(t_i) \qquad \text{(indices taken mod $r$).}
	\]
	See \cref{fig:mor_twisted}.

\begin{figure}[h]
       \begin{subfigure}[b]{.9\textwidth}
\xymatrix @R=3em @!C=0em{
		& L(a_1) \ar[rr]^-{f_1} && R(a_2) &&&\cdots&&& L(a_{r-1}) \ar[rr]^-{f_{r-1}} && R(a_r) && L(a_r) \ar[rr]^-{f_r} && R(a_1) & \\
		a_1 \ar@{~>}[ur] &&&& \ar@{~>}[ul] a_2 \ar@{~>}[ur] &&&& \ar@{~>}[ul] a_{r-1} \ar@{~>}[ur] &&&& \ar@{~>}[ul] a_r \ar@{~>}[ur] &&&& \ar@{~>}[ul] a_1
	}
	\caption{Objects of $\End^{(r)}\left(\tw{\cC}{L}{R}{\cD}\right)$}\label{fig:obj_twisted}
	\end{subfigure}
       \begin{subfigure}[b]{.8\textwidth}
\xymatrix @R=1.5em@C=2em{
		a_1 \ar[ddd]^-{t_1} \ar@{~>}[dr] &&  & \ar@{~>}[dl] a_2 \ar[ddd]^-{t_2} &
		\cdots &
		a_r \ar[ddd]^-{t_r} \ar@{~>}[dr] &  & & \ar@{~>}[dl] a_1 \ar[ddd]^-{t_1}
		\\
		& L(a_1) \ar[d]^-{L(t_1)} \ar[r]^-{f_1} & R(a_2) \ar[d]^-{R(t_2)} &  &
		\cdots &
		& L(a_r) \ar[d]^-{L(t_r)} \ar[r]^-{f_r} & R(a_1) \ar[d]^-{R(t_1)} &
		\\
		& L(b_1) \ar[r]_-{g_1} & R(b_2) &  &
		\cdots &
		& L(b_r) \ar[r]_-{g_r} & R(b_1) &
		\\
		b_1 \ar@{~>}[ru] && & \ar@{~>}[lu] b_2 &
		\cdots &
		b_r \ar@{~>}[ru] &  &  & \ar@{~>}[ul] b_1.
	}
	\caption{Morphisms of $\End^{(r)}\left(\tw{\cC}{L}{R}{\cD}\right)$}\label{fig:mor_twisted}
	\end{subfigure}
	\caption{}
\end{figure}

\begin{defn}\label{ex:thh_n}

	Suppose $\spcat\cC$ is a pointwise cofibrant spectral category and $\spcat\cX$ is a pointwise cofibrant $(\spcat\cC,\spcat\cC)$ bimodule.
	For each $r \geq 1$, let $\spcat\cC^{\sma r}$ be the spectral category with object set $(\ob\spcat\cC)^{\times r}$ and mapping spectra
	\[ \spcat\cC^{\sma r}((a_1,\ldots,a_r),(b_1,\ldots,b_r)) = \bigwedge_{i=1}^r \spcat\cC(a_i,b_i). \]
	As in \cref{def:twisted_endo} we let
	\[ \rho\colon \spcat{\cC}^{\sma r} \to \spcat\cC^{\sma r}, \qquad \rho(c_1,\ldots,c_r) = (c_2,\ldots,c_r,c_1) \]
	be the spectral functor that rotates the smash product factors.

	Similarly, let $\spcat\cX^{\sma r}$ denote the $(\spcat\cC^{\sma r},\spcat\cC^{\sma r})$-bimodule whose value on each pair of $r$-tuples is the evident $r$-fold smash product. Twisting on the right by $\rho$ gives another $(\spcat\cC^{\sma r},\spcat\cC^{\sma r})$-bimodule $\spcat\cX^{\sma r}_\rho$. We define \textbf{$r$-fold topological Hochschild homology} by the formula
	\[ \THH^{(r)}(\spcat\cC;\spcat\cX) \coloneqq \THH(\spcat\cC^{\sma r};\spcat\cX^{\sma r}_\rho). \]
        Rotating the coordinates of $\spcat\cX^{\sma r}_\rho$ gives an isomorphism of
bimodules $\rho \colon \spcat\cX^{\sma r}_\rho \arr \spcat\cX^{\sma r}_\rho$
over the map of spectral categories $\rho \colon \spcat\cC^{\sma r} \arr
\spcat\cC^{\sma r}$. In other words, we get an action of the cyclic group $C_r$
on the pair consisting of the spectral category $\spcat\cC^{\sma r}$ and the
bimodule $\spcat\cX^{\sma r}_\rho$. This makes $r$-fold $\THH$ into an
orthogonal $C_r$-spectrum.
\end{defn}

\begin{prop}\label{thh_n_derived}
	When $\spcat\cC$ and $\spcat\cX$ are pointwise cofibrant, $\THH^{(r)}$
        is cofibrant and the Hill--Hopkins--Ravenel norm diagonal of
        \cref{norm_diagonal} induces isomorphisms of orthogonal $C_s$-spectra
	\[ \Phi^{C_r} \THH^{(rs)}(\spcat\cC;\spcat\cX) \cong \THH^{(s)}(\spcat\cC;\spcat\cX) \]
	for all $r,s \geq 1$.
\end{prop}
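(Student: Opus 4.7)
The plan is to reduce the statement to an application of the Hill--Hopkins--Ravenel diagonal from \cref{norm_diagonal}, by first establishing a natural $C_r$-equivariant isomorphism
\[
\THH^{(rs)}(\spcat\cC;\spcat\cX) \cong \bigl(\THH^{(s)}(\spcat\cC;\spcat\cX)\bigr)^{\sma r},
\]
where on the left $C_r$ is the subgroup $\langle \rho^s \rangle \leq C_{rs}$ and on the right $C_r$ permutes the $r$ smash factors. Granting this identification and the cofibrancy of $\THH^{(s)}(\spcat\cC;\spcat\cX)$, \cref{norm_diagonal} produces a point-set isomorphism $\THH^{(s)}(\spcat\cC;\spcat\cX) \oarr{\cong} \Phi^{C_r}\bigl(\THH^{(s)}(\spcat\cC;\spcat\cX)\bigr)^{\sma r}$, which combines with the above identification to yield the desired $C_s$-equivariant isomorphism $\Phi^{C_r} \THH^{(rs)}(\spcat\cC;\spcat\cX) \cong \THH^{(s)}(\spcat\cC;\spcat\cX)$. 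The residual $C_s = C_{rs}/C_r$ action is tracked through both identifications and will match on the nose by construction.

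For the cofibrancy claim, note that pointwise cofibrancy of $\spcat\cC$ and $\spcat\cX$ is preserved under smash powers, so $\spcat\cC^{\sma r}$ and $\spcat\cX^{\sma r}_\rho$ are again pointwise cofibrant. Each simplicial level of the cyclic bar $B^{\mathrm{cy}}_\bullet(\spcat\cC^{\sma r};\spcat\cX^{\sma r}_\rho)$ is then a wedge of smash products of cofibrant orthogonal spectra, hence cofibrant, and the simplicial spectrum is Reedy cofibrant by standard arguments so its geometric realization $\THH^{(r)}(\spcat\cC;\spcat\cX)$ is cofibrant.

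For the key equivariant identification, regroup the $rs$ smash factors of $\spcat\cC^{\sma rs}$ into $r$ blocks of $s$ factors each; under the resulting natural identification $\spcat\cC^{\sma rs} \cong (\spcat\cC^{\sma s})^{\sma r}$, the subgroup $C_r = \langle \rho_{rs}^s \rangle$ acts by cyclic permutation of the $r$ outer blocks and acts trivially within each block. The coefficient twist $\rho_{rs}$ shifts by one position within a block and crosses from block $i$ to block $i+1$ at each multiple of $s$, so it decomposes as the within-block twist $\rho_s$ on each copy of $\spcat\cX^{\sma s}$ together with the outer rotation of the $r$-fold smash. Since the cyclic bar construction is a colimit built from smash products, and both smash products and geometric realization commute with smash in each variable, this decomposition of $\spcat\cC^{\sma rs}$ and $\spcat\cX^{\sma rs}_{\rho_{rs}}$ induces the asserted $C_r$-equivariant isomorphism of realizations. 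The main obstacle is precisely this bookkeeping step: verifying that the single twist $\rho_{rs}$ really does factor as the product of the within-block twists $\rho_s$ and the external rotation of smash factors, and that this factorization is compatible with the $C_r$-action on both sides. Once the combinatorics of the indexing are unwound, the identification is formal.
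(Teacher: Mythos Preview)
Your central claim---that there is a $C_r$-equivariant isomorphism
\[
\THH^{(rs)}(\spcat\cC;\spcat\cX) \cong \bigl(\THH^{(s)}(\spcat\cC;\spcat\cX)\bigr)^{\sma r}
\]
before taking any fixed points---is false, and the error is precisely in the ``bookkeeping step'' you flag. The twist $\rho_{rs}$ is a single $rs$-cycle; it does \emph{not} factor as the product of $r$ independent $s$-cycles together with an outer block rotation. Concretely, in the coefficient slot of $\THH^{(rs)}$ the term indexed by the last position of block $j$ is $\spcat\cX(a^k_{js},a^0_{js+1})$, connecting block $j$ to block $j{+}1$, whereas in $(\THH^{(s)})^{\sma r}$ the corresponding term is $\spcat\cX(b^{j,k}_s,b^{j,0}_1)$, which wraps around inside block $j$. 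The two simplicial spectra therefore differ already at level~$0$. A quick sanity check: for $\spcat\cX = \spcat\cC$ your identification would combine with \cref{rmk:unwinding_subdivision} to give $\THH(\spcat\cC) \cong \THH(\spcat\cC)^{\sma r}$ nonequivariantly, which fails for essentially any nontrivial $\spcat\cC$.

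The paper's argument avoids this by never asserting such a global splitting. Instead it commutes $\Phi^{C_r}$ past the geometric realization and applies the HHR diagonal \emph{levelwise}. At simplicial level $k$ the $C_r$-action permutes the summands of a wedge indexed by $(k{+}1)$-tuples of $rs$-tuples of objects; geometric fixed points of such a wedge retain only the summands with $C_r$-fixed index, i.e.\ those where each $rs$-tuple is an $r$-fold repetition of an $s$-tuple. On \emph{those} summands the term really is an $r$-fold smash power, and $D_r$ identifies its $\Phi^{C_r}$ with the corresponding level of $\THH^{(s)}$. One then checks compatibility with faces, degeneracies, and the residual $C_s$-action; the last face and the $C_s$-action require the rigidity theorem for geometric fixed points rather than plain naturality. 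The point is that the norm structure lives on each fixed summand, not on the entire realization.

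Your cofibrancy argument also needs strengthening: what is required is cofibrancy as an orthogonal $C_r$-spectrum, so that $\Phi^{C_r}$ is already derived. For this one must know that the $r$-fold smash power sends cofibrations of orthogonal spectra to cofibrations of $C_r$-spectra, which is what makes the latching maps of the cyclic bar construction equivariant cofibrations; merely observing that each level is nonequivariantly cofibrant is not enough.
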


\begin{proof}
	For the cofibrancy statement it suffices to check that the latching maps of the simplicial spectrum defining $\THH^{(r)}$ are cofibrations of orthogonal $C_r$-spectra.
These latching maps are cofibrations  since the $r$-fold smash power turns cofibrations into equivariant cofibrations \cite[4.11]{malkiewich_thh_dx}.

	For the second part,
	we commute $\Phi^{C_r}$ with the realization and apply the norm diagonal $D_r$ at every simplicial level:

\noindent
 \begin{small}
\begin{align*}
	\Phi^{C_r} &\left| [k] \mapsto \bigvee_{(a^0_i),\dotsc,(a^k_i)} \spcat\cC^{\sma rs}((a^0_i),(a^1_i)) \sma \dotsm \sma \spcat\cC^{\sma rs}((a^{k-1}_i),(a^k_i)) \sma \spcat\cX^{\sma rs}_\rho((a^k_i),(a^0_i)) \right| \\
	\cong &\left| [k] \mapsto \bigvee_{(b^0_i),\dotsc,(b^k_i)} \Phi^{C_r}\left(\spcat\cC^{\sma rs}((b^0_i)^{\times r},(b^1_i)^{\times r}) \sma \dotsm \sma \spcat\cC^{\sma rs}((b^{k-1}_i)^{\times r},(b^k_i)^{\times r}) \sma \spcat\cX^{\sma rs}_\rho((b^k_i)^{\times r},(b^0_i)^{\times r})\right) \right| \\
	\cong &\left| [k] \mapsto \bigvee_{(b^0_i),\dotsc,(b^k_i)} \spcat\cC^{\sma s}((b^0_i),(b^1_i)) \sma \dotsm \sma \spcat\cC^{\sma s}((b^{k-1}_i),(b^k_i)) \sma \spcat\cX^{\sma s}_\rho((b^k_i),(b^0_i)) \right|. \\
	\end{align*}
\end{small}

\noindent Here $(a^j_i) = (a^j_1,a^j_2,\dotsc,a^j_{rs})$ ranges over $rs$-tuples of objects of $\spcat\cC$, $(b^j_i) = (b^j_1,b^j_2,\dotsc,b^j_s)$ ranges over $s$-tuples of objects of $\spcat\cC$, and
\[(b^j_i)^{\times r} = (b^j_1, \dotsc, b^j_s, \dotsc, b^j_1, \dotsc, b^j_s)\]
 is the $rs$-tuple obtained by duplicating an $s$-tuple $r$ times. It remains to check that the diagonal respects the faces and degeneracy maps, and the $C_s$-action. For degeneracies and every face but the first, this follows by naturality of the diagonal. For the first face this follows from \cite[3.26]{malkiewich_thh_dx}, and for the $C_s$-action it follows from \cite[3.27]{malkiewich_thh_dx}. In other words, both are consequences of the rigidity theorem \cite[1.2]{malkiewich_thh_dx}.
\end{proof}

\begin{rmk}\label{rmk:unwinding_subdivision}
	When $\cX = \cC$, the spectrum $\THH^{(r)}(\cC;\mc{C}) $ is isomorphic to $\THH(\cC)$ using $r$-fold subdivision, and this is the cyclotomic structure constructed in \cite[4.6]{malkiewich_thh_dx}. It is equivalent to B\" okstedt's cyclotomic structure on $\THH(\cC)$ by the main result of \cite{dmpsw}.
\end{rmk}

The next result formalizes the observation that if we unwind the $r$-fold
topological Hochschild homology spectrum we get the construction illustrated in
\cref{fig:thhr}.
\begin{figure}[h]
	\begin{tikzpicture}
%circle
\draw [ domain=90:270 ,smooth,variable =\x, gray] plot ({cos \x},{sin \x}); 
\draw [ domain=270:450 ,smooth,variable =\x, gray] plot ({cos \x},{sin \x}); 

\def\aa{0}
\def\ab{(\aa+60)}
\def\ac{(\ab+60)}
\def\ad{(\ac+60)}
\def\ae{(\ad+60)}
\def\af{(\ae+60)}

\def\ba{30}
\def\bb{(\ba+60)}
\def\bc{(\bb+60)}
\def\bd{(\bc+60)}
\def\be{(\bd+60)}
\def\bf{(\be+60)}

\def\r{1.3}
\def\ra{1.2}

\node at ({\ra* cos \aa},{\ra* sin \aa}) {$\spcat\cC$};
\node at ({\ra* cos \ab},{\ra* sin \ab}) {$\spcat\cC$};
\node at ({\ra* cos \ac},{\ra* sin \ac}) {$\spcat\cC$};
\node at ({\ra* cos \ad},{\ra* sin \ad}) {$\spcat\cC$};
\node at ({\ra* cos \ae},{\ra* sin \ae}) {$\spcat\cC$};
\node at ({\ra* cos \af},{\ra* sin \af}) {$\spcat\cC$};

\node at ({\r* cos \ba},{\r* sin \ba}) {$\spcat\cX$};
\node at ({\r* cos \bb},{\r* sin \bb}) {$\spcat\cX$};
\node at ({\r* cos \bc},{\r* sin \bc}) {$\spcat\cX$};
\node at ({\r* cos \bd},{\r* sin \bd}) {$\spcat\cX$};
\node at ({\r* cos \be},{\r* sin \be}) {$\spcat\cX$};
\node at ({\r* cos \bf},{\r* sin \bf}) {$\spcat\cX$};
%\node[right] at ({cos 0},{sin 0}) {$A$};

%node 
\node[circle,draw=black, fill=coc, inner sep=0pt,minimum size=3pt] at ({cos \ba},{sin \ba}) {};
\node[circle,draw=black, fill=coc, inner sep=0pt,minimum size=3pt] at ({cos \bb},{sin \bb}) {};
\node[circle,draw=black, fill=coc, inner sep=0pt,minimum size=3pt] at ({cos \bc},{sin \bc}) {};
\node[circle,draw=black, fill=coc, inner sep=0pt,minimum size=3pt] at ({cos \bd},{sin \bd}) {};
\node[circle,draw=black, fill=coc, inner sep=0pt,minimum size=3pt] at ({cos \be},{sin \be}) {};
\node[circle,draw=black, fill=coc, inner sep=0pt,minimum size=3pt] at ({cos \bf},{sin \bf}) {};
%\node[below] at ({cos 270},{sin 270}) {$N$};
\end{tikzpicture}
	\caption{$\THH^{(r)}(\spcat\cC;\spcat\cX) $}\label{fig:thhr}
\end{figure}

\begin{prop}\label{unwinding}
	After forgetting the $C_r$-action, there is a natural equivalence
	\[ \THH^{(r)}(\spcat\cC;\spcat\cX) \simeq \THH(\spcat\cC;\spcat\cX \odot \dotsm \odot \spcat\cX) \]
	with $r$ copies of $\spcat\cX$ on the right.
\end{prop}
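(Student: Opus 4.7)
My plan is to prove this by exhibiting both sides as (different) realizations of a common multisimplicial spectrum built out of smash products in a cyclic pattern, and then appealing to a Fubini/diagonal equivalence.

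First, I would expand the iterated composite $\spcat\cX^{\odot r} = B(\spcat\cX, \spcat\cC, B(\spcat\cX, \spcat\cC, \dotsm B(\spcat\cX, \spcat\cC, \spcat\cX) \dotsm))$ as the iterated realization of an $(r-1)$-fold multisimplicial $(\spcat\cC,\spcat\cC)$-bimodule. At multi-level $(k_1, \dotsc, k_{r-1})$, the value at $(c, c')$ is the wedge over tuples of intermediate objects of a smash product that threads $r$ copies of $\spcat\cX$ through chains of $\spcat\cC$ of respective lengths $k_1, \dotsc, k_{r-1}$. Since the cyclic bar construction commutes with realizations in the bimodule variable (a formal consequence of the definition of $\THH$ in \S\ref{subsec:spectral_bimodules}), substituting this expansion into $\THH(\spcat\cC; \spcat\cX^{\odot r})$ presents it as the iterated realization of an $r$-fold multisimplicial spectrum $X_{(k_0; k_1, \dotsc, k_{r-1})}$, where the outer $\THH$-direction $k_0$ contributes a further chain of $\spcat\cC$'s before the first $\spcat\cX$, so that $X$ has a fully cyclic structure with $r$ blocks, each of the form $\spcat\cC^{\wedge k_i} \wedge \spcat\cX$.

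Second, I would unfold $\THH^{(r)}(\spcat\cC; \spcat\cX) = \THH(\spcat\cC^{\sma r}; \spcat\cX^{\sma r}_\rho)$ directly using the definition of the cyclic bar construction. Distributing the $r$-fold smash product past the wedges and the $\odot$ in the definition, the $k$-simplices take the form
\[ \bigvee_{(a_i^j)} \bigwedge_{i=1}^{r} \Big[ \spcat\cC(a_i^0, a_i^1) \sma \dotsm \sma \spcat\cC(a_i^{k-1}, a_i^k) \sma \spcat\cX(a_i^k, a_{i+1 \bmod r}^0) \Big], \]
which after reordering the smash factors is exactly $X_{(k; k, \dotsc, k)}$. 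In other words, $\THH^{(r)}(\spcat\cC; \spcat\cX)$ is the realization of the diagonal simplicial spectrum of $X_{(\bullet; \bullet, \dotsc, \bullet)}$.

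Finally, the equivalence follows from the Fubini theorem for multisimplicial spectra: under suitable properness, the diagonal of a proper $r$-fold simplicial spectrum is naturally equivalent to its iterated realization. Combining the two steps yields the desired zig-zag
\[ \THH^{(r)}(\spcat\cC; \spcat\cX) \simeq |k \mapsto X_{(k; k, \dotsc, k)}| \simeq |X_{(\bullet; \bullet, \dotsc, \bullet)}| \simeq \THH(\spcat\cC; \spcat\cX \odot \dotsm \odot \spcat\cX), \]
and naturality in $\spcat\cC$ and $\spcat\cX$ is automatic since every identification is built functorially from the bar constructions. The main obstacle I expect is the bookkeeping in step one: confirming that the face and degeneracy maps of the bar-construction unfolding match up correctly with those of the distributed definition of $\THH^{(r)}$, and verifying the properness hypothesis needed for the Fubini step. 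The properness should follow from the pointwise cofibrancy of $\spcat\cC$ and $\spcat\cX$ together with the fact that the latching maps of bar constructions are cofibrations, parallel to the latching argument used in the proof of \cref{thh_n_derived}.
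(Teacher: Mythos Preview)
Your proposal is correct and follows essentially the same route as the paper: expand $\THH(\spcat\cC;\spcat\cX^{\odot r})$ as the realization of an $r$-fold multisimplicial spectrum, identify its diagonal with the simplicial spectrum defining $\THH^{(r)}(\spcat\cC;\spcat\cX)$ by regrouping the smash factors, and invoke the diagonal/realization equivalence. The paper's proof is terser but highlights the same key point you encoded in the index shift $a_{i+1 \bmod r}$, namely that the twist $\rho$ in $\spcat\cX^{\sma r}_\rho$ is exactly what makes the regrouping line up with the cyclic bar pattern.
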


\begin{proof}
	The spectrum $\THH(\spcat\cC;\spcat\cX^{\odot r})$ is built using $r$ bar constructions, the final one being the cyclic bar construction, so it is the realization of an $r$-fold multisimplicial spectrum. Taking the diagonal of this multisimplicial spectrum, we identify the resulting simplicial spectrum with the one for $\THH^{(r)}(\spcat\cC;\spcat\cX)$ by regrouping copies of $\spcat\cC$ and $\spcat\cX$. Note that the use of the cyclic action $\rho$ in the definition of $\THH^{(r)}$ is essential to this argument.
      \end{proof}

\begin{rmk}\label{thh_n_morita_invariance}
	As a consequence of \cref{equivalence_on_twisted_thh,thh_n_derived,unwinding,equivs_measured_on_geometric_fp}, the construction $\THH^{(r)}(\cC;\tensor[_L]{\cD}{_R})$ sends Morita equivalences in the $\cC$ variable and Dwyer--Kan embeddings in the $\cD$ variable to equivalences of $C_r$-spectra. Consequently, the map
	\[ \THH^{(r)}(A;M) \stackrel\sim\arr \THH^{(r)}( \tensor[^A]{\Perf}{} ; \tensor[^A]{\Mod}{_M} ) \]
	induced by the Morita equivalence $A \arr \tensor[^A]{\Perf}{}$ (\cref{ex:classic_morita_equiv}) is an equivalence of $C_r$-spectra.
      \end{rmk}

\begin{thm}[Additivity of $\THH^{(r)}$]\label{thm:THH_n_additivity}
	Let $\spcat{\cC}$ be a spectral Waldhausen category. Then the maps $\iota_{i_1,\ldots,i_n}$ defined above \cref{thm:THH_additivity} induce an equivalence of $C_r$-spectra
	\[
	\bigvee_{\substack{i_1,\dotsc, i_n \\ 1 \leq i_j \leq k_j}} \THH^{(r)}(\spcat{\cC})
	\stackrel\sim\arr \THH^{(r)}(w_{k_0}S^{(n)}_{k_1, \dotsc, k_n} \spcat{\cC}).
	\]
	For any twisting $L,R\colon \spcat\cC \rightrightarrows \spcat\cD$, the maps $\iota_{i_1,\ldots,i_n}$ also induce an equivalence of $C_r$-spectra
	\[
	\bigvee_{\substack{i_1,\dotsc, i_n \\ 1 \leq i_j \leq k_j}} \THH^{(r)}(\spcat{\cC};\tensor[_L]{\spcat{\cD}}{_R})
	\stackrel\sim\arr \THH^{(r)}(w_{k_0}S^{(n)}_{k_1, \dotsc, k_n} \spcat{\cC};\tensor[_L]{(w_{k_0}S^{(n)}_{k_1, \dotsc, k_n} \spcat{\cD})}{_R}).
	\]
\end{thm}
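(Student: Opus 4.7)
The plan is to reduce to the non-equivariant additivity theorem (\cref{thm:THH_additivity}) by checking equivalences on derived geometric fixed points. By \cref{equivs_measured_on_geometric_fp}, it suffices to prove that $\bbL\Phi^{C_s}$ of the proposed map is an equivalence of spectra for every subgroup $C_s \leq C_r$. The source and target are both cofibrant orthogonal $C_r$-spectra (the cofibrancy argument in the proof of \cref{thh_n_derived} extends to finite wedges), so $\bbL\Phi^{C_s}$ agrees with the point-set $\Phi^{C_s}$. By \cref{thh_n_derived} and the naturality of the Hill--Hopkins--Ravenel diagonal in the spectral Waldhausen category and in the twisting, $\Phi^{C_s}$ of our map is identified with the corresponding additivity map for $\THH^{(r/s)}$, viewed as a map of plain orthogonal spectra (forgetting the residual $C_{r/s}$-action). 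Thus it suffices to prove, for every $m \geq 1$, that the displayed map becomes an equivalence of spectra after replacing $r$ by $m$.

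By definition, $\THH^{(m)}(\spcat{\cC};\tensor[_L]{\spcat{\cD}}{_R}) = \THH(\spcat{\cC}^{\sma m};\tensor[_{L^{\sma m}}]{(\spcat{\cD}^{\sma m})}{_{R^{\sma m}_\rho}})$, and the $w_\bullet S^{(n)}_\bullet$ construction is compatible with smash products of spectral categories, in the sense that the Moore ends defining the mapping spectra commute coordinatewise with smashing. This yields canonical isomorphisms $(w_{k_0}S^{(n)}_\bullet \cC)^{\sma m} \cong w_{k_0}S^{(n)}_\bullet(\cC^{\sma m})$ of multisimplicial spectral categories, compatible with the induced twisting. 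Consequently, the nonequivariant statement reduces to additivity for $\THH$ applied to the twisting $\lMr{L^{\sma m}}{\cC^{\sma m}/\cD^{\sma m}}{R^{\sma m}_\rho}$, now at the level of spectral rather than spectral Waldhausen categories.

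The main obstacle is that $\cC^{\sma m}$ carries no intrinsic Waldhausen structure, so \cref{thm:THH_additivity} does not apply as a black box. To circumvent this, I plan to rerun the argument of \cref{prop:small_add_2}, which is entirely formal: it relies only on the shadow structure on the bicategory of spectral categories and bimodules, cofiber-sequence preservation of $\THH$ in the bimodule slot, the dual pairs and cofiber sequence produced in \cref{sdot_adjunctions}, and the Beck--Chevalley-type maps of \cref{base_change_beck_chevalley}. Each of these inputs is built from the functors $s_{k-1}, d_k, \iota_k, \pi_k$ applied to $\cC$ together with the homotopy pushout axiom for a spectral Waldhausen category. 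Applying these functors and the pushout property coordinatewise to $\cC^{\sma m}$ and smashing the resulting pointwise equivalences, homotopy cofiber sequences, and Beck--Chevalley maps produces smash-power analogs of each required input. The proof of \cref{prop:small_add_2} then goes through verbatim in this setting, completing the argument.
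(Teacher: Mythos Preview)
Your first reduction step---checking equivalences on all $\Phi^{C_s}$ using \cref{equivs_measured_on_geometric_fp} and identifying $\Phi^{C_s}\THH^{(r)}$ with $\THH^{(r/s)}$ via \cref{thh_n_derived}---is exactly what the paper does. The divergence begins immediately after, and there the proposal runs into real trouble.

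The paper does \emph{not} attempt to work with $\cC^{\sma m}$ directly. Instead it invokes the unwinding equivalence of \cref{unwinding},
\[
\THH^{(m)}(\spcat{\cC};\tensor[_L]{\spcat{\cD}}{_R}) \simeq \THH(\spcat{\cC};\tensor[_L]{\spcat{\cD}}{_R}^{\odot m}),
\]
which keeps the spectral Waldhausen category $\spcat{\cC}$ in the first slot and pushes all of the $m$-fold structure into the bimodule variable. After this, the argument of \cref{prop:small_add_2} runs with no structural changes: the only difference is that the bimodule is now an $m$-fold bar product $\tensor[_L]{\spcat{\cD}}{_R}^{\odot m}$, so the Beck--Chevalley map $f$ must be applied $m$ times (once for each tensor factor) rather than once. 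The cofiber sequence from \cref{sdot_adjunctions} is still a single cofiber sequence of $(S_k\cC,S_k\cC)$-bimodules, and $\THH(S_k\cC;-)$ still preserves it.

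Your route avoids \cref{unwinding} and instead asserts canonical isomorphisms $(w_{k_0}S^{(n)}_\bullet \cC)^{\sma m} \cong w_{k_0}S^{(n)}_\bullet(\cC^{\sma m})$. This is problematic on two counts. First, the right-hand side is not defined: $S_\bullet$ in this paper is built from the Waldhausen structure on $\uncat\cC$, and $\cC^{\sma m}$ has none (as you note). Second, even at the level of mapping spectra the claim fails: $S_k\cC(a,b)$ is a Moore end, a homotopy limit over a zig-zag diagram, and smash products do not commute with homotopy limits. More seriously, your plan to ``smash the resulting \ldots\ homotopy cofiber sequences'' does not produce a cofiber sequence: the $m$-fold smash of the cofiber sequence in \cref{sdot_adjunctions} is an $m$-cube, not a two-term filtration, so the splitting argument of \cref{prop:small_add} does not go through. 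The unwinding trick is precisely what collapses this $m$-cube back to a single cofiber sequence in the bimodule slot.
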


\begin{figure}[h]
     \centering
	\hspace{1in}
     \begin{subfigure}[b]{0.25\textwidth}
         \centering
	\tdplotsetmaincoords{90}{70}
         \begin{tikzpicture}[tdplot_main_coords]
%\draw[thick,->] (0,0,0) -- (1,0,0) node[anchor=north east]{$x$};
%\draw[thick,->] (0,0,0) -- (0,1,0) node[anchor=north west]{$y$};
%\draw[thick,->] (0,0,0) -- (0,0,1) node[anchor=south]{$z$};

\def\ra{1.5}
\def\aa{10}
\def\a1{(\aa+60)}
\def\ab{(\a1+50)}
\def\ac{(\ab+70)}
\def\ad{(\ac+60)}
\def\ae{(\ad+60)}
\def\af{(\ae+60)}
%circle
%\draw [ domain=\aa:\af,smooth,variable =\x, cod, very thick] plot ({\ra*sin \x},0,{\ra* cos \x}); 
%shifted circles 
\draw [ domain=\ac:\af ,smooth,variable =\x, gray] plot ({(\ra+.5)*sin \x},.5,{(\ra+.5)* cos \x}); 
\draw [ domain=\ac:\af,smooth,variable =\x, gray] plot ({(\ra+.75)*sin \x},.25,{(\ra+.75)* cos \x}); 
\draw [ domain=\ac:\af ,smooth,variable =\x, gray] plot ({(\ra+.25)*sin \x},.75,{(\ra+.25)* cos \x}); 
\draw [ domain=\ac:\af,smooth,variable =\x, gray] plot ({(\ra+1)*sin \x},0,{(\ra+1)* cos \x}); 
\draw [ domain=\ac:\af ,smooth,variable =\x, gray] plot ({\ra*sin \x},1,{\ra* cos \x}); 
%flags

\filldraw[draw=coa, very thick,fill=coa, opacity=.9] ({\ra*sin \aa},0,{\ra* cos \aa})--({(\ra+1)*sin \aa},0,{(\ra+1)* cos \aa})--({\ra*sin \aa},1,{\ra* cos \aa})--({\ra*sin \aa},0,{\ra* cos \aa});

\filldraw[draw=coa, very thick,fill=coa, opacity=.9] ({\ra*sin \a1},0,{\ra* cos \a1})--({(\ra+1)*sin \a1},0,{(\ra+1)* cos \a1})--({\ra*sin \a1},1,{\ra* cos \a1})--({\ra*sin \a1},0,{\ra* cos \a1});

\filldraw[draw=coa, very thick,fill=coa, opacity=.9] ({\ra*sin \ab},0,{\ra* cos \ab})--({(\ra+1)*sin \ab},0,{(\ra+1)* cos \ab})--({\ra*sin \ab},1,{\ra* cos \ab})--({\ra*sin \ab},0,{\ra* cos \ab});

\filldraw[draw=coa, very thick,fill=coa, opacity=.9] ({\ra*sin \ac},0,{\ra* cos \ac})--({(\ra+1)*sin \ac},0,{(\ra+1)* cos \ac})--({\ra*sin \ac},1,{\ra* cos \ac})--({\ra*sin \ac},0,{\ra* cos \ac});

\filldraw[draw=coa, very thick,fill=coa, opacity=.9] ({\ra*sin \ad},0,{\ra* cos \ad})--({(\ra+1)*sin \ad},0,{(\ra+1)* cos \ad})--({\ra*sin \ad},1,{\ra* cos \ad})--({\ra*sin \ad},0,{\ra* cos \ad});

\filldraw[draw=coa, very thick,fill=coa, opacity=.9] ({\ra*sin \ae},0,{\ra* cos \ae})--({(\ra+1)*sin \ae},0,{(\ra+1)* cos \ae})--({\ra*sin \ae},1,{\ra* cos \ae})--({\ra*sin \ae},0,{\ra* cos \ae});

%edge nodes
\node[circle,draw=black, fill=coc, inner sep=0pt,minimum size=3pt] at ({(\ra+1)*sin \aa},0,{(\ra+1)* cos \aa}) {};
\node[circle,draw=black, fill=coc, inner sep=0pt,minimum size=3pt] at ({(\ra+.75)*sin \aa},.25,{(\ra+.75)* cos \aa}) {};
\node[circle,draw=black, fill=coc, inner sep=0pt,minimum size=3pt] at ({(\ra+.5)*sin \aa},.5,{(\ra+.5)* cos \aa}) {};
\node[circle,draw=black, fill=coc, inner sep=0pt,minimum size=3pt] at ({(\ra+.25)*sin \aa},.75,{(\ra+.25)* cos \aa}) {};
\node[circle,draw=black, fill=coc, inner sep=0pt,minimum size=3pt] at ({(\ra+0)*sin \aa},1,{(\ra+0)* cos \aa}) {};

\node[circle,draw=black, fill=coc, inner sep=0pt,minimum size=3pt] at ({(\ra+1)*sin \a1},0,{(\ra+1)* cos \a1}) {};
\node[circle,draw=black, fill=coc, inner sep=0pt,minimum size=3pt] at ({(\ra+.75)*sin \a1},.25,{(\ra+.75)* cos \a1}) {};
\node[circle,draw=black, fill=coc, inner sep=0pt,minimum size=3pt] at ({(\ra+.5)*sin \a1},.5,{(\ra+.5)* cos \a1}) {};
\node[circle,draw=black, fill=coc, inner sep=0pt,minimum size=3pt] at ({(\ra+.25)*sin \a1},.75,{(\ra+.25)* cos \a1}) {};
\node[circle,draw=black, fill=coc, inner sep=0pt,minimum size=3pt] at ({(\ra+0)*sin \a1},1,{(\ra+0)* cos \a1}) {};

\node[circle,draw=black, fill=coc, inner sep=0pt,minimum size=3pt] at ({(\ra+1)*sin \ab},0,{(\ra+1)* cos \ab}) {};
\node[circle,draw=black, fill=coc, inner sep=0pt,minimum size=3pt] at ({(\ra+.75)*sin \ab},.25,{(\ra+.75)* cos \ab}) {};
\node[circle,draw=black, fill=coc, inner sep=0pt,minimum size=3pt] at ({(\ra+.5)*sin \ab},.5,{(\ra+.5)* cos \ab}) {};
\node[circle,draw=black, fill=coc, inner sep=0pt,minimum size=3pt] at ({(\ra+.25)*sin \ab},.75,{(\ra+.25)* cos \ab}) {};
\node[circle,draw=black, fill=coc, inner sep=0pt,minimum size=3pt] at ({(\ra+0)*sin \ab},1,{(\ra+0)* cos \ab}) {};

\node[circle,draw=black, fill=coc, inner sep=0pt,minimum size=3pt] at ({(\ra+1)*sin \ac},0,{(\ra+1)* cos \ac}) {};
\node[circle,draw=black, fill=coc, inner sep=0pt,minimum size=3pt] at ({(\ra+.75)*sin \ac},.25,{(\ra+.75)* cos \ac}) {};
\node[circle,draw=black, fill=coc, inner sep=0pt,minimum size=3pt] at ({(\ra+.5)*sin \ac},.5,{(\ra+.5)* cos \ac}) {};
\node[circle,draw=black, fill=coc, inner sep=0pt,minimum size=3pt] at ({(\ra+.25)*sin \ac},.75,{(\ra+.25)* cos \ac}) {};
\node[circle,draw=black, fill=coc, inner sep=0pt,minimum size=3pt] at ({(\ra+0)*sin \ac},1,{(\ra+0)* cos \ac}) {};

\node[circle,draw=black, fill=coc, inner sep=0pt,minimum size=3pt] at ({(\ra+1)*sin \ad},0,{(\ra+1)* cos \ad}) {};
\node[circle,draw=black, fill=coc, inner sep=0pt,minimum size=3pt] at ({(\ra+.75)*sin \ad},.25,{(\ra+.75)* cos \ad}) {};
\node[circle,draw=black, fill=coc, inner sep=0pt,minimum size=3pt] at ({(\ra+.5)*sin \ad},.5,{(\ra+.5)* cos \ad}) {};
\node[circle,draw=black, fill=coc, inner sep=0pt,minimum size=3pt] at ({(\ra+.25)*sin \ad},.75,{(\ra+.25)* cos \ad}) {};
\node[circle,draw=black, fill=coc, inner sep=0pt,minimum size=3pt] at ({(\ra+0)*sin \ad},1,{(\ra+0)* cos \ad}) {};

\node[circle,draw=black, fill=coc, inner sep=0pt,minimum size=3pt] at ({(\ra+1)*sin \ae},0,{(\ra+1)* cos \ae}) {};
\node[circle,draw=black, fill=coc, inner sep=0pt,minimum size=3pt] at ({(\ra+.75)*sin \ae},.25,{(\ra+.75)* cos \ae}) {};
\node[circle,draw=black, fill=coc, inner sep=0pt,minimum size=3pt] at ({(\ra+.5)*sin \ae},.5,{(\ra+.5)* cos \ae}) {};
\node[circle,draw=black, fill=coc, inner sep=0pt,minimum size=3pt] at ({(\ra+.25)*sin \ae},.75,{(\ra+.25)* cos \ae}) {};
\node[circle,draw=black, fill=coc, inner sep=0pt,minimum size=3pt] at ({(\ra+0)*sin \ae},1,{(\ra+0)* cos \ae}) {};

%%circle
\draw [ domain=\aa:\af ,smooth,variable =\x, cod, very thick] plot ({\ra*sin \x},0,{\ra* cos \x}); 
%shifted circles 
\draw [ domain=\aa:\ac ,smooth,variable =\x, gray] plot ({(\ra+.5)*sin \x},.5,{(\ra+.5)* cos \x}); 
\draw [ domain=\aa:\ac,smooth,variable =\x, gray] plot ({(\ra+.75)*sin \x},.25,{(\ra+.75)* cos \x}); 
\draw [ domain=\aa:\ac ,smooth,variable =\x, gray] plot ({(\ra+.25)*sin \x},.75,{(\ra+.25)* cos \x}); 
\draw [ domain=\aa:\ac,smooth,variable =\x, gray] plot ({(\ra+1)*sin \x},0,{(\ra+1)* cos \x}); 
\draw [ domain=\aa:\ac ,smooth,variable =\x, gray] plot ({\ra*sin \x},1,{\ra* cos \x});

%base nodes
\node[circle,draw=black, fill=cob, inner sep=0pt,minimum size=3pt] at ({\ra*sin \aa},0,{\ra* cos \aa}) {};
\node[circle,draw=black, fill=cob, inner sep=0pt,minimum size=3pt] at ({\ra*sin \a1},0,{\ra* cos \a1}) {};
\node[circle,draw=black, fill=cob, inner sep=0pt,minimum size=3pt] at ({\ra*sin \ab},0,{\ra* cos \ab}) {};
\node[circle,draw=black, fill=cob, inner sep=0pt,minimum size=3pt] at ({\ra*sin \ac},0,{\ra* cos \ac}) {};
\node[circle,draw=black, fill=cob, inner sep=0pt,minimum size=3pt] at ({\ra*sin \ad},0,{\ra* cos \ad}) {};
\node[circle,draw=black, fill=cob, inner sep=0pt,minimum size=3pt] at ({\ra*sin \ae},0,{\ra* cos \ae}) {};
\end{tikzpicture}
         \caption{}
         \label{fig:shadow_flag_mult}
     \end{subfigure}
     \hfill
     \begin{subfigure}[b]{0.25\textwidth}
         \centering
	\tdplotsetmaincoords{90}{70}
         \begin{tikzpicture}[tdplot_main_coords]
%\draw[thick,->] (0,0,0) -- (1,0,0) node[anchor=north east]{$x$};
%\draw[thick,->] (0,0,0) -- (0,1,0) node[anchor=north west]{$y$};
%\draw[thick,->] (0,0,0) -- (0,0,1) node[anchor=south]{$z$};

\def\ra{1.5}
\def\rb{2}
\def\aa{10}
\def\a1{(\aa+60)}
\def\ab{(\a1+50)}
\def\ac{(\ab+70)}
\def\ad{(\ac+60)}
\def\ae{(\ad+60)}
\def\af{(\ae+60)}
%circle
%\draw [ domain=\aa:\af,smooth,variable =\x, cod, very thick] plot ({\ra*sin \x},0,{\ra* cos \x}); 
%shifted circles 
\draw [ domain=\ac:\af ,smooth,variable =\x, gray] plot ({(\ra+.5)*sin \x},.5*\rb,{(\ra+.5)* cos \x}); 
\draw [ domain=\ac:\af,smooth,variable =\x, gray] plot ({(\ra+.75)*sin \x},.25*\rb,{(\ra+.75)* cos \x}); 
\draw [ domain=\ac:\af ,smooth,variable =\x, gray] plot ({(\ra+.25)*sin \x},.75*\rb,{(\ra+.25)* cos \x}); 
\draw [ domain=\ac:\af,smooth,variable =\x, gray] plot ({(\ra+1)*sin \x},0*\rb,{(\ra+1)* cos \x}); 
\draw [ domain=\ac:\af ,smooth,variable =\x, gray] plot ({\ra*sin \x},1*\rb,{\ra* cos \x}); 
%%flags
%
%\filldraw[draw=coa, very thick,fill=coa, opacity=.9] ({\ra*sin \aa},0,{\ra* cos \aa})--({(\ra+1)*sin \aa},0,{(\ra+1)* cos \aa})--({\ra*sin \aa},1,{\ra* cos \aa})--({\ra*sin \aa},0,{\ra* cos \aa});
%
%\filldraw[draw=coa, very thick,fill=coa, opacity=.9] ({\ra*sin \a1},0,{\ra* cos \a1})--({(\ra+1)*sin \a1},0,{(\ra+1)* cos \a1})--({\ra*sin \a1},1,{\ra* cos \a1})--({\ra*sin \a1},0,{\ra* cos \a1});
%
%\filldraw[draw=coa, very thick,fill=coa, opacity=.9] ({\ra*sin \ab},0,{\ra* cos \ab})--({(\ra+1)*sin \ab},0,{(\ra+1)* cos \ab})--({\ra*sin \ab},1,{\ra* cos \ab})--({\ra*sin \ab},0,{\ra* cos \ab});
%
%\filldraw[draw=coa, very thick,fill=coa, opacity=.9] ({\ra*sin \ac},0,{\ra* cos \ac})--({(\ra+1)*sin \ac},0,{(\ra+1)* cos \ac})--({\ra*sin \ac},1,{\ra* cos \ac})--({\ra*sin \ac},0,{\ra* cos \ac});
%
%\filldraw[draw=coa, very thick,fill=coa, opacity=.9] ({\ra*sin \ad},0,{\ra* cos \ad})--({(\ra+1)*sin \ad},0,{(\ra+1)* cos \ad})--({\ra*sin \ad},1,{\ra* cos \ad})--({\ra*sin \ad},0,{\ra* cos \ad});
%
%\filldraw[draw=coa, very thick,fill=coa, opacity=.9] ({\ra*sin \ae},0,{\ra* cos \ae})--({(\ra+1)*sin \ae},0,{(\ra+1)* cos \ae})--({\ra*sin \ae},1,{\ra* cos \ae})--({\ra*sin \ae},0,{\ra* cos \ae});

%edge nodes
\node[circle,draw=black, fill=coc, inner sep=0pt,minimum size=3pt] at ({(\ra+1)*sin \aa},0*\rb,{(\ra+1)* cos \aa}) {};
\node[circle,draw=black, fill=coc, inner sep=0pt,minimum size=3pt] at ({(\ra+.75)*sin \aa},.25*\rb,{(\ra+.75)* cos \aa}) {};
\node[circle,draw=black, fill=coc, inner sep=0pt,minimum size=3pt] at ({(\ra+.5)*sin \aa},.5*\rb,{(\ra+.5)* cos \aa}) {};
\node[circle,draw=black, fill=coc, inner sep=0pt,minimum size=3pt] at ({(\ra+.25)*sin \aa},.75*\rb,{(\ra+.25)* cos \aa}) {};
\node[circle,draw=black, fill=coc, inner sep=0pt,minimum size=3pt] at ({(\ra+0)*sin \aa},1*\rb,{(\ra+0)* cos \aa}) {};

\node[circle,draw=black, fill=coc, inner sep=0pt,minimum size=3pt] at ({(\ra+1)*sin \a1},0*\rb,{(\ra+1)* cos \a1}) {};
\node[circle,draw=black, fill=coc, inner sep=0pt,minimum size=3pt] at ({(\ra+.75)*sin \a1},.25*\rb,{(\ra+.75)* cos \a1}) {};
\node[circle,draw=black, fill=coc, inner sep=0pt,minimum size=3pt] at ({(\ra+.5)*sin \a1},.5*\rb,{(\ra+.5)* cos \a1}) {};
\node[circle,draw=black, fill=coc, inner sep=0pt,minimum size=3pt] at ({(\ra+.25)*sin \a1},.75*\rb,{(\ra+.25)* cos \a1}) {};
\node[circle,draw=black, fill=coc, inner sep=0pt,minimum size=3pt] at ({(\ra+0)*sin \a1},1*\rb,{(\ra+0)* cos \a1}) {};

\node[circle,draw=black, fill=coc, inner sep=0pt,minimum size=3pt] at ({(\ra+1)*sin \ab},0*\rb,{(\ra+1)* cos \ab}) {};
\node[circle,draw=black, fill=coc, inner sep=0pt,minimum size=3pt] at ({(\ra+.75)*sin \ab},.25*\rb,{(\ra+.75)* cos \ab}) {};
\node[circle,draw=black, fill=coc, inner sep=0pt,minimum size=3pt] at ({(\ra+.5)*sin \ab},.5*\rb,{(\ra+.5)* cos \ab}) {};
\node[circle,draw=black, fill=coc, inner sep=0pt,minimum size=3pt] at ({(\ra+.25)*sin \ab},.75*\rb,{(\ra+.25)* cos \ab}) {};
\node[circle,draw=black, fill=coc, inner sep=0pt,minimum size=3pt] at ({(\ra+0)*sin \ab},1*\rb,{(\ra+0)* cos \ab}) {};

\node[circle,draw=black, fill=coc, inner sep=0pt,minimum size=3pt] at ({(\ra+1)*sin \ac},0*\rb,{(\ra+1)* cos \ac}) {};
\node[circle,draw=black, fill=coc, inner sep=0pt,minimum size=3pt] at ({(\ra+.75)*sin \ac},.25*\rb,{(\ra+.75)* cos \ac}) {};
\node[circle,draw=black, fill=coc, inner sep=0pt,minimum size=3pt] at ({(\ra+.5)*sin \ac},.5*\rb,{(\ra+.5)* cos \ac}) {};
\node[circle,draw=black, fill=coc, inner sep=0pt,minimum size=3pt] at ({(\ra+.25)*sin \ac},.75*\rb,{(\ra+.25)* cos \ac}) {};
\node[circle,draw=black, fill=coc, inner sep=0pt,minimum size=3pt] at ({(\ra+0)*sin \ac},1*\rb,{(\ra+0)* cos \ac}) {};

\node[circle,draw=black, fill=coc, inner sep=0pt,minimum size=3pt] at ({(\ra+1)*sin \ad},0*\rb,{(\ra+1)* cos \ad}) {};
\node[circle,draw=black, fill=coc, inner sep=0pt,minimum size=3pt] at ({(\ra+.75)*sin \ad},.25*\rb,{(\ra+.75)* cos \ad}) {};
\node[circle,draw=black, fill=coc, inner sep=0pt,minimum size=3pt] at ({(\ra+.5)*sin \ad},.5*\rb,{(\ra+.5)* cos \ad}) {};
\node[circle,draw=black, fill=coc, inner sep=0pt,minimum size=3pt] at ({(\ra+.25)*sin \ad},.75*\rb,{(\ra+.25)* cos \ad}) {};
\node[circle,draw=black, fill=coc, inner sep=0pt,minimum size=3pt] at ({(\ra+0)*sin \ad},1*\rb,{(\ra+0)* cos \ad}) {};

\node[circle,draw=black, fill=coc, inner sep=0pt,minimum size=3pt] at ({(\ra+1)*sin \ae},0*\rb,{(\ra+1)* cos \ae}) {};
\node[circle,draw=black, fill=coc, inner sep=0pt,minimum size=3pt] at ({(\ra+.75)*sin \ae},.25*\rb,{(\ra+.75)* cos \ae}) {};
\node[circle,draw=black, fill=coc, inner sep=0pt,minimum size=3pt] at ({(\ra+.5)*sin \ae},.5*\rb,{(\ra+.5)* cos \ae}) {};
\node[circle,draw=black, fill=coc, inner sep=0pt,minimum size=3pt] at ({(\ra+.25)*sin \ae},.75*\rb,{(\ra+.25)* cos \ae}) {};
\node[circle,draw=black, fill=coc, inner sep=0pt,minimum size=3pt] at ({(\ra+0)*sin \ae},1*\rb,{(\ra+0)* cos \ae}) {};

%%circle
%\draw [ domain=\aa:\af ,smooth,variable =\x, cod, very thick] plot ({\ra*sin \x},0,{\ra* cos \x}); 
%shifted circles 
\draw [ domain=\aa:\ac ,smooth,variable =\x, gray] plot ({(\ra+.5)*sin \x},.5*\rb,{(\ra+.5)* cos \x}); 
\draw [ domain=\aa:\ac,smooth,variable =\x, gray] plot ({(\ra+.75)*sin \x},.25*\rb,{(\ra+.75)* cos \x}); 
\draw [ domain=\aa:\ac ,smooth,variable =\x, gray] plot ({(\ra+.25)*sin \x},.75*\rb,{(\ra+.25)* cos \x}); 
\draw [ domain=\aa:\ac,smooth,variable =\x, gray] plot ({(\ra+1)*sin \x},0*\rb,{(\ra+1)* cos \x}); 
\draw [ domain=\aa:\ac ,smooth,variable =\x, gray] plot ({\ra*sin \x},1*\rb,{\ra* cos \x});

%base nodes
%\node[circle,draw=black, fill=cob, inner sep=0pt,minimum size=3pt] at ({\ra*sin \aa},0,{\ra* cos \aa}) {};
%\node[circle,draw=black, fill=cob, inner sep=0pt,minimum size=3pt] at ({\ra*sin \a1},0,{\ra* cos \a1}) {};
%\node[circle,draw=black, fill=cob, inner sep=0pt,minimum size=3pt] at ({\ra*sin \ab},0,{\ra* cos \ab}) {};
%\node[circle,draw=black, fill=cob, inner sep=0pt,minimum size=3pt] at ({\ra*sin \ac},0,{\ra* cos \ac}) {};
%\node[circle,draw=black, fill=cob, inner sep=0pt,minimum size=3pt] at ({\ra*sin \ad},0,{\ra* cos \ad}) {};
%\node[circle,draw=black, fill=cob, inner sep=0pt,minimum size=3pt] at ({\ra*sin \ae},0,{\ra* cos \ae}) {};
\end{tikzpicture}
         \caption{}
         \label{fig:shadow_flag_mult_2}
     \end{subfigure}
	\hspace{1in}
        \caption{Graphical representations of \cref{thm:THH_n_additivity}}
        \label{fig:pic_shadows_add_mult}
\end{figure}

\begin{proof}
	By \cref{thh_n_derived,unwinding,equivs_measured_on_geometric_fp}, it suffices to check that the corresponding non-equivariant map
	\[
	\bigvee_{\substack{i_1,\dotsc, i_n \\ 1 \leq i_j \leq k_j}} \THH(\spcat{\cC};\tensor[_L]{\spcat{\cD}}{_R}^{\odot r})
	\stackrel\sim\arr \THH(w_{k_0}S^{(n)}_{k_1, \dotsc, k_n} \spcat{\cC};\tensor[_L]{(w_{k_0}S^{(n)}_{k_1, \dotsc, k_n} \spcat{\cD})}{_R}^{\odot r}).
	\]
	is an equivalence. This is proven by the same induction as in
        \cref{thm:THH_additivity}.  The only difference is that in the inductive
        step (proof of \cref{prop:small_add_2}) we apply the map $f$ a total of $r$ times instead of just once.
\end{proof}

\subsection{Definition of the equivariant Dennis trace}

We next define the equivariant refinement of the Dennis trace.

\begin{rmk}
	If $\spcat\cC$ is a spectral Waldhausen category, then $(\spcat{\cC}^{\sma r},\uncat\cC^{\times r})$ fails to be a spectral Waldhausen category because it does not satisfy the pushout axiom. Therefore the equivariant Dennis trace is \emph{not} the twisted Dennis trace applied to $\spcat\cC^{\sma r}$. Instead, the smash powers have to occur on the outside of the $S_\bullet$ construction.
\end{rmk}

\begin{defn}\label{equivariant_dennis_trace}
Let $\tw{\cC}{L}{R}{\cD}$ be a twisting of a spectral Waldhausen category $\spcat{\cC}$. For each $r \geq 1$ the inclusion of the $0$-simplices into the cyclic bar construction defines a $C_r$-equivariant map
\begin{equation}\label{include_endos_equivariant}
\Sigma^{\infty} \ob \End^{(r)}\left(\tensor[_{L}]{ \left(\cC/\cD\right)}{_{R}}\right)
\arr \bigvee_{c_1,\ldots,c_r \in  \ob \spcat{\cC}} \bigwedge_{i=1}^r \spcat{\cD}(L (c_i), R(c_{i+1}))\arr \THH^{(r)}(\spcat\cC;\tensor[_L]{\spcat\cD}{_R}).
\end{equation}
\begin{figure}
	\begin{tikzpicture}
\def\shrink{3}
\def\step{30}
\def\rad{1.75}

%%node 
\node[circle,draw=black, fill=cob, inner sep=0pt,minimum size=3pt] at ({-5+\rad*cos 0},{\rad*sin 0}) {};
\node[circle,draw=black, fill=cob, inner sep=0pt,minimum size=3pt] at ({-5+\rad*cos \step},{\rad*sin \step}) {};
\node[circle,draw=black, fill=cob, inner sep=0pt,minimum size=3pt] at ({-5+\rad*cos (2*\step)},{\rad*sin (2*\step)}) {};
\node[circle,draw=black, fill=cob, inner sep=0pt,minimum size=3pt] at ({-5+\rad*cos (3*\step)},{\rad*sin (3*\step)}) {};
\node[circle,draw=black, fill=cob, inner sep=0pt,minimum size=3pt] at ({-5+\rad*cos (4*\step)},{\rad*sin (4*\step)}) {};
\node[circle,draw=black, fill=cob, inner sep=0pt,minimum size=3pt] at ({-5+\rad*cos (5*\step)},{\rad*sin (5*\step)}) {};
\node[circle,draw=black, fill=cob, inner sep=0pt,minimum size=3pt] at ({-5+\rad*cos (6*\step)},{\rad*sin (6*\step)}) {};
\node[circle,draw=black, fill=cob, inner sep=0pt,minimum size=3pt] at ({-5+\rad*cos (7*\step)},{\rad*sin (7*\step)}) {};
\node[circle,draw=black, fill=cob, inner sep=0pt,minimum size=3pt] at ({-5+\rad*cos (8*\step)},{\rad*sin (8*\step)}) {};
\node[circle,draw=black, fill=cob, inner sep=0pt,minimum size=3pt] at ({-5+\rad*cos (9*\step)},{\rad*sin (9*\step)}) {};
%%circle
\draw [dotted, domain=(0+\shrink):(\step-\shrink) ,smooth,variable =\x, gray] plot ({-5+\rad*cos \x},{\rad*sin \x}); 
\draw [->, domain=(\step+\shrink):(2*\step-\shrink) ,smooth,variable =\x, gray] plot ({-5+\rad*cos \x},{\rad*sin \x}) ; 
\draw [dotted, domain=(2*\step+\shrink):(3*\step-\shrink) ,smooth,variable =\x, gray] plot ({-5+\rad*cos \x},{\rad*sin \x}); 
\draw [dotted, domain=(3*\step+\shrink):(4*\step-\shrink) ,smooth,variable =\x, gray] plot ({-5+\rad*cos \x},{\rad*sin \x}); 
\draw [->, domain=(4*\step+\shrink):(5*\step-\shrink) ,smooth,variable =\x, gray] plot ({-5+\rad*cos \x},{\rad*sin \x}) ; 
\draw [dotted, domain=(5*\step+\shrink):(6*\step-\shrink) ,smooth,variable =\x, gray] plot ({-5+\rad*cos \x},{\rad*sin \x}); 
\draw [dotted, domain=(6*\step+\shrink):(7*\step-\shrink) ,smooth,variable =\x, gray] plot ({-5+\rad*cos \x},{\rad*sin \x}); 
\draw [->, domain=(7*\step+\shrink):(8*\step-\shrink) ,smooth,variable =\x, gray] plot ({-5+\rad*cos \x},{\rad*sin \x}) ; 
\draw [dotted, domain=(8*\step+\shrink):(9*\step-\shrink) ,smooth,variable =\x, gray] plot ({-5+\rad*cos \x},{\rad*sin \x}); 
\draw [dashed, domain=(9*\step+\shrink):(360-\shrink) ,smooth,variable =\x, gray] plot ({-5+\rad*cos \x},{\rad*sin \x}); 

\node[right] at ({-5+\rad*cos 0},{\rad*sin 0}) {$c_r$};
\node[right] at ({-5+\rad*cos \step},{\rad*sin \step}) {$L(c_r)$};
\node[below left] at ({-5+\rad*cos (1.5*\step)},{\rad*sin (1.5*\step)}) {$f_r$};
\node[above ] at ({-5+\rad*cos (2*\step)},{\rad*sin (2*\step)}) {$R(c_1)$};
\node[above] at ({-5+\rad*cos (3*\step)},{\rad*sin (3*\step)}) {$c_1$};
\node[ above left] at ({-5+\rad*cos (4*\step)},{\rad*sin (4*\step)}) {$L(c_1)$};
\node[ below right] at ({-5+\rad*cos (4.5*\step)},{\rad*sin (4.5*\step)}) {$f_1$};
\node[left] at ({-5+\rad*cos (5*\step)},{\rad*sin (5*\step)}) {$R(c_2)$};
\node[left] at ({-5+\rad*cos (6*\step)},{\rad*sin (6*\step)}) {$c_2$};

\node[left] at ({-5+\rad*cos (7*\step)},{\rad*sin (7*\step)}) {$L(c_2)$};
\node[above right] at ({-5+\rad*cos (7.5*\step)},{\rad*sin (7.5*\step)}) {$f_2$};
\node[below left] at ({-5+\rad*cos (8*\step)},{\rad*sin (8*\step)}) {$R(c_3)$};
\node[below] at ({-5+\rad*cos (9*\step)},{\rad*sin (9*\step)}) {$c_3$};

%%%second picture
\def\dist{1}

%%node 
\node[circle,draw=black, fill=cob, inner sep=0pt,minimum size=3pt] at ({\dist+\rad*cos 0},{\rad*sin 0}) {};
\node[circle,draw=black, fill=cob, inner sep=0pt,minimum size=3pt] at ({\dist+\rad*cos \step},{\rad*sin \step}) {};
\node[circle,draw=black, fill=cob, inner sep=0pt,minimum size=3pt] at ({\dist+\rad*cos (2*\step)},{\rad*sin (2*\step)}) {};
\node[circle,draw=black, fill=cob, inner sep=0pt,minimum size=3pt] at ({\dist+\rad*cos (4*\step)},{\rad*sin (4*\step)}) {};
\node[circle,draw=black, fill=cob, inner sep=0pt,minimum size=3pt] at ({\dist+\rad*cos (5*\step)},{\rad*sin (5*\step)}) {};
\node[circle,draw=black, fill=cob, inner sep=0pt,minimum size=3pt] at ({\dist+\rad*cos (7*\step)},{\rad*sin (7*\step)}) {};
\node[circle,draw=black, fill=cob, inner sep=0pt,minimum size=3pt] at ({\dist+\rad*cos (8*\step)},{\rad*sin (8*\step)}) {};
%%circle
\draw [dotted, domain=(0+\shrink):(\step-\shrink) ,smooth,variable =\x, gray] plot ({\dist+\rad*cos \x},{\rad*sin \x}); 
\draw [->, domain=(\step+\shrink):(2*\step-\shrink) ,smooth,variable =\x, gray] plot ({\dist+\rad*cos \x},{\rad*sin \x}) ; 
\draw [->, domain=(2*\step+\shrink):(4*\step-\shrink) ,smooth,variable =\x, gray] plot ({\dist+\rad*cos \x},{\rad*sin \x}); 
\draw [->, domain=(4*\step+\shrink):(5*\step-\shrink) ,smooth,variable =\x, gray] plot ({\dist+\rad*cos \x},{\rad*sin \x}) ; 
\draw [->, domain=(5*\step+\shrink):(7*\step-\shrink) ,smooth,variable =\x, gray] plot ({\dist+\rad*cos \x},{\rad*sin \x}); 
\draw [->, domain=(7*\step+\shrink):(8*\step-\shrink) ,smooth,variable =\x, gray] plot ({\dist+\rad*cos \x},{\rad*sin \x}) ; 
\draw [dotted, domain=(8*\step+\shrink):(9*\step) ,smooth,variable =\x, gray] plot ({\dist+\rad*cos \x},{\rad*sin \x}); 
\draw [dashed, domain=(9*\step):(360-\shrink) ,smooth,variable =\x, gray] plot ({\dist+\rad*cos \x},{\rad*sin \x}); 

\node[right] at ({\dist+\rad*cos 0},{\rad*sin 0}) {$\mathcal{C}$};
\node[above right] at ({\dist+\rad*cos (1.5*\step)},{\rad*sin (1.5*\step)}) {$_L\mathcal{D}_R$};
\node[above] at ({\dist+\rad*cos (3*\step)},{\rad*sin (3*\step)}) {$\mathcal{C}$};
\node[above left] at ({\dist+\rad*cos (4.5*\step)},{\rad*sin (4.5*\step)}) {$_L\mathcal{D}_R$};
\node[left] at ({\dist+\rad*cos (6*\step)},{\rad*sin (6*\step)}) {$\mathcal{C}$};

\node[below left] at ({\dist+\rad*cos (7.5*\step)},{\rad*sin (7.5*\step)}) {$_L\mathcal{D}_R$};
\node[below] at ({\dist+\rad*cos (9*\step)},{\rad*sin (9*\step)}) {$\mathcal{C}$};

\draw[->, decorate, decoration={
    zigzag,
    segment length=4,
    amplitude=.9,post=lineto,
    post length=2pt
}] (-2.5,0)--(-1.25,0);
\end{tikzpicture}
	\caption{Graphical representation of \eqref{include_endos_equivariant}}\label{fig:multi_trace_inclusion}
\end{figure}
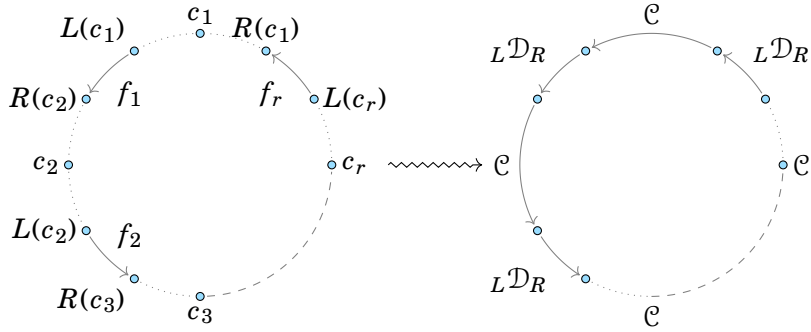
See \cref{fig:multi_trace_inclusion}. This is natural with respect to morphisms of twisted spectral Waldhausen categories, so we apply it to the multisimplicial spectral Waldhausen category $w_{\bullet} S_{\bullet}^{(n)} \spcat{\cC}$ twisted by $w_{\bullet} S_{\bullet}^{(n)} \spcat{\cD}$, and get a zig-zag of multisimplicial orthogonal spectra
\begin{align*}\label{eq:levelwise_trace_zigzag_equivariant}
\Sigma^{\infty} \ob \End^{(r)}\left(\tensor[_L]{(w_{\bullet}S_{\bullet}^{(n)}{{\cC}}/ w_\bullet S_\bullet^{(n)} {\cD})}{_R}\right)
& \arr \THH^{(r)}\left( w_{\bullet}S_{\bullet}^{(n)} \spcat{\cC} ; \tensor[_L]{(w_\bullet S_\bullet^{(n)} {\spcat\cD})}{_R}\right) \\
& \overset{\simeq}{\longleftarrow} (S^1_\bullet)^{\sma n} \sma \THH^{(r)}\left(\spcat{\cC} ; \tensor[_L]{\spcat{\cD}}{_R} \right),
\end{align*}
the second map coming from \cref{thm:THH_n_additivity}. The same argument as in \cref{zigzag_respects_simplicial_str} shows that this is a zig-zag of $\Sigma_\Delta$-diagrams of simplicial orthogonal $C_r$-spectra, hence we can take their geometric realization and get a zig-zag of $C_r$-equivariant bispectra. We describe these in detail in \cite[\swcref{Appendix A}{\cref{swc-sec:model_structures}}]{clmpz-specWaldCat}. Applying left-derived prolongation
	(\cite[\swcref{Proposition A.7}{\cref{swc-quillen-adjoints}}]
	{clmpz-specWaldCat}),
and using the canonical identifications of object sets
\[
\ob w_{\bullet} S_{\bullet}^{(n)} \End^{(r)}\left(\tensor[_L]{{\cC}/{\cD}}{_R}\right) = \ob \End^{(r)} \left(\tensor[_L]{(w_{\bullet}S_{\bullet}^{(n)}\spcat{{\cC}}/w_\bullet S_\bullet^{(n)} {\cD})}{_R}\right),
\]
we get a zig-zag of orthogonal $C_r$-spectra
\begin{equation}\label{eq:equivariant_levelwise_trace_zigzag}
\bbP K\End^{(r)}\left(\tw{\cC}{L}{R}{\cD}\right)
\arr
\left\vert\THH^{(r)} \left( w_{\bullet} S_{\bullet}^\ast \spcat{\cC} ; \tensor[_L]{(w_\bullet S_\bullet^\ast \spcat{\cD})}{_R}\right)\right\vert
\overset{\simeq}{\longleftarrow}
\THH^{(r)} \left(\spcat\cC;\tensor[_L]{\spcat\cD}{_R}\right).
\end{equation}
	The {\bf $r$-fold Dennis trace map}
	\[
          \trc^{(r)} \colon K\End^{(r)}\left(\tw{\cC}{L}{R}{\cD}\right) \arr \THH^{(r)} \left(\spcat\cC;\tensor[_L]{\spcat\cD}{_R}\right)
	\]
	is the map represented by this zig-zag in the stable homotopy category of orthogonal $C_r$-spectra.
\end{defn}

The case $r = 1$ recovers the twisted Dennis trace
\[ \trc^{(1)} = \trc \colon \Sigma^{\infty} K(\End({}_L\cC/\cD_R)) \arr \THH(\spcat\cC;\tensor[_L]{\spcat\cD}{_R}) \]
from \cref{twisted_dennis_trace}.

The following is an extension of \cref{lem:pi_0_dennis_trace}:

\begin{lem}\label{lem:trace_is_inclusion_objects_r_fold}
	Let $(f_i \colon L(a_i) \arr R(a_{i + 1}))$ be an object of the $r$-fold twisted endomorphism category $\End^{(r)}\left(\tw{\cC}{L}{R}{\cD}\right)$, where we take indices modulo $r$.  The image of the class $[f_1, \dotsc, f_r] \in K_0\End^{(r)}\left(\tw{\cC}{L}{R}{\cD}\right)$ under the $r$-fold Dennis trace map
	\[
	\trc^{(r)} \colon \pi_0K\End^{(r)}\left(\tw{\cC}{L}{R}{\cD}\right) \arr \pi_0\THH^{(r)}(\tw{\cC}{L}{R}{\cD})
	\]
	is the composite
	\[
	\bbS \xarr{\bigwedge [f_i]} \bigwedge_{i = 1}^{r} \spcat{\cD}(L(a_i), R(a_{i + 1)})) \arr \bigvee_{(c_1, \dotsc, c_r) \in \cC^{r}} \bigwedge_{i = 1}^{r} \spcat{\cD}(L(c_i), R(c_{i + 1})) \xarr{\text{$0$-skeleton}} \THH^{(r)}(\tw{\cC}{L}{R}{\cD})
	\]
	which includes the homotopy classes of the $f_i$ as $0$-simplices in the cyclic bar construction.
\end{lem}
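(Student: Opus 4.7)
The proof will follow the pattern established in \cref{lem:pi_0_dennis_trace}, extended to the equivariant setting. The plan is to form a commutative diagram whose top row is the restriction of the zig-zag \eqref{eq:equivariant_levelwise_trace_zigzag} to symmetric spectrum level $n = 0$ and simplicial level $k_1 = \dotsb = k_n = 0$ in the $S_\bullet$-directions, and $k_0 = 0$ in the $w_\bullet$-direction. Explicitly, this top row is
\[
\Sigma^{\infty} \ob \End^{(r)}(w_0\,\tw{\cC}{L}{R}{\cD}) \arr \THH^{(r)}(w_0\spcat\cC;\tensor[_L]{(w_0\spcat\cD)}{_R}) \xleftarrow{\ \cong\ } \Sigma^0 \THH^{(r)}(\spcat\cC;\tensor[_L]{\spcat\cD}{_R}),
\]
where the right-hand map is the identification of the bottom simplicial degree and the left-hand map is precisely the inclusion of $0$-simplices \eqref{include_endos_equivariant}. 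This top row maps down into the zig-zag of bispectra defining $\trc^{(r)}$ by including at the specified bi-degree.

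The left vertical map $\Sigma^{\infty} \ob \End^{(r)}(w_0\,\tw{\cC}{L}{R}{\cD}) \to \bbP K\End^{(r)}(\tw{\cC}{L}{R}{\cD})$ is surjective on $\pi_0$ by the standard presentation of $K_0$ of a Waldhausen category (every class is represented by a single object). Hence any class $[f_1, \dotsc, f_r]$ can be lifted to the top-left corner, where it is represented by the wedge summand map $\bbS \to \bigvee \bigwedge \spcat{\cD}(L(c_i),R(c_{i+1}))$ corresponding to the tuple $(c_1, \dotsc, c_r) = (a_1, \dotsc, a_r)$ and the chosen endomorphisms. Chasing this class through the diagram and applying the right-hand identification, the image in $\pi_0 \THH^{(r)}(\tw{\cC}{L}{R}{\cD})$ is exactly the composite described in the statement.

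The only thing to verify is that the equivariant structure is compatible with this bookkeeping, but this is automatic: the inclusion \eqref{include_endos_equivariant} is by construction $C_r$-equivariant (rotating the $r$-tuple of objects and morphisms on the left matches the rotation action on the smash factors on the right), and all identifications in the zig-zag are maps of orthogonal $C_r$-spectra by \cref{thm:THH_n_additivity}. Thus the $C_r$-action does not affect the $\pi_0$-level computation, and the proof is essentially the same diagram chase as in the non-equivariant case. I do not anticipate a significant obstacle; the argument is a direct translation of \cref{lem:pi_0_dennis_trace}, with the only added care being the equivariance of the zig-zag, which has already been established in the construction of $\trc^{(r)}$.
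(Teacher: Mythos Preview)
Your proposal is correct and follows exactly the approach the paper intends: the paper does not give an explicit proof of this lemma, instead presenting it as ``an extension of \cref{lem:pi_0_dennis_trace},'' and your argument is precisely the $r$-fold analogue of that proof, restricting the zig-zag \eqref{eq:equivariant_levelwise_trace_zigzag} to symmetric spectrum level $0$ and simplicial level $0$ and using surjectivity of the left vertical map on $\pi_0$.
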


We can use this result to identify the image under the trace of certain classes in the
$K$-theory of endomorphisms.  Let $A$ be a ring spectrum and let $M$ be an $(A, A)$-bimodule.  Given a collection $P_1, \dotsc, P_r$ of perfect left $A$-modules and $A$-module maps
\[ f_i \colon P_i \arr M \sma_A P_{i + 1}, \]
where the indices are taken modulo $r$, we let $f_{r} \circ \dotsm \circ f_{1}$ denote the composite map
  \[
  P_1 \oarr{f_1} M \sma_A P_{2} \xarr{\id \sma f_{2}} M \sma_A M \sma_A P_{3} \xarr{\id \sma f_{3}} \ \dotsm \ \xarr{\id \sma f_r} M^{\sma_{A} r} \sma_A P_1.
  \]
The bicategorical trace of $f_{r} \circ \dotsm \circ f_{1}$ is a map of spectra
  \[
  \tr(f_{r} \circ \dotsm \circ f_{1}) \colon \bbS \cong \THH(\bbS) \arr \THH(A; M^{\sma_{A} r}).
  \]
On the other hand, the collection $(f_1, \dotsc, f_r)$ is an object of the $r$-fold twisted endomorphism category $\End^{(r)}(\cP/\cM_{- \sma_{A} M})$, and thus determines a class $[f_1, \dotsc, f_r]$ in
\[K_0(\End^{(r)}(\cP/\cM_{M})).\]

\begin{prop}\label{prop:main_identify_pi_0_TR}
	The image of the class $[f_1, \dotsc, f_r] \in K_0(\End^{(r)}(\cP/\cM_{M}))$ under the composite map
	\begin{equation}\label{eq:composite_main_identify_n_distinct}
		\begin{tikzcd}
			K(\End^{(r)}(\cP/\cM_{M})) \ar{r}{\trc^{(r)}} &\THH^{(r)} (\cP; \cM_{M}) &  \ar{l}[swap]{\simeq} \THH^{(r)}(A; M) \ar{r}{\simeq} & \THH(A; M^{\sma_{A}r})
		\end{tikzcd}
	\end{equation}
	is the homotopy class of the trace of the composite
  $[\tr(f_{r} \circ \dotsm \circ f_{1}) ] \in \pi_0 \THH(A; M^{\sma_{A}r})$.
\end{prop}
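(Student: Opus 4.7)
The plan is to combine the identification of the $r$-fold Dennis trace on $\pi_0$ (\cref{lem:trace_is_inclusion_objects_r_fold}) with the unwinding equivalence of \cref{unwinding}, and reduce to the one-fold case already treated in \cref{ex:twisted_trace_is_twisted_trace}.

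First, I would apply \cref{lem:trace_is_inclusion_objects_r_fold} directly: the image of $[f_1,\dotsc,f_r]$ under $\trc^{(r)}$ in $\pi_0 \THH^{(r)}(\cP;\cM_M)$ is represented by the inclusion of the $0$-simplex $f_1 \wedge \cdots \wedge f_r$ into the cyclic bar construction. Recall from \cref{unwinding} that the equivalence $\THH^{(r)}(\cP;\cM_M) \simeq \THH(\cP;\cM_M^{\odot r})$ is implemented by the diagonal of the underlying $r$-fold multisimplicial spectrum. Under this equivalence, the $0$-simplex $(f_1,\dotsc,f_r)$ in $\THH^{(r)}(\cP;\cM_M)$ corresponds tautologically to the $(0,\dotsc,0)$-multisimplex of $\THH(\cP;\cM_M^{\odot r})$ given by the same data, viewed as an element of $\cM_M(P_1,P_2) \wedge \cdots \wedge \cM_M(P_r,P_1)$, since the two spectra agree on the nose at this multisimplicial level after renaming the summation indices.

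Next, I would relate this multisimplex to a $0$-simplex coming from a single composite endomorphism. The natural composition map $\cM_{M^{\sma_A r}} \to \cM_M^{\odot r}$ of $(\cP,\cP)$-bimodules is a pointwise equivalence (both sides compute the derived composition of the $r$ copies of $\cM_M$, and \cref{ex:bimodule_smash_functor} ensures that each copy is equivalent to $M$). The composite $g \coloneqq f_r \circ \cdots \circ f_1 \colon P_1 \to M^{\sma_A r} \sma_A P_1$ is an element of $\cM_{M^{\sma_A r}}(P_1,P_1)$, and by construction its image under the composition map agrees, up to the natural face/degeneracy identifications in the bar construction, with the multisimplex $(f_1,\dotsc,f_r)$ of $\cM_M^{\odot r}(P_1,P_1)$. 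Taking $\THH$ and comparing $0$-simplices, this shows that the image of $[f_1,\dotsc,f_r]$ in $\pi_0 \THH(\cP;\cM_M^{\odot r})$ equals the image under $\trc^{(1)}$ of the class $[g] \in K_0\End\bigl(\lMr{}{\cP}{}_{\cM_{M^{\sma_A r}}}\bigr)$.

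Finally, I would invoke \cref{ex:twisted_trace_is_twisted_trace} applied to the twisted endomorphism $g$: its image under the one-fold Dennis trace is precisely the bicategorical trace $\tr(g) = \tr(f_r \circ \cdots \circ f_1) \in \pi_0 \THH(A;M^{\sma_A r})$, once we have transported the result along the Morita equivalence $A \to \cP$ using \cref{thh_n_morita_invariance}. Chasing the composite \eqref{eq:composite_main_identify_n_distinct} through these identifications yields the conclusion.

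The main obstacle is the bookkeeping in the second step: verifying that the diagonal identification of \cref{unwinding}, followed by the bimodule equivalence $\cM_{M^{\sma_A r}} \to \cM_M^{\odot r}$, genuinely sends the included tuple $(f_1,\dotsc,f_r)$ to the included composite $f_r \circ \cdots \circ f_1$. This amounts to unpacking the face maps of the iterated bar construction and confirming that collapsing intermediate simplicial coordinates corresponds exactly to composing the $f_i$'s as $A$-module maps; cyclic invariance of the shadow guarantees that the resulting class does not depend on the order in which the composition is taken.
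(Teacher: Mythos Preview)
Your approach is essentially the paper's: apply \cref{lem:trace_is_inclusion_objects_r_fold}, unwind via \cref{unwinding}, pass to the bimodule $\cM_{M^{\sma_A r}}$, and invoke the one-fold identification with the bicategorical trace (the paper cites \cite[7.4]{cp} directly rather than routing through \cref{ex:twisted_trace_is_twisted_trace}, but that is cosmetic).

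There is one slip. You invoke a ``composition map $\cM_{M^{\sma_A r}} \to \cM_M^{\odot r}$'' sending $g$ to the tuple $(f_1,\dotsc,f_r)$; no such bimodule map exists, since a composite does not canonically decompose into factors. The natural map goes the other way, $\cM_M^{\odot r} \to \cM_{M^{\sma_A r}}$, by iterating the operation
\[
\cM(P, M^{\sma_A i} \sma_A -) \odot \cM(-, M \sma_A P') \xrightarrow{\ \id \sma - \ ,\ \circ\ } \cM(P, M^{\sma_A (i+1)} \sma_A P'),
\]
and under this map the $0$-simplex $(f_1,\dotsc,f_r)$ is sent to the $0$-simplex $f_r \circ \cdots \circ f_1$. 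You should therefore compare classes in $\pi_0\THH(\cP;\cM_{M^{\sma_A r}})$ rather than in $\pi_0\THH(\cP;\cM_M^{\odot r})$; the rest of your argument then goes through. The paper carries this out by writing down two explicit commuting diagrams (one at the level of $0$-skeleta, one at the level of $\THH$ with the Morita equivalence $A \to \cP$ running horizontally), which is exactly the ``bookkeeping'' you flag as the main obstacle.
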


\begin{proof}
  The homotopy class of $f_{r} \circ \dotsm \circ f_{1}$ is encoded by the map of spectra
  \[
  \bbS \cong \bbS^{\sma r} \xarr{\bigwedge_{i = 1}^{r} [f_i]} \bigwedge_{i = 1}^r \spcat{\cM}(P_{i}, M \sma_A P_{i+1}) \oarr{\circ} \spcat{\cM}(P_{1}, M^{\sma_{A} r} \sma_A P_{1}).
  \]
  This fits into the following commutative diagram of inclusions and composition maps.
\begin{small}
  \begin{equation}\label{eq:pi_0_TR_diag1}
	{\begin{tikzcd}
  \bbS \ar{r}{\bigwedge_{i = 1}^{r} [f_i]} \ar{ddr}[swap]{[f_1 \circ \dotsm \circ f_r]} & \displaystyle\bigwedge_{i = 1}^r \spcat{\cM}(P_{i}, M \sma_A P_{i+1}) \ar{dd}{\circ} \ar{r}{}
  & \displaystyle \bigvee_{(Q_1, \dotsc, Q_r) \in \ob \cP^r} \spcat{\cM}(Q_1, M \sma_A Q_2) \sma \dotsm \sma \spcat{\cM}(Q_r, M \sma_A Q_1)  \ar[equal]{d}
  \\
  & & \displaystyle\bigvee_{Q_1} \bigvee_{(Q_2, \dotsc, Q_r)} \spcat{\cM}(Q_1, M \sma_A Q_2) \sma \dotsm \sma \spcat{\cM}(Q_r, M \sma_A Q_1) \ar{d}{\circ}
  \\
  & \spcat{\cM}(P_1, M^{\sma_{A} r} \sma_A P_1) \ar{r}{}
  & \displaystyle \bigvee_{Q \in \ob \cP} \spcat{\cM}(Q, M^{\sma_{A} r} \sma_A Q)
  \end{tikzcd}}
  \end{equation}
\end{small}
\noindent The right-hand column of the previous diagram is the $0$-skeleton of the left-hand column of the next diagram:
\begin{equation}\label{eq:pi_0_TR_diag2}
  \begin{tikzcd}
  \THH^{(r)}(\spcat{\cP}; \spcat{\cM}_{M}) \ar{d}{\cong} & \ar{l}[swap]{\simeq} \THH^{(r)}(A; M) \ar{d}{\cong} \\
  \THH(\spcat{\cP}; \spcat{\cM}_{M}^{\odot r}) \ar{d}{\circ} & \ar{l}[swap]{\simeq} \THH(A; M^{\odot r}) \ar{d}{\simeq}
  \\
  \THH(\spcat{\cP}; \spcat{\cM}_{M^{\sma_{A} r}}) & \ar{l}[swap]{\simeq} \THH(A; M^{\sma_{A} r}).
  \end{tikzcd}
\end{equation}
Here the horizontal arrows come from the Morita equivalence $A \to \cP$ and the map of bimodules $M \to \cM_M$ from \cref{ex:bimodule_smash_functor}. The upper vertical maps on the left and right are the unwinding equivalence from \cref{unwinding} and the top region commutes by naturality of this equivalence. We use the notation $M \odot -$ to denote the bar construction, to distinguish it from the strict smash product $M \sma_A -$. The lower-right vertical map collapses this bar construction, and the bottom region commutes using (\cite[\swcref{Lemma 3.16}{\cref{swc-composing_bimodules}}]
{clmpz-specWaldCat}).

The lower-left vertical map of \eqref{eq:pi_0_TR_diag2} arises by applying $\THH$ to the morphism of $(\spcat{\cP}, \spcat{\cP})$-bimodules $\spcat{\cM}_{M}^{\odot r} \arr \spcat{\cM}_{M^{\sma_{A} r}}$ defined by iterating the composition operation
\begin{align*}
	& \spcat{\cM}(P, M^{\sma_{A} i} \sma_A -) \odot \spcat{\cM}(-, M \sma_A P') \\
	\oarr{\id \sma -} & \spcat{\cM}(P, M^{\sma_{A} i} \sma_A -) \odot \spcat{\cM}(M^{\sma_{A} i} \sma_A -, M^{\sma_{A} (i+1)} \sma_A P') \\
	\oarr{\circ} & \spcat{\cM}(P, M^{\sma_{A} (i+1)} \sma_{A} P')
\end{align*}
for $i = 1, \dotsc, r - 1$. The commutativity of the bottom region of \eqref{eq:pi_0_TR_diag2} is the fact that when we take $P = P' = A$, the resulting composite map of $(A,A)$-bimodule spectra
\[
  M^{\sma_A r} \arr \spcat{\cM}(A, M \sma_A A)^{\odot r} \oarr{\circ} \spcat{\cM}(A, M^{\sma_{A} r} \sma_{A} A)
\]
is adjoint to the identity of $M^{\sma_A r}$.

  Paste the diagrams \eqref{eq:pi_0_TR_diag1} and \eqref{eq:pi_0_TR_diag2} together by including the $0$-skeleta into the THH terms.  The composite along the top and right of the resulting diagram agrees with the composite in the statement of the proposition, by Lemma \ref{lem:trace_is_inclusion_objects_r_fold}.  The bottom composite is the inclusion of the map $f_{r} \circ \dotsm \circ f_{1}$ into the cyclic bar construction for $\THH(\spcat{\cP}; \spcat{\cM}_{M^{\sma_{A} r}})$, followed by the Morita equivalence to $\THH(A; M^{\sma_{A} r})$.  By \cite[7.4]{cp},
  this is the homotopy class of the bicategorical trace $[\tr(f_{r} \circ \dotsm \circ f_{1}) ]$,
  and thus the proof is complete.
\end{proof}

\section{The trace to topological restriction homology}\label{sec:trace_to_TR}

Now that we have constructed an equivariant refinement of the Dennis trace, we
distill out of it a trace from the $K$-theory of endomorphisms to topological
restriction homology which we call the $\TR$-trace.  We then define an analog of
the ghost coordinates of Witt vectors for $\TR$: the ghost maps $g_{n} \colon
\TR X_{\bullet} \arr X_{n}$.  Finally, we prove that applying the $n$-th ghost map to the $\TR$-trace encodes the trace $\tr(f^{\circ n})$ of the $n$-fold iterate of a self-map (\cref{thm:main_identify_pi_0_TR}).

\subsection{Restriction Systems}\label{subsec:restriction_systems}
We begin by formalizing the sense in which the $r$-fold Dennis traces $\trc^{(r)}$ fit
together as $r$ varies. The situation is a bit subtle because on the $K$-theory
side they are related by the categorical fixed points, and on the $\THH$ side
they are related by the geometric fixed points.

\begin{defn} \label{genuine_restriction_system}
For each $m, n \geq 1$, let $\rsf_{n}$ be a functor from $C_{mn}$-spectra to $C_{m}$-spectra such that for all $r,s \geq 1$ we have natural transformations
  \[ \rsf_{rs} \to \rsf_s \circ \rsf_r. \]
An \textbf{$\rsf_*$-pre-restriction system} consists of the following data:
  \begin{itemize}
  \item A sequence of spectra $\{X_n\}_{n=1}^\infty$ together with an action of
    $C_n$ on $X_n$ for all $n$.
  \item A $C_s$-equivariant map $c_r\colon \rsf_r(X_{rs}) \to X_s$ for all $r,s\geq 1$
  making the square
  \[ \xymatrix @C=3em{
      \rsf_{rs}(X_{rst}) \ar[d] \ar[r]^-{c_{rs}} & X_t \\
      \rsf_s(\rsf_r(X_{rst})) \ar[r]_-{\rsf_s(c_r)} & \rsf_s(X_{st}) \ar[u]_-{c_s}
    } \]
  commute for all $r,s,t\geq 1$.
  \end{itemize}

  When $\rsf_n$ is the categorical fixed points functor $(-)^{C_n}$ and each $c_r$ is an isomorphism, this is called a
  \textbf{naive restriction system}.  When $\rsf_n$ is the geometric fixed points functor $\Phi^{C_n}$,
  and each $c_r$ induces a weak
  equivalence out of the derived geometric fixed points, this is a \textbf{genuine restriction system}.

  A morphism of restriction systems consists of equivariant maps
  $X_n \arr Y_n$ commuting with the maps $c_r$.
\end{defn}

\begin{rmk}
  Our definition of restriction systems recalls the structures defining a
  $p$-cyclotomic spectrum; however, instead of working with the pro-group
  $\bbZ_p$ we are instead working with the pro-group $\widehat\bbZ$.
  Restriction systems should therefore be closely related to pro-spectra; see,
  for example, \cite{fausk}.
\end{rmk}

\begin{example}\label{ex:naive_restriction_systems}
  The definition of naive restriction system works equally well for the
  category of spaces, so let us consider that case first.
  Let $X$ be a space equipped with a self-map $f \colon X \to X$.  There is a naive restriction system
  whose $n$th term is the twisted free loop space of the Fuller construction
  $\mc{L}^{\Psi^n(f)} X^n$, consisting of all $n$-tuples of points
  $x_1, \dotsc, x_n$ and paths from $f(x_i)$ to $x_{i+1}$ (indices modulo $n$)
  \cite{mp1,kw2}. The case where all of the paths are constant gives a naive restriction system where the $n$-th level is the $n$-periodic points $\Fix(f^{\circ n})$.
  In the case where $f = \id$, the system assigns to each $n \geq 1$ the free loop space
  $\mc{L} X = \Map(S^1, X)$ with $C_n$ acting by rotating the loops. The
  structure maps are the $n$-power maps $(\mc{L} X)^{C_n} \cong \mc{L} X$.
\item\label{ex:fuller_maps}
\end{example}

\begin{example}
  If $T$ is a cyclotomic spectrum in the sense of
  \cite{ABGHLM_TC,blumberg_mandell_cyclotomic}, there is a genuine
  restriction system where the $n$th term is $T$ with the generator of $C_n$
  acting by $e^{2\pi i/n} \in S^1$.
\end{example}

\begin{example}\label{ex:gen_restriction_systems}\label{suspend_naive_system_of_spaces}
  We give a couple of examples of ways to go between different kinds of
  restriction system structures. If $X_\bullet$ is a naive restriction system of based spaces, taking
  suspension spectra $\Sigma^\infty X_\bullet$ gives a genuine restriction
  system. This uses the isomorphism
  $\Phi^{C_r} \Sigma^\infty X \cong \Sigma^\infty (X^{C_r})$, and the fact that
  in this case the geometric fixed points agree with the left-derived geometric
  fixed points.

  Now suppose instead that $X_\bullet$ is a genuine restriction system of
  fibrant orthogonal spectra.  For any $G$-spectrum $Y$ there is a canonical map
  $\kappa\colon Y^G \to \Phi^G Y$ from the categorical fixed points to the geometric
  fixed points.  There are therefore maps
  \[\gamma_r\colon X^{C_{rs}}_{rst} \stackrel{\kappa^{C_s}}\arr (\Phi^{C_r}X_{rst})^{C_s}
    \stackrel{c_r^{C_s}}\arr X_{st}^{C_s}.\]
  These maps make $X_\bullet$ into a $(-)^{C_n}$-pre-restriction system.
\end{example}

The key examples for the purposes of the current discussion are the following:

\begin{example}\label{ex:k_restriction_system}
There is a naive restriction system whose $n$-th level is the $K$-theory $K(\End^{(n)}(\tensor[_{L}]{\cC/\cD}{_{R}}))$ of the $n$-fold endomorphism category from \cref{def:twisted_endo}. The structure maps $c_r$ identify the fixed points of such a category with the same category for a smaller value of $n$:
\[ K\left(\End^{(rs)}\left(\tw{\cC}{L}{R}{\cD}\right)\right)^{C_r} \cong K\left(\End^{(rs)}\left(\tw{\cC}{L}{R}{\cD}\right)^{C_r}\right) \cong K\End^{(s)}\left(\tw{\cC}{L}{R}{\cD}\right). \]
In other words, an $rs$-tuple of objects and morphisms that is strictly preserved by the $C_r$-action must be an $s$-tuple of objects and morphisms that are repeated $r$ times.
\end{example}

\begin{example} \label{thh_system}
  When $\cC$ is a pointwise cofibrant spectral category and $\cX$ is a
  pointwise cofibrant bimodule, the isomorphisms from \cref{thh_n_derived}
  make $\THH^{\bullet}(\cC;\cX)$ into a genuine restriction system. The proof is essentially that of \cite[4.6]{malkiewich_thh_dx}, but simpler because there are no subdivisions.
\end{example}

The $r$-fold Dennis trace map from \cref{equivariant_dennis_trace} assembles into a map of restriction systems. Since the Dennis trace is a zig-zag of bispectra, the most natural statement to make is that it defines a zig-zag of restriction systems of bispectra. The details of this notion are in
	\cref{sec:model_str_restriction_systems}---the important thing to know is that the geometric fixed points of a bispectrum are taken at each symmetric spectrum level separately, so that in the symmetric spectrum direction they behave like categorical fixed points, and in the orthogonal direction they behave like geometric fixed points. It is this convention that rectifies the apparent disparity between $K\End^{(r)}$ and $\THH^{(r)}$.

We can suspend the naive restriction system of \cref{ex:k_restriction_system} and get a genuine restriction system of bispectra $\Sigma^{\infty} K\left(\End^{\bullet}\left(\tw{\cC}{L}{R}{\cD}\right)\right)$, as in \cref{suspend_naive_system_of_spaces}. Similarly, we can suspend the genuine restriction system of \cref{thh_system} and get a genuine restriction system of bispectra $\Sigma^{\infty} \THH^{\bullet}\left(\spcat{\cC}; \tensor[_L]{\spcat\cD}{_R} \right)$. See \cref{restriction_system_pushup_1,restriction_system_pushup_2} for additional discussion.

\begin{thm}\label{trace_respects_restriction}
	The $r$-fold Dennis trace for varying $r \geq 1$
	\[
	\trc^{\bullet} \colon \Sigma^{\infty} K\left(\End^{\bullet}\left(\tw{\cC}{L}{R}{\cD}\right)\right) \arr \Sigma^{\infty} \THH^{\bullet}\left(\spcat{\cC}; \tensor[_L]{\spcat\cD}{_R} \right)
	\]
	together define a morphism of genuine restriction systems of bispectra.
\end{thm}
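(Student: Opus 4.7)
The plan is to verify the compatibility of $\trc^{(rs)}$ with $\trc^{(s)}$ levelwise in the defining zig-zag \eqref{eq:equivariant_levelwise_trace_zigzag}, before geometric realization and prolongation. For each pair $r, s \geq 1$ the goal is a commutative square in the stable homotopy category of $C_s$-bispectra relating $\Phi^{C_r}\trc^{(rs)}$ and $\trc^{(s)}$, with vertical arrows: on the source, the identification $\Phi^{C_r}\Sigma^\infty\ob\End^{(rs)}(\cdot)\cong \Sigma^\infty\ob\End^{(rs)}(\cdot)^{C_r}=\Sigma^\infty\ob\End^{(s)}(\cdot)$ coming from \cref{norm_on_suspension_spectra} together with \cref{ex:k_restriction_system}; on the target, the HHR norm isomorphism $\Phi^{C_r}\THH^{(rs)}(\cdot;\cdot)\cong \THH^{(s)}(\cdot;\cdot)$ from \cref{thh_n_derived}.

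First I would equip each of the three multisimplicial bispectra in the zig-zag with a genuine restriction-system structure as $r$ varies. The outer two terms already carry such structures by the constructions preceding the theorem. For the middle term $|\THH^{(\bullet)}(w_\bullet S^*_\bullet \spcat\cC;\lMr{L}{(w_\bullet S^*_\bullet \spcat\cD)}{R})|$ the structure is inherited from \cref{thh_n_derived} applied multisimplicial-level-wise, using that $\Phi^{C_r}$ commutes with geometric realization of suitably cofibrant simplicial spectra. Next I would verify that the forward map in the zig-zag, namely the inclusion of 0-skeleta \eqref{include_endos_equivariant}, is a morphism of restriction systems at each multisimplicial level. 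The main observation is that a $C_r$-fixed 0-simplex of the cyclic bar construction defining $\THH^{(rs)}$ is precisely an $rs$-tuple $(c_i, f_i)$ that is the $r$-fold repetition of an $s$-tuple; under the norm diagonal $D_r$ of \cref{norm_diagonal}, the inclusion of such an $s$-tuple as a 0-simplex for $\THH^{(s)}$ is identified with the corresponding inclusion for $\THH^{(rs)}$. This is the exact content of how the isomorphism in \cref{thh_n_derived} is built at the simplicial level, specialized to 0-simplices where both sides are suspension spectra and the rigidity statement in \cref{norm_on_suspension_spectra} applies.

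The backward additivity equivalence in the zig-zag causes no additional trouble: by \cref{thm:THH_n_additivity} it is an equivalence of $C_r$-spectra given by a wedge decomposition indexed by $\{(i_1,\ldots,i_n):1\le i_j\le k_j\}$, a set on which $C_r$ acts trivially; thus $\Phi^{C_r}$ distributes over the wedge, and on each summand the compatibility with \cref{thh_n_derived} is automatic by the naturality of the norm diagonal in the spectral Waldhausen category. The main obstacle is the bookkeeping for bispectra, since geometric fixed points behave differently in the two spectral directions---like categorical fixed points in the symmetric-spectrum direction for suspension spectra (via \cref{norm_on_suspension_spectra}), and genuinely geometrically in the orthogonal direction; this subtlety is subsumed by the general framework for restriction systems of bispectra developed in \cref{restriction_system_pushup_1,restriction_system_pushup_2}, which packages the levelwise compatibilities above into a morphism of genuine restriction systems after prolongation to orthogonal spectra.
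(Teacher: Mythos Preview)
Your approach is essentially the same as the paper's: establish restriction-system structures on all three terms of the zig-zag, then verify that both the forward map (inclusion of $0$-skeleta) and the backward map (additivity splitting) respect these structures. Your argument for the forward map via the norm diagonal and \cref{norm_on_suspension_spectra} is exactly what the paper does, and the naturality argument for the backward map is also correct.

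There is, however, a gap in your treatment of the middle term. You write that the restriction-system structure on $|\THH^{(\bullet)}(w_\bullet S^*_\bullet \spcat\cC;\cdot)|$ is inherited from \cref{thh_n_derived} ``using that $\Phi^{C_r}$ commutes with geometric realization of suitably cofibrant simplicial spectra,'' but you do not explain why the relevant multisimplicial spectra are suitably cofibrant after realization. This matters because a \emph{genuine} restriction system requires the structure maps $c_r$ to be equivalences out of \emph{derived} geometric fixed points, and for the middle term of the zig-zag this is not automatic. The paper addresses this explicitly: it passes to the model structure on pre-restriction systems of orthogonal spectra (\cref{restriction_system_model_structure}) and takes cofibrant replacement of the entire $\Sigma_\Delta$-diagram of such systems; Reedy cofibrancy then guarantees that each realization $|X_{\bullet,\ldots,\bullet,r}|$ is a cofibrant orthogonal $C_r$-spectrum, so point-set and derived geometric fixed points agree. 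Only after this replacement does one have an honest zig-zag of genuine restriction systems of bispectra. The references you cite (\cref{restriction_system_pushup_1,restriction_system_pushup_2}) handle the two outer terms but do not address the middle one.

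A related point you omit: once you have cofibrantly replaced, you must re-verify that the backward map remains a levelwise equivalence of $C_r$-bispectra. The paper does this economically by observing it is an equivalence at level $r=1$ (ordinary additivity of $\THH$) and then invoking \cref{equivs_measured_on_geometric_fp} together with the restriction-system structure to propagate the equivalence to all $r$.
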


By \cref{prolong_restriction_systems,restriction_system_model_structure},
we can therefore make these restriction systems cofibrant and then prolong them back to orthogonal spectra, giving a map in the homotopy category of genuine restriction systems of orthogonal spectra
	\[
	\bbP\Sigma^{\infty} K\left(\End^{\bullet}\left(\tw{\cC}{L}{R}{\cD}\right)\right) \arr \THH^{\bullet}\left(\spcat{\cC}; \tensor[_L]{\spcat\cD}{_R} \right).
	\]

\begin{proof}
	Apply \cref{thh_system} to the middle term
	\[ \THH^{(r)}\left( w_{\bullet}S_{\bullet}^{(n)} \spcat{\cC} ; \tensor[_L]{(w_\bullet S_\bullet^{(n)} {\spcat\cD})}{_R}\right) \]
	of the zig-zag that defines the $r$-fold Dennis trace \eqref{eq:equivariant_levelwise_trace_zigzag}.
	By the naturality of the construction, this gives a $\Sigma_\Delta$-diagram of restriction systems of simplicial orthogonal $C_r$-spectra. Concretely, such an object consists of orthogonal spectra $X_{k_0,k_1,\ldots,k_n,r}$ with the structure of a $\Sigma_\Delta$-diagram in the first $(n+1)$ indices, and of a genuine restriction system in the last index, that commute with each other. By naturality, the maps of the additivity theorem for $\THH^{(r)}$ (\cref{thm:THH_n_additivity}) give a map of $\Sigma_\Delta$-diagrams of restriction systems of simplicial orthogonal $C_r$-spectra.

	We next check that the inclusion of $K$-theory \eqref{include_endos_equivariant} also gives a map of $\Sigma_\Delta$-diagrams of restriction systems of simplicial orthogonal $C_r$-spectra. We already know it commutes with most of this structure by the argument before \cref{equivariant_dennis_trace}; the only new thing to check is agreement with the maps of the restriction system. By \cref{norm_on_suspension_spectra}, we can rewrite the maps of the restriction system on the left using the HHR norm diagonal, and then the inclusion of endomorphisms commutes with the restriction system structure, simply because the norm diagonal is natural:
	\[ \xymatrix{
		\Phi^{C_r}\Sigma^{\infty} \ob \End^{(rs)}\left(\tensor[_{L}]{ \left(\cC/\cD\right)}{_{R}}\right)
		\ar[r] &
		\Phi^{C_r}\displaystyle\bigvee_{a_1,\ldots,a_{rs} \in  \ob \spcat{\cC}} \displaystyle\bigwedge_{i=1}^{rs} \spcat{\cD}(L (a_i), R(a_{i+1})) \\
		\Sigma^{\infty} \ob \End^{(s)}\left(\tensor[_{L}]{ \left(\cC/\cD\right)}{_{R}}\right) \ar[u]_-{D_r}
		\ar[r] &
		\displaystyle\bigvee_{b_1,\ldots,b_{s} \in  \ob \spcat{\cC}} \displaystyle\bigwedge_{i=1}^{s} \spcat{\cD}(L (b_i), R(b_{i+1})).  \ar[u]_-{D_r}
	} \]

	Each of the desired two maps is now a map of systems of spectra $X_{k_0,k_1,\ldots,k_n,r}$ with both a $\Sigma_\Delta$-action and with restriction maps. We re-interpret these as $\Sigma_\Delta$-diagrams of restriction systems and take their realization to get symmetric spectrum objects in pre-restriction systems. We identify these with pre-restriction systems of bispectra by noting that the geometric fixed point functor for bispectra is (uniquely) isomorphic to the functor that takes the geometric fixed points at each symmetric spectrum level separately
	(\cite[\swcref{Definition A.9}{\cref{swc-gfp_bispectra}}]
	{clmpz-specWaldCat}).

	It remains to check that we actually have restriction systems, in other words that the geometric fixed points agree with the left-derived geometric fixed points. We already know this for $K$-theory and $\THH$ but not for the middle term of our zig-zag. To verify this condition, it is enough if each of the $C_r$-bispectra $Y_{\bullet,\bullet}$ is a cofibrant orthogonal $C_r$-spectrum at each symmetric spectrum level $Y_{n,\bullet}$. In other words, we must show that the realization in the $k_0$ through $k_n$ directions, $|X_{\bullet,\bullet,\ldots,\bullet,r}|$, is a cofibrant orthogonal $C_r$-spectrum.

	To accomplish this we use the model structure on restriction systems of orthogonal $C_r$-spectra from
	\cref{restriction_system_model_structure}
and then take cofibrant replacement in $\Sigma_\Delta$-diagrams of such objects by
	\cite[\swcref{Theorem 6.4}{\cref{swc-properness-thm}}]
	{clmpz-specWaldCat}.
Then for fixed $n$, each system $X_{k_0,k_1,\ldots,k_n,r}$ is Reedy cofibrant, meaning each latching map is a cofibration of restriction systems. Therefore for each fixed value of $r$ the latching map is a cofibration of orthogonal $C_r$-spectra. Therefore the realization is a cofibrant orthogonal $C_r$-spectrum. Therefore, after cofibrant replacement we get a zig-zag of maps of restriction systems of bispectra.

	Note that after this replacement, the backwards map of the zig-zag is an equivalence of bispectra at level 1 of the restriction system. By \cref{equivs_measured_on_geometric_fp}, it is therefore an equivalence of $C_r$-bispectra at level $r$ for every $r \geq 1$. This finishes the proof.
\end{proof}

\subsection{Topological restriction homology}

\begin{defn}\label{tr_in_general}
  For any $\rsf_*$-pre-restriction system, the spectra $\rsf_n(X_n)$ assemble into a
  diagram indexed by the category $\bbI$ with one object for each $n \geq 1$, one
  morphism $n \to m$ when $m |n $, and no other morphisms (as in \cite[\S
  2.5]{madsen_survey}).
  For any $\rsf_*$-pre-restriction system, the spaces $\rsf_n(X_n)$ assemble into a
  diagram indexed by the category $\bbI$.

  Let $X_\bullet$ be a genuine restriction system.  Define maps
  \begin{equation}\label{eq:tr_in_general} \xymatrix {
      X_{rs}^{C_{rs}} \cong (X_{rs}^{C_r})^{C_s} \ar[r]^-{\kappa^{C_s}} & (\Phi^{C_r} X_{rs})^{C_s} \ar[r]^-{(\gamma_r)^{C_s}} & X_s^{C_s}
    } \end{equation}
  by first composing the canonical map $\kappa$ from categorical fixed points to
  geometric fixed points and the structure map $\gamma_r$.   Then apply
  categorical $C_s$-fixed points.  As above, this makes the spectra
  $X_n^{C_n}$ into a diagram indexed by $\bbI$.
 We define the \textbf{underived topological restriction homology} of $X_\bullet$ by
  \[ \TR^{\un}(X_\bullet) = \underset{\bbI}\lim\, X_n^{C_n}. \]

  If in addition each $X_r$ is
  fibrant as an orthogonal $C_r$-spectrum, we define the \textbf{topological restriction homology} of $X_\bullet$ as the homotopy limit
  \[ \TR(X_\bullet) = \underset{\bbI}\holim\, X_n^{C_n}.\]
  To define $\TR$ for an arbitrary genuine restriction system, we first take a
  fibrant replacement in the model structure from
  \cref{restriction_system_model_structure},
and
  then take the homotopy limit as above.  There is a canonical map
  $\TR^{\un} \to \TR$ that takes fibrant replacement and
  passes to the homotopy limit.
\end{defn}

\begin{defn}\label{tr_of_category}
  For any pointwise cofibrant spectral category $\cC$ and pointwise cofibrant
  $(\cC,\cC)$ bimodule $\cX$, we write $\TR(\cC;\cX)$ for the $\TR$ of the
  restriction system $\THH^{\bullet}(\cC; \cX)$ from \cref{thh_system}.
\end{defn}

When $\cC = A$ is a ring spectrum and $\cX = A$, this is the classical
definition of topological restriction homology. On the other hand, if we take a
general $(A,A)$-bimodule spectrum $M$,  this is a spectral version of
Lindenstrauss--McCarthy's $W(A;M)$ \cite{LM12}.

Before defining the $\TR$-trace we need one more technical result.

\begin{lem}[\cref{restriction_system_pushup_1}]\label{underived_tr_of_suspension}
  Let $X_\bullet$ be a naive restriction system of symmetric spectra.  Then
  $\bbP\Sigma^\infty X_\bullet$ is a genuine restriction system of orthogonal spectra and there is an isomorphism
  \[ \TR^{\un}(\mathbb P\Sigma^\infty X_\bullet) \cong \bbP\Sigma^\infty X_1 \cong \bbP X_1. \]
\end{lem}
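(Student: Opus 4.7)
The plan is to establish the two assertions of the lemma in turn: first, that $\bbP\Sigma^\infty X_\bullet$ carries a canonical genuine restriction system structure, and second, that its underived topological restriction homology collapses to $\bbP\Sigma^\infty X_1$.

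For the first assertion, the essential input is the rigidity of geometric fixed points on suspension spectra established in \cref{norm_on_suspension_spectra}: for a (suitably cofibrant) symmetric spectrum $Y$ with a $C_r$-action, the HHR diagonal gives a canonical isomorphism $\Phi^{C_r}(\bbP\Sigma^\infty Y) \cong \bbP\Sigma^\infty(Y^{C_r})$ identifying the geometric $C_r$-fixed points with the suspension spectrum of the categorical $C_r$-fixed points. Applied to $Y = X_{rs}$ with its restricted $C_r$-action, composing this isomorphism with the naive restriction isomorphism $c_r \colon X_{rs}^{C_r} \xrightarrow{\cong} X_s$ supplies structure maps $\Phi^{C_r}(\bbP\Sigma^\infty X_{rs}) \xrightarrow{\cong} \bbP\Sigma^\infty X_s$. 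The triangle compatibility in \cref{genuine_restriction_system} for the pair $(r,s)$ is inherited by naturality from the analogous compatibility for $X_\bullet$. Because every $c_r$ was an isomorphism to begin with, the induced structure maps are point-set isomorphisms and in particular agree with their left-derived versions, so $\bbP\Sigma^\infty X_\bullet$ is genuine and not merely pre-restriction.

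For the second assertion, I invoke \cref{tr_in_general}: $\TR^{\un}(\bbP\Sigma^\infty X_\bullet)$ is the limit over $\bbI$ of the functor $n \mapsto (\bbP\Sigma^\infty X_n)^{C_n}$, with transition maps built from $\kappa^{C_s}$ followed by $(\gamma_r)^{C_s}$. The key observation is that the indexing category $\bbI$ has $n = 1$ as a terminal object, since $1$ divides every positive integer, so every $n$ admits a unique morphism $n \to 1$. A limit over a category with a terminal object is the value at the terminal, so
\[ \TR^{\un}(\bbP\Sigma^\infty X_\bullet) = \lim_{\bbI}\, (\bbP\Sigma^\infty X_n)^{C_n} \cong (\bbP\Sigma^\infty X_1)^{C_1} = \bbP\Sigma^\infty X_1. \]
The final identification $\bbP\Sigma^\infty X_1 \cong \bbP X_1$ is a unit isomorphism for $\Sigma^\infty$ in this bispectrum setting, where $\Sigma^\infty$ is normalized so that $\bbP\Sigma^\infty$ agrees with $\bbP$ on symmetric spectra up to canonical isomorphism.

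The main technical point to verify carefully is that the structure maps we place on $\bbP\Sigma^\infty X_\bullet$, built from the naive restriction system of $X_\bullet$ via the HHR diagonal, are exactly the geometric fixed point maps $\gamma_r$ appearing in the construction of $\TR^{\un}$ in \cref{tr_in_general}; only then does the terminal-object argument legitimately compute the limit. This amounts to a diagram chase comparing $\kappa$ and the HHR diagonal on suspension spectra, using the rigidity theorem cited in \cref{norm_on_suspension_spectra}.
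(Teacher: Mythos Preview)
Your argument for the second assertion contains a genuine error: you assert that ``a limit over a category with a terminal object is the value at the terminal,'' but this is false. It is the \emph{initial} object that trivializes limits; the terminal object trivializes colimits. Over the arrow category $a \to b$, for instance, $b$ is terminal, yet $\lim(F(a) \to F(b)) = F(a)$. The category $\bbI$ has $1$ as terminal object but no initial object, so your formal argument does not go through.

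What actually makes the underived limit collapse is that, for suspension spectra, \emph{every transition map in the $\bbI$-diagram is an isomorphism}. The restriction $\kappa \colon (\Sigma^\infty X_{rs})^{C_r} \to \Phi^{C_r}\Sigma^\infty X_{rs}$ is a point-set isomorphism, and the genuine structure maps $\gamma_r$ were built from the naive ones as isomorphisms (this is the content of \cref{restriction_system_pushup_1}). Hence each map $(\bbP\Sigma^\infty X_n)^{C_n} \to (\bbP\Sigma^\infty X_1)^{C_1}$ in the diagram is invertible, and the inverses assemble into a cone exhibiting $\bbP\Sigma^\infty X_1$ as the limit. The terminality of $1$ is still used---it is what provides every object with a comparison map to level $1$---but only in combination with the invertibility of those maps; neither ingredient alone suffices.

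A minor remark on the first assertion: your citation of \cref{norm_on_suspension_spectra} is slightly off, since that remark concerns the HHR diagonal for smash powers $X^{\sma r}$ rather than general $C_r$-actions. The relevant input is the simpler isomorphism $\Phi^{C_r}\Sigma^\infty Y \cong \Sigma^\infty(Y^{C_r})$ (equivalently, that $\kappa$ is an isomorphism on suspension spectra), which the paper uses directly in \cref{restriction_system_pushup_1}.
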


We are now ready to define the $\TR$-trace.

\begin{defn} \label{tr_trace}
  The \textbf{$\TR$-trace} is the map $\trc\colon K(\End(\lMr{L}{\cC/\cD}{R})) \to
  \TR(\cC; \lMr{L}{\cD}{R})$ defined as the composition
  \begin{align}
    K(\End(\lMr{L}{\cC/\cD}{R})) &\cong \TR^{\un}(\bbP \Sigma^\infty
    K(\End^\bullet(\lMr{L}{\cC/\cD}{R}))) \arr \TR(\bbP \Sigma^\infty
                                   K(\End^\bullet(\lMr{L}{\cC/\cD}{R}))) \nonumber\\
    &\arr \TR(\THH^\bullet(\cC;\lMr{L}{\cD}{R})) = \TR(\cC;\lMr{L}{\cD}{R}). \label{map_of_trs}
  \end{align}
\end{defn}

\begin{example}
	Taking $\spcat\cC = \tensor[^A]{\Perf}{}$ for a ring spectrum $A$, and twisting by an $(A,A)$-bimodule $M$, the $\TR$-trace gives a map
	\begin{equation}\label{eq:TR_trace_ring_example}
	\trc \colon K(A;M)
	= K \End(\tw{\tensor[]{\Perf}{}}{}{M}{\tensor[]{\Mod}{}})
	\arr \TR\left(\spcat{\cP}; \spcat\cM_M\right)
	\overset\sim\longleftarrow
	\TR(A;M).
	\end{equation}
	As before, the $\TR$-trace is identically zero on the zero endomorphisms, so induces a map out of cyclic $K$-theory
	\begin{equation}
	\trc \colon K^\cyc(A;M)
	= \widetilde K(A;M)
	\arr \TR(A;M).
	\end{equation}
\end{example}

\begin{rmk}\label{equivariant_comparison}
	We briefly discuss two compatibility statements with the existing literature. One is that when $M = A$ and we restrict to identity morphisms, we recover the usual trace $K(\tensor[^A]{\Perf}{}) \to \TR(\tensor[^A]{\Perf}{})$ as in e.g. \cite{bokstedt_hsiang_madsen}. Recall that the more common definition arises by using the fact that the inclusion of identity morphisms into $\THH(\cC)$ lands in the categorical fixed points $\THH(\cC)^{S^1}$, and then constructing a natural map $\THH(\cC)^{S^1} \to \TR(\cC)$ out of $r$-fold subdivision and the restriction maps in equivariant stable homotopy theory. However, the $r$-fold subdivision applied to the inclusion of identity morphisms \eqref{eq:k_end_trace_0} agrees with the equivariant inclusion of identity morphisms in \eqref{include_endos_equivariant}, so our construction produces the same map to $\TR$.

	The other statement is that our trace agrees with the trace to $W(A;M)$ as defined by Lindenstrauss and McCarthy for discrete rings \cite{LM12}. The comparison can be sketched just as in \cref{comparison_to_LM12}, only the equivalences are $C_r$-equivariant and we compare the $r$-fold inclusion of endomorphisms map \eqref{include_endos_equivariant} with the one in \cite[9.1]{LM12}.
\end{rmk}

\subsection{The ghost map}

\begin{defn}\label{defn:ghost_map}
Let $X_\bullet$ be a genuine restriction system. The \textbf{ghost map}
\[
g = (g_n) \colon \TR(X_\bullet) \to \prod_{n \geq 1} X_n
\]
 is defined by the composites
\[
\begin{tikzcd}
 g_n \colon \TR(X_\bullet) \ar{r} &  X_n^{C_n} \ar{r}{F^{n}} &  X_n,
\end{tikzcd}
\]
where $F^{n}$ denotes the inclusion of fixed points.
\end{defn}

The ghost map is defined in the same way for underived $\TR$. It is natural with
respect to maps of restriction systems, and with respect to the inclusion of
underived $\TR$ into $\TR$, and this naturality makes it easy to compute.

\begin{prop}\label{dennis_trace_ghost}
	Let $\spcat{\cC}$ be a spectral Waldhausen category and let ${}_L\cC/\cD_R$ be a twisting of $\spcat{\cC}$.  Then the following diagram commutes
	\[
	\xymatrix{
		K \End\left(\tw{\cC}{L}{R}{\cD}\right)  \ar[d]_-{\Delta_{n}} \ar[r]^{\trc}&\TR\left(\spcat{\cC}; \tensor[_L]{\spcat\cD}{_R}\right)\ar[d]^-{g_n}  \\
		K \End^{(n)}\left(\tw{\cC}{L}{R}{\cD}\right)  \ar[r]^{\trc^{(n)}} & \THH^{(n)}\left(\spcat{\cC}; \tensor[_L]{\spcat\cD}{_R}\right),
	}
      \]
      where $\Delta_n$ is the duplication functor from \cref{def:rfoldTwisted}.
	Setting $n = 1$, we conclude that the $\TR$-trace is a lift of the Dennis trace along the first ghost map $g_1$:
	\[
	\xymatrix{
		K \End\left(\tw{\cC}{L}{R}{\cD}\right) \ar[dr]^-{\trc^{(1)}}\ar[r]^{\trc}&\TR\left(\spcat{\cC}; \tensor[_L]{\spcat\cD}{_R}\right)\ar[d]^-{g_1}  \\
		& \THH\left(\spcat{\cC}; \tensor[_L]{\spcat\cD}{_R}\right).
	}
	\]

\end{prop}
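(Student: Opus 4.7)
The plan is to reduce the claim to two independent statements: naturality of the ghost map, and an identification of the ghost map on the naive restriction system built out of $K\End^\bullet$.

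First I would use \cref{trace_respects_restriction}, which says the $r$-fold Dennis traces assemble into a morphism of genuine restriction systems of bispectra
\[
\trc^\bullet \colon \bbP\Sigma^\infty K(\End^\bullet(\lMr{L}{\cC/\cD}{R})) \arr \THH^\bullet(\cC;\lMr{L}{\cD}{R}).
\]
Since $g_n$ is natural with respect to maps of restriction systems (immediate from \cref{defn:ghost_map}), applying $g_n$ to $\trc^\bullet$ and using \eqref{map_of_trs} reduces the problem to showing that the composite
\[
K\End(\lMr{L}{\cC/\cD}{R}) \cong \TR^{\un}(\bbP\Sigma^\infty K\End^\bullet(\lMr{L}{\cC/\cD}{R})) \arr \TR(\bbP\Sigma^\infty K\End^\bullet(\lMr{L}{\cC/\cD}{R})) \xarr{g_n} \bbP\Sigma^\infty K\End^{(n)}(\lMr{L}{\cC/\cD}{R})
\]
agrees with the map induced by the duplication functor $\Delta_n$.

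Next I would analyze $g_n$ for a general naive restriction system $X_\bullet$. Because $\rsf_r = (-)^{C_r}$ and the structure maps $c_r\colon X_{rs}^{C_r} \xarr{\cong} X_s$ are isomorphisms, the diagram $\{X_n^{C_n}\}_{n \in \bbI}$ is (canonically) constant on $X_1$, so $\TR^{\un}(X_\bullet) \cong X_1$ with the projection to level $n$ given by the inverse of $c_1\colon X_n^{C_n} \xarr{\cong} X_1$. Passing to suspension spectra via \cref{suspend_naive_system_of_spaces} and \cref{underived_tr_of_suspension} preserves this description, and the composition $X_n^{C_n} \hookrightarrow X_n$ defining the ghost map then becomes the inclusion-of-fixed-points map $X_1 \cong X_n^{C_n} \hookrightarrow X_n$.

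To close, I would identify this inclusion in the case $X_\bullet = K\End^\bullet(\lMr{L}{\cC/\cD}{R})$. The isomorphism $K\End^{(n)}(\lMr{L}{\cC/\cD}{R})^{C_n} \cong K\End(\lMr{L}{\cC/\cD}{R})$ from \cref{ex:k_restriction_system} is induced by the inclusion of fixed objects of $\End^{(n)}$: an $n$-tuple $((a_1,\ldots,a_n),(f_1,\ldots,f_n))$ is strictly $C_n$-fixed precisely when it is the $n$-fold repetition of a single $(a,f)$. Consequently, the inclusion $K\End^{(n)}(-)^{C_n} \hookrightarrow K\End^{(n)}(-)$ is induced on Waldhausen categories by the functor $(a,f)\mapsto ((a,\ldots,a),(f,\ldots,f))$, which is by definition $\Delta_n$. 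The most delicate point, and the one I would check most carefully, is that the suspension-and-prolongation process used to move between naive and genuine restriction systems in \cref{trace_respects_restriction} does not disrupt the identification of $\TR^{\un}$ with level $1$ nor the identification of the ghost map with $\Delta_n$; this is exactly the content of \cref{underived_tr_of_suspension} combined with the compatibility between categorical and geometric fixed points on suspension spectra recorded in \cref{norm_on_suspension_spectra}.
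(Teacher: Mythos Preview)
Your proposal is correct and follows essentially the same approach as the paper: reduce via naturality of the ghost map (applied to the morphism of restriction systems from \cref{trace_respects_restriction}) to showing that $g_n$ on the underived $\TR$ of the $K$-theory side is $\Delta_n$, then identify that composite as the inclusion of $C_n$-fixed objects followed by $F^n$, which is exactly the duplication functor. The paper's proof is the same argument, phrased via the structural isomorphisms $\gamma_n$ of \cref{restriction_system_pushup_1} in place of your direct analysis of naive restriction systems.
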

\begin{proof}
	By \cref{underived_tr_of_suspension}, the underived $\TR$ of the genuine restriction system
\[\bbP\Sigma^\infty K \End^{\bullet}\left(\tw{\cC}{L}{R}{\cD}\right)\] is the $K$-theory of endomorphisms $K \End\left(\tw{\cC}{L}{R}{\cD}\right)$. Using the naturality of the ghost map, it suffices to prove that under this identification, the $n$th ghost map
	\[
	g_{n} \colon \TR^{\un}\bbP\Sigma^\infty K \End^{\bullet}\left(\tw{\cC}{L}{R}{\cD}\right)  \arr K \End^{(n)}\left(\tw{\cC}{L}{R}{\cD}\right)
	\]
	agrees with the map induced by $\Delta_{n}$.

	The $n$th ghost map for the restriction system applies the inverse of the structural isomorphisms $\gamma_n$ (see \cref{restriction_system_pushup_1}) to get to the $C_n$-fixed points of the $n$th term, then includes the $C_n$-fixed points into the entire $K$-theory spectrum
	\[ K \End^{(1)}\left(\tw{\cC}{L}{R}{\cD}\right)
	\oarr{\gamma_{n}^{-1}} K\left(\End^{(n)}\left(\tw{\cC}{L}{R}{\cD}\right)\right)^{C_n}
	\oarr{F^{n}} K \End^{(n)}\left(\tw{\cC}{L}{R}{\cD}\right). \]
	It follows from the definition of $\gamma_n$ that this composite is the map induced by the duplication functor $\Delta_{n}$.
\end{proof}

Using the ghost map, we can extend the result of \cref{prop:main_identify_pi_0_TR}.

\begin{thm}\label{thm:main_identify_pi_0_TR}
  Let $A$ be a ring spectrum, let $P$ be a perfect $A$-module, and let $M$ be an $(A, A)$-bimodule.  The image of the class $[f] \in K_0 (A; M)$ determined by a twisted endomorphism $f\colon P \to M \sma_A P$ under the composite
 \begin{equation}\label{eq:composite_main_identify}
 \begin{tikzcd} K(A; M) \ar{r}{\trc}
	& \TR(A; M) \ar{r}{g_{n}} &\THH^{(n)} (A; M) \ar{r}{\simeq} & \THH(A; M^{\sma_{A}n})
\end{tikzcd}
\end{equation}
 is the homotopy class of the trace of the iterate $[\tr(f^{\circ n})] \in \pi_0 \THH(A; M^{\sma_{A} n})$.
\end{thm}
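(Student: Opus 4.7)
The plan is to deduce the theorem by combining Proposition \ref{dennis_trace_ghost} with Proposition \ref{prop:main_identify_pi_0_TR}; essentially all the conceptual work has been done, and what remains is to assemble the pieces and observe that the resulting bicategorical trace is $\tr(f^{\circ n})$.

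First, I would apply Proposition \ref{dennis_trace_ghost} to the spectral Waldhausen category $\tensor[^A]{\Perf}{}$ equipped with the twisting $\tw{\tensor[^A]{\Perf}{}}{}{M \sma_A -}{\tensor[^A]{\Mod}{}}$ from Example \ref{ex:twisting_mod}. This identifies the composite $g_n \circ \trc$ applied to $[f]$ with the image under $\trc^{(n)}$ of $\Delta_n([f])$. Directly from the definition of the duplication functor (Definition \ref{def:rfoldTwisted}), $\Delta_n([f])$ is the class of the $n$-tuple $(f, f, \ldots, f)$ in $K_0 \End^{(n)}(\tw{\cP}{}{}{\cM_M})$.

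Second, I would apply Proposition \ref{prop:main_identify_pi_0_TR} with $r = n$ and $(f_1, \ldots, f_n) = (f, f, \ldots, f)$. This computes the image of the class $[(f, \ldots, f)]$ under $\trc^{(n)}$, followed by the Morita equivalence $\THH^{(n)}(\cP; \cM_M) \simeq \THH^{(n)}(A; M)$ and the unwinding isomorphism $\THH^{(n)}(A;M) \simeq \THH(A; M^{\sma_A n})$, as the homotopy class of the bicategorical trace of the composite $f \circ f \circ \cdots \circ f$ (with $n$ factors), where ``composition'' here is the iterated map
\[
P \xarr{f} M \sma_A P \xarr{\id \sma f} M^{\sma_A 2} \sma_A P \xarr{\id \sma f} \cdots \xarr{\id \sma f} M^{\sma_A n} \sma_A P.
\]
By definition this is exactly $f^{\circ n}$, so the image is $[\tr(f^{\circ n})]$ as required.

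The remaining bookkeeping is to check that the identifications $\TR(A;M) \simeq \TR(\cP; \cM_M)$ and $\THH^{(n)}(A;M) \simeq \THH^{(n)}(\cP; \cM_M)$ used implicitly in the theorem are compatible with $g_n$; this follows by applying the ghost map functorially to the morphism of restriction systems induced by the Morita equivalence $A \to \cP$ (which is an equivalence of $C_n$-spectra at each level by Remark \ref{thh_n_morita_invariance}, hence an equivalence of restriction systems). The only mildly subtle point---really the ``hard part'' if there is one---is keeping straight which direction each Morita equivalence runs and verifying that the unwinding isomorphism of Proposition \ref{unwinding} is natural in the ring-theoretic Morita equivalence; both of these are formal consequences of the bicategorical trace framework established in \S\ref{sec:spectral_bimodules} and the naturality built into $\THH^{(r)}$. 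With these checks in place, the chain of identifications in \eqref{eq:composite_main_identify} factors through \eqref{eq:composite_main_identify_n_distinct} applied to $\Delta_n([f])$, and the result follows.
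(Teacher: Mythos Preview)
Your proposal is correct and follows essentially the same approach as the paper: apply Proposition~\ref{dennis_trace_ghost} to replace $g_n \circ \trc$ by $\trc^{(n)} \circ \Delta_n$, then invoke Proposition~\ref{prop:main_identify_pi_0_TR} with all $f_i = f$. The paper's proof is slightly more terse but makes exactly the same moves, including the use of naturality of the ghost map to pass the Morita equivalence $A \to \cP$ through $g_n$.
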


\begin{proof}
	By naturality of the ghost map, we may apply $g_n$ before simplifying from $\Perf$ to $A$. This gives the top route in the commutative square of \cref{dennis_trace_ghost} followed by the equivalence $\THH^{(n)}(\spcat{\cP}; \spcat{\cM}_{M}) \simeq \THH(A; M^{\sma_{A} n})$. The composite then agrees with the bottom route of \cref{dennis_trace_ghost} composed with this equivalence, and thus, by \cref{prop:main_identify_pi_0_TR} with each $f_i = f$, takes $[f]$ to $[\tr(f^{\circ n})] \in \pi_0 \THH(A; M^{\sma_{A} n})$.
\end{proof}

\section{Characteristic polynomials, zeta functions, and the Reidemeister trace}\label{sec:char_poly_zeta}

In this section we explain how the characteristic polynomial, the Reidemeister trace, and the Lefschetz zeta function are all encoded by the $\TR$-trace. The first result is that when $A$ is a commutative Eilenberg--Maclane spectrum, the $\TR$-trace
\[
	K_0 \End(A) \to \pi_0 \TR(A) \cong W(A) \cong (1+tA[[t]])^\times
\]
takes each endomorphism $[f] \in K_0 \End(A)$ to its characteristic polynomial (\cref{tr=char}). Here $W(A) \cong (1+tA[[t]])^\times$ is the ring of big Witt vectors of $A$, and the isomorphism $\pi_0 \TR(A) \cong W(A)$ is a result of Hesselholt and Madsen \cite{hesselholt_madsen}, see also \cite{hesselholt_noncommutative,dknp}.
As a result, the $\TR$-trace of this paper is a generalization of the characteristic polynomial map $K_0 \End(A) \to (1+tA[[t]])^\times$ studied by Almkvist and others \cite{almkvist}.

The second result is that when $A = \Sigma^\infty_+ \Omega X$ is a spherical group ring with $X$ finitely dominated and path-connected, each based map $f\colon X \to X$ defines a class
\[
[f] \in K_0( \Sigma^\infty_+ \Omega X ; \Sigma^\infty_+ \Omega^f X )
\]
whose image in $\TR$ records the Reidemeister traces $R(f^n)$ for all $n \geq 1$. Using \cite{mp1}, this implies that the trace to $\TR$ takes $[f]$ to its Fuller trace $R(\Psi^n f)^{C_n}$ for all $n \geq 1$ (\cref{fuller_underneath_k_theory}). On the other hand, if we map forward to $\TR(\bbZ) \cong W(\bbZ)$ then this class becomes the Lefschetz zeta function of $f$ (\cref{lefschetz_zeta_function}).

In each of the above two cases there is a splitting of $\pi_0\TR$ into an infinite product, but the splittings arise for very different reasons. We refer to the splitting of $\TR(\Sigma^\infty_+ \Omega X)$ as {\bf tom Dieck coordinates} and the splitting of $\TR(HA)$ for a commutative ring $A$ as {\bf Witt coordinates}. These should not be confused with each other, nor should they be confused with the image under the ghost map, which we call {\bf ghost coordinates.} When we apply the ring homomorphism $\bbS \to \bbZ$ to move between the above two examples, all three of these coordinate systems come into play (\cref{crazy_isomorphism}). The distinction between them is needed to fully comprehend how the Lefschetz zeta function of a map $f\colon X \to X$ is related to the Fuller trace $R(\Psi^n f)^{C_n}$.

\subsection{The case of commutative Eilenberg--MacLane spectra}\label{sec:eilenberg_maclane}

The case of commutative Eilenberg--MacLane spectra is intimately tied with the Witt vectors.  There is a conceptual reason for this: by \cref{thm:main_identify_pi_0_TR}, the $\TR$-trace records the traces $\tr(f^{\circ n})$ of the iterates of an endomorphism $f$.  The structure of the Witt vectors collates this information into a single class in $\pi_0 \TR(A)$ which corresponds to the characteristic polynomial $\chi_{f}(t) = \det (1 - t f)$.

In this section, $A$ is a discrete commutative ring.  We abuse notation and make the abbreviations $\TR(A) = \TR(HA)$, etc., so that we have no need to explicitly refer to the associated Eilenberg--MacLane spectrum.

We briefly recall the basic facts regarding the ring of (big) Witt vectors $W(A)$ of $A$, referring the reader to \cite{almkvist, grayson_groth, hesselholt_survey, campbell_witt} for more details.  As a set, $W(A) = A^{\bbZ_{+}}$ consists of collections $a = (a_n)$ of ring elements $a_n \in A$ indexed by the positive integers $n \geq 1$.  We equip $W(A)$ with the addition and multiplication uniquely determined by the requirement that the \textbf{ghost coordinates} map
\begin{equation}\label{eq:witt_ghost_coords}
w = (w_n) \colon W(A) \arr A^{\bbZ_{+}}, \qquad w_{n}(a) = \sum_{d \mid n} d a_{d}^{n/d}
\end{equation}
is a natural transformation of functors from commutative rings to commutative rings, where the ring structure on $A^{\bbZ_{+}}$ is defined componentwise.  There is a natural isomorphism of abelian groups
\begin{equation}\label{eq:witt_iso_power_series}
W(A) \cong (1 + tA[[t]])^{\times} \quad \text{defined by} \; \;
a \longmapsto \prod_{n \geq 1} (1 - a_{n}t^n),
\end{equation}
and the group of invertible power series can be given an additional binary operation for which this is an isomorphism of rings.  The relevance of the ring structure on $W(A) \cong (1 + tA[[t]])^{\times}$, for our purposes, is that the characteristic polynomial map
\begin{align*}
K_0(\End(A)) &\oarr{\chi} (1 + tA[[t]])^{\times} \\
[f \colon P \to P] &\longmapsto \det (1 - t f)
\end{align*}
is a ring homomorphism.  In fact:
\begin{thm}[\cite{almkvist}]\label{char_is_inj}
	For any discrete commutative ring $A$, $\chi$ induces an injective ring homomorphism
\[
\widetilde K_0(\End(A)) \coloneqq K_0(\End(A)) \ / \ \iota_1 K_0(A) \oarr{\chi} (1 + tA[[t]])^{\times}
\]
and the image consists of precisely those power series that are quotients of polynomials.
\end{thm}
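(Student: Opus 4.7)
The plan is to follow Almkvist's classical argument in three stages: establish that $\chi$ is a well-defined ring homomorphism vanishing on the classes of zero endomorphisms, identify its image with the group of rational quotients $p(t)/q(t)$, and prove that the induced map on the quotient is injective. Commutativity of $A$ is used throughout, both to make the determinants behave well and to access the Witt vector machinery organizing the target.

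For well-definedness, given $(P, f)$ with $P$ finitely generated projective I would choose a complement $Q$ with $P \oplus Q \cong A^n$ and set $\chi(P, f) \coloneqq \det(I_n - t(f \oplus 0_Q)) \in 1 + tA[t]$. Multiplicativity of the determinant on block triangular matrices makes this independent of $Q$ and additive on short exact sequences, so it descends to a group homomorphism from $K_0(\End(A))$ to $(1 + tA[[t]])^\times$. Vanishing on the classes of zero endomorphisms is immediate from $\det(I) = 1$. To upgrade to a ring homomorphism (with respect to the Witt product on the target defined in \eqref{eq:witt_ghost_coords}), I would check multiplicativity on ghost coordinates: by the Newton--Girard identities the $n$-th ghost coordinate of $\chi(f)$ equals $\operatorname{tr}(f^{\circ n})$, and trace is multiplicative under tensor products of endomorphisms. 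Since the functorial Witt ring structure compatible with $w$ is unique, this forces $\chi$ to be a ring map.

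For the image, the key observation is that modulo classes of zero endomorphisms every class in $K_0(\End(A))$ is represented by an endomorphism of a free module: $(P, f)$ and $(P \oplus Q, f \oplus 0)$ differ by a zero endomorphism, and $Q$ can be chosen so that $P \oplus Q$ is free. The characteristic polynomial of an $n \times n$ matrix visibly lies in $1 + tA[t]$, and conversely every $p \in 1 + tA[t]$ arises as the characteristic polynomial of its companion matrix. Hence the image is the multiplicative subgroup of $(1 + tA[[t]])^\times$ generated by $1 + tA[t]$, which is exactly the set of rational quotients $p(t)/q(t)$ with $p, q \in 1 + tA[t]$.

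The main obstacle is injectivity. Suppose $(P, f)$ and $(P', f')$ have $\chi(f) = \chi(f')$; after stabilising by zero endomorphisms I reduce to matrices $M, M'$ on free modules $A^n, A^m$ with a common characteristic polynomial $q(t)$. The classical strategy is to reinterpret a pair $(P, f)$ as a finitely generated $A[t]$-module that is projective over $A$ with $t$ acting as $f$; Cayley--Hamilton then shows that the $A[t]$-action is annihilated by the monic lift of $q$. The remaining task, showing that two such modules with equal $\chi$ agree modulo zero-endomorphism classes, is the substantive content of Almkvist's theorem: it identifies $\widetilde K_0(\End(A))$ with the subring of $W(A)$ generated by the classes of linear polynomials $(1 - at)$ and their formal inverses. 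This step, where commutativity of $A$ enters essentially through companion matrix normal forms over $A[t]$ and the symmetric function identities underlying Witt multiplication, is the part that does not follow formally from properties of determinants.
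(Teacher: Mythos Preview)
The paper does not prove this theorem; it is stated as a citation of Almkvist's result and used as a black box thereafter (most notably in the proof of \cref{tr=char}). So there is no proof in the paper to compare against, and your outline is a reasonable summary of Almkvist's classical argument, including the honest acknowledgement that the injectivity step is where the real content lies.

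One genuine discrepancy you should flag: the theorem as printed takes the quotient by $\iota_1 K_0(A)$, and in the paper's conventions (see the paragraph after \cref{def:category_of_endomorphisms}) $\iota_1$ is the inclusion via \emph{identity} endomorphisms, not zero endomorphisms. Your argument checks that $\chi$ vanishes on $\iota_0 K_0(A)$, and indeed $\chi$ does \emph{not} vanish on $\iota_1 K_0(A)$, since $\chi([P,\id_P]) = \det(1 - t\,\id_P) = (1-t)^{\operatorname{rank} P} \neq 1$. Almkvist's original statement is for the quotient by zero endomorphisms, so the $\iota_1$ here is almost certainly a typo for $\iota_0$ (consistent with the paper's own definition of $\widetilde K(\End(-))$ in \cref{cyclic_ktheory}). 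Your proof is aimed at the correct quotient; just be aware that you are silently correcting the statement rather than proving it as written.
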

\begin{cor}\label{preserves_surj}
	A surjective ring homomorphism $A \to B$ induces a surjection
\[\widetilde K_0(\End(A)) \to \widetilde K_0(\End(B)).\]
\end{cor}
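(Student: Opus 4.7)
The plan is to exploit Almkvist's theorem (\cref{char_is_inj}), which characterizes the image of the characteristic polynomial map inside the invertible power series ring. Specifically, I will use naturality of $\chi$ in the ring $A$ to reduce the surjectivity of $\widetilde K_0(\End(A)) \to \widetilde K_0(\End(B))$ to a lifting problem on rational power series, which is then elementary.

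First I would set up the naturality square
\[
\begin{tikzcd}
\widetilde K_0(\End(A)) \ar[r, hook, "\chi_A"] \ar[d] & (1+tA[[t]])^\times \ar[d] \\
\widetilde K_0(\End(B)) \ar[r, hook, "\chi_B"] & (1+tB[[t]])^\times
\end{tikzcd}
\]
in which both horizontal maps are injective by \cref{char_is_inj} and the right vertical map is induced coefficient-wise by $A \to B$. Given a class $[g] \in \widetilde K_0(\End(B))$, its image $\chi_B([g])$ is, by \cref{char_is_inj}, a rational function $p(t)/q(t)$ where $p(t), q(t) \in B[t]$ satisfy $p(0) = q(0) = 1$.

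The key step is then to lift $p$ and $q$: since $A \to B$ is a surjective ring homomorphism, I can choose polynomials $\widetilde p(t), \widetilde q(t) \in A[t]$ with $\widetilde p(0) = \widetilde q(0) = 1$ that map to $p(t)$ and $q(t)$ respectively (just lift each non-constant coefficient individually; the constant term $1$ lifts to itself). Because $\widetilde q(t)$ is a unit in $A[[t]]$, the quotient $\widetilde p(t)/\widetilde q(t)$ lies in $(1+tA[[t]])^\times$ and, being a quotient of polynomials, it is in the image of $\chi_A$ by \cref{char_is_inj}. So there exists a class $[f] \in \widetilde K_0(\End(A))$ with $\chi_A([f]) = \widetilde p(t)/\widetilde q(t)$.

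Finally, chasing $[f]$ around the square, its image in $(1+tB[[t]])^\times$ along the right-then-down route is $p(t)/q(t) = \chi_B([g])$, while along the down-then-right route it is $\chi_B$ of the image of $[f]$ in $\widetilde K_0(\End(B))$. Injectivity of $\chi_B$ then forces the image of $[f]$ to equal $[g]$, proving surjectivity. There is no real obstacle here beyond noting that polynomials with unit constant term lift along surjective ring maps; the entire content is concentrated in the injectivity and image identification supplied by \cref{char_is_inj}.
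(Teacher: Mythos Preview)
Your proof is correct and is exactly the intended argument: the paper states this as an immediate corollary of Almkvist's theorem (\cref{char_is_inj}) without giving further details, and the steps you spell out---naturality of $\chi$, lifting the numerator and denominator polynomials along the surjection, and invoking injectivity of $\chi_B$---are precisely how one extracts the corollary.
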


In order to state Hesselholt--Madsen's isomorphism $W(A) \cong \pi_0\TR(A)$ in a useful way, we will also need the Witt vectors $W_{\langle n \rangle} (A) = A^{\langle n \rangle}$ indexed on the truncation set
\[\langle n \rangle = \{ d \in \bbZ_+ : d \mid n \}.\]
The set $W_{\langle n \rangle} (A)$ is made into a commutative ring by declaring that the ghost coordinates map $w \colon W_{\langle n \rangle} (A) \arr A^{\langle n \rangle}$, defined as in \eqref{eq:witt_ghost_coords}, is a ring homomorphism.  The restriction maps $R^{n/d} \colon W_{\langle n \rangle} (A) \arr W_{\langle d \rangle} (A)$, which forget the elements indexed by divisors of $n$ that do not divide $d$, are also ring homomorphisms.  We make the identification $W(A) \cong \lim_n W_{\langle n \rangle} (A)$, where the limit is taken over the restriction maps.

We now recall the result of Hesselholt--Madsen, expressed in terms of the restriction system $\THH^{\bullet}(A)$ from \S\ref{subsec:restriction_systems}.

\begin{thm}\cite[Add. 3.3]{hesselholt_madsen}\label{hm_isom}
  There is a natural isomorphism of rings
  \[
  I_n \colon W_{\langle n \rangle} (A) \xrightarrow{\cong} \pi_0 \THH^{(n)}(A)^{C_n}
  \]
  defined by
  \[
  I_n (a) = \sum_{d \mid n} V^{d} (\Delta_{n/d} (a_d)).
  \]
\end{thm}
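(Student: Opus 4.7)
The plan is to verify that $I_n$ is well-defined, bijective, and a ring homomorphism by reducing everything to the ghost coordinates, which rigidify the ring structure on both sides.

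First I would define ghost coordinates on the $\THH$ side. For each divisor $d \mid n$, the inclusion of categorical fixed points $F^{n/d}\colon \pi_0 \THH^{(n)}(A)^{C_n} \to \pi_0 \THH^{(n)}(A)^{C_d}$ followed by the canonical identification $\pi_0\THH^{(n)}(A)^{C_d} \cong \pi_0\THH^{(d)}(A) \cong A$ (coming from the genuine restriction system structure of $\THH^\bullet(A)$ from \cref{thh_system}, together with the fact that for commutative $A$ one has $\pi_0\THH^{(d)}(A) \cong A$) produces a map $\widetilde{w}_d\colon \pi_0\THH^{(n)}(A)^{C_n} \to A$. The assembly $(\widetilde w_d)_{d\mid n}$ gives the ghost-coordinate map into $A^{\langle n \rangle}$. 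The ring structure on $\pi_0\THH^{(n)}(A)^{C_n}$ is inherited from the fact that $\THH$ of a commutative ring is a commutative ring spectrum, and the Frobenius $F^{n/d}$ is a ring homomorphism, so the ghost-coordinate map is a ring homomorphism into the product ring.

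Next I would verify the key identity $\widetilde w_d \circ I_n = w_d$, that is
\[
  F^{n/d}\!\left(\sum_{e \mid n} V^{e}\bigl(\Delta_{n/e}(a_e)\bigr)\right) \;=\; \sum_{e \mid d} e\, a_e^{d/e} \quad \in A \cong \pi_0\THH^{(d)}(A)^{C_d}/\text{canonical}.
\]
This is the main computational input. The Frobenius--Verschiebung double coset formula for the cyclic groups $C_n \supset C_d$, combined with the fact that the composite of diagonal and norm $F^{n/e}V^e\Delta_{n/e}$ acts by $e$-th power multiplied by $e$, produces the required ghost component $e\,a_e^{d/e}$ whenever $e \mid d$, and vanishes when $e \nmid d$ since the relevant double cosets are empty. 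This is the analogue for $\THH$ of the classical Witt polynomial identity, and is the heart of the theorem.

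With the ghost identity in hand, the ring homomorphism property of $I_n$ follows by the universal property of ghost coordinates: since both $W_{\langle n\rangle}(A)\to A^{\langle n\rangle}$ and $\pi_0\THH^{(n)}(A)^{C_n}\to A^{\langle n\rangle}$ are ring maps and the ghost map is injective (as $A$ has no $n$-torsion in the generic case, and the general case follows by naturality from $\bbZ[x_1,x_2,\ldots] \twoheadrightarrow A$ using \cref{preserves_surj}), we get the ring homomorphism property for free. Bijectivity I would prove by induction on the number of divisors of $n$: the restriction map $R\colon \pi_0\THH^{(n)}(A)^{C_n}\to \pi_0\THH^{(n/p)}(A)^{C_{n/p}}$ for a prime $p\mid n$ fits into a short exact sequence whose kernel is the image of the Verschiebung $V^p$, and this matches the analogous short exact sequence for Witt vectors $0\to A\xrightarrow{V^p}W_{\langle n\rangle}(A)\xrightarrow{R}W_{\langle n/p\rangle}(A)\to 0$.

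The main obstacle will be the ghost identity in the second step, since it requires a careful bookkeeping of how the HHR diagonal $\Delta_{n/e}$ (as in \cref{norm_diagonal} and \cref{rmk:unwinding_subdivision}) interacts with the transfer $V^e$ and the fixed-point inclusion $F^{n/d}$. The cleanest route is to express $\Delta$ and $V$ entirely in terms of the genuine restriction system structure on $\THH^\bullet(A)$ and then apply the tom Dieck additivity of $\pi_0$ of $C_n$-fixed points; the double-coset geometry then reduces to counting how $C_d$ acts on $C_n/C_e$-orbits inside the $e$-fold smash product.
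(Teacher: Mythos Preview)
The paper does not prove this theorem from scratch. It is a citation of Hesselholt--Madsen, and the paper's proof consists only of checking that the paper's conventions for $R^n$, $F^n$, and the ghost map $g_d$ (defined via the restriction system $\THH^{\bullet}(A)$ and \cref{unwinding}) agree with those in \cite{hesselholt_madsen}, so that the cited result transfers. Your proposal, by contrast, is an attempt to reprove the Hesselholt--Madsen theorem itself. That is a legitimate and more ambitious project, and your overall strategy (reduce to ghost coordinates, then use Frobenius--Verschiebung double coset relations and an inductive short exact sequence) is indeed close to what Hesselholt--Madsen do.

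However, there is a genuine error in your ghost coordinate construction. You claim a canonical identification $\pi_0\THH^{(n)}(A)^{C_d} \cong \pi_0\THH^{(d)}(A)$ coming from the restriction system structure of \cref{thh_system}. No such identification exists: the restriction system gives equivalences $\Phi^{C_{n/d}}\THH^{(n)}(A) \simeq \THH^{(d)}(A)$ on \emph{geometric} $C_{n/d}$-fixed points, not a statement about categorical $C_d$-fixed points. In particular, $\pi_0\THH^{(n)}(A)^{C_d}$ is typically much larger than $A$. The correct $d$th ghost coordinate, as the paper spells out just below the theorem statement, is
\[
g_d \colon \pi_0 \THH^{(n)}(A)^{C_n} \xrightarrow{R^{n/d}} \pi_0 \THH^{(d)}(A)^{C_d} \xrightarrow{F^{d}} \pi_0 \THH^{(d)}(A) \cong A,
\]
where $R^{n/d}$ is built from the canonical map $\kappa$ to geometric fixed points composed with the restriction system structure map, as in \eqref{eq:tr_in_general}. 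Your map $F^{n/d}$ followed by a nonexistent identification cannot substitute for this. Once you replace your $\widetilde w_d$ by the correct $g_d$, the double coset computation you outline (and the inductive bijectivity argument via the $R$/$V$ short exact sequence) is essentially the Hesselholt--Madsen argument, but you would need to track the interaction of $V$, $\Delta$, and $F$ with the restriction maps $R$ as well, which adds a layer of bookkeeping you have not addressed.
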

Here
  \[
  \Delta_{r} \colon A \cong \pi_0 \THH(A) \arr \pi_0\THH^{(r)}(A)^{C_r}
  \]
  is the duplication map induced by the map $a \mapsto a^{\sma r}$ on 0-skeleta,
  and
  \[
  V^{d} \colon \pi_0\THH^{(r)}(A)^{C_r} \arr \pi_0 \THH^{(dr)}(A)^{C_{dr}}
  \]
  is the Verschiebung map on $\THH$, defined in terms of the equivariant transfer for the subgroup $C_r < C_{dr}$.  The isomorphism $I$ respects the ghost coordinate maps on its doman and codomain, in the sense that $g_d \circ I_n = w_d$ for every $d \mid n$, where
  \[
  g_d \colon  \pi_0 \THH^{(n)}(A)^{C_n} \xarr{R^{n/d}} \pi_0 \THH^{(d)}(A)^{C_d} \xarr{F^{d}} \pi_0 \THH^{(d)}(A) \cong A
  \]
  is the $d$-th ghost coordinate map on $\pi_0 \THH^{(n)}(A)^{C_n}$.
\begin{proof}
Since our conventions are different from those of Hesselholt--Madsen, it's worth saying something about the proof.  The main point is that under the equivalence
\[\THH^{(n)}(A) \simeq \THH(A)\]
 of \cref{unwinding} (see also \cref{rmk:unwinding_subdivision}), our definitions of the restriction map $R^{n}$, in terms of the restriction system structure map, and the Frobenius map $F^{n}$, in terms of the inclusion of fixed-points, agree with Displays (1) and (16) of \cite{hesselholt_madsen}.  It follows that on the $n$-th component $\THH^{(n)}(A)^{C_n}$ of the restriction system, our ghost map $g$ (see \cref{defn:ghost_map}) agrees with theirs (denoted by $\overline{w}$).
The statement of the theorem then follows as in \cite[Add. 3.3]{hesselholt_madsen}.
\end{proof}

\begin{lem}\label{ghosts_agree}
There is a natural isomorphism of rings $I\colon W(A) \oarr{\cong} \pi_0\TR(A)$ that respects the ghost coordinate maps, meaning that $g \circ I = w$.
\end{lem}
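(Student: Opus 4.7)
The plan is to assemble the Hesselholt--Madsen isomorphisms $I_n$ of \cref{hm_isom} into a compatible system over the divisibility poset $\bbI$ and pass to the homotopy limit defining $\TR(A)$. The first step is to verify that for every $d \mid n$ the square
\[
\begin{tikzcd}
W_{\langle n \rangle}(A) \ar[r, "I_n"] \ar[d, "R^{n/d}"'] & \pi_0 \THH^{(n)}(A)^{C_n} \ar[d, "R^{n/d}"] \\
W_{\langle d \rangle}(A) \ar[r, "I_d"] & \pi_0 \THH^{(d)}(A)^{C_d}
\end{tikzcd}
\]
commutes, where the left vertical map is truncation of Witt vectors and the right vertical map is the restriction in the cyclotomic structure. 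Using the explicit formula $I_n(a) = \sum_{e \mid n} V^e(\Delta_{n/e}(a_e))$, this reduces to the commutation relations $R^{n/d} \circ V^e = V^{e} \circ R^{(n/e)/(d/e)}$ when $e \mid d$, and $R^{n/d} \circ V^e = 0$ when $e \nmid d$. Both are formal consequences of the interaction of the equivariant transfer with geometric fixed points, combined with the description of the restriction system structure of $\THH^{\bullet}$ from \cref{thh_system}.

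Second, I would pass to inverse limits. The assembled $I_n$ give an isomorphism $W(A) = \lim_n W_{\langle n \rangle}(A) \xrightarrow{\cong} \lim_n \pi_0 \THH^{(n)}(A)^{C_n}$, and composing with the natural map $\pi_0 \TR(A) \to \lim_n \pi_0 \THH^{(n)}(A)^{C_n}$ coming from the Milnor short exact sequence
\[
0 \to {\lim}^1_n \pi_1 \THH^{(n)}(A)^{C_n} \to \pi_0 \TR(A) \to \lim_n \pi_0 \THH^{(n)}(A)^{C_n} \to 0
\]
produces the desired map $I$. Naturality in $A$ is inherited from naturality of each $I_n$, and the ring homomorphism property survives passage to the limit. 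The ghost identity $g \circ I = w$ is then checked componentwise: $g_n \circ I$ factors through the projection to $\pi_0 \THH^{(n)}(A)^{C_n}$ and reduces to $g_n \circ I_n = w_n$ from \cref{hm_isom}.

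The main obstacle is to show that the canonical map $\pi_0 \TR(A) \to \lim_n \pi_0 \THH^{(n)}(A)^{C_n}$ is an isomorphism, i.e.\ that the ${\lim}^1$ term in the Milnor sequence vanishes. Surjectivity of the restriction maps on the $\pi_0$ tower (matching the surjectivity of Witt truncation) takes care of the $\pi_0$-side Mittag--Leffler condition, but the vanishing on $\pi_1$ requires more. I would address this by appealing to Hesselholt--Madsen's structural computations of $\pi_1 \THH^{(n)}(A)^{C_n}$, which imply the required Mittag--Leffler condition; alternatively, one can construct $I$ directly on spectra by realizing the Witt coordinate $(a_n)$ as $\sum_n V^n \Delta_n(a_n)$ using the Verschiebung and diagonal operators that lift to $\TR(A)$ itself, bypassing the ${\lim}^1$ issue entirely.
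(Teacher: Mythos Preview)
Your proposal is correct and follows essentially the same route as the paper: define $I$ as $\lim_n I_n$, then eliminate the $\lim^1 \pi_1$ obstruction in the Milnor sequence by invoking Hesselholt--Madsen's surjectivity result for the restriction maps (the paper cites specifically the surjectivity of $\overline{R}$ in the proof of \cite[Prop.~3.3]{hesselholt_madsen}, extended from the $p$-typical to the global case via their discussion on p.~55). Your additional details---the explicit compatibility square for $I_n$ with restriction, and the componentwise verification of $g \circ I = w$---are left implicit in the paper but are exactly what is needed; your alternative spectrum-level construction via $\sum_n V^n \Delta_n(a_n)$ is a reasonable backup but is not the approach the paper takes.
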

\begin{proof}
	The map we want is essentially $I = \lim_n I_n$, the limit over the restriction maps $R$ of the isomorphisms $I_n$. Technically, this produces an isomorphism to $\lim_n \pi_0\THH(A)^{C_n}$, but it lifts to an isomorphism to $\pi_0 \lim_n\THH(A)^{C_n}$ because $\lim^1 \pi_1\THH(A)^{C_n} = 0$. This last claim follows from the surjectivity of $\overline{R}$ in the proof of \cite[Prop 3.3]{hesselholt_madsen}, which generalizes from the $p$-typical case by the discussion on \cite[p. 55]{hesselholt_madsen}.
\end{proof}

Let $f\colon P \to P$ be an endomorphism of a finitely generated projective $A$-module, with associated $K$-theory class $[f] \in K_0(\End(A))$. By \cref{thm:main_identify_pi_0_TR}, the image of $[f]$ under the $\TR$-trace and the ghost map is
\[
(\tr(f), \tr(f^{\circ 2}), \tr(f^{\circ 3}), \dots) \in \prod_{n \geq 1} \pi_0 \THH^{(n)}(A;A) \cong \prod_{n \geq 1} A.
\]
As explained in \cite{hesselholt_survey,campbell_witt}, along the isomorphism $W(A) \cong (1+tA[[t]])^\times$ of \eqref{eq:witt_iso_power_series},
the ghost map of $W(A)$ is identified with the negative logarithmic derivative
\[ \xymatrix @C=4em{ (1+tA[[t]])^\times \ar[r]^-{-t\frac{d}{dt} \log} & tA[[t]] \cong \displaystyle\prod_{n \geq 1} A. } \]
\begin{lem}\label{derivative_of_chi}
	The element of $\prod_{n \geq 1} A$ given by the iterated traces $(\tr(f^{\circ n}))$ is the negative logarithmic derivative of the characteristic polynomial
\[ \chi_f(t) = \det(\id - tf) \in (1+tA[[t]])^\times. \]
\end{lem}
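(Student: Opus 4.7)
The plan is to reduce the statement to a universal matrix identity and then use the formal identity $\log\det = \tr\log$ for matrices close to the identity.

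First I would reduce to the case of a free module and a universal matrix. Since $A$ is commutative and $P$ is perfect, we may choose $Q$ with $P \oplus Q \cong A^n$; extending $f$ by zero on $Q$ affects neither $\tr(f^{\circ k})$ nor $\det(\id-tf)$, so both sides of the claim are unchanged. Hence we may assume $P = A^n$ and that $f$ is an $n \times n$ matrix over $A$. Both sides are natural in $A$ and polynomial in the entries of $f$, so it suffices to check the identity in $\bbZ[x_{ij}\,:\,1\leq i,j \leq n][[t]]$, a torsion-free ring, which in turn reduces to verifying it after base change to $\bbC$.

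Second, I would set $M = \id - tf \in M_n(A[[t]])$ and work with the formal logarithm
\[
\log(\id + U) \;=\; \sum_{k\geq 1} \frac{(-1)^{k+1}}{k}\, U^k,
\]
defined for $U \in M_n(tA[[t]])$, together with the analogous scalar formal series on $1 + tA[[t]]$. The key lemma I would invoke is the formal identity
\[
\log\det(\id + U) \;=\; \tr\log(\id + U) \qquad \text{in } tA[[t]].
\]
Over $\bbC$, this is the standard fact that $\det \exp = \exp\tr$ specialised near the identity, and both sides are universal polynomial expressions (at each degree of $t$) in the entries of $U$ with rational coefficients, so it suffices to verify it over $\bbC$.

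Third, I would simply compute. Applied to $U = -tf$, the identity yields
\[
\log\det(\id - tf) \;=\; \tr\log(\id - tf) \;=\; -\sum_{k\geq 1} \frac{\tr(f^{\circ k})}{k}\, t^k,
\]
and applying $-t\tfrac{d}{dt}$ then gives $\sum_{k\geq 1} \tr(f^{\circ k})\, t^k$, which is exactly the element of $\prod_n A$ recorded by the ghost coordinates. The main obstacle is really just ensuring that the formal identity $\log\det = \tr\log$ is correctly set up in $tA[[t]]$ (no issues with convergence or divided powers), which is handled cleanly by the universal reduction to $\bbC$. Everything else is bookkeeping.
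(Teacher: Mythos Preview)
Your proof is correct. Both arguments reduce to a universal case over a polynomial ring and then verify the identity after passage to a field, but the two routes differ in the endgame. The paper extends from algebraically closed fields (where the identity follows from the eigenvalue factorization $\det(\id - tf) = \prod_i (1 - \lambda_i t)$ and $\tr(f^{\circ k}) = \sum_i \lambda_i^k$) to integral domains via the fraction field, and then to an arbitrary commutative ring by choosing a surjection from a polynomial ring over $\bbZ$ and invoking the surjectivity of $\widetilde K_0(\End(-))$ from Almkvist's theorem. Your reduction to free modules via a complement $Q$ is more elementary and sidesteps any appeal to Almkvist, and your verification via the formal identity $\log\det = \tr\log$ over $\bbC$ replaces the eigenvalue argument. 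The trade-off is that the paper's eigenvalue computation is perhaps more transparent, while your approach packages the combinatorics into a single well-known matrix identity and avoids an external input.
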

\noindent In other words, the following diagram commutes.
\begin{equation}\label{eq:derivative_of_chi} \xymatrix @R=1.5em {
	& \widetilde K_0 (\End (A)) \ar[dl]_-{\trc} \ar[dr]^-\chi & \\
	\pi_0 \TR(A) \ar[d]_-{\prod g_n} && (1+tA[[t]])^\times \ar[d]^-{-t\frac{d}{dt} \log} \\
	\prod_{n=1}^\infty A \ar@{<->}[rr] && tA[[t]]
} \end{equation}
\begin{proof}
	When $A$ is an algebraically closed field, this follows by induction on the eigenvalues. It therefore holds for any integral domain, by passing to the algebraic closure of the fraction field. In particular, it holds for any polynomial ring over $\bbZ$. If $A$ is a general commutative ring, then there is a surjection from a polynomial ring to $A$. Using \cref{preserves_surj}, the statement therefore holds for $A$ as well. See \cite[4.24]{campbell_witt} for a different derivation.
\end{proof}

\begin{thm} \label{tr=char}
	Let $A$ be a commutative ring. Then the following triangle commutes.
	\begin{equation}\label{eq:tr=chr} \xymatrix{
		& \widetilde K_0 (\End (A)) \ar[dl]_-{\trc} \ar[dr]^-\chi & \\
		\pi_0 \TR(A) \ar[r]^-\cong_-{I^{-1}} &  W(A) \ar[r]^-\cong_-{\eqref{eq:witt_iso_power_series}} & (1+tA[[t]])^\times
	} \end{equation}
	In other words, as invariants underneath the $K$-theory of endomorphisms, the trace to $\pi_0 \TR(A)$ is isomorphic to the characteristic polynomial.
\end{thm}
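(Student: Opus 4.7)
The plan is to reduce the statement to an equality of ghost coordinates, which has already been done in \cref{thm:main_identify_pi_0_TR} and \cref{derivative_of_chi}, and then use the injectivity of the ghost map for torsion-free rings plus naturality to handle general commutative $A$.

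First I would combine \cref{thm:main_identify_pi_0_TR} (applied with $M = A$, so that $M^{\sma_A n} \simeq A$) with \cref{defn:ghost_map} to compute, for any endomorphism $f \colon P \to P$ of a perfect $A$-module, the $n$th ghost coordinate of the class $\trc([f]) \in \pi_0 \TR(A)$. The conclusion is that
\[
g_n(\trc([f])) \;=\; \tr(f^{\circ n}) \;\in\; \pi_0 \THH^{(n)}(A) \cong A,
\]
so the total ghost map $g \circ \trc \colon \widetilde K_0(\End(A)) \to A^{\bbZ_+}$ sends $[f]$ to the sequence of iterated traces $(\tr(f^{\circ n}))_{n \geq 1}$.

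Next I would compute the ghost coordinates of $\chi([f])$. By \cref{derivative_of_chi} (combined with the fact that, under the isomorphism $W(A) \cong (1 + tA[[t]])^\times$ of \eqref{eq:witt_iso_power_series}, the Witt ghost map $w$ corresponds to $-t \tfrac{d}{dt}\log$), the composite
\[
\widetilde K_0(\End(A)) \xrightarrow{\chi} (1 + tA[[t]])^\times \cong W(A) \xrightarrow{w} A^{\bbZ_+}
\]
also sends $[f]$ to $(\tr(f^{\circ n}))_{n \geq 1}$. By \cref{ghosts_agree}, the isomorphism $I \colon W(A) \xrightarrow{\cong} \pi_0 \TR(A)$ intertwines the Witt ghost map $w$ with the $\TR$ ghost map $g$. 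Therefore the two composites $\widetilde K_0(\End(A)) \to W(A)$ of interest agree after applying $w$.

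To finish, when $A$ is torsion-free the Witt ghost map $w \colon W(A) \to A^{\bbZ_+}$ is injective, so the two composites into $W(A)$ are equal and the triangle \eqref{eq:tr=chr} commutes. For a general commutative ring $A$, I would choose a surjection $B \twoheadrightarrow A$ from a polynomial ring $B$ over $\bbZ$ (for instance $B = \bbZ[x_a : a \in A]$). By \cref{preserves_surj}, the induced map $\widetilde K_0(\End(B)) \twoheadrightarrow \widetilde K_0(\End(A))$ is surjective. Since $B$ is torsion-free the triangle commutes over $B$, and naturality of $\trc$, $\chi$, and $I$ in the ring $A$ transports this commutativity to $A$. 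The only real subtlety is keeping track of the identifications in \cref{hm_isom}--\cref{ghosts_agree} (so that the $\TR$ ghost map is genuinely the Witt ghost map under $I$) and in \cref{derivative_of_chi} (matching $-t\frac{d}{dt}\log$ with $w$); the rest is bookkeeping.
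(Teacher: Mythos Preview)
Your proposal is correct and follows essentially the same argument as the paper: compare ghost coordinates of $\trc$ and $\chi$ (using \cref{thm:main_identify_pi_0_TR} and \cref{derivative_of_chi}), invoke \cref{ghosts_agree} to match the two ghost maps, deduce the result for torsion-free $A$ by injectivity of $w$, and then pass to general $A$ via a surjection from a torsion-free ring together with \cref{preserves_surj}. The paper presents this as a single pasted diagram rather than the sequence of steps you give, but the logic is identical.
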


\begin{proof}
Pasting the  diagrams in \cref{eq:derivative_of_chi,eq:tr=chr}  together gives the diagram
	\[ \xymatrix{
		& \widetilde K_0 (\End (A)) \ar[dl]_-{\trc} \ar[dr]^-\chi & \\
		\pi_0 \TR(A) \ar[d]_-{\prod g_n} & \ar[l]_-\cong^-I W(A) \ar[dl]^-{\prod w_n} \ar[r]^-\cong & (1+tA[[t]])^\times \ar[d]^-{-t\frac{d}{dt} \log} \\
		\prod_{n=1}^\infty A \ar@{<->}[rr] && tA[[t]].
	} \]
	The maps along the outside edge commute by the previous discussion, as does the trapezoid and the small triangle on the left (\cref{ghosts_agree}). If $A$ is torsion-free, the two vertical maps are injective and therefore the triangle at the top commutes as well.

	To extend to the case where $A$ is any commutative ring we use a standard trick (see e.g. \cite[p. 6]{grayson_groth}). Pick a surjective ring homomorphism $A' \to A$ with $A'$ torsion-free, and observe that the diagram is natural in ring homomorphisms. This gives a map from the diagram for $A'$ to the diagram for $A$, that is surjective on the terms
\[\pi_0\TR(A) \cong W(A) \cong (1+tA[[t]])^\times\]
 and on $\widetilde K_0 (\End (A))$ by \cref{preserves_surj}. Therefore the desired triangle for $A$ can be deduced from the same triangle for $A'$.
\end{proof}

\begin{cor}
	Let $A$ be a commutative ring. Then the TR-trace $\widetilde K (\End (A)) \to \TR(A)$ is injective on $\pi_0$.
\end{cor}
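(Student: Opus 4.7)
The plan is to read off the corollary directly from the commutative triangle in \cref{tr=char} together with Almkvist's injectivity result. Concretely, \cref{tr=char} produces a commuting diagram
\[
\xymatrix{
& \widetilde K_0(\End(A)) \ar[dl]_-{\trc} \ar[dr]^-{\chi} & \\
\pi_0 \TR(A) \ar[r]^-\cong_-{I^{-1}} & W(A) \ar[r]^-\cong & (1+tA[[t]])^\times,
}
\]
whose right-hand leg is the characteristic polynomial map and whose bottom horizontal composite is an isomorphism of rings. By \cref{char_is_inj}, the right-hand leg $\chi$ is injective. Since the bottom composite $I^{-1}$ followed by the isomorphism \eqref{eq:witt_iso_power_series} is a bijection, injectivity of $\chi$ forces injectivity of the $\TR$-trace on $\pi_0$.

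So the proof is essentially a one-line invocation: it suffices to note that injectivity is preserved under post-composition with isomorphisms, so the equality of the two maps out of $\widetilde K_0(\End(A))$ in \cref{tr=char} transports injectivity of $\chi$ (\cref{char_is_inj}) to injectivity of $\trc$. There is no obstacle to speak of here; the substantive work has already been done in proving \cref{tr=char} and in recalling Almkvist's theorem. If desired, one can make the statement even more self-contained by noting that the negative logarithmic derivative in \eqref{eq:derivative_of_chi} shows that the ghost coordinates $(\tr(f^{\circ n}))_{n\geq 1}$ already determine $\chi_f(t)$ whenever $A$ is torsion-free, and then reducing the general case as in the proof of \cref{tr=char}, but this is unnecessary given the theorem already in hand.
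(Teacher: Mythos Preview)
Your proposal is correct and matches the paper's intended reasoning: the corollary is stated without proof precisely because it follows immediately from the commuting triangle of \cref{tr=char} together with Almkvist's injectivity theorem (\cref{char_is_inj}). Your additional remark about the torsion-free reduction is, as you note, unnecessary here.
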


\subsection{The case of spherical group rings and fixed-point theory}

As in the case of Eilenberg--MacLane spectra, the computation of $\pi_0 \TR(A)$ for $A = \Sigma^{\infty}_{+}G$ a spherical group ring hinges on the interplay between the ghost coordinates and a set of splitting coordinates for $\pi_0 \TR(A)$. However the splitting here arises for a very different reason, namely the tom Dieck splitting from equivariant stable homotopy theory.

\begin{prop}\label{naive_tr}
	If $X_\bullet$ is a naive restriction system of spaces, then the $\TR$ of its suspension genuine restriction system $\Sigma^\infty X_\bullet$ from \cref{suspend_naive_system_of_spaces} has a tom~Dieck splitting
	\[ \TR(\Sigma^\infty X_\bullet) \simeq \prod_{j \geq 1} (\Sigma^\infty X_j)_{hC_j}. \]
\end{prop}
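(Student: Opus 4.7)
The plan is to combine the tom~Dieck splitting with a cofinality argument in $\bbI$. First, since $X_\bullet$ is a naive restriction system of spaces, the structure maps $c_r\colon X_{rs}^{C_r} \xrightarrow{\cong} X_s$ are isomorphisms. For each $n$, fibrantly replace $\Sigma^\infty X_n$ as an orthogonal $C_n$-spectrum and apply the tom~Dieck splitting to its genuine fixed points:
\[
(\Sigma^\infty X_n)^{C_n} \simeq \bigvee_{d \mid n} \Sigma^\infty\bigl((EC_{n/d})_+ \sma_{C_{n/d}} X_n^{C_d}\bigr).
\]
Using $X_n^{C_d} \cong X_{n/d}$ and reindexing with $j = n/d$ gives
\[
(\Sigma^\infty X_n)^{C_n} \simeq \bigvee_{j \mid n} (\Sigma^\infty X_j)_{hC_j}.
\]

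Second, I would identify the restriction maps of the diagram defining $\TR$. For $m \mid n$ with $n = rm$, the map in \eqref{eq:tr_in_general} is the composite
\[
(\Sigma^\infty X_n)^{C_n} \xrightarrow{\kappa^{C_m}} (\Phi^{C_r}\Sigma^\infty X_n)^{C_m} \xrightarrow{(c_r)^{C_m}} (\Sigma^\infty X_m)^{C_m}.
\]
The key input is that on a suspension spectrum the map $\kappa\colon (\Sigma^\infty X_n)^{C_r} \to \Phi^{C_r}\Sigma^\infty X_n \cong \Sigma^\infty X_n^{C_r}$ is, up to tom~Dieck, the projection onto the summand indexed by $H = C_r$, a fact closely related to the rigidity of geometric fixed points on suspension spectra (\cref{norm_on_suspension_spectra}). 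Combining this with the iso $c_r$ and taking $C_m$-fixed points, I would show that the restriction map is the wedge projection $\bigvee_{j \mid n}(\Sigma^\infty X_j)_{hC_j} \to \bigvee_{j \mid m}(\Sigma^\infty X_j)_{hC_j}$ that is the identity on summands with $j \mid m$ and collapses the rest.

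Third, the homotopy limit is the product. For each fixed $j \geq 1$, the full subcategory of $\bbI$ on objects $n$ with $j \mid n$ is cofinal (any finite collection is bounded below by the least common multiple), and on this subcategory the $j$-th projection of the diagram is constant at $(\Sigma^\infty X_j)_{hC_j}$ with identity structure maps. The $\holim$ therefore splits as the product of the individual contributions, yielding $\prod_{j \geq 1}(\Sigma^\infty X_j)_{hC_j}$.

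The hard part will be Step 2: making the identification of the restriction maps precise on the level of cofibrant-fibrant models, compatibly with the fibrant replacement required by \cref{tr_in_general}. In particular one must track the residual $C_m$-action on the tom~Dieck decomposition of $(\Sigma^\infty X_n)^{C_r}$ and verify that $\kappa^{C_m}$ really is the projection onto the $H = C_r$ summand before applying the iso $c_r^{C_m}$. Once this bookkeeping is settled, the cofinality argument makes the rest formal.
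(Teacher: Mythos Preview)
Your outline matches the paper's: tom~Dieck split each $(\Sigma^\infty X_n)^{C_n}$, identify the restriction maps with projections onto divisor-indexed summands, then read off the homotopy limit. You have also correctly located the crux at Step~2.

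The gap is that Step~2, as you have framed it, only identifies each restriction map \emph{in the homotopy category}. The paper makes this point explicitly: the tom~Dieck splitting and the identification of the restriction maps as projections ``only apply in the homotopy category, so they do not directly imply anything about the homotopy limit defining $\TR$.'' Your Step~3 then assumes that the $j$th projection of the $\bbI$-diagram is literally constant with identity structure maps, but the tom~Dieck splittings at different levels $n$ are not strictly compatible, so there is no well-defined ``$j$th summand'' of the diagram to project onto. The cofinality argument would need an honest map of $\bbI$-diagrams, not a family of homotopy classes.

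The paper's fix is not to try to split the diagram, but to \emph{project out} of it. Fix once and for all a point-set representative $t_j\colon (\Sigma^\infty X_j)^{C_j} \to (\Sigma^\infty X_j)_{hC_j}$ of the top tom~Dieck summand, for each $j$. For $j \mid n$, the composite
\[
(\Sigma^\infty X_n)^{C_n} = \bigl((\Sigma^\infty X_n)^{C_{n/j}}\bigr)^{C_j} \xrightarrow{\ \kappa\ } (\Sigma^\infty X_j)^{C_j} \xrightarrow{\ t_j\ } (\Sigma^\infty X_j)_{hC_j}
\]
is a genuine map of $\bbI$-diagrams to the constant diagram (supported on multiples of $j$), because the first arrow is exactly the structural restriction map of the system and $t_j$ does not depend on $n$. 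Assembling over $j$ gives a map of $\bbI$-diagrams to a product of constant diagrams; by tom~Dieck it is a levelwise equivalence, hence induces the desired equivalence on homotopy limits. No cofinality argument is needed. This is the bookkeeping you were anticipating in your last paragraph, but with the order of operations reversed: choose the splitting only at the target level $j$, and let naturality of $\kappa$ do the rest.
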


\begin{proof}
	We interpret the naive restriction system as a $\bbZ$-space $X$ with no free orbits, so that $X_n = X^{n\bbZ}$. Under the identification $C_n = \bbZ/n\bbZ$, the generator of the cyclic group $C_n$ acts on $X_n = X^{n\bbZ}$ by $1 \in \bbZ$, and the generator of the subgroup $C_{n/j} < C_{n}$ acts by $j \in \bbZ$ for each positive divisor $j \mid n$.

	The proposition is a consequence of the classical tom~Dieck splitting theorem, which tells us that the derived fixed point spectrum of the suspension spectrum of $X_n$ splits as a wedge of homotopy orbits
	\[
	(\Sigma^{\infty} X_n)^{C_n} = (\Sigma^{\infty} X^{n\bbZ})^{C_n} \simeq \bigvee_{j \mid n} \Sigma^{\infty} (X^{j \bbZ})_{h C_{j}} = \bigvee_{j \mid n} \Sigma^{\infty} (X_j)_{h C_{j}}.
	\]
	Here all of the fixed points labeled by cyclic groups $C_i$ are genuine fixed points, i.e. they are implicitly right-derived. We also need the standard fact that the restriction $(\Sigma^\infty X^{n\bbZ})^{C_n} \to (\Sigma^\infty X^{k\bbZ})^{C_k}$ for $k | n$ corresponds along this splitting to the map that restricts to the summands where $j | k$.

	These statements only apply in the homotopy category, so they do not directly imply anything about the homotopy limit defining $\TR$. However, the map from $j$th summand is described more concretely as a transfer followed by a splitting of the restriction map:
	\[
		\Sigma^{\infty} (X^{j \bbZ})_{h C_{j}}
		\xarr{trf} \left(\Sigma^{\infty} X^{j \bbZ}\right)^{C_{j}}
		\arr \left(\left(\Sigma^{\infty} X^{n\bbZ}\right)^{C_{n/j}}\right)^{C_{j}}
		= (\Sigma^{\infty} X^{n\bbZ})^{C_n}.
	\]
	Therefore, if we pick a representative for the splitting $t_n\colon (\Sigma^{\infty} X_{n})^{C_n} \to (\Sigma^{\infty} X_{n})_{h C_n}$ for each $n \geq 1$, the $j$th term of the splitting in the homotopy category is given by the formula
	\[
	(\Sigma^{\infty} X_{n})^{C_n}
	= \left(\left(\Sigma^{\infty} X_n\right)^{C_{n/j}}\right)^{C_{j}}
	\xarr{\kappa} \left(\Sigma^{\infty} X_j \right)^{C_{j}}
	\xarr{t_j} \Sigma^{\infty} (X_j)_{h C_{j}}.
	\]
	This defines a map of homotopy limit systems from $(\Sigma^\infty X_\bullet)^{C_\bullet}$ to a product of homotopy limit systems over $j \geq 1$, the $j$th system having $n$th term $\Sigma^{\infty} (X_j)_{h C_{j}}$ when $j | n$ and $*$ when $j \nmid n$, with all identity maps between them. On each term this map of homotopy limit systems is an equivalence by the above discussion, so it induces an equivalence of homotopy limits. This gives the desired tom Dieck splitting of $\TR$.
      \end{proof}

      \begin{rmk}
	There is a generalization of the tom Dieck splitting theorem due to Gaunce Lewis \cite{lewis_splitting} that applies to the orthogonal $C_r$-spectrum $\bbP\Sigma^\infty K\left(\End^{(r)}\left(\tw{\cC}{L}{R}{\cD}\right)\right)$. We could use this together with the argument in \cref{naive_tr} below to identify the middle term of \eqref{map_of_trs} as the infinite product
	\begin{equation}\label{eq:TR_trace_out_of_product}
	\prod_{n \geq 1} K\left(\End^{(n)}\left(\tw{\cC}{L}{R}{\cD}\right)\right)_{hC_n}.
	\end{equation}
	The inclusion of underived $\TR$ is then just the first term in this product. Note that the map to the $\TR$ system described in \cref{naive_tr} does not respect this splitting.
\end{rmk}

\begin{example}\label{naive_tr_ghost}
	Along the tom~Dieck splitting, the ghost map is a map of products
	\[ g \colon \prod_{j \geq 1} (\Sigma^\infty X_j)_{hC_j} \arr \prod_{n \geq 1} \Sigma^\infty X_n. \]
	To compute its $n$th coordinate we use the product system from the proof of \cref{naive_tr}:
	\[ \xymatrix @R=1.5em{
		\TR(\Sigma^\infty X_\bullet) \ar[d]^-\simeq \ar[r] & (\Sigma^\infty X_n)^{C_n} \ar[d]^-\simeq \ar[r]^-F & \Sigma^\infty X_n \\
		\displaystyle\prod_{j \geq 1} (\Sigma^\infty X_j)_{hC_j} \ar[r] & \displaystyle\prod_{j | n} (\Sigma^\infty X_j)_{hC_j} \ar@{-->}[ur]
	} \]
	We then use \cite[4.4]{malkiewich_tc_ds1} to compute the dashed composite as the sum over all $j | n$ of the transfers and inclusions
	\begin{equation}\label{eq:transfer_inclusion}
		(\Sigma^{\infty} X_j)_{h C_{j}}
		\xarr{\mathrm{trf}} \Sigma^{\infty} X_j
		\cong \Sigma^\infty (X_n)^{C_{n/j}}
		\arr \Sigma^\infty X_n.
	\end{equation}
	On $\pi_0$, the composite takes every path component of $(X_j)_{h C_{j}}$ to the weighted sum of its preimage components in $X_j$ (weighted evenly so that the total weight is $j$), then maps forward to the corresponding components of $X_n$.  Here is a useful consequence of this description in the case $j = n$.
\end{example}

\begin{prop}\label{ghost_injectivity}
	For a suspension spectrum restriction system, the ghost map on $\pi_0$
	\[ g = (g_n) \colon \pi_0\TR(\Sigma^\infty X_\bullet) \arr \prod_{n \geq 1} \pi_0 \Sigma^{\infty}_{+} X_n \cong  \prod_{n \geq 1} H_0(X_n) \]
is injective.
\end{prop}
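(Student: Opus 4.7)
The plan is to exploit the tom~Dieck splitting from \cref{naive_tr} and the explicit description of the ghost map in \cref{naive_tr_ghost} to express $g$ as a ``lower triangular'' map with respect to divisibility of indices, whose diagonal entries are injective.

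First, taking $\pi_0$ in the tom~Dieck splitting gives an isomorphism
\[ \pi_0\TR(\Sigma^\infty X_\bullet) \cong \prod_{j \geq 1} \pi_0(\Sigma^\infty X_j)_{hC_j}. \]
By \cref{naive_tr_ghost}, under this identification the $n$-th component of $g$ factors through the projection onto the subproduct indexed by $j \mid n$, and on each $j$-th factor is the composition of the transfer $\mathrm{tr}_j \colon \pi_0(\Sigma^\infty X_j)_{hC_j} \to \pi_0\Sigma^\infty X_j$ with the map $\iota_{j,n} \colon \pi_0 \Sigma^\infty X_j \cong \pi_0 \Sigma^\infty (X_n)^{C_{n/j}} \to \pi_0 \Sigma^\infty X_n$. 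Writing $a_\bullet = (a_j) \in \prod_j \pi_0(\Sigma^\infty X_j)_{hC_j}$, this gives
\[ g_n(a_\bullet) \; = \; \sum_{j \mid n} \iota_{j,n} \circ \mathrm{tr}_j(a_j), \]
with no contribution whatsoever from factors whose index does not divide $n$. In particular, the ``diagonal'' entry at $n = j$ is just $\mathrm{tr}_j$, since the inclusion $X_j = (X_j)^{C_1} \to X_j$ is the identity.

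The key step is to verify that each diagonal transfer $\mathrm{tr}_j$ is injective. The group $\pi_0 \Sigma^\infty X_j \cong \widetilde H_0(X_j)$ is the free abelian group on the non-basepoint components of $X_j$, and the $C_j$-action permutes these components. Decomposing according to orbits, we get a $C_j$-equivariant splitting
\[ \widetilde H_0(X_j) \cong \bigoplus_\alpha \bbZ[C_j/H_\alpha], \]
where $\alpha$ ranges over the orbits and $H_\alpha$ is the stabilizer. Taking coinvariants of each summand gives $\bbZ$, and the transfer on each summand is the norm map $\bbZ \to \bbZ[C_j/H_\alpha]$ sending $1 \mapsto \sum_{c \in C_j/H_\alpha} c$. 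This is visibly injective on each summand, hence $\mathrm{tr}_j$ is injective.

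Injectivity of $g$ on $\pi_0$ now follows by a minimality argument on the indexing set. Suppose $a_\bullet$ lies in the kernel of $g$ but is nonzero, and pick the smallest $j$ with $a_j \neq 0$. For every proper divisor $j' \mid j$ with $j' < j$ we have $a_{j'} = 0$ by minimality, so
\[ 0 \;=\; g_j(a_\bullet) \;=\; \sum_{j' \mid j} \iota_{j',j} \circ \mathrm{tr}_{j'}(a_{j'}) \;=\; \mathrm{tr}_j(a_j), \]
forcing $a_j = 0$ by the injectivity established above, a contradiction. The only real obstacle is the bookkeeping around \cref{naive_tr_ghost}; once the triangular description is in hand, the argument is purely algebraic.
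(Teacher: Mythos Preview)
Your proposal is correct and follows essentially the same approach as the paper: the paper does not write out a separate proof but presents the proposition as ``a useful consequence of this description in the case $j = n$,'' meaning exactly the lower-triangular structure from \cref{naive_tr_ghost} together with injectivity of the diagonal transfer. One small imprecision: the paper's description of the transfer on $\pi_0$ sends a component with stabilizer $H_\alpha$ to the orbit sum weighted by $|H_\alpha|$ (total weight $j$), so your formula $1 \mapsto \sum_{c \in C_j/H_\alpha} c$ is off by a factor of $|H_\alpha|$, but this does not affect injectivity.
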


We now apply the tom Dieck splitting to $\TR$ of a spherical group ring. Let $G$ be a topological group or grouplike topological monoid and write $\bbS[G] = \Sigma^\infty_+ G$ for its suspension spectrum. Let $A$ be a topological space with commuting left and right $G$-actions. Assume that $G$ and $A$ are cofibrant as topological spaces. Then the restriction system $\THH^{\bullet}(\bbS[G];\bbS[A])$ from \cref{thh_system} is the suspension spectrum of a naive restriction system whose $n$th level is the bar construction in unbased spaces $B(G^{\times n};A^{\times n}_\rho)$. \cref{naive_tr} therefore gives a splitting (where the ${}_{C_j}$ denotes coinvariants)
\begin{align*}
	\TR(\bbS[G];\bbS[A]) &\simeq \prod_{j \geq 1} \Sigma^\infty_+ B(G^{\times j};A^{\times j}_\rho)_{h C_j}, \\
	\pi_0 \TR (\bbS[G]; \bbS[A]) &\cong \prod_{j \geq 1} \HH_0 (\bbZ[\pi_0 G]^{\otimes j}; \bbZ[\pi_0 A]^{\otimes j}_\rho)_{C_j}, \\
	\pi_0 \TR (\bbS[G]) &\cong \prod_{j \geq 1} \HH_0 (\bbZ[\pi_0 G]) \\
\end{align*}

For a based connected CW complex $X$, we take $G = \Omega X$ to be any well-based topological group modeling the loop space of $X$. Let $f\colon X \to X$ be a basepoint preserving self-map of $X$.  We let $A = \Omega^f X$ be $\Omega X$ with the usual right action, and left action twisted by $f$, i.e. given by the composite
\[
\Omega X \times \Omega X \xrightarrow{\Omega f \times \id} \Omega X \times \Omega X \xarr{\text{mult}} \Omega X.
\]
The tom Dieck coordinates can then be described as
\begin{align*}
	\pi_0 \TR (\bbS[\Omega X]; \bbS[\Omega^f X]) &\cong \prod_{j \geq 1} \HH_0 (\bbZ[\pi_1 X]; \bbZ[\pi_1 X]_{f^{\circ j}})_{f_*}
\end{align*}
where $(-)_{f_*}$ denotes coinvariants under the action of $f_*$ on each copy of $\pi_1 X$.

The above is true in general, but if $X$ is finitely dominated, then $\bbS$ is perfect as a left $\bbS[\Omega X]$-module, and we can pick out a distinguished class
\[ [f] \in K_0(\End(\bbS[\Omega X];\bbS[\Omega^f X])). \]
It is the twisted endomorphism of $(\bbS[\Omega X], \bbS)$-bimodule spectra
\begin{equation}\label{twisted_module_morphism}
\bbS \stackrel\simeq\arr \bbS[\Omega^f X] \sma_{\bbS[\Omega X]} \bbS
\end{equation}
that is homotopy inverse to the map that collapses the bar construction on the right back to $\bbS$.
This is the suspension of the canonical
isomorphism
\[ \bcr{\Omega X}{}{*} \stackrel\cong\arr \bcr{\Omega X}{f}{\Omega X} \odot \bcr{\Omega X}{}{*} \]
arising from the fact that $f$ and the identity agree after composing with $X \to *$. By \cite{ponto_thesis}, its bicategorical trace is the Reidemeister trace
\[ R(f)\colon \bbS \to \THH (\bbS[\Omega X]; \bbS[\Omega^f X]). \]

\begin{thm}\label{reidemeister_series}
	For any finitely dominated space $X$ and basepoint preserving self map $f\colon X \to X$, the image of the class $[f]$ from \eqref{twisted_module_morphism} under the composite
	\[
	\xymatrix{
	K_0(\bbS[\Omega X]; \bbS[\Omega^f X]) \ar[r]^-{\trc} & \pi_0\TR(\bbS[\Omega X]; \bbS[\Omega^f X]) \ar[r]^-{g} & \displaystyle\prod_{n \geq 1} \HH_0 (\bbZ[\pi_1 X]; \bbZ[\pi_1 X]_{f^{\circ n}})
}
	\]
	is the Reidemeister series $(R(f),R(f^{\circ 2}),R(f^{\circ 3}), \dots)$.
\end{thm}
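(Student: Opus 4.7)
The plan is to apply \cref{thm:main_identify_pi_0_TR} with $A = \bbS[\Omega X]$, $M = \bbS[\Omega^f X]$, $P = \bbS$, and $[f]$ taken to be the class represented by the twisted endomorphism \eqref{twisted_module_morphism}. That theorem identifies the image of $[f]$ under $g_n \circ \trc$ with $[\tr(f^{\circ n})]$ in $\pi_0 \THH(\bbS[\Omega X]; M^{\sma_{\bbS[\Omega X]} n})$, where $\tr(f^{\circ n})$ is the bicategorical trace of the $n$-fold composite $\bbS \to M^{\sma_{\bbS[\Omega X]} n} \sma_{\bbS[\Omega X]} \bbS$ formed from the twisted endomorphism $[f]$. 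The remaining work is to identify this target with $\HH_0(\bbZ[\pi_1 X]; \bbZ[\pi_1 X]_{f^{\circ n}})$ and to recognize the trace of the iterate as the Reidemeister trace of $f^{\circ n}$.

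The first task is handled by a natural equivalence of $(\bbS[\Omega X], \bbS[\Omega X])$-bimodules
\[
\bbS[\Omega^{g} X] \sma_{\bbS[\Omega X]} \bbS[\Omega^{h} X] \stackrel{\simeq}{\longrightarrow} \bbS[\Omega^{g \circ h} X],
\]
valid for any basepoint preserving self-maps $g, h\colon X \to X$, obtained by absorbing the second factor into the first via right multiplication in $\Omega X$: the left-action twist by $h$ on the second factor is transported, after identification, onto a twist by $g \circ h$ on the combined factor. Iterating gives $M^{\sma_{\bbS[\Omega X]} n} \simeq \bbS[\Omega^{f^{\circ n}} X]$, which identifies $\pi_0 \THH(\bbS[\Omega X]; M^{\sma_{\bbS[\Omega X]} n})$ with $\HH_0(\bbZ[\pi_1 X]; \bbZ[\pi_1 X]_{f^{\circ n}})$, as in the discussion before the theorem statement. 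By naturality of the construction \eqref{twisted_module_morphism} in the self-map, together with associativity of the bimodule smash product, this identification carries the $n$-fold composite $\bbS \to M^{\sma_{\bbS[\Omega X]} n} \sma_{\bbS[\Omega X]} \bbS$ to the canonical class $[f^{\circ n}]$ associated to the iterated map $f^{\circ n}\colon X \to X$.

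The proof is then completed by invoking the identification from \cite{ponto_thesis}, recalled in the discussion preceding the theorem statement, that the bicategorical trace of the twisted module morphism $\bbS \to \bbS[\Omega^{g} X] \sma_{\bbS[\Omega X]} \bbS$ coming from a basepoint preserving self-map $g\colon X \to X$ is precisely the Reidemeister trace $R(g)$. Specializing to $g = f^{\circ n}$ and assembling over all $n \geq 1$ gives the Reidemeister series. The main obstacle will be the coherence check that the chosen iterated bimodule identification matches the specific iterated composition written out in \cref{prop:main_identify_pi_0_TR}, so that the image really is the class of $[f^{\circ n}]$ on the nose rather than merely a homotopic alternative; but this reduces to a routine functoriality check on the bar construction already set up in \cref{sec:spectral_bimodules}.
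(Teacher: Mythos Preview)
Your proposal is correct and follows essentially the same route as the paper's proof: apply \cref{thm:main_identify_pi_0_TR}, collapse the iterated bimodule $\bbS[\Omega^f X]^{\sma_{\bbS[\Omega X]} n}$ to $\bbS[\Omega^{f^{\circ n}} X]$ so that the $n$-fold composite becomes the twisted endomorphism \eqref{twisted_module_morphism} for $f^{\circ n}$, and then invoke the identification from \cite{ponto_thesis} of its bicategorical trace with $R(f^{\circ n})$. The paper's proof is slightly more terse but the logic is identical, including the coherence point you flag at the end.
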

\begin{proof}
	By \cref{thm:main_identify_pi_0_TR}, the $n$th factor of this map is the trace of the composite
	\[
	\bbS \cong \bbS[\Omega^f X] \sma_{\bbS[\Omega X]} \bbS \cong \bbS[\Omega^f X] \sma_{\bbS[\Omega X]} \bbS[\Omega^f X] \sma_{\bbS[\Omega X]} \bbS \cong \dotsm \cong \bbS[\Omega^f X]^{\sma_{\bbS[\Omega X]} (n)} \sma_{\bbS[\Omega X]} \bbS.
	\]
	Collapsing the copies of $\bbS[\Omega^f X]$ together gives the bimodule $\bbS[\Omega^{f^{\circ n}} X]$ and the twisted endomorphism \eqref{twisted_module_morphism} with $f$ replaced by $f^{\circ n}$. Therefore, along the resulting isomorphism
	\begin{equation}\label{composition_isomorphism}
	\THH(\bbS[\Omega X]; \bbS[\Omega^f X]^{\sma_{\bbS[\Omega X]} n}) \cong \THH(\bbS[\Omega X]; \bbS[\Omega^{f^{\circ n}} X])
	\end{equation}
	this trace is taken to the Reidemeister trace of $f^{\circ n}$.
\end{proof}

\begin{rmk}
	The $\TR$-trace without coefficients $A(X) \simeq K(\bbS[\Omega X]) \to \TR(\bbS[\Omega X])$ therefore gives traces of the identity map, as in \cite{lydakis}.
\end{rmk}

The ring map $\pi \colon \bbS[\Omega X] \to \bbS$ that collapses $\Omega X$ to a point and the corresponding bimodule map $\bbS[\Omega^f X] \to \bbS$ induce a map on topological restriction homology
\[ \xymatrix @R=1.5em{
	\TR(\bbS[\Omega X]; \bbS[\Omega^f X]) \ar[r]^-{\pi} & \TR(\bbS).
} \]
We may further compose with the ring map $\bbS \to H\bbZ$ to land in $\TR(\bbZ)$.

\begin{thm}\label{lefschetz_zeta_function}
  The composite
  \[
    K_0(\bbS[\Omega X], \bbS[\Omega^f X]) \xarr{\trc} \pi_0 \TR(\bbS[\Omega X]; \bbS[\Omega^f X]) \oarr{\pi} \pi_0 \TR (\bbS) \arr \pi_0 \TR (\bbZ) \cong (1 + t\bbZ[[t]])^\times
  \]
  maps the twisted module endomorphism $[f]$ from \eqref{twisted_module_morphism} to the Lefschetz zeta function
  \[
  \exp \left( \int \frac{1}{t} \left(\sum^\infty_{n=1} L(f^{\circ n}) t^n \right) \right) = \exp \left(\sum^\infty_{n=1} \frac{L(f^{\circ n})}{n} t^n \right).
  \]
\end{thm}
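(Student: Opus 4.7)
The plan is to use naturality of the $\TR$-trace along the ring map $\bbS[\Omega X] \to H\bbZ$, followed by injectivity of the ghost map on $\pi_0 \TR(H\bbZ)$ to pin down the image from its iterated-trace data.

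First, I would invoke naturality of the $\TR$-trace along the morphism of twistings given by the collapse $\bbS[\Omega X] \to \bbS$, the unit $\bbS \to H\bbZ$, and the corresponding bimodule maps $\bbS[\Omega^f X] \to \bbS \to H\bbZ$.  Since the equivariant Dennis trace (and hence the $\TR$-trace, by \cref{tr_trace}) is natural in morphisms of twistings, and the ghost maps are natural in morphisms of restriction systems, I obtain a commutative diagram
\[
\xymatrix @R=1.25em @C=1em {
K \End(\bbS[\Omega X]; \bbS[\Omega^f X]) \ar[r]^-{\trc} \ar[d] & \pi_0\TR(\bbS[\Omega X]; \bbS[\Omega^f X]) \ar[r]^-{g_n} \ar[d] & \pi_0 \THH^{(n)}(\bbS[\Omega X]; \bbS[\Omega^f X]) \ar[d] \\
K \End(\bbZ) \ar[r]^-{\trc} & \pi_0 \TR(\bbZ) \ar[r]^-{g_n} & \pi_0\THH^{(n)}(\bbZ) \cong \bbZ,
}
\]
reducing the computation to identifying the $n$-th ghost coordinate of the image of $[f]$ in $\pi_0\TR(\bbZ)$ for each $n \geq 1$.

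Second, I would compute these ghost coordinates to be the Lefschetz numbers $L(f^{\circ n})$.  By \cref{thm:main_identify_pi_0_TR} applied to $A = \bbS[\Omega X]$ and $M = \bbS[\Omega^f X]$, the composite $(g_n \circ \trc)([f])$ in the top row is identified with the bicategorical trace of the $n$-fold iterate of the twisted endomorphism \eqref{twisted_module_morphism}, which by \cref{reidemeister_series} (together with the unwinding equivalence of \cref{unwinding}) is the Reidemeister trace $R(f^{\circ n}) \in \pi_0 \THH(\bbS[\Omega X]; \bbS[\Omega^f X]^{\sma_{\bbS[\Omega X]} n})$.  Pushing forward along the bimodule map $\bbS[\Omega^f X]^{\sma_{\bbS[\Omega X]} n} \to H\bbZ$, which collapses $\pi_1 X$ to a point, then turns $R(f^{\circ n})$ into the ordinary Lefschetz number $L(f^{\circ n}) \in \bbZ$ via the standard reduction of the Reidemeister trace.

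Third, I would invoke ghost injectivity.  Since $\bbZ$ is torsion-free, the ghost map $\pi_0 \TR(\bbZ) \cong W(\bbZ) \to \prod_{n \geq 1} \bbZ$ is injective (as used in the proof of \cref{tr=char}).  Hence the image of $[f]$ is the unique element of $\pi_0 \TR(\bbZ) \cong (1+t\bbZ[[t]])^\times$ whose ghost sequence is $(L(f^{\circ n}))_{n \geq 1}$.  By \cref{derivative_of_chi}, this ghost sequence equals the negative logarithmic derivative of the associated power series, and a direct comparison with \eqref{eq:witt_iso_power_series} identifies the resulting element of $(1+t\bbZ[[t]])^\times$ with $\zeta_f(t) = \exp\bigl(\sum_n L(f^{\circ n}) t^n/n\bigr)$.

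The main obstacle is the verification in the second step that the pushforward of $R(f^{\circ n})$ along the bimodule map $\bbS[\Omega^f X]^{\sma_{\bbS[\Omega X]} n} \to H\bbZ$ equals $L(f^{\circ n}) \in \pi_0 \THH(\bbZ) = \bbZ$: this requires combining the description of the Reidemeister trace as a $0$-simplex in the cyclic bar construction (\cref{lem:pi_0_dennis_trace}) with the standard calculation that collapsing $\pi_1 X$ turns the Reidemeister trace into the Lefschetz number.  Aligning the sign convention in \eqref{eq:witt_iso_power_series} with the classical form of $\zeta_f(t)$ is purely formal but merits attention.
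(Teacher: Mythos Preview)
Your proposal is correct and follows essentially the same strategy as the paper: both arguments use naturality of the ghost map (the paper) or of the full $\TR$-trace (you) along $\bbS[\Omega X] \to \bbS \to H\bbZ$ to reduce to computing ghost coordinates, invoke \cref{reidemeister_series} to identify these as the Reidemeister traces $R(f^{\circ n})$, collapse to Lefschetz numbers $L(f^{\circ n})$, and then use injectivity of the ghost map (equivalently, of $-t\frac{d}{dt}\log$) on $\pi_0\TR(\bbZ)$ to pin down the image as the Lefschetz zeta function. The only cosmetic difference is that the paper's commuting diagram \eqref{three_ghosts_of_christmas} starts at $\pi_0\TR$ rather than routing through $K_0\End(\bbZ)$, but this changes nothing of substance.
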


\begin{proof}
	Naturality of the ghost map implies that the following diagram commutes, where the vertical maps are ghost maps:
	\begin{equation}\label{three_ghosts_of_christmas}
	\xymatrix{
		\pi_0\TR(\bbS[\Omega X]; \bbS[\Omega^f X]) \ar[d]_-g \ar[r] & \pi_0 \TR(\bbS) \ar[d]_-{g} \ar[r] & (1 + \bbZ[[t]])^\times \ar[d]^-{-t \frac{d}{dt} \log} \\
		\displaystyle\prod_{n \geq 1} \HH_0 (\bbZ[\pi_1 X]; \bbZ[\pi_1 X]_{f^{\circ n}}) \ar[r] & \displaystyle\prod_{n \geq 1} \bbZ \ar@{<->}[r]_-\cong & t\bbZ[[t]]
	}
	\end{equation}
	By \cref{reidemeister_series}, the class $[f]$ in the upper-left goes to the Reidemeister traces $R(f^{\circ n})$ in the lower-left. The augmentation sends these to the Lefschetz numbers $L(f^{\circ n})$ in the lower-middle. This agrees with the image of the Lefschetz zeta function along the logarithmic derivative. Since the logarithmic derivative is injective, we conclude that the image of $[f]$ in the upper-right is the Lefschetz zeta function.
\end{proof}

In fact, the passage from $\bbS$ to $\bbZ$ in \cref{lefschetz_zeta_function} has no effect on $\pi_0\TR$:

\begin{prop}\label{crazy_isomorphism}
	The ring map $\bbS \to H\bbZ$ induces an isomorphism
	\[ \pi_0 \TR(\bbS) \cong \pi_0 \TR(\bbZ) \cong (1 + t\bbZ[[t]])^\times. \]
\end{prop}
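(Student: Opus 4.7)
The plan is to show both sides are isomorphic to the same subset of $\prod_{n \geq 1} \bbZ$ via injective ghost maps that agree with each other.

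First, I would identify both $\pi_0 \TR(\bbS)$ and $\pi_0 \TR(\bbZ)$ abstractly and then set up a square of ghost maps. For $\pi_0\TR(\bbS)$, apply \cref{naive_tr} to the trivial naive restriction system $X_\bullet = \ast$ (i.e. $\bbS = \bbS[\Omega \ast]$), which gives the tom Dieck splitting $\TR(\bbS) \simeq \prod_{j \geq 1} \Sigma^\infty_+ BC_j$ and hence $\pi_0\TR(\bbS) \cong \prod_{j \geq 1} \bbZ$ with coordinates $(a_j)_{j \geq 1}$. By \cref{naive_tr_ghost}, the $n$th ghost coordinate on $\pi_0$ sends $(a_j)$ to $\sum_{j \mid n} j a_j$, and \cref{ghost_injectivity} guarantees this map into $\prod_n \bbZ$ is injective. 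For the right side, combine the Hesselholt--Madsen isomorphism \cref{hm_isom} and its passage to the inverse limit in \cref{ghosts_agree} with the Witt vector description to get $\pi_0\TR(\bbZ) \cong W(\bbZ) \cong (1+t\bbZ[[t]])^\times$, with $n$th ghost $(a_d)_{d \geq 1} \mapsto \sum_{d \mid n} d a_d^{n/d}$; injectivity of the ghost map here is standard since $\bbZ$ is torsion free.

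Next I would observe that $\pi_0\THH^{(n)}(\bbS) = \pi_0\bbS = \bbZ$ and $\pi_0\THH^{(n)}(\bbZ) = \bbZ$, and the unit $\bbS \to H\bbZ$ induces the identity on $\pi_0$. By naturality of the ghost map in morphisms of restriction systems, this produces a commutative square
\[
\begin{tikzcd}
\pi_0\TR(\bbS) \ar[r] \ar[d, hook, "g"'] & \pi_0\TR(\bbZ) \ar[d, hook, "g"] \\
\prod_{n \geq 1} \bbZ \ar[r, equal] & \prod_{n \geq 1} \bbZ
\end{tikzcd}
\]
in which both vertical maps are injective and the lower horizontal map is the identity. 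Injectivity of the top map is then immediate by diagram chase.

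For surjectivity, the key point is to identify the images of the two ghost maps inside $\prod_n \bbZ$ and show they coincide. On the $\bbS$ side the ghost formula $w_n = \sum_{j \mid n} j a_j$ is \emph{linear}, and M\"obius inversion shows its image is exactly
\[
\Bigl\{(w_n) \in \textstyle\prod_n \bbZ \;:\; \sum_{d \mid n} \mu(n/d)\, w_d \equiv 0 \pmod{n} \text{ for all } n \geq 1\Bigr\}.
\]
On the $\bbZ$ side the image of the Witt ghost map is classically characterized by the same congruences (Dwork's lemma; see the Witt vector references cited in the paper, e.g.~\cite{almkvist,hesselholt_survey,campbell_witt}). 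Hence both subgroups of $\prod_n \bbZ$ agree, and injectivity of $g$ on the right, together with surjectivity onto this common image, forces the top horizontal map to be surjective as well.

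The main obstacle I anticipate is purely the careful identification of the tom Dieck ghost formula: one must correctly track that the transfer $\Sigma^\infty_+ BC_j \to \Sigma^\infty_+ \ast$ induces multiplication by $j$ on $\pi_0$, unwinding \cref{naive_tr_ghost} through the description \eqref{eq:transfer_inclusion}. Once that linear formula is in hand, the remaining work is the elementary (but nontrivial) combinatorial fact that Dwork's congruences coincide with the M\"obius-inversion congruences, which is the bridge that ultimately realizes the conceptually striking fact that the sphere spectrum and the integers have the same $\pi_0 \TR$ despite parametrizing it by very different coordinate systems.
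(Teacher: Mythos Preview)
Your proposal is correct and sets up the same commuting square of ghost maps as the paper, obtaining injectivity by the identical diagram chase. The difference lies entirely in the surjectivity argument. You identify the images of both ghost maps inside $\prod_n \bbZ$ as the subgroup cut out by the necklace/Dwork congruences $\sum_{d\mid n}\mu(n/d)w_d \equiv 0 \pmod n$, and conclude because the two images coincide. The paper instead argues directly by induction: inverting the Witt ghost formula expresses each Witt coordinate $a_n$ as a rational polynomial in the tom~Dieck coordinates $b_d$ for $d\mid n$, whose $b_n$-coefficient is $1$; so one can tune $b_n$ to hit any integer value of $a_n$. Your route is cleaner and more conceptual but imports an external fact (Dwork's lemma in its M\"obius-inversion form), while the paper's route is entirely self-contained and avoids appealing to the congruence characterization at the cost of a slightly ad~hoc induction.
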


\begin{proof}
	Expanding the right-hand square of \eqref{three_ghosts_of_christmas} using the tom~Dieck splitting and Witt coordinates, we get the commuting square
	\[ \xymatrix @R=1.5em{
		\displaystyle\prod_{j \geq 1} \bbZ \ar[d]_-{g} \ar[r] & \displaystyle\prod_{i \geq 1} \bbZ \ar[d]^-g && (b_j)_{j=1}^\infty \ar@{|->}[d] \ar@{|->}[r] & (a_i)_{i=1}^\infty \ar@{|->}[d] \\
		\displaystyle\prod_{n \geq 1} \bbZ \ar@{=}[r] & \displaystyle\prod_{n \geq 1} \bbZ && (w_n)_{n=1}^\infty \ar@{=}[r] & (w_n)_{n=1}^\infty.
	} \]
	The ghost coordinates $w_n$ are given by the formulas
	\[ \sum_{d|n} d b_d = w_n = \sum_{d|n} d a_d^{n/d}, \]
	the first arising from \cref{naive_tr_ghost} and the second from \cref{eq:witt_ghost_coords}. It is an easy observation that both of these maps are injective. Therefore the top horizontal map is injective.

	To prove it is surjective, we reverse-engineer the second formula to write $a_n$ as $\frac{1}{n}w_n$ plus a rational polynomial in the $a_d$ for $d | n$, $d < n$. Inductively, this implies that $a_n$ is expressed as a rational polynomial in the $b_d$ for $d | n$, whose $b_n$-term is $\frac{1}{n}(nb_n) = b_n$. Clearly changing the value of $b_n$ then allows us to attain any integer value of $a_n$, so by induction this collection of polynomials defines a surjective map to $\prod_{i \geq 1} \bbZ$.
\end{proof}

\begin{rmk}
	The isomorphism $\pi_0\TR(\bbS) \stackrel\cong\arr \pi_0\TR(\bbZ)$ is given from tom Dieck coordinates to Witt coordinates by the following polynomials in the first few degrees:
	\[
	\begin{array}{rclcrcl}
		a_1 &=& b_1								&\hspace{2em}& b_1 &=& a_1 \\
		a_2 &=& b_2 - \frac{b_1^2 - b_1}{2}		&& b_2 &=& a_2 + \frac{a_1^2 - a_1}{2} \\
		a_3 &=& b_3 - \frac{b_1^3 - b_1^3}{3}	&& b_3 &=& a_3 + \frac{a_1^3 - a_1}{3}
	\end{array}
	\]
	The polynomials for $a_4$ and $b_4$ have many more terms, for instance
	\[ a_4 = b_4 + \frac14\left( 2b_2 - b_2^2 + 2b_1^2b_2 - 2b_1b_2 - \frac{3}{2}b_1^4 + b_1^3 - \frac{1}{2}b_1^2 + b_1 \right). \]
	Though it is not directly apparent from the formula, this polynomial is integer valued on integer inputs.
\end{rmk}

\subsection{The relation to the free loop space and periodic-point theory}
For a space $X$ and self-map $f\colon X \to X$, let $\mathcal L^f X$ denote the twisted free loop space
\[ \mathcal{L}^f X = \{ \gamma\colon [0, 1] \to X| \gamma(0) = f(\gamma(1)) \}. \]
In this final subsection we describe how the work of the previous subsection is related to \cite{mp1}, which investigated the Reidemeister traces $R(f^{\circ n})$ as elements of
\[ \pi_0(\Sigma^\infty_+ \mathcal L^{f^{\circ n}} X) \cong H_0(\mathcal L^{f^{\circ n}} X), \]
rather than $\HH_0 (\bbZ[\pi_1 X]; \bbZ[\pi_1 X]_{f^{\circ n}})$. When $X$ is path-connected and $f$ preserves a chosen basepoint, the two definitions of the Reidemeister trace agree along the equivalence (see \cite[6.5]{vegetables}, \cite[Cor A.14]{cp}, \cite[8.2.8]{malkiewich_parametrized})
\begin{equation} \label{eq:THH_free_loops_basic_equiv}
 \THH( \bbS[\Omega X]; \bbS[\Omega^f X] ) \simeq \Sigma^\infty_+ \mathcal L^{f} X.
\end{equation}

We first lift this equivalence to an equivalence of restriction systems. Let $\Psi^n(f) = f^{\times n} \circ \rho^{-1}$ denote the $n$th Fuller construction of $f$ as in \cite{kw2,mp1}:
\[ \xymatrix @R=2pt {
	X\times \cdots \times X\ar[r]^{\Psi^n(f)} & X\times \cdots \times X \\
	(x_1,x_2,\ldots , x_n)\ar@{|->}[r] & (f(x_n), f(x_1), \ldots , f(x_{n-1}))
}\]
An element of the twisted free loop space $\mathcal{L}^{\Psi^n(f)} X^n$ consists of an $n$-tuple of points $x_1, \dotsc, x_n \in X$ and paths $\gamma_i$ from $f(x_i)$ to $x_{i+1}$, indices modulo $n$. As $n$ varies, the suspension spectra $\Sigma^{\infty}_{+} \mathcal{L}^{\Psi^n(f)} X^n$ form a naive restriction system (\cref{ex:naive_restriction_systems}.\ref{ex:fuller_maps}). Following \cite{mp1} and the precedent set by \cite{bokstedt_hsiang_madsen}, we denote its topological restriction homology by $\TR(X;f)$.

\begin{prop}\label{prop:TR_equiv_modules_to_freeloops}
	For any path-connected $X$ and basepoint-preserving $f\colon X \to X$, there is an equivalence of restriction systems
	\[ \THH^{(r)} (\bbS[\Omega X]; \bbS[\Omega^f X]) \simeq \Sigma^\infty_+ \mathcal{L}^{\Psi^r(f)} X^r \]
	and therefore an equivalence on $\TR$
	\[ \TR(\bbS[\Omega X]; \bbS[\Omega^f X]) \simeq \TR(X;f). \]
\end{prop}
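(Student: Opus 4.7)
The plan is to bootstrap up from the basic equivalence \eqref{eq:THH_free_loops_basic_equiv} using the explicit model of $\THH^{(r)}$ as ordinary $\THH$ with $r$-fold smash powers. By \cref{ex:thh_n},
\[ \THH^{(r)}(\bbS[\Omega X]; \bbS[\Omega^f X]) = \THH\bigl( \bbS[\Omega X]^{\sma r} ; \bbS[\Omega^f X]^{\sma r}_\rho \bigr) \cong \THH\bigl( \bbS[(\Omega X)^{\times r}] ; \bbS[(\Omega^f X)^{\times r}_\rho] \bigr). \]
The first key step is to recognize, at the level of topological groups and bimodules, the identification $(\Omega X)^{\times r} \cong \Omega(X^r)$ along with an isomorphism $(\Omega^f X)^{\times r}_\rho \cong \Omega^{\Psi^r(f)}(X^r)$. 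This is essentially a check on points: the rotation $\rho$ composed with $f$ in the last coordinate is by definition the Fuller construction $\Psi^r(f) = f^{\times r} \circ \rho^{-1}$, so the bimodule twist on the left corresponds exactly to the twisting by $\Psi^r(f)$ on the right.

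With that identification in place, I would apply the basic equivalence \eqref{eq:THH_free_loops_basic_equiv} with $X^r$ in place of $X$ and $\Psi^r(f)$ in place of $f$, yielding a non-equivariant equivalence of orthogonal spectra
\[ \THH^{(r)}(\bbS[\Omega X]; \bbS[\Omega^f X]) \simeq \Sigma^\infty_+ \mathcal{L}^{\Psi^r(f)} X^r. \]
Next one must put the $C_r$-actions on both sides and check they agree. On the left, the generator cyclically rotates the $r$ smash factors, and on the right it cyclically permutes the coordinates of $X^r$ and the corresponding paths in the twisted free loop space. Both actions arise from applying the same cyclic permutation to the topological group $(\Omega X)^{\times r}$ and its bimodule, so by naturality of \eqref{eq:THH_free_loops_basic_equiv} the equivalence intertwines them.

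The main obstacle will be verifying that this equivariant equivalence is compatible with the restriction-system structure maps. On the $\THH^{(r)}$ side the structure map is the Hill--Hopkins--Ravenel norm diagonal from \cref{thh_n_derived}; on the free loop space side it is the canonical inclusion $\mathcal{L}^{\Psi^s(f)} X^s \hookrightarrow (\mathcal{L}^{\Psi^{rs}(f)} X^{rs})^{C_r}$ that duplicates each tuple of points and paths $r$ times across blocks of coordinates, extended to suspension spectra as in \cref{suspend_naive_system_of_spaces}. Since both constructions live on suspension spectra of spaces, \cref{norm_on_suspension_spectra} (the rigidity theorem) identifies the HHR norm diagonal with the point-set diagonal, after which matching the two descriptions reduces to a direct comparison of the bar construction model of $\mathcal{L}^{\Psi^r(f)} X^r$ with its image in $\mathcal{L}^{\Psi^{rs}(f)} X^{rs}$. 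Once this compatibility is verified, we obtain an equivalence of genuine restriction systems in the sense of \cref{genuine_restriction_system}, and the equivalence on $\TR$ follows by applying \cref{tr_in_general} and the fact that $\TR$ is homotopy invariant on restriction systems.
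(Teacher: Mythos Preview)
Your outline follows the same route as the paper: identify $\THH^{(r)}(\bbS[\Omega X];\bbS[\Omega^f X])$ with $\THH(\bbS[(\Omega X)^{\times r}];\bbS[\Omega^{\Psi^r(f)}(X^r)])$, then invoke the basic equivalence \eqref{eq:THH_free_loops_basic_equiv} at each level and argue that the levels fit together. The paper does exactly this, after first replacing $X$ by $BG$ for a cofibrant topological group $G$ with $f$ induced by a group homomorphism.

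The gap in your argument is the phrase ``by naturality of \eqref{eq:THH_free_loops_basic_equiv}.'' That equivalence is cited from the literature only as a statement in the homotopy category; naturality there gives at best that the $C_r$-action commutes with the equivalence \emph{up to homotopy}, which is far weaker than producing a genuine $C_r$-equivariant map of spectra, let alone a map of restriction systems. To show that the levelwise equivalences assemble into an equivalence of restriction systems you need a point-set (or at least strictly natural) model for \eqref{eq:THH_free_loops_basic_equiv}, so that taking $C_r$-fixed points literally recovers the same construction for a smaller index. This is precisely the content of the paper's \cref{lem:equiv_restrict_space_level}: it builds an explicit zig-zag of spaces
\[
B^{\mathrm{cy}}(G^{\times r};\tensor[_{\Psi^r(f)}]{G}{^{\times r}}) \;\simeq\; EG^{\times r}\times_{G^{\times r}}\tensor[_{\Psi^r(f)}]{G}{^{\mathrm{ad}\times r}} \;\simeq\; \mathcal L^{\Psi^r(f)}BG^{\times r}
\]
that is visibly $C_r$-equivariant and whose $C_r$-fixed points are the same zig-zag for $r$ replaced by $r/|C_r|$. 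Your appeal to \cref{norm_on_suspension_spectra} and rigidity is appropriate once such a space-level model is in hand, but it does not substitute for constructing one. In short, your plan is correct, but the step you label ``direct comparison'' is where the real work lies, and the paper carries it out explicitly rather than by citation.
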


To prove the proposition, note that both sides of the equivalence are homotopy invariant, and so we may without loss of generality assume that $X = BG$ for a cofibrant topological group $G$, and use $G$ as the model for $\Omega X$. We may also assume that $f\colon BG \to BG$ arises by applying the classifying space functor to a group homomorphism, which by abuse of notation we also denote $f\colon G \to G$.

The first step is to identify
\begin{equation}\label{thh_r_is_fuller}
	\THH^{(r)} (\bbS[G]; \bbS[\tensor[_f]{G}{}])
	= \THH (\bbS[\tensor[]{G}{^{\times r}}]; \bbS[\tensor[_{f^{\times r}}]{G}{^{\times r}_\rho}])
	\cong \THH (\bbS[\tensor[]{G}{^{\times r}}]; \bbS[\tensor[_{\Psi^r(f)}]{G}{^{\times r}}])
\end{equation}
by applying $\rho^{-1}$ once to the bimodule coordinate. This gives an isomorphism of restriction systems where the one on the right arises from the isomorphisms $(\Psi^{rs}(f))^{C_r} \cong \Psi^s(f)$. The proposition then follows from the next lemma by passage to suspension spectra.

\begin{lem}\label{lem:equiv_restrict_space_level}
There is a natural equivalence of restriction systems of spaces
\[
B^{\mathrm{cy}}(G^{\times r}; \tensor[_{\Psi^r(f)}]{G}{^{\times r}}) \oarr{\simeq} \mc{L}^{\Psi^r(f)}BG^{\times r}.
\]
\end{lem}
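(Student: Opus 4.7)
The plan is to first construct the equivalence at each fixed level $r$ as an instance of a standard identification, and then check that it is natural with respect to both the $C_r$-actions and the restriction system structure maps.

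At each level $r$, I would obtain the equivalence from the classical identification of the twisted cyclic bar construction of a topological group with a twisted free loop space on its classifying space. Applied to any topological group $H$ and group endomorphism $\phi\colon H \to H$, this takes the form of a natural $S^1$-equivariant equivalence
\[
B^{\mathrm{cy}}(H;\tensor[_\phi]{H}{}) \simeq \mathcal{L}^{B\phi} BH,
\]
obtained by modeling both sides as realizations of a common simplicial space whose $n$-simplices are tuples $(h_0,h_1,\dots,h_n) \in H^{n+1}$, with face and degeneracy maps twisted by $\phi$ at the appropriate spot. This is precisely the identification used to establish \eqref{eq:THH_free_loops_basic_equiv} in \cite[6.5]{vegetables}, \cite[Cor A.14]{cp}, \cite[8.2.8]{malkiewich_parametrized}. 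I would apply it with $H = G^{\times r}$ and $\phi = \Psi^r(f)$, using that $\Psi^r(f) = f^{\times r}\circ \rho^{-1}$ is a group homomorphism because $f$ is. Since $B(G^{\times r}) \simeq (BG)^{\times r}$, this yields the claimed equivalence at level $r$, together with the $C_r$-equivariance induced by the natural $S^1$-action on both sides.

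Next I would verify that the equivalences assemble into a map of restriction systems. Under the identification \eqref{thh_r_is_fuller}, the structure map of the left-hand system is obtained from the HHR norm diagonal of \cref{thh_n_derived}: on a $C_r$-fixed $0$-simplex $(g_1,\dots,g_{rs})$, which is determined by its first $s$ entries $(g_1,\dots,g_s)$, it returns $(g_1,\dots,g_s)$, and more generally on the $n$-th simplicial level it is the canonical identification of the $C_r$-fixed points of $(G^{\times rs})^{\times(n+1)}$ with $(G^{\times s})^{\times(n+1)}$. On the right-hand side, the structure map is the evident identification
\[
(\mathcal{L}^{\Psi^{rs}(f)} X^{\times rs})^{C_r} \cong \mathcal{L}^{\Psi^s(f)} X^{\times s},
\]
which sends a $C_r$-fixed loop, whose coordinates and connecting paths are $r$-fold repetitions of data indexed by $1 \le i \le s$, to the underlying non-repeated data. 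Both sides of the equivalence, prior to taking $C_r$-fixed points, are built from the same simplicial pattern involving the $r$-fold subdivision of $S^1_\bullet$, and both structure maps correspond on geometric realization to the standard identification $|S^1_\bullet|^{C_r} \cong |S^1_\bullet|$ after passing through this subdivision.

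The main technical obstacle will be bookkeeping: making precise the compatibility at the level of simplicial spaces between the HHR norm diagonal (on the $B^{\mathrm{cy}}$ side) and the fixed-point identification (on the $\mathcal{L}$ side) with the twistings $\Psi^r(f)$. The key observation that makes this go through is that both restriction maps correspond geometrically to the ``unwinding'' of an $r$-fold iterated loop into a single loop --- in one case realized via edgewise subdivision of the cyclic bar construction, and in the other via the evident identification of periodic loops on $X^{\times rs}$ with loops on $X^{\times s}$. Once both sides are expressed in terms of the $r$-fold subdivision of $S^1_\bullet$, the naturality is forced by the universality of this geometric picture, and one concludes the equivalence of restriction systems by passing to suspension spectra as in the paragraph following \eqref{thh_r_is_fuller}.
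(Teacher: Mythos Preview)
Your overall strategy is sound, but two of your justifications are confused and do not go through as written.

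First, the $C_r$-equivariance at level $r$ cannot come from ``the natural $S^1$-action on both sides.'' The twisted cyclic bar construction $B^{\mathrm{cy}}(H;\tensor[_\phi]{H}{})$ has no $S^1$-action when $\phi \neq \id$: the cyclic structure is broken by the twist. The $C_r$-action on $B^{\mathrm{cy}}(G^{\times r};\tensor[_{\Psi^r(f)}]{G}{^{\times r}})$ comes from rotating the $r$ tensor factors of $G^{\times r}$, i.e.\ from a $C_r$-action on the pair $(G^{\times r},\Psi^r(f))$ by group automorphisms. To get equivariance you must therefore argue that the equivalence $B^{\mathrm{cy}}(H;\tensor[_\phi]{H}{}) \simeq \mathcal{L}^{B\phi}BH$ is natural in the pair $(H,\phi)$, and then apply that naturality to the $C_r$-automorphisms. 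This is true, but it is not what you wrote.

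Second, your restriction-system verification invokes ``the HHR norm diagonal of \cref{thh_n_derived},'' which is a spectrum-level statement about geometric fixed points. At the space level the restriction system on both sides is \emph{naive}: the structure maps are the categorical fixed-point isomorphisms $(X_{rs})^{C_r} \cong X_s$. Your subsequent description of what happens on $0$-simplices is in fact describing categorical fixed points, so the underlying picture is right, but the framing through the norm diagonal is misplaced and makes the argument hard to follow.

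The paper handles both points by being more explicit. It builds the levelwise equivalence as a composite through the Borel construction $EG \times_G \tensor[_f]{G}{^{\mathrm{ad}}}$, giving a concrete simplicial formula for the first map and a zig-zag through path spaces for the second. With the maps written down explicitly, it then observes directly that they are $C_r$-equivariant (for the factor-permutation action) and that restricting to a subgroup $C_{r/s} \leq C_r$ recovers the same construction for the smaller value $s$. That single observation simultaneously gives the equivariance and the restriction-system compatibility, with no appeal to norm diagonals or subdivision.
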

\begin{proof}
We start by constructing two equivalences of spaces
\begin{equation}\label{eq:two_equivs}
B^{\mathrm{cy}}(G; \tensor[_f]{G}{}) \oarr{\simeq} EG \times_{G} \tensor[_f]{G}{^{\mathrm{ad}}} \oarr{\simeq} \mc{L}^{f}BG
\end{equation}
where $\tensor[_f]{G}{^{\mathrm{ad}}}$ denotes $G$ with the left $G$-action $g \cdot a = f(g)ag^{-1}$. The first equivalence is actually an isomorphism, and is defined on $k$-simplices by
\[
(g_1, \dotsc, g_k;g) \longmapsto [g_1 \mid \dotsm \mid g_k] g g_1 \dotsm g_k.
\]
To define the second, we compare two different fibrant models for the base change 1-cell $\tensor[_{\id}]{BG}{_{f}} = [BG \oarr{f} BG]$ in the bicategory of parametrized spaces over varying base spaces. The first is the fibrant approximation of the parametrized space $(\id, f) \colon BG \arr BG^2$ given by the space of paths $(ev_0, f \circ ev_1) \colon BG^I \arr BG^2$. The second is $EG^2 \times_{G^2} \tensor[_f]{G}{}$, where $\tensor[_f]{G}{}$ is given the left $G^2$-action $(g,h)a = f(g)ah^{-1}$, and the map to $BG^2$ arises from the projection $(p_1,p_2)\colon EG^2 \to BG^2$. The equivalence
\begin{equation}\label{canonical_iso_of_base_change}
	BG \arr EG^2 \times_{G^2} \tensor[_f]{G}{}, \qquad [g_1 \mid \dotsm \mid g_k] \longmapsto [(g_1,f(g_1)) \mid \dotsm \mid (g_k,f(g_k))]e
\end{equation}
of spaces over $BG^2$ is the fibrant approximation map.
We form a commuting square of spaces over $BG^2$ where the horizontal maps are induced by \eqref{canonical_iso_of_base_change} and the vertical maps are inclusion of constant paths:
\[ \xymatrix @R=1.5em{
	BG \ar[d]_-\sim \ar[r]^-\sim & EG^2 \times_{G^2} \tensor[_f]{G}{} \ar[d]^-\sim \\
	BG^I \ar[r]_-\sim & (EG^2 \times_{G^2} \tensor[_f]{G}{})^I
} \]
The final space projects to $BG^2$ by $(p_1 \circ ev_0,p_2 \circ ev_1)$. If we remove the upper left instance of $BG$, the other three spaces are fibrant over $BG^2$, hence we can pull them back along the diagonal $\Delta\colon BG \to BG^2$ to get a zig-zag of equivalences of spaces over $BG$
\[ \mathcal L^f BG \overset\sim\arr \Delta^*\left[ (EG^2 \times_{G^2} \tensor[_f]{G}{})^I \right] \overset\sim\longleftarrow EG \times_{G} \tensor[_f]{G}{^{\mathrm{ad}}}. \]
The second equivalence in \eqref{eq:two_equivs} is this zig-zag.

Applying the construction \eqref{eq:two_equivs} to the group $G^{\times r}$ and the map $\Psi^r(f)\colon G^{\times r} \arr G^{\times r}$, we have a composite equivalence
\[
B^{\mathrm{cy}}(G^{\times r}; \tensor[_{\Psi^r(f)}]{G}{^{\times r}}) \oarr{\simeq} EG^{\times r} \times_{G^{\times r}} \tensor[_{\Psi^r(f)}]{G}{^{\mathrm{ad}\times r}} \oarr{\simeq} \mc{L}^{\Psi^r(f)}BG^{\times r},
\]
which defines each level of the equivalence of restriction systems in the statement of the lemma.
Since both of the maps are $C_r$-equivariant, and taking fixed points with respect to a subgroup of $C_r$ gives the same maps for a smaller value of $r$, the desired equivalence of restriction systems follows.
\end{proof}

\begin{rmk}\label{rmk:smbfs}
The map \eqref{eq:two_equivs} is the canonical equivalence between two different models for $r_{!} \Delta^* (\tensor[_{\id}]{BG}{_{f}})$ that are computed by deriving the base change functor $\Delta^*$ in two different ways.  It follows that each level of the equivalence of restriction systems in \cref{prop:TR_equiv_modules_to_freeloops} is a point set model for the comparison map of shadows induced by the equivalence of symmetric monoidal bifibrations from \cite[\S8.2]{malkiewich_parametrized} (see also \cite[\S14]{mp2}).
\end{rmk}

Recall from \cite{mp1} that the Fuller trace $R(\Psi^n(f))^{C_n}$ is defined as the $C_n$-equivariant Reidemeister trace of the map $\Psi^n(f)$. By the main theorem of \cite{mp1}, these assemble to define a class
\begin{equation*}
	R(\Psi^\infty(f)) \in \pi_0\TR(X;f)
\end{equation*}
called the infinite Fuller trace.

\begin{thm}\label{fuller_underneath_k_theory}
The composite
  \[
    K_0(\bbS[\Omega X], \bbS[\Omega^f X]) \xarr{\trc} \pi_0 \TR(\bbS[\Omega X]; \bbS[\Omega^f X]) \cong \pi_0 \TR(X; f)
  \]
of the $\TR$-trace and the equivalence from \cref{prop:TR_equiv_modules_to_freeloops} takes the class of the twisted module endomorphism $[f]$ from \eqref{twisted_module_morphism} to the infinite Fuller trace $R(\Psi^{\infty}(f))$.
\end{thm}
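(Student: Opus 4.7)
The plan is to reduce to a ghost-coordinate computation and then match the two sides one ghost coordinate at a time. First, by \cref{prop:TR_equiv_modules_to_freeloops} the restriction system $\THH^{(\bullet)}(\bbS[\Omega X];\bbS[\Omega^f X])$ is equivalent to the restriction system $\Sigma^\infty_+ \mathcal L^{\Psi^{\bullet}(f)} X^{\bullet}$, which is the suspension spectrum of a naive restriction system of spaces. Hence \cref{ghost_injectivity} applies and the ghost map
\[ g = (g_n) \colon \pi_0 \TR(X;f) \arr \prod_{n \geq 1} \pi_0 \Sigma^\infty_+ \mathcal L^{\Psi^n(f)} X^n \]
is injective. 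Since the equivalence in \cref{prop:TR_equiv_modules_to_freeloops} is an equivalence of restriction systems, the ghost maps on the two sides commute, so it suffices to check that, for every $n \geq 1$, the $n$-th ghost coordinate of $\trc([f])$ matches the $n$-th ghost coordinate of $R(\Psi^\infty(f))$.

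For the left-hand side, I would apply \cref{thm:main_identify_pi_0_TR}: the image of $[f]$ under $g_n \circ \trc$ in $\pi_0\THH(\bbS[\Omega X]; \bbS[\Omega^f X]^{\sma_{\bbS[\Omega X]} n})$ is the bicategorical trace of the $n$-fold iterate of the twisted module map \eqref{twisted_module_morphism}. Collapsing the $n$-fold smash product as in the proof of \cref{reidemeister_series} via the isomorphism \eqref{composition_isomorphism} identifies this with the Reidemeister trace $R(f^{\circ n})$ living in $\pi_0\THH(\bbS[\Omega X];\bbS[\Omega^{f^{\circ n}} X])$.

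For the right-hand side, by the main theorem of \cite{mp1} the class $R(\Psi^\infty(f)) \in \pi_0\TR(X;f)$ is characterized by having $n$-th ``fixed-point coordinate'' equal to the equivariant Fuller trace $R(\Psi^n(f))^{C_n}$. Following the composite in \cref{defn:ghost_map}, the $n$-th ghost coordinate then forgets the $C_n$-equivariance to give the ordinary Reidemeister trace $R(\Psi^n(f))$ in $\pi_0\Sigma^\infty_+ \mathcal{L}^{\Psi^n(f)} X^n$. So the task reduces to verifying that under the equivalence \eqref{thh_r_is_fuller} of \cref{prop:TR_equiv_modules_to_freeloops} at level $n$, the class $R(f^{\circ n}) \in \pi_0\THH(\bbS[\Omega X]; \bbS[\Omega^{f^{\circ n}} X])$ is sent to $R(\Psi^n(f)) \in \pi_0\Sigma^\infty_+ \mathcal{L}^{\Psi^n(f)} X^n$.

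I expect the main obstacle to be this last compatibility: matching $R(f^{\circ n})$ with $R(\Psi^n(f))$ across the equivalence. The argument will go by noting that the equivalence of \cref{prop:TR_equiv_modules_to_freeloops} is, by \cref{rmk:smbfs}, a point-set instantiation of a comparison of shadows on an equivalence of symmetric monoidal bifibrations, so it transports bicategorical traces to bicategorical traces. Consequently $R(\Psi^n(f))$, which is defined as the bicategorical trace of the identity 2-cell of the base-change bimodule for $\Psi^n(f)\colon X^n \to X^n$, is sent to the bicategorical trace of the analogous base-change bimodule for $G^{\times n}$ with the twisting $\Psi^n(f)$; then the identification \eqref{thh_r_is_fuller} together with the collapsing isomorphism $\bbS[\Omega^f X]^{\sma_{\bbS[\Omega X]} n} \simeq \bbS[\Omega^{f^{\circ n}} X]$ used above turns this into $R(f^{\circ n})$. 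Combined with steps one through three, the injectivity of the ghost map then forces $\trc([f]) = R(\Psi^\infty(f))$, completing the proof.
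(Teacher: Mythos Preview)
Your approach is essentially the paper's: reduce to ghost coordinates via \cref{ghost_injectivity}, identify the left side as $R(f^{\circ n})$ via \cref{reidemeister_series}, and match it with the $n$th ghost coordinate $R(\Psi^n(f))$ of the infinite Fuller trace. The organization is slightly different in that you compute the ghost coordinate of $R(\Psi^\infty(f))$ explicitly rather than leave it implicit, but the substance is the same.

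The one place where you should be more careful is the final compatibility. The equivalence from \cref{rmk:smbfs} handles only the horizontal comparison
\[
\THH\bigl(\bbS[G^{\times n}];\bbS[\tensor[_{\Psi^n(f)}]{G}{^{\times n}}]\bigr) \simeq \Sigma^\infty_+ \mathcal{L}^{\Psi^n(f)} X^n,
\]
transporting traces between the module picture and the parametrized picture at the same level $n$. It does not by itself handle the vertical unwinding from $\THH(\bbS[G^{\times n}];\bbS[\tensor[_{\Psi^n(f)}]{G}{^{\times n}}])$ down to $\THH(\bbS[G];\bbS[\tensor[_{f^{\circ n}}]{G}{}])$. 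That unwinding --- your assertion that ``the identification \eqref{thh_r_is_fuller} together with the collapsing isomorphism turns this into $R(f^{\circ n})$'' --- requires knowing that the trace of the twisted module map for $\bbS$ over $\bbS[G^{\times n}]$ unwinds to the trace of the twisted module map for $\bbS$ over $\bbS[G]$, which is precisely the content of \cite[Thm.~1.1]{mp1}. The paper invokes this result directly at this point. Your sketch is correct but you should cite \cite[Thm.~1.1]{mp1} for the unwinding rather than treat it as a formal consequence of the identifications.
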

\begin{proof}
We examine the image of the class $[f]$ under the $\TR$-trace and the routes in the commutative diagram
\[
\begin{tikzcd}
\pi_0 \TR(\bbS[\Omega X]; \bbS[\Omega^f X]) \ar{r}{\cong} \ar{d}{g_n} & \pi_0 \TR(X; f) \ar{d}{g_n} \\
\pi_0 \THH(\bbS[\Omega X]; \bbS[\Omega^{\Psi^n(f)} X]) \ar{r}{\cong} \ar{d}{\cong} & \pi_0 \Sigma^{\infty}_+ \mc{L}^{\Psi^{n}(f)} X^{\times n} \\
\pi_0 \THH(\bbS[\Omega X]; \bbS[\Omega^{f^{\circ n}} X]),
\end{tikzcd}
\]
where the upper square commutes by the naturality of the ghost maps for the equivalence of restriction systems from \cref{prop:TR_equiv_modules_to_freeloops}, and the lower-left vertical map is the composition of \eqref{thh_r_is_fuller} and \eqref{composition_isomorphism}. \cref{reidemeister_series} states that the image in the lower left corner is the Reidemeister trace $R(f^{\circ n})$ of the iterate.  By the unwinding argument of \cite[Thm. 1.1]{mp1}, it follows that the image in the middle-left is the Fuller trace $R(\Psi^n(f))$. Therefore the image in the middle-right is $R(\Psi^n(f))$ as computed in parametrized spectra. By \cref{ghost_injectivity} the upper vertical maps are jointly injective, hence the image of $[f]$ in $\pi_0 \TR(X; f)$ is the infinite Fuller trace $R(\Psi^{\infty}(f))$.
\end{proof}

Since the infinite Fuller trace capture the behavior of $n$-periodic points for every $n \geq 1$, the theorem suggests that $K$-theory might capture deeper dynamical information. We plan to continue this investigation in future work.

\appendix
\section{Model categories of restriction systems}\label{sec:model_str_restriction_systems}

Recall that in \S\ref{sec:trace_to_TR} we introduced the notion of a ``restriction system'' to link the equivariant Dennis traces together and define the $\TR$-trace. A restriction system is like a cyclotomic spectrum, but more general. In this appendix, we show how to move these restriction systems between different models of spectra, and we establish a model structure for pre-restriction systems in the spirit of \cite{blumberg_mandell_cyclotomic}.

Recall the notion of an equivariant bispectrum from \cite[\swcref{Appendix A}{\cref{swc-sec:model_structures}}]{clmpz-specWaldCat}. We define genuine restriction systems of these bispectra by using the geometric fixed point functor from \cite[\swcref{Definition A.9}{\cref{swc-gfp_bispectra}}]{clmpz-specWaldCat}.

\begin{prop}\label{prolong_restriction_systems}
	If $X_\bullet$ is a genuine (pre-)restriction system of bispectra and each $X_n$ is cofibrant as a $C_n$-bispectrum then the prolongation $\bbP X_\bullet$ to orthogonal spectra is naturally a genuine \mbox{(pre-)restriction} system of orthogonal spectra.
\end{prop}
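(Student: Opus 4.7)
The plan is to unpack the definition of "genuine (pre-)restriction system" and verify each piece of structure is preserved under $\bbP$. Since $\bbP$ is a functor, we immediately obtain orthogonal spectra $\bbP X_n$ with $C_n$-action for each $n$. The essential issue is to define the structure maps
\[
c_r\colon \Phi^{C_r}(\bbP X_{rs}) \to \bbP X_s
\]
in the prolonged system and to verify the compatibility squares and, in the genuine case, the derived-equivalence condition.

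The key technical input is a natural isomorphism (or at least weak equivalence on cofibrant inputs) $\Phi^{C_r}(\bbP Y) \cong \bbP(\Phi^{C_r} Y)$ for $C_r$-equivariant bispectra $Y$. By \cref{swc-gfp_bispectra}, the geometric fixed point functor on bispectra is computed at each symmetric spectrum level separately, i.e.\ as the orthogonal spectrum $\Phi^{C_r}$ applied in the second variable; meanwhile $\bbP$ acts on the symmetric spectrum direction. Since these two operations live on different coordinates of a bispectrum, they commute up to natural isomorphism. Granted this comparison, the structure map in $\bbP X_\bullet$ is the composite
\[
\Phi^{C_r}(\bbP X_{rs}) \cong \bbP(\Phi^{C_r} X_{rs}) \xarr{\bbP(c_r)} \bbP X_s,
\]
which is $C_s$-equivariant because the comparison isomorphism and $\bbP$ are both natural in their inputs. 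The compatibility squares for the restriction system structure now follow formally from naturality of this comparison together with the assumed compatibility squares for $X_\bullet$.

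For the "genuine" statement, I would use the cofibrancy hypothesis twice. First, since each $X_n$ is cofibrant as a $C_n$-bispectrum and $\bbP$ is left Quillen (by the analog for bispectra of \cite[\swcref{Proposition A.7}{\cref{swc-quillen-adjoints}}]{clmpz-specWaldCat}), each $\bbP X_n$ is cofibrant as an orthogonal $C_n$-spectrum, so the point-set functor $\Phi^{C_r}$ computes the derived geometric fixed points on these objects. Second, $\bbP$ preserves weak equivalences between cofibrant objects, so the assumed derived equivalence out of $\Phi^{C_r} X_{rs}$ in the input transports to a derived equivalence out of $\bbP \Phi^{C_r} X_{rs} \cong \Phi^{C_r} \bbP X_{rs}$, which is exactly the condition needed for $\bbP X_\bullet$ to be a genuine restriction system.

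The main obstacle is establishing the natural comparison $\Phi^{C_r} \bbP \cong \bbP \Phi^{C_r}$ rigorously, and checking that cofibrancy is preserved where it is needed. This requires tracing through the explicit models for bispectra, the prolongation functor, and the geometric fixed point functor for bispectra as set up in the companion paper, and verifying that the relevant Quillen adjunction interacts with the equivariant structure in the expected way.
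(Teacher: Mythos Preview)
Your proposal is correct and follows essentially the same approach as the paper: define the structure maps via the comparison isomorphism $\Phi^{C_r}\bbP \cong \bbP\Phi^{C_r}$ followed by $\bbP(c_r)$, and use that $\bbP$ preserves equivalences between cofibrant objects for the genuine case. The only place where the paper is more explicit is in verifying the compatibility square: it breaks the check into three regions and invokes a rigidity lemma (\cite[\swcref{Lemma A.13}{\cref{swc-lem:rigidity}}]{clmpz-specWaldCat}) to see that the iterated-fixed-points natural transformation $\Phi^{C_{rs}} \Rightarrow \Phi^{C_s}\Phi^{C_r}$ interacts correctly with the $\bbP$--$\Phi$ swap isomorphisms, whereas you subsume this under ``naturality of the comparison.''
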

\begin{proof}
	We define the structure maps by
	\[ \xymatrix{
		\Phi^{C_r} \bbP X_{rs} & \ar[l]_-\cong \bbP \Phi^{C_r} X_{rs} \ar[r]^-{\bbP c_{r}} & \bbP X_s.
	} \]
	By \cite[\swcref{Proposition A.7}{\cref{swc-quillen-adjoints}}]{clmpz-specWaldCat}, $\bbP$ preserves equivalences of cofibrant spectra, so if the maps $c_r$ are stable equivalences then so are these new maps. To check these form a pre-restriction system it suffices to show the diagram below commutes. The top-right region commutes because the bispectra $X_\bullet$ form a restriction system. The bottom-right region commutes by naturality and the left-hand region commutes by \cite[\swcref{Lemma A.13}{\cref{swc-lem:rigidity}}]{clmpz-specWaldCat}.
	\[ \xymatrix @R=1.5em @C=5em {
		\Phi^{C_{rs}} \bbP X_{rst} \ar[dd]^-{\textup{it}} & \ar[l]_-\cong \bbP \Phi^{C_{rs}} X_{rst} \ar[d]^-{\textup{it}} \ar[r]^-{\bbP c_{rs}} & \bbP X_t \\
		& \bbP \Phi^{C_{s}}\Phi^{C_{r}} X_{rst} \ar[d]^-\cong \ar[r]^-{\bbP \Phi^{C_s} c_r} & \bbP \Phi^{C_{s}} X_{st} \ar[u]_-{\bbP c_s} \ar[d]^-\cong \\
		\Phi^{C_{s}}\Phi^{C_{r}} \bbP X_{rst} & \ar[l]_-\cong \Phi^{C_{s}} \bbP \Phi^{C_{r}} X_{rst} \ar[r]^-{\Phi^{C_{s}}\bbP c_r} & \Phi^{C_{s}}\bbP X_{st}
	} \]
\end{proof}

\begin{example}\label{restriction_system_pushup_1}
	For every termwise cofibrant naive restriction system of symmetric spectra $X_\bullet$, the orthogonal suspension spectra $\Sigma^\infty X_\bullet$ form a genuine restriction system of bispectra. In fact, the restriction map from the categorical fixed points
	\[ \kappa\colon \left(\Sigma^\infty X_{rs}\right)^{C_r} \arr \Phi^{C_r} \Sigma^\infty X_{rs} \]
	is an isomorphism, so we define the maps of the genuine restriction system using the inverse of the naive restriction system maps:
	\[ \Phi^{C_r} \Sigma^\infty X_{rs} \cong \left(\Sigma^\infty X_{rs}\right)^{C_r} \cong \Sigma^\infty (X_{rs}^{C_r}) \overset{\cong}{\longleftarrow} \Sigma^\infty X_s. \]
	Their compatibility follows immediately from rigidity.
\end{example}

\begin{example}\label{restriction_system_pushup_2}
	Similarly, for every genuine restriction system of orthogonal spectra $X_\bullet$, the symmetric suspension spectra $\Sigma^\infty X_\bullet$ form a genuine restriction system of bispectra with structure maps
	\[ \Phi^{C_r} \Sigma^\infty X_{rs} \cong \Sigma^\infty \Phi^{C_r} X_{rs} \oarr{\Sigma^{\infty}c_{r}} \Sigma^\infty X_s. \]
\end{example}

Next we place model structures on the various categories of restriction systems. It is simple enough to do this for naive restriction systems $\{X_n\}$ because they are equivalent to spectra with a $\bbZ$-action and no free $\bbZ$-orbits. The weak equivalences and fibrations are measured on each term $X_n$ separately, and the cofibrations are generated by the maps of naive restriction systems that are $*$ at all levels $n$ not divisible by $a$, and the shift-desuspensions of the canonical inclusions
\[ F_m(S^{k-1} \times \bbZ/a\bbZ)_+ \arr F_m(D^k \times \bbZ/a\bbZ)_+  \]
at all levels $n$ where $a \mid n$.

\begin{prop}\label{naive_restriction_system_model_structure}
	This defines a model structure on naive restriction systems of symmetric spectra.
\end{prop}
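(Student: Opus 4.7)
The plan is to realize the category of naive restriction systems of symmetric spectra as a presheaf category on a small indexing category, and then recognize the claimed model structure as the projective model structure transferred from symmetric spectra.

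First I would reinterpret the data. Let $\bbI$ denote the category with objects the positive integers and a unique morphism $r \colon n \to m$ whenever $m \mid n$, with $r = n/m$. A naive restriction system, as defined in \cref{genuine_restriction_system}, is exactly a functor $X \colon \bbI^{\mathrm{op}} \to \mathrm{Sp}^\Sigma$ together with an action of $C_n$ on $X(n)$ for each $n$, such that each structure map $(X(rs))^{C_r} \to X(s)$ is an isomorphism and the composites agree. Equivalently (and more cleanly), one may replace $\bbI$ by the orbit category $\mc O$ of the profinite group $\widehat{\bbZ}$ restricted to finite-index open subgroups $n\widehat\bbZ$, and identify naive restriction systems with the category of $\widehat\bbZ$-symmetric-spectra whose underlying $\widehat\bbZ$-action has only finite orbits; the equivalence sends $X_\bullet$ to the spectrum $Y$ with $Y^{n\widehat\bbZ} = X_n$. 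Either description presents naive restriction systems as a presheaf category on a small EI category, which guarantees it is complete, cocomplete, locally presentable, and tensored/cotensored over symmetric spectra.

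Next I would produce the model structure by Kan's recognition theorem \cite{hirschhorn} applied to the stable model structure on symmetric spectra. The generating cofibrations of $\mathrm{Sp}^\Sigma$ have the form $F_m(S^{k-1})_+ \to F_m(D^k)_+$; tensoring with the representable naive restriction systems $\bbI(-,a) \sma (\bbZ/a\bbZ)_+$ (which assigns $(\bbZ/a\bbZ)_+$ at every level $n$ with $a \mid n$ and $*$ elsewhere) gives precisely the generating cofibrations listed in the statement. Analogously, the generating acyclic cofibrations of $\mathrm{Sp}^\Sigma$ yield, after the same tensoring, a generating set for the acyclic cofibrations. The weak equivalences and fibrations in the statement are levelwise, so the small object argument produces functorial factorizations. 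To verify the Kan recognition axioms I would check: (i) both classes of generating maps permit the small object argument, which follows from compactness of spheres and discreteness of $\bbZ/a\bbZ$; (ii) every relative cell complex built from generating acyclic cofibrations is a levelwise weak equivalence, which reduces to the corresponding statement for symmetric spectra applied at each level $n$ with $a \mid n$, together with the fact that $*$ is trivially cofibrant at the other levels; and (iii) a map has the right lifting property against the generating cofibrations if and only if it is both a levelwise acyclic fibration, and similarly for the acyclic case. All three checks reduce, level by level, to the known model structure on symmetric spectra.

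The main obstacle, such as it is, lies in (ii): verifying that pushouts along the generating acyclic cofibrations and transfinite compositions thereof remain levelwise stable equivalences. This is where it matters that cofibrations include the "extension by $*$" at levels $n$ not divisible by $a$, so pushouts are computed levelwise and no interaction between different levels arises beyond the isomorphism constraints defining a naive restriction system. Once this is handled, the remaining axioms are formal consequences of the projective/diagram model structure recipe.
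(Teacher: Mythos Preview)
The paper does not give a proof of this proposition; it only remarks in the paragraph preceding the statement that the construction is ``simple enough'' because naive restriction systems are equivalent to spectra with a $\bbZ$-action and no free $\bbZ$-orbits. Your proposal is a correct fleshing-out of exactly that hint: you make the identification with a presheaf/diagram category explicit (using $\widehat\bbZ$ rather than $\bbZ$, which is harmless once one restricts to finite orbits) and then invoke the projective model structure via Kan's recognition theorem, with the generating cofibrations matching those the paper writes down. So your approach and the paper's implicit approach coincide; you have simply supplied the details the authors chose to omit.
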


Next we turn to the model structure on genuine restriction systems. The idea, as in \cite[\S 5]{blumberg_mandell_cyclotomic}, is to build a model structure on genuine pre-restriction systems by thinking of them as algebras over a monad $\bbC$. As in that paper, this is not completely correct because $\bbC$ is only a monad on cofibrant inputs, but we can still use it to create free pre-restriction systems, which is enough to build the model structure.

Consider the category of sequences $\{X_n\}_{n \geq 0}$ in which the $n$th term $X_n$ is a $C_n$-equivariant orthogonal spectrum or bispectrum. Such a sequence is termwise cofibrant if each $X_n$ is cofibrant in the 
stable model structure.
We define $\bbC\{X_\bullet\}$ to be the sequence with $n$th term
\[  \bbC\{X_\bullet\}_{n} = \bigvee_{m \geq 1} \Phi^{C_m} X_{mn}. \]

If the sequence $\{X_\bullet\}$ is termwise cofibrant then we make $\bbC\{X_\bullet\}$ into a genuine pre-restriction system by defining $c_r$ to be the composite
\[ \xymatrix @R=1em{
	\Phi^{C_r} \bbC\{X_\bullet\}_{rs} \ar@{=}[d] && \bbC\{X_\bullet\}_{s} \\
	\Phi^{C_r} \left( \displaystyle\bigvee_{m \geq 1} \Phi^{C_m} X_{mrs} \right) &
	\ar@{<-}[l]_-\cong \displaystyle\bigvee_{m \geq 1} \Phi^{C_r} \Phi^{C_m} X_{mrs} &
	\ar@{<-}[l]_-\cong \displaystyle\bigvee_{m \geq 1} \Phi^{C_{mr}} X_{mrs}. \ar[u]
	} \]
Note that the final map is an inclusion of some but not all of the summands of $\bbC\{X_\bullet\}_{s}$. Hence this is a not a restriction system, only a pre-restriction system. The compatibility check for the structure maps $c_r$ can be done on each summand of the source separately, where it follows from rigidity.

\begin{lem}\label{free_restriction_system}
	On cofibrant inputs, $\bbC$ is the left adjoint of the forgetful functor from pre-restriction systems to sequences of equivariant spectra.
\end{lem}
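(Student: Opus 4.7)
The plan is to verify the adjunction directly by exhibiting a unit map and showing every extension along it is unique. Define the \textbf{unit} $\eta \colon \{X_\bullet\} \to U\bbC\{X_\bullet\}$ by letting $\eta_n \colon X_n \to \bbC\{X_\bullet\}_n = \bigvee_{m\geq 1} \Phi^{C_m} X_{mn}$ be the inclusion of the $m=1$ summand (using the canonical identifications $\Phi^{C_1} \cong \id$ and $c_1 = \id$). Given a map of sequences $f_\bullet \colon \{X_\bullet\} \to \{UY_\bullet\}$ into a pre-restriction system $Y_\bullet$, define the \textbf{extension} $\tilde f \colon \bbC\{X_\bullet\} \to Y_\bullet$ on the $m$th summand by
\[
\tilde f_n|_{\Phi^{C_m} X_{mn}} \coloneqq c_m^Y \circ \Phi^{C_m} f_{mn} \colon \Phi^{C_m} X_{mn} \to \Phi^{C_m} Y_{mn} \to Y_n.
\]

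The first check is that $\tilde f$ is a morphism of pre-restriction systems, i.e.\ that $\tilde f_s \circ c_r^{\bbC X} = c_r^Y \circ \Phi^{C_r} \tilde f_{rs}$. Restricting to the $m$th summand $\Phi^{C_r}\Phi^{C_m} X_{mrs}$ of $\Phi^{C_r}\bbC\{X_\bullet\}_{rs}$, the left-hand side becomes $c_{mr}^Y \circ \Phi^{C_{mr}} f_{mrs}$ (because $c_r^{\bbC X}$ sends this summand to the $mr$th summand of $\bbC\{X_\bullet\}_s$ via the rigidity isomorphism $\Phi^{C_r}\Phi^{C_m} \cong \Phi^{C_{mr}}$), while the right-hand side unfolds to $c_r^Y \circ \Phi^{C_r}(c_m^Y) \circ \Phi^{C_r}\Phi^{C_m} f_{mrs}$; these agree precisely by the cyclic associativity axiom $c_{mr} = c_r \circ \Phi^{C_r}(c_m)$ in the definition of a pre-restriction system. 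This summand-by-summand diagram chase is the most bookkeeping-heavy step and is where the role of the rigidity isomorphism $\Phi^{C_r}\Phi^{C_m} \cong \Phi^{C_{rm}}$ enters; it is the main (though ultimately routine) obstacle.

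Next I would verify the two round-trips. Starting from $f$, the composite $\tilde f \circ \eta$ equals $f$ since on the $m=1$ summand $\tilde f_n$ reduces to $c_1^Y \circ \Phi^{C_1} f_n = f_n$. Conversely, starting from a map of pre-restriction systems $g\colon \bbC\{X_\bullet\} \to Y_\bullet$, one checks $\widetilde{g \circ \eta} = g$ by restricting the pre-restriction system compatibility square for $g$ (with structure map $c_m$) to the $m'=1$ summand in the top left. This precisely identifies $g_n|_{\Phi^{C_m} X_{mn}}$ with $c_m^Y \circ \Phi^{C_m}((g\circ \eta)_{mn}) = \widetilde{g\circ \eta}_n|_{\Phi^{C_m} X_{mn}}$, and since both maps agree on every summand they coincide on all of $\bbC\{X_\bullet\}_n$.

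Finally, naturality of the bijection in both $\{X_\bullet\}$ (as a termwise cofibrant sequence of equivariant spectra) and $Y_\bullet$ (as a pre-restriction system) is immediate from the construction, so this exhibits $\bbC$ as left adjoint to the forgetful functor on the subcategory of termwise cofibrant sequences, as claimed.
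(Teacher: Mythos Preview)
Your proof is correct and follows essentially the same approach as the paper. The paper's proof is a terse sketch observing that the compatibility condition with $c_r$ (in the case $r=m$) forces the maps $f_{m,n}\colon \Phi^{C_m}X_{mn}\to Y_n$ to be determined by the $f_{1,n}$; your extension formula $\tilde f_n|_{\Phi^{C_m}X_{mn}}=c_m^Y\circ\Phi^{C_m}f_{mn}$ makes this explicit and you carry out the verifications the paper leaves to the reader.
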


\begin{proof}
	A map of pre-restriction systems $\bbC X \arr Y$ is given by maps of $C_n$-spectra
\[f_{m,n}\colon \Phi^{C_m} X_{mn} \arr Y_n\]
 for all $m,n \geq 1$ that are compatible along the structure maps $c_{r}$.  In the case of $r = m$, one finds that the compatibility condition implies that the maps $f_{1, n} \colon X_{n} \to Y_{n}$ determine all of the others.
\end{proof}

The generating cofibrations for the model structure on pre-restriction systems are constructed using the sets $I_n$ of generating cofibrations for $C_{n}$-spectra, for all $n \geq 1$, by considering each map as a morphism of equivariant sequences that is only nontrivial at the $n$th term, and then applying $\bbC$ to get a map of pre-restriction systems.
Concretely, these are the maps of pre-restriction systems that are trivial on the $k$th term unless $k \mid n$, in which case they are given by the maps of $C_k$-spectra
\[ \Phi^{C_{n/k}} F_{(m,V)}\left( C_n/C_a \times S^{k-1} \right)_+ \arr \Phi^{C_{n/k}} F_{(m,V)}\left( C_n/C_a \times D^k \right)_+. \]
Call the collection of such maps $\bbC I$. By the preservation properties of geometric fixed points detailed in \cite[\swcref{Lemma A.10}{\cref{swc-lem:gfp_properties}}]{clmpz-specWaldCat}, any $\bbC I$-cell complex is at the $n$th term of the restriction system an $I_n$-cell complex.

We perform the same construction to the generating acyclic cofibrations,
and in the case of bispectra to the generating cofibrations for the model structure on $C_n$-equivariant bispectra from \cite[\swcref{Proposition A.5}{\cref{swc-stable_model_structure}}]{clmpz-specWaldCat}.  Verifying that our definitions define a model structure is now straightforward by checking the properties termwise.

\begin{prop}\label{restriction_system_model_structure}
	These generating cofibrations and acyclic cofibrations, together with the termwise stable equivalences, define a stable model structure on pre-restriction systems of orthogonal spectra or of bispectra.
\end{prop}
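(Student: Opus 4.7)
The plan is to apply the standard recognition principle for cofibrantly generated model categories (in the form of Hovey's Theorem 2.1.19 or Hirschhorn's 11.3.1) to the sets $\bbC I$ and $\bbC J$, where $I$ and $J$ are the generating cofibrations and generating acyclic cofibrations for the stable model structures on $C_n$-spectra, taken levelwise and freed up via $\bbC$. The category of pre-restriction systems is bicomplete (limits and colimits are computed termwise and the compatibility squares for the $c_r$ are preserved), and the classes of weak equivalences, defined termwise, automatically satisfy 2-out-of-3, the retract axiom, and accessibility properties, inherited from the underlying equivariant stable model structures.

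First I would verify smallness: each domain of a map in $I_n$ or $J_n$ is small relative to the entire category of $C_n$-spectra, and since the forgetful functor from pre-restriction systems to levelwise $C_n$-spectra creates filtered colimits, the domains of $\bbC I$ and $\bbC J$ are small relative to $\bbC I$-cell and $\bbC J$-cell, respectively. Next I would identify the fibrations as the termwise stable fibrations: by the (partial) adjunction of \cref{free_restriction_system}, a map $Y \to Z$ of pre-restriction systems has the right lifting property against $\bbC I$ (resp. $\bbC J$) if and only if each $Y_n \to Z_n$ has the right lifting property against $I_n$ (resp. $J_n$), using that the forgetful functor is right adjoint to $\bbC$ on cofibrant input, and observing that generating-cofibration domains are cofibrant.

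The central calculation is to check that every relative $\bbC J$-cell complex is a termwise stable equivalence (and a cofibration), which gives the acyclicity condition for the recognition theorem. The key input, already cited in the excerpt, is the list of preservation properties of the geometric fixed points functor $\Phi^{C_m}$ from \cite[\cref{swc-lem:gfp_properties}]{clmpz-specWaldCat}: $\Phi^{C_m}$ commutes with wedges and pushouts along cofibrations and preserves cofibrations and acyclic cofibrations. Together with the explicit formula $\bbC\{X_\bullet\}_n = \bigvee_{m\geq 1}\Phi^{C_m}X_{mn}$, this lets me identify, at each level $n$, a $\bbC J$-cell complex of pre-restriction systems with a $J_n$-cell complex of $C_n$-spectra, hence with an acyclic cofibration at each level, hence with a stable equivalence. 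The same argument with $I$ in place of $J$ shows that $\bbC I$-cell complexes are termwise cofibrations, so that the functorial factorizations produced by the small object argument have the correct left halves.

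The main obstacle, as in \cite{blumberg_mandell_cyclotomic}, is that $\bbC$ is not literally a monad on the whole category of equivariant sequences — the formula using $\Phi^{C_m}$ only behaves as a left adjoint to the forgetful functor on cofibrant inputs, since $\Phi^{C_m}$ fails to preserve colimits on non-cofibrant objects. This forces me to run the small object argument with an eye toward staying in the subcategory where $\bbC$ behaves correctly: the cells are attached along $\bbC$ applied to cofibrations of cofibrant equivariant spectra, and the termwise identification with $I_n$-cell complexes ensures that the pushouts and transfinite compositions remain termwise cofibrant at every stage. Once this is verified, the recognition theorem produces the model structure, and the characterization of fibrations from the preceding step shows that this really is the stable model structure where both equivalences and fibrations are detected termwise. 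The case of bispectra is formally identical, using the stable model structure on $C_n$-equivariant bispectra from \cite[\cref{swc-stable_model_structure}]{clmpz-specWaldCat} and the level-by-level geometric fixed points functor from \cite[\cref{swc-gfp_bispectra}]{clmpz-specWaldCat}.
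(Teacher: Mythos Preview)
Your proposal is correct and matches the paper's approach: the paper gives essentially no detailed proof, only the assertion that the verification is ``straightforward by checking the properties termwise,'' relying on the key fact already stated before the proposition that a $\bbC I$-cell complex is, at each term $n$, an $I_n$-cell complex. Your outline fills in exactly this termwise verification via the recognition theorem, using the same preservation properties of $\Phi^{C_m}$ and the free--forgetful adjunction of \cref{free_restriction_system} on cofibrant inputs, so there is no substantive difference in strategy.
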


\bibliography{references}
\bibliographystyle{amsalpha2}

\end{document}